\title[]{Moment maps, nonlinear PDE, and stability in Mirror Symmetry}
\author[T. C. Collins]{Tristan C. Collins}
  \email{tristanc@mit.edu}
  \address{Department of Mathematics, Massachusetts Institute of Technology, 77 Massachusetts Avenue, Cambridge, MA 02139}
 \thanks{T.C.C is supported in part by NSF grant DMS-1506652, DMS-1810924 and an Alfred P. Sloan Fellowship. }
 \author[S.-T Yau]{Shing-Tung Yau}
  \email{yau@math.harvard.edu}
  \address{Department of Mathematics, Harvard University, 1 Oxford Street, Cambridge, MA 02138}
\theoremstyle{plain}
\newtheorem{thm}{Theorem}[section]
\newtheorem{prop}[thm]{Proposition}
\newtheorem{defn}[thm]{Definition}
\newtheorem{lem}[thm]{Lemma}
\newtheorem{cor}[thm]{Corollary}
\newtheorem{conj}[thm]{Conjecture}
\newtheorem{que}[thm]{Question}
\theoremstyle{definition}
\newtheorem{ex}[thm]{Example}
\newtheorem{rk}[thm]{Remark}
\numberwithin{equation}{section}
\newcommand{\Tr}{\textrm{Tr}}
\newcommand{\del}{\partial}
\newcommand{\dbar}{\overline{\del}}
\newcommand{\ddb}{\sqrt{-1}\del\dbar}
\newcommand{\Hom}{\text{Hom}}
\newcommand{\DDb}{\sqrt{-1}D \overline{D}}
\newcommand{\E}{\mathcal{E}}
\newcommand{\osc}{\mathrm{osc}}
\newcommand{\cP}{\mathcal{P}}
\renewcommand{\leq}{\leqslant}
\renewcommand{\geq}{\geqslant}
\renewcommand{\epsilon}{\varepsilon}
\renewcommand{\phi}{\varphi}
\begin{document}

\maketitle

\begin{abstract}
We study the deformed Hermitian-Yang-Mills (dHYM) equation, which is mirror to the special Lagrangian equation, from the variational point of view via an infinite dimensional GIT problem mirror to Thomas' GIT picture for special Lagrangians. This gives rise to infinite dimensional manifold $\mathcal{H}$ mirror to Solomon's space of positive Lagrangians.  In the hypercritical phase case we prove the existence of smooth approximate geodesics, and weak geodesics with $C^{1,\alpha}$ regularity.  This is accomplished by proving sharp with respect to scale estimates for the Lagrangian phase operator on collapsing manifolds with boundary.  We apply these results to the infinite dimensional GIT problem for deformed Hermitian-Yang-Mills.  We associate algebraic invariants to certain birational models of $X\times \Delta$, where $\Delta \subset \mathbb{C}$ is a disk.  Using the existence of regular weak geodesics we prove that these invariants give rise to obstructions to the existence of solutions to the dHYM equation.  Furthermore, we show that these invariants fit into a stability framework closely related to Bridgeland stability.  Finally, we use a Fourier-Mukai transform on toric K\"ahler manifolds to describe degenerations of Lagrangian sections of SYZ torus fibrations of Landau-Ginzburg models $(Y,W)$.  We speculate on the resulting algebraic invariants, and discuss the implications for relating Bridgeland stability to the existence of special Lagrangian sections of $(Y,W)$.
\end{abstract}

\section{Introduction}

Mirror symmetry predicts that Calabi-Yau manifolds come in pairs $(X,\Omega, \omega)$, $(\check{X}, \check{\Omega}, \check{\omega})$ with the property that symplectic geometry on $\check{X}$ is related to complex geometry on $X$ and vice versa.  The physical mechanism underlying mirror symmetry is a duality between type IIA string theory compactified on $X$ and type IIB string theory compactified on $\check{X}$.  Within this duality there is a correspondence between the D-branes in each theory; we refer the reader to \cite{Asp} for a nice discussion of $D$-branes on Calabi-Yau manifolds.  In both the type IIA and type IIB theory the physically realistic D-branes are minimizers of some energy functional, and are referred to as BPS, or supersymmetric D-branes. 

 On the $A$-model the $D$-branes, often referred to as $A$-branes, are known to be Lagrangian submanifolds of $(\check{X}, \check{\Omega}, \check{\omega})$ equipped with flat unitary bundles (as well as certain extended versions of these).  On the $B$-model, the complex geometric side, $D$-branes, often called $B$-branes, can be thought of as holomorphic vector bundles, possibly supported on analytic subsets $V\subset X$. In this language Kontsevich's homological mirror symmetry proposal \cite{Kont} predicts a correspondence between $A$-branes on $\check{X}$, and $B$-branes on $X$
\[
D^{b}{\rm Fuk}(\check{X}) \sim D^{b}Coh(X).
\]
where the left hand side is the derived Fukaya category of $\check{X}$.  Mirror symmetry furthermore predicts a duality between the supersymmetric branes on each side of the correspondence.  On the symplectic side, the supersymmetric $A$-branes are known to be special Lagrangian (sLag) submanifolds of $\check{X}$, together flat unitary bundles.  On the $B$-model, however, the supersymmetry constraint is more mysterious.  Around 2000, three separate approaches to understanding supersymmetric $B$-branes were introduced.  One approach, by Mari\~no-Minasian-Moore-Strominger \cite{MMMS}, used the Dirac-Born-Infeld$+$Chern-Simons functional to compute the equations of motion in the case of abelian gauge group. A second approach by Leung-Yau-Zaslow \cite{LYZ} was to use the Strominger-Yau-Zaslow \cite{SYZ} proposal, and a Fourier-Mukai transform, to compute the mirror object to a special Lagrangian in the setting of semi-flat mirror symmetry \cite{Leung}.  Each of these found that the equations of motion corresponded to a holomorphic line bundle $L \rightarrow X$ with a hermitian metric $h$ solving the equation
\begin{equation}\label{eq: introDHYM}
\begin{aligned}
{\rm Im}(e^{-\sqrt{-1}\hat{\theta}}(\omega +F(h))^{n}) &=0\\
{\rm Re}(e^{-\sqrt{-1}\hat{\theta}}(\omega +F(h))^{n}) &>0
\end{aligned}
\end{equation}
where $\hat{\theta}$ is a constant.  This equation became known as the {\em deformed Hermitian-Yang-Mills} (dHYM) equation.  A third approach, initiated by Douglas-Fiol-R\"omelsberger \cite{DFR} (see also \cite{Doug}), was inspired by the Donaldson-Uhlenbeck-Yau theorem \cite{Do3, UY}.  They bypassed the equations of motion for BPS $B$-branes, and instead proposed an algebro-geometric notion called $\Pi$-stability.  Their proposal can then be summarized as ``an object in $D^{b}Coh(X)$ is a supersymmetric $B$-brane if it is $\Pi$-stable".  This idea was taken up by Bridgeland \cite{Br} who developed the notion of categorical stability conditions in great generality.  Since Bridgeland's pioneering work the subject of stability conditions on categories, and particularly $D^{b}Coh(X)$ and $D^{b}{\rm Fuk}(\check{X})$, has generated a tremendous amount of interest.  However, despite a great deal of progress (see, for example \cite{AB, Bay, BMT, MS} and the references therein), there is no general construction of a Bridgeland stability condition on $D^{b}Coh(X)$ for $X$ Calabi-Yau of dimension larger than $2$ \cite{AB}.

The goal of this paper is to begin to unite these three approaches to supersymmetric $B$-branes.  Indeed, the correspondence between the algebraic notion of supersymmetric $A/B$- branes, and the geometric notion of supersymmetric $A/B$-branes has played an important role in the development of mirror symmetry.  Even before the introduction of $\Pi$-stability, Thomas \cite{Th} and Thomas-Yau \cite{ThY} proposed a notion of stability for Lagrangians and predicted that the Lagrangian $L$ could be deformed by Hamiltonian deformations to a special Lagrangian if and only if $L$ is stable.  This proposal was based in part on a moment map formalism for special Lagrangians discovered by Thomas \cite{Th}, and in part on the analogy with the Donaldson-Uhlenbeck-Yau theorem, motivated by mirror symmetry.  More recently Joyce \cite{J} proposed a very broad update to the Thomas-Yau conjecture in the framework of Bridgeland stability and the mean curvature flow.  Broadly, the folklore conjecture is

\begin{conj}[Folklore]\label{conj: folklore}
There is a Bridgeland stability condition on $D^{b}{\rm Fuk}(\check{X})$ (resp. $D^{b}Coh(X)$) so that the isomorphism class of a Lagrangian $L$ (resp. holomorphic vector bundle $E$) is stable if and only if it contains a special Lagrangian (resp. $E$ admits a metric solving the deformed Hermitian-Yang-Mills equation).
\end{conj}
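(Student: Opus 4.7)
The plan is to split the conjecture into an obstruction direction and an existence direction, and to link the two categorical stability conditions via mirror symmetry. I would attack the $B$-side (dHYM) first, since the infinite-dimensional GIT framework and the analysis of $\HH$ developed in this paper provide the natural setting; the $A$-side conjecture should then follow by transporting the structure along a homological mirror equivalence, using the Fourier--Mukai machinery for SYZ fibrations sketched at the end of the paper.

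For the obstruction direction on the $B$-side (existence implies stability), I would mimic the Kempf--Ness/Donaldson--Tian strategy from K\"ahler geometry. To each birational model of $X\times\Delta$ (with an appropriate equivariant lift of the line bundle $L$) one associates an algebraic invariant, realized as the $\HH$-slope at infinity of an explicit weak geodesic ray in $\HH$; the $C^{1,\alpha}$ regularity of weak geodesics proved in this paper is precisely what makes this slope well-defined and computable. Concavity of the moment-map-type functional along the geodesic then forces the invariant to be non-negative whenever dHYM is solvable, and strict positivity away from orbit directions upgrades this to stability. One must then promote this family of invariants into a Bridgeland stability condition on $D^{b}Coh(X)$ by extending from line bundles to arbitrary objects via a Harder--Narasimhan descent and verifying the support property, cross-checked against the Bayer--Macr\`i--Toda constructions in low dimension.

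For the existence direction, the strategy is a continuity method in the phase angle $\hat\theta$, or equivalently long-time convergence of the gradient flow of the dHYM energy. The key analytic input is properness of this energy modulo automorphisms under the hypothesis that every algebraic slope is strictly positive; the hypercritical-phase estimates on the Lagrangian phase operator proved in this paper furnish the local a priori control, while properness at infinity must be extracted by matching the non-Archimedean limit along weak geodesic rays with the algebraic invariants above. To cross mirror symmetry I would then identify the two Bridgeland conditions by matching central charges through the Fourier--Mukai transform, so that the slope of a Lagrangian section along a degeneration equals the dHYM slope along the mirror birational model.

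The principal obstacle is the stability-implies-existence direction. The dHYM operator is fully nonlinear and its linearization degenerates outside the hypercritical phase regime, so the estimates of this paper suffice to build weak geodesics but not yet to close a continuity method in full generality. Overcoming this will require genuinely new non-Archimedean analysis in the spirit of Berman--Boucksom--Jonsson, tailored to the dHYM potential rather than Monge--Amp\`ere; and in parallel, assembling the full Bridgeland data on $D^{b}Coh(X)$ for $\dim X \ge 3$ is itself an open problem, so the invariants produced by geodesic rays give at best a natural candidate whose axioms must be checked one by one, likely using the categorical wall-crossing/autoequivalence structure to leverage low-dimensional cases.
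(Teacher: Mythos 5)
The statement you are addressing is labeled a \emph{conjecture} in the paper (the ``Folklore'' Conjecture~\ref{conj: folklore}), and the paper does not prove it; indeed the introduction says explicitly that ``on either side of this conjecture there has been little progress.'' So there is no ``paper's own proof'' to compare against. What the paper supplies is a framework and a collection of partial results that constitute evidence: existence of $C^{1,\alpha}$ weak geodesics and smooth $\epsilon$-geodesics in $\mathcal{H}$ in the hypercritical phase range (Theorems~\ref{thm: DirProbXe} and~\ref{thm: geoExistence}), the computation of limit slopes of $CY_{\mathbb{C}}$ (hence of $\mathcal{J}$, $\mathcal{C}$, $Z$) along model curves built from flag ideals (Proposition~\ref{prop: limSlopeAY}, Theorem~\ref{thm: obstrThm}), the resulting Bridgeland-type inequalities such as Proposition~\ref{prop: BridgStabInt} and Corollary~\ref{cor: subVarObstr}, and a speculative translation to the $A$-model via the semi-flat SYZ Fourier--Mukai transform (Section~\ref{sec: Amod}).

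Your proposal is not a proof but a research strategy, and it faithfully reconstructs the program that the paper itself lays out: the obstruction direction via convexity of the Kempf--Ness functional $\mathcal{J}$ along weak geodesic rays and evaluation of limit slopes in terms of intersection numbers on birational models of $X\times\Delta$; the hoped-for existence direction via a BBJ-style non-Archimedean variational/properness argument (which the introduction flags as the natural but unresolved analogue); and transport to the Fukaya side via Fourier--Mukai matching of central charges. You are also correct about the two principal obstructions, and they are the ones the paper itself emphasizes: (i) no general construction of a Bridgeland stability condition on $D^bCoh(X)$ exists in dimension $\geq 3$, so the invariants from geodesic rays only give a candidate whose axioms (slicing, Harder--Narasimhan, support property) must be checked; and (ii) outside the hypercritical phase range $\hat\theta>(n-1)\frac{\pi}{2}$ the Lagrangian phase operator loses concavity and even weak geodesics are not known to exist, so the continuity/flow method cannot currently be closed. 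I would only add that the paper points to one further concrete difficulty you should record in the obstruction-to-stability passage, namely the phase-lifting problem and its associated Chern-number inequalities (Proposition~\ref{prop: chernNum} and the discussion around Question~\ref{que: gradeObj}): even defining the slicing angle $\phi_V(L)$ for subvarieties of dimension $\geq 3$ requires inequalities that are themselves conjectural, and the invariants from general flag ideals live naturally on resolutions of $X\times\Delta$ and are not yet known to descend to the $K$-theory of $X$. These are genuine gaps in the program rather than errors in your sketch; since the statement is a conjecture, your proposal is a reasonable account of the state of the art and of what remains to be done.
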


This conjecture is really two conjectures, the first involving the existence of a Bridgeland stability condition, and the second relating Bridgeland stability and the existence of a solution to a certain nonlinear PDE.

On either side of this conjecture there has been little progress.  Haiden-Katzarkov-Kontsevich-Pandit showed that gradient flows of metrics on semi-stable quiver representations \cite{HKKP}, and the Yang-Mills flow on a holomorphic bundle over a Riemann surface \cite{HKKP1}, give rise to canonically defined filtrations associated with Bridgeland stability conditions, giving evidence for Conjecture~\ref{conj: folklore}.  On the symplectic side Joyce \cite{J} has outlined a program for approaching to Conjecture~\ref{conj: folklore}, based on understanding the singularity formation and surgery of the Lagrangian mean curvature flow (LMCF).  We remark that  Neves \cite{Nev} has shown that finite time singularities of the LMCF are essentially unavoidable, and hence the problem of understanding the long-time behavior of the LMCF is extremely difficult.  At the same time, Imagi-Joyce-Oliveira dos Santos \cite{IJO} have shown how ideas from Floer theory and the Fukaya category can be used to study the singularity formation of the LMCF.

On the holomorphic side, the deformed Hermitian-Yang-Mills equation has recently been studied by Jacob-Yau \cite{JY} and the authors and Jacob \cite{CJY}.  In \cite{CJY} a necessary and sufficient {\em analytic} condition was given for the existence of solutions to dHYM in the critical phase case. It was observed that these conditions gave rise to algebraic obstructions of ``Bridgeland type".  However, outside of this result, and for the case of higher rank vector bundles, essentially nothing is known.

This paper takes up the above folklore conjecture, primarily on the $B$-model.  In particular, we develop the algebro-geometric obstruction theory for the deformed Hermitian-Yang-Mills equation on a line bundle in the hypercritical phase case.  We compare our results with the ``expected" Bridgeland stability condition, and use mirror symmetry to deduce similar results for Landau-Ginzburg models mirror to toric Fano varieties.

Our approach to this problem is to study the mirror of an infinite dimensional GIT framework for special Lagrangians due to Thomas \cite{Th}, and Solomon \cite{Sol}.  To put things in context, let us briefly recall the basic idea of finite dimensional GIT; we refer the reader to \cite{ThNo, GIT} for a thorough discussion.  Suppose $(X,\omega)$ is a projective K\"ahler manifold acted on by a group $G$, which is the complexification of compact real Lie group $K$, acting on $(X,\omega)$ by symplectomorphisms.  By the Hilbert-Mumford criterion, a point $p$ with finite stabilizer is GIT stable if and only if the orbit of $p$ is closed under all $1$-parameter subgroups, which we think of as infinite geodesics in $G$.  The Kempf-Ness theorem makes the the connection with symplectic geometry by associating to $G, p$ a certain $K$-invariant function $f_p$, called the Kempf-Ness functional, constructed out of the moment map for the $K$-action.  The Kempf-Ness functional has the following two properties:  (i) $f_p$ is convex along all one-parameter subgroups of $G/K$, and (ii) $p$ is stable if and only if $f_p$ is proper on $G/K$.  Since $f_p$ is convex, the properness can be checked by evaluating the limit slope of $f_p$ along infinite geodesic rays in $G/K$, a calculation which gives rise to algebraic invariants of the $G$-action whose sign determine the stability of $p$.  Furthermore, the construction of $f_p$ shows that $p$ is stable if and only if the $K$-orbit of $p$ contains a zero of the moment map, which is the usual statement of the Kempf-Ness theorem.

On the $A$-model, \cite{Sol} Solomon introduces a Riemannian structure, and geodesic equation on the infinite dimensional space of {\em positive} (or almost calibrated) Lagrangians.  These geodesics are the one parameter subgroups in the complexified symplectomorphism group in Thomas' infinite dimensional GIT picture \cite{Th,ThY}.   Solomon also introduces two functionals $\mathcal{C}, \mathcal{J}$,  the latter of which is the the Kempf-Ness functional of the GIT problem, and is therefore convex along putative smooth geodesics.

  The existence problem for geodesics in the space of positive Lagrangians has recently generated a great deal of interest.  Solomon-Yuval \cite{SolYu} demonstrated the existence of smooth geodesics between positive Lagrangians in Milnor fibers.  Rubinstein-Solomon~\cite{RuSol} studied the existence of geodesics between graphical positive Lagrangians. They prove that if $f_0, f_1$ are two functions defined on a smoothly bounded domain $D\subset \mathbb{R}^n$  so that
\[
x\longmapsto (x,\nabla f_i(x)) \qquad i=0,1
\]
define positive Lagrangians in $\mathbb{R}^{2n} = \mathbb{C}^{n}$, then there exists a {\em continuous function} $F(x,t)$ having $F(x,0)= f_0$, $F(x,1) = f_1$ and which solves Solomon's geodesic equation in the weak sense of Harvey-Lawson's Dirichlet duality theory.  These weak geodesics were described in terms of envelopes in the hypercritical phase case by Darvas-Rubinstein~\cite{DarRu}.  The Rubinstein-Solomon approach was extended to compact Riemannian manifolds by Dellatorre \cite{Del}, and to the deformed Hermitian-Yang-Mills setting, as developed here, by Jacob \cite{JacPr}. 

In this paper we will develop the mirror of the Thomas-Solomon infinite dimensional GIT picture for the deformed Hermitian-Yang-Mills equation.  We describe a infinite dimensional symplectic manifold, admitting an action by a group of symplectomorphisms, together with a space $\mathcal{H}\subset C^{\infty}(X,\mathbb{R})$ and a Riemannian structure on $\mathcal{H}$, which can be thought of as analogous to $G/K$ in the finite dimensional GIT.  We compute the geodesic equation, and introduce a notion of $\epsilon$-geodesics, which solve an approximate version of the geodesic equation.  We also introduce the complexified Calabi-Yau functional (see Definition~\ref{defn: AY}), and extract from this functional analogues of the $\mathcal{C}$, and  $\mathcal{J}$ functionals, as well as a $\mathbb{C}$-valued functional $Z$.  The $\mathcal{J}$ functional is the Kempf-Ness functional for the GIT problem; it has critical points at solutions of the dHYM equation, and is convex along smooth geodesics.  A fundamental issue in the analogy with finite dimensional GIT is that smooth geodesics need not exist.  Thus, our main analytic contribution is to prove, in the hypercritical phase case, the existence of weak geodesics connecting points in $\mathcal{H}$, with $C^{1,\alpha}$ regularity.  With this much regularity, we can show that the functionals $\mathcal{J}, \mathcal{C}, Z$ are well-defined and we prove that they are convex/concave along these generalized geodesics.

 In order to study the existence of regular geodesics we study the manifolds
\[
(\mathcal{X}_{\epsilon}, \hat{\omega}) = \left(X \times \{ t\in \mathbb{C} : \epsilon e^{-1} < |t| < \epsilon \}, \omega: =\pi_{X}^{*}\omega + \sqrt{-1}dt\wedge d\bar{t} \right)
\]
and solutions of the specified Lagrangian phase equation
\begin{equation}\label{eq: introLagPhase}
\Theta_{\hat{\omega}}(\pi_{X}^{*}\alpha + \DDb\phi): = \sum_{i=0}^{n} \arctan(\mu_{i}) = h
\end{equation}
on $\mathcal{X}_{\epsilon}$.  Here $\mu_{0},\ldots, \mu_n$ are eigenvalues of $\pi_{X}^{*}\alpha + \DDb\phi$ with respect to $\hat{\omega}$.  We prove sharp (with respect to scale) estimates for this equation.  As a PDE question, this seems to be of independent interest.  Namely, suppose we have a domain of the form $M \times [-\epsilon, \epsilon] \subset M \times \mathbb{R}$, where $M$ is a Riemannian manifold, possible with boundary.  Suppose $\phi$ solves a fully non-linear elliptic equation $F(D^{2}\phi)=0$.  By the mean-value theorem, the gradient of $\nabla \phi$ is of order $\frac{1}{\epsilon}$ in the thin directions, and by the comparison principle we expect $D^{2}\phi \approx \epsilon^{-2}$ in directions parallel to the $\mathbb{R}$-factor.  The question then is whether this lack of regularity in the ``thin" directions propagates to directions tangent to $M$.  In the present setting we prove that the lack of regularity {\em does not} propagate.

\begin{thm}
Suppose $\phi(x,t)$ is a smooth $S^1$ invariant function on $(\mathcal{X}_{\epsilon}, \hat{\omega})$ solving~\eqref{eq: introLagPhase}, with $h:\mathcal{X}_{\epsilon} \rightarrow ((n-1)\frac{\pi}{2}, (n+1)\frac{\pi}{2})$, and with $\phi\big|_{\del \mathcal{X}_{\epsilon}} \in \mathcal{H}$. Then there is a constant $C$ independent of $\epsilon$ so that following estimates hold
\[
\osc_{\mathcal{X}_{\epsilon}}\phi + |\nabla^{X}\phi|_{\hat{\omega}} + |\nabla^{X} \overline{\nabla^{X}}\phi|_{\hat{\omega}} \leq C
\]
\[
|\nabla_{\bar{t}}\nabla^{X}\phi|_{\hat{\omega}}+ |\nabla_{t} \phi|_{\hat{\omega}} \leq \frac{C}{\epsilon}
\]
\[
|\nabla_{\bar{t}}\nabla_{t}\phi|_{\hat{\omega}} \leq \frac{C}{\epsilon^2}
\]
\end{thm}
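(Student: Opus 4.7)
I would prove the estimates in the standard order ($C^0$, $C^1$, $C^2$) while tracking the dependence on $\epsilon$ at each step. The natural first move is to exploit the $S^1$-invariance: after rescaling $s = t/\epsilon$ the problem is recast on the fixed annular domain $X\times\{e^{-1}<|s|<1\}$ equipped with the product metric $\hat\omega = \pi_X^*\omega + \epsilon^2\sqrt{-1}\,ds\wedge d\bar s$, whose fiber directions collapse as $\epsilon\to 0$. Under this change of variables the three displayed estimates collapse to uniform-in-$\epsilon$ $C^2$ bounds for $\phi$ measured with respect to $\hat\omega$, and the Lagrangian phase equation descends, via $\tau = \log(\epsilon/|t|) \in [0,1]$, to a concave fully nonlinear elliptic PDE on $X\times[0,1]$. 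The hypercritical phase range $h\in((n-1)\pi/2,(n+1)\pi/2)$ guarantees positivity of the eigenvalues $\mu_i$ and concavity of $\sum_i\arctan(\mu_i)$ on the relevant cone, and this concavity drives all of the subsequent maximum-principle arguments.

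The $C^0$ estimate and boundary $C^1$ estimate follow from the comparison principle using smooth sub- and supersolutions obtained by perturbing the linear interpolation in $\tau$ between the two boundary values of $\phi$; hypercriticality ensures such barriers exist for any Dirichlet data in $\mathcal{H}$. For the interior $C^1$ estimate I would apply the maximum principle to a test function of the form $e^{A\phi}\bigl(|\nabla^X\phi|_\omega^2+|\partial_\tau\phi|^2\bigr)$, using concavity of the operator to dispose of the third-order terms. The cross terms of $\DDb\phi$ carry factors of $t^{-1}$ or $\bar t^{-1}$, so without the product structure of $\hat\omega$ and the $S^1$-invariance one would be forced to absorb factors of $\epsilon^{-1}$; these structural features are what make the resulting gradient bound independent of $\epsilon$.

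The heart of the proof is the uniform interior $C^2$ estimate. Following the strategy of Yuan and of the authors with Jacob~\cite{CJY}, I would apply the maximum principle to a test function of the form $\log\lambda_{\max}\bigl(\hat\omega^{-1}(\pi_X^*\alpha+\DDb\phi)\bigr) + B(\phi,|\nabla\phi|_{\hat\omega}^2)$, where $\lambda_{\max}$ denotes the largest eigenvalue and $B$ is a suitable auxiliary function chosen to exploit the $C^1$ bound. Concavity of $\sum\arctan(\mu_i)$ in the hypercritical regime produces a favorable negative contribution from the second variation of the equation which absorbs the problematic positive third-derivative terms arising when $\DDb$ is commuted past the eigenvalue. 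Boundary $C^2$ estimates follow from standard barrier constructions using the $C^1$ bound and the flatness of $\del\mathcal{X}_\epsilon$ in the $X$-directions.

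The principal obstacle is maintaining uniformity in $\epsilon$ as the metric $\hat\omega$ degenerates in the fiber direction: the naive constants in $C^2$-type maximum-principle arguments involve curvature or injectivity radius of $\hat\omega$, which blow up. To circumvent this I would systematically exploit two structural features. First, $\hat\omega$ is a Riemannian product, so commutators of $X$- and $t$-covariant derivatives involve only the curvature of $\omega$, which is uniform in $\epsilon$; this prevents spurious curvature blow-up. Second, the $S^1$-symmetry reduces the $t$-direction analysis to a one-dimensional problem in $\tau$, for which sharp comparison estimates are elementary and scale invariant. Combined with the rescaling $s=t/\epsilon$, which absorbs the $\epsilon$-dependence into the metric coefficients, these features should yield the claimed sharp-in-scale bounds.
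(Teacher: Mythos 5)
Your overall outline (rescale to a fixed domain, then $C^0 \to C^1 \to C^2$ with product-structure and $S^1$-invariance controlling $\epsilon$-dependence) is the right shape, but it rests on two claims that are false in the hypercritical range, and these falsities are precisely where the real difficulty of the theorem lives.

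First, $\sum_i \arctan(\mu_i)$ is \emph{not} concave on the cone $\{F > (n-1)\pi/2\}$, nor are the eigenvalues all positive there: by Lemma~\ref{lem: AngleBasicProps}, in this range one has $\mu_0 \geq \cdots \geq \mu_{n-1} > 0$ but $\mu_n$ can be negative, and the Hessian $D^2F$ has the indefinite coefficient $(\mu_i+\mu_j)/[(1+\mu_i^2)(1+\mu_j^2)]$ which is negative when $\mu_i+\mu_j<0$. What one actually has is the weaker fact that the super-level sets $\{F\geq\sigma\}$ are convex, and that $-e^{-AF}$ is concave for suitable $A$. Your statement that ``this concavity drives all of the subsequent maximum-principle arguments'' therefore begs the question: the paper devotes most of Section~\ref{sec: C2est} to explaining why the standard concave-operator machinery of Guan and Sz\'ekelyhidi does \emph{not} apply here, and to working around the negative contribution of the $i=j=n$ term in the second-variation formula by a delicate decomposition of eigenvalues into ``big'' ($\mu_k \geq \delta_0\lambda_1$) and ``small'' ($\mu_k < \delta_0\lambda_1$) families, culminating in Propositions~\ref{prop: HessEst}--\ref{prop: C2KeyEst}.

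Second, and more consequentially, your proposed $C^2$ test function $\log\lambda_{\max}\bigl(\hat\omega^{-1}(\pi_X^*\alpha+\DDb\phi)\bigr)$ uses the largest eigenvalue of the full $(n{+}1)\times(n{+}1)$ Hermitian endomorphism. That quantity genuinely scales like $\epsilon^{-2}$ (the $\nabla_t\nabla_{\bar t}\phi$ entry must be of that size whenever $\phi_0\ne\phi_1$), so any maximum-principle argument built on it cannot yield the uniform bound $|\nabla^X\overline{\nabla^X}\phi|\leq C$. The paper instead bounds the largest \emph{spatial} eigenvalue $\lambda_1$ of $(\pi_X^*\alpha+\DDb\phi)|_{TX}$, which requires a separate perturbation argument to make $\lambda_1$ smooth and its own second-variation computation. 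This choice introduces a new obstruction your proposal does not address: when you invoke $\nabla\tilde Q = 0$ at the maximum, the term $\nabla_i(\alpha_\phi)_{\bar V_1V_1}/\lambda_1 = -H'\nabla_i\phi$ involves the full gradient $\nabla_i\phi$, whose temporal component is of order $1/\epsilon$ and thus \emph{cannot} be controlled by the auxiliary function $B(\phi,|\nabla\phi|^2)$ alone. The resolution in the paper is the big/small eigenvalue split combined with Lemma~\ref{lem: convComb} (convex-combination control of $V_1^j$), and the observation that $\mu_0$ can dwarf $\lambda_1$, none of which appears in your sketch. Without these, the proof would stall exactly at the step you describe as ``absorbs the problematic positive third-derivative terms.''

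The boundary-estimate and $C^0/C^1$ portions of your outline are closer to the paper's (barriers from Demailly-type regularized maxima, and a Phong--Sturm/B\l ocki gradient estimate applied only to $|\nabla^X\phi|^2$ rather than to $|\nabla^X\phi|^2+|\partial_\tau\phi|^2$), but the gap at the interior $C^2$ estimate is fundamental: using the wrong eigenvalue and relying on a concavity that fails means the core estimate does not go through as proposed.
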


The proof of these estimates depends on exploiting the geometry of $(\mathcal{X}_{\epsilon}, \hat{\omega})$ together with rather subtle concavity properties of the Lagrangian phase operator.   Altogether, these estimates imply the existence of smooth $\epsilon$-geodesics connecting any two potentials in $\mathcal{H}$, and the existence of weak geodesics in $\mathcal{H}$ with $C^{1,\alpha}$ regularity for any $\alpha \in (0,1)$.   We show that these weak geodesics have enough regularity to define the complexified Calabi-Yau functional, and prove that $\mathcal{C}$ is affine, $\mathcal{J}$ is convex, and the real and imaginary parts of $Z$ are concave along these weak geodesics.  Furthermore, using the existence result for geodesics we show that $\mathcal{H}$ has a well-defined metric structure.

With these results in hand, we are in the setting of an infinite dimensional GIT problem with geodesics playing the role of one-parameter subgroups. Using algebraic geometry we construct model infinite rays, analogous to one-parameter subgroups in the space $\mathcal{H}$, and evaluate the limit slope of the Calabi-Yau functional along these model curves in terms of algebraic data.  Using the existence of regular geodesics these model curves give rise to algebro-geometric obstructions to the existence of solutions to dHYM.  For example, we prove that in the hypercritical phase case (see Section~\ref{sec: varFrame}) we have

\begin{thm}
Let $\mathfrak{J}_{1} \subset \mathfrak{J}_{2} \cdots \subset \mathfrak{J}_{r-1} \subset \mathfrak{J}_{r} = \mathcal{O}_{X}$ be a sequence of ideal sheaves, and define
\[
\mathfrak{I} = \mathfrak{J}_{1} +t\cdot \mathfrak{J}_{2} + \cdots t^{r-1}\cdot \mathfrak{J}_{r-1} +(t^r) \subset \mathcal{O}_{X}\otimes \mathbb{C}[t].
\]
Let $\mu: \mathcal{X} \rightarrow X\times \Delta$ be a log resolution of $\mathfrak{I}$, so that $\mu^{-1}\mathfrak{I} = \mathcal{O}_{\mathcal{X}}(-E)$ for an s.n.c divisor $E$.  If $[\alpha]$ admits a solution of the deformed Hermitian-Yang-Mills equation then
\[
E.{\rm Im}\left[\frac{\left(\mu^{*}[\omega] + \sqrt{-1}\left(\mu^{*}[\alpha] -\delta E\right)\right)^{n}}{(\omega+\sqrt{-1}\alpha)^{n}.[X]}\right] \geq 0
\]
for all $\delta>0$ sufficiently small.  Moreover, equality holds if and only of $\mathfrak{I} = (t^\ell)$ for some $0< \ell \leq r$.
\end{thm}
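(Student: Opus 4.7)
The strategy is to realize the claimed algebraic expression as the asymptotic slope of the Kempf--Ness functional $\mathcal{J}$ along a weak geodesic ray in $\mathcal{H}$ built from the log resolution of $\mathfrak{I}$. Given the data, choose a smooth Hermitian metric $h_E$ on $\mathcal{O}_{\mathcal{X}}(E)$ with curvature $\theta_E$, and write $s_E$ for the defining section of $E$. Consider the $S^{1}$-invariant model potential
\[
\Psi_\delta(x,t) \;=\; -\delta \log \|s_E(x,t)\|_{h_E}^{2},
\]
which is smooth away from $E$; for $\delta$ sufficiently small, the form $\mu^{*}(\omega+\sqrt{-1}\alpha) - \delta \theta_E$ still satisfies the hypercritical phase condition. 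Restricting $\Psi_\delta$ to an annulus $\{\epsilon e^{-1}\le |t|\le \epsilon\}\subset \Delta$ and pushing down via $\mu$ to $X$ produces boundary data in $\mathcal{H}$. The scale-invariant estimates stated in the previous theorem then yield smooth $\epsilon$-geodesics with these boundary data, and assembling their $C^{1,\alpha}$-limits as $\epsilon\to 0$ produces a weak geodesic ray $\{\phi_s\}_{s\ge 0}\subset \mathcal{H}$ whose asymptotic behaviour is modelled on $\Psi_\delta$, with ray parameter $s=-\log|t|$.

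If $[\alpha]$ admits a dHYM solution, then by the variational framework the convex functional $\mathcal{J}$ attains its infimum in $\mathcal{H}$, and its asymptotic slope $\mathcal{J}'(\infty):=\lim_{s\to\infty}s^{-1}\mathcal{J}(\phi_s)$ is non-negative along every weak geodesic ray. The required inequality follows once $\mathcal{J}'(\infty)$ is computed along the ray modelled on $\Psi_\delta$ and identified, up to positive normalisation, with the claimed intersection number. Since $\mathcal{J}$ is extracted from $\mathrm{Im}\,Z$ together with the affine functional $\mathcal{C}$, and since the test potential is pulled back from $\mathcal{X}$, the computation reduces to an integral on $\mathcal{X}$: one expands
\[
\bigl(\mu^{*}(\omega+\sqrt{-1}\alpha) - \delta\theta_E\bigr)^{n+1}
\]
as an $(n+1,n+1)$-form on $\mathcal{X}$, integrates the $t$-factor over the annulus in $\Delta$, and pushes down to $X$. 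The residual term localises on the divisor $E$ and yields
\[
c\cdot E\cdot\mathrm{Im}\!\left[\frac{\bigl(\mu^{*}[\omega]+\sqrt{-1}(\mu^{*}[\alpha]-\delta E)\bigr)^{n}}{(\omega+\sqrt{-1}\alpha)^{n}\cdot[X]}\right]
\]
for a positive constant $c$ depending only on the length of the $t$-interval, giving the desired inequality for all $\delta>0$ sufficiently small.

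\textbf{Main obstacle and equality case.} The principal technical difficulty is to justify that $\Psi_\delta$ indeed captures the leading order behaviour of the genuine weak geodesic ray, so that the asymptotic slope may be read off from the algebraic data. This requires (i) convergence of the smooth $\epsilon$-geodesics to $\Psi_\delta$ on compact subsets of $\mathcal{X}$ away from $E$; (ii) an a priori bound on the contribution from the logarithmic singularity of $\Psi_\delta$ along $E$, which is where the scale-invariant estimates and $C^{1,\alpha}$ regularity from the main theorem are crucial; and (iii) interchange of the limit $\epsilon\to 0$ with the integrals defining $\mathcal{J}$ and $Z$. For the equality case, vanishing of the slope for all small $\delta$ forces $\mathrm{Im}\,Z$ to be affine along the geodesic ray, which by strict concavity of the arctangent summands of the Lagrangian phase operator in the hypercritical range forces $\ddb\phi_s\equiv 0$ along the ray. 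Triviality of the ray means that $\mathcal{O}_{\mathcal{X}}(-E)$ is pulled back from an ideal on $\Delta$, i.e., $E = \ell\cdot\mu^{*}\{0\}$ for some $0<\ell\le r$, which is equivalent to $\mathfrak{I}=(t^{\ell})$.
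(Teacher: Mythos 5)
Your proposal takes a genuinely different route from the paper, but as written it leaves the two central steps unresolved. The paper never builds a geodesic ray out of the resolution data. Instead, it constructs an \emph{explicit non-geodesic} model curve $\Phi(s)=\phi_0+\delta\psi(s)$ emanating from the dHYM solution $\phi_0$, where $\psi$ is the Demailly--P\u{a}un type quasi-psh function with logarithmic poles along $\operatorname{supp}(\mathfrak{I})$, and computes $\lim_{s\to\infty}\tfrac{d}{ds}CY_{\mathbb{C}}(\Phi(s))$ directly via the Poincar\'e--Lelong formula and integration by parts on the resolution $\tilde{\mathcal{X}}$ (Proposition~\ref{prop: limSlopeAY}). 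The geodesics enter only as finite segments $\phi_s(t)$, $t\in[0,s]$, joining $\phi_0$ to $\Phi(s)$: convexity of $\mathcal{J}$ along each segment plus $d\mathcal{J}(\phi_0)=0$ gives $\mathcal{J}(\Phi(s))\geq\mathcal{J}(\phi_0)$, and L'H\^opital then yields non-negativity of the explicitly computed limit slope. Your version instead asks for an infinite weak geodesic ray asymptotic to $\Psi_\delta$, and then asserts that its asymptotic $\mathcal{J}$-slope agrees, ``up to positive normalisation,'' with the intersection number. That is precisely the gap you flag as the ``main obstacle'': the existence results in the paper are for geodesic segments with endpoints in $\mathcal{H}$, not for infinite rays, and even if such a ray existed, identifying its slope with that of the explicit model potential is a serious unaddressed step. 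The paper's architecture is designed exactly to avoid ever having to know the asymptotics of a geodesic ray.

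The equality case in your proposal is also not supported by the argument you give. You want to deduce, from vanishing slope, that $\ddb\phi_s\equiv 0$ along the ray by invoking strict concavity of $\operatorname{Im}Z$. But the model curve is not a geodesic, so the convexity/concavity statements of Proposition~\ref{prop: convexityOfFuncs} do not apply to it; and along the degenerate (limit $\epsilon\to 0$) geodesic, strict convexity of $\mathcal{J}$ is not established in the paper (only for $\epsilon>0$). The paper's actual argument for the equality case is entirely different and essentially algebraic: it perturbs $\omega\mapsto(1+\operatorname{Re}z)\omega$, $\alpha\mapsto\alpha+\operatorname{Im}(z)\omega$, observes that
\[
z\longmapsto E.\frac{\bigl(\mu^{*}\omega+\sqrt{-1}(\mu^{*}\alpha-\delta E)+z\mu^{*}\omega\bigr)^{n}}{(\omega+\sqrt{-1}\alpha+z\omega)^{n}.[X]}
\]
is holomorphic near $0$ with non-negative imaginary part, and applies the maximum principle; it then reduces to showing the numerator $p(z)$ is not identically zero unless $\mathfrak{I}=(t^r)$, via an analysis of which intersection numbers $\mu^{*}\omega^{\ell}.\mu^{*}\alpha^{j}.E^{n+1-\ell-j}$ vanish. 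This is a closed argument; yours, as presented, is not.
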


These obstructions, fit into a framework which is closely related to Bridgeland stability \cite{Br}.  Namely, for a holomorphic line bundle we consider the central charge
\[
Z_{X}(L) = -\int_{X}e^{-\sqrt{-1}\omega}ch(L).
\]
We show that if $L$ has a solution of the deformed Hermitian-Yang-Mills equation with hypercritical phase, then $Z_{X}(L)$ lies in the upper half-plane.  Furthermore, we show that for any $V\subset X$ irreducible analytic subset we have that
\[
Z_{V}(L) = -\int_{V}e^{-\sqrt{-1}\omega}ch(L)
\]
lies in the upper half-plane and ${\rm Arg}Z_{V}(L) > {\rm Arg}Z_{X}(L)$.  We furthermore explain the role of Chern number inequalities and their the relation to the phase lifting problem and the notion of a slicing of $D^{b}Coh(X)$.  By the Hilbert-Mumford criterion, it is therefore natural to think of Bridgeland stability for a holomorphic line bundle $L$ as formally predicting the (infinite dimensional) GIT stability of $L$.

Finally, we apply SYZ mirror symmetry, and the Fourier-Mukai transform to study the existence of special Lagrangians on Landau-Ginzburg models mirror to toric K\"ahler manifolds.  We explain how our results imply similar results for Lagrangians, including the existence geodesics in the space of positive Lagrangians, and the construction of certain algebraic degenerations.  We speculate on the relationship with Bridgeland stability conditions on the derived Fukaya category.
 
As suggested by finite dimensional GIT, we expect this theory will have applications in proving the existence of solutions to dHYM.  In the case of K\"ahler-Einstein metrics,  Berman-Boucksom-Jonsson \cite{BBJ} recently gave a variational proof of the Chen-Donaldson-Sun theorem \cite{CDS1, CDS2, CDS3} establishing the existence of K\"ahler-Einstein metrics on $K$-stable Fano manifolds.   Some of the key ingredients in the approach of \cite{BBJ} are: (i) the existence of $C^{1,\alpha}$ geodesics in the space of K\"ahler metrics \cite{Chen}. (ii) the relationship between existence of K\"ahler-Einstein metrics and the properness of a certain Kempf-Ness functional $D$ (in an appropriate sense) \cite{DRu}, using the metric structure on the space of K\"ahler metrics. (iii) the extension of $D$ to a function on non-Archimedean metrics, which allows one to study the limit slopes along infinite geodesic rays \cite{BBJ, BHJ, BHJ2}.  One could hope for a similar approach to proving the existence of solutions to dHYM, using the existence of regular geodesics we establish, together with the convexity/concavity properties of the functionals $\mathcal{J}$ and $Z$.  However, for the dHYM equation there are significant new difficulties related to the phase lifting problem, and related Chern number inequalities; see Section~\ref{sec: StabCond}.

The layout of this paper is as follows.  In Section~\ref{sec: varFrame} we explain the mirror of Thomas' moment map framework and the Thomas-Solomon GIT/variational framework.  We compute the geodesic equation, introduce $\epsilon$-geodesics and study the complexified Calabi-Yau functional.  We also outline the approach to existence of regular geodesic.  In Section~\ref{sec: AnalPrelim} we discuss the properties of the Lagrangian phase operator, and construct barrier functions which play an important role in later estimates.  In Section~\ref{sec: C1est} we prove a priori $C^1$ estimates for solutions to~\eqref{eq: introLagPhase}.  In Section~\ref{sec: C2est} we prove interior $C^2$ estimates for solutions of~\eqref{eq: introLagPhase}.  We also explain how our estimates can be applied to give a streamlined proof of the existence of geodesics in the space of K\"ahler metrics \cite{Chen}.  In Section~\ref{sec: bndryEst} we prove boundary $C^2$ estimates, and combine our work to prove the existence of smooth $\epsilon$-geodesics, $C^{1,\alpha}$ weak geodesics, and prove various convexity statements for relevant functionals along weak geodesics.  
  In Section~\ref{sec: AlgObstr} we construct model curves from algebraic geometry, and use weak geodesics to produce alebro-geometric obstructions to the existence of solutions to dHYM.   In Section~\ref{sec: StabCond} we explain how the results in Section~\ref{sec: AlgObstr} can be put into a coherent framework closely related to Bridgeland stability, including the role of Chern number inequalities in the theory.  We also make some conjectures about relations between algebraic invariants, the existence of solutions to dHYM, and the non-emptiness of $\mathcal{H}$.  Finally, in Section~\ref{sec: Amod} we use the SYZ proposal to translate our theorem from toric K\"ahler manifolds to results about degenerations of Lagrangian sections in Landau-Ginzburg models, and relations with stability conditions on the derived Fukaya category.
\\
\\

{\bf Acknowledgements}: T.C.C would like to thank A. Jacob, J. Ross, and B. Berndtsson for many helpful conversations, as well as  J. Solomon, A. Hanlon, and P. Seidel for helpful conversations concerning special Lagrangians and the Fukaya category.  T.C.C is grateful to the European Research Council, and the Knut and Alice Wallenberg Foundation who supported a visiting semester at Chalmers University, where this work was initiated.  T.C.C would also like to thank Robert Berman, Daniel Persson, David Witt Nystr\"om and the rest of the complex geometry group at Chalmers for providing a stimulating research environment.

\section{The Variational Framework, Geodesics, and Approximate Geodesics}\label{sec: varFrame}

Let $(X,\omega)$ be a compact K\"ahler manifold, and fix a class $[\alpha]\in H^{1,1}(X,\mathbb{R})$.  By the $\del\dbar$-lemma, the $(1,1)$ forms $\alpha'$ lying in the cohomology class $[\alpha]$ are parametrized by functions $\alpha ' = \alpha +\ddb \phi$.  Consider the integral $\int_{X}(\omega+\sqrt{-1}\alpha)^{n} \in \mathbb{C}$.  We will always assume that this integral lies in $\mathbb{C}^{*}$.  Then we  define a unit complex number $e^{\sqrt{-1}\hat{\theta}}$ depending only on $[\omega],[\alpha]$ by
\[
\int_{X}(\omega+\sqrt{-1}\alpha)^{n} \in \mathbb{R}_{>0}e^{\sqrt{-1}\hat{\theta}}.
\]
Motivated by mirror symmetry we introduce the deformed Hermitian-Yang-Mills (dHYM) equation, which seeks a function $\phi \in C^{\infty}(X, \mathbb{R})$ so that $\alpha_{\phi} := \alpha+\ddb \phi$ satisfies
\[
{\rm Im}\left(e^{-\sqrt{-1}\hat{\theta}}(\omega+\sqrt{-1}\alpha_{\phi})^{n}\right)=0.
\]
We refer the reader to \cite{CXY} for a brief introduction to the dHYM equation.  Note that with our present convention, if $\alpha= c_1(L)$, then we are studying~\eqref{eq: introDHYM} on $L^{-1}$.  This convention plays no role, apart from avoiding an abundance of minus signs, until Section~\ref{sec: Amod}.  To write the dHYM equation more concretely, fix a point $p \in X$, and local holomorphic coordinates $(z_1,\ldots,z_n)$ near $p$ so that $\omega(p)_{\bar{j}i} = \delta_{\bar{j}i}$, and $(\alpha_{\phi})_{\bar{j}i} = \lambda_i \delta_{\bar{j}i}$.  Then $\lambda_i$ are the eigenvalues of the Hermitian endomorphism $\omega^{-1}\alpha_{\phi}$, and
\[
\begin{aligned}
\frac{(\omega + \sqrt{-1}\alpha_{\phi})^n}{\omega^{n}} &= \prod_{i=1}^{n}(1+\sqrt{-1}\lambda_i)\\
&= r(\alpha_{\phi})e^{\sqrt{-1}\Theta_{\omega}(\alpha_{\phi})}
\end{aligned}
\]
where 
\begin{equation}\label{eq: defnOperators}
r(\alpha_{\phi}) = \sqrt{\left( \prod_{i=1}^{n}(1+\lambda_i^2)\right)},\qquad \Theta_{\omega}(\alpha_{\phi}) = \sum_{i=1}^{n}\arctan(\lambda_i).
\end{equation}
The function $r(\alpha_{\phi})$ is called the radius function, while $\Theta_{\omega}(\alpha_{\phi})$ is called the Lagrangian phase.  The deformed Hermitian-Yang-Mills equation seeks $\alpha_{\phi}$ so that
\[
\sum_{i=1}^{n}\arctan(\lambda_i) = \hat{\theta} {\rm \mod 2\pi}.
\]
Note that if there is a solution of the deformed Hermitian-Yang-Mills equation then there is a well-defined lift of $\hat{\theta}$ to $\mathbb{R}$.  Furthermore, with this formulation it is clear that the dHYM equation is the complex analogue of the special Lagrangian graph equation.  The following lemma is due to Jacob-Yau \cite{JY}.

\begin{lem}[Jacob-Yau \cite{JY}]\label{lem: BPS}
 Solutions of the deformed Hermitian-Yang-Mills equation minimize the functional
\[
C^{\infty}(X,\mathbb{R})\phi \longrightarrow V(\phi) :=  \int_{X}r(\alpha_{\phi})\omega^{n}.
\]
If $\phi$ is a solution of the deformed Hermitian-Yang-Mills equation then
\[
0< V(\phi) =  \bigg| \int_{X}(\omega+ \sqrt{-1}\alpha)^{n}\bigg|.
 \]
 \end{lem}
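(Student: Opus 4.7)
The plan is to recognize $V(\phi)$ as the total mass of a complex-valued top form on $X$ and then invoke the complex triangle inequality.

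First I would observe that the polar decomposition displayed in \eqref{eq: defnOperators} gives the pointwise identity
\[
(\omega + \sqrt{-1}\alpha_{\phi})^{n} = r(\alpha_{\phi})\, e^{\sqrt{-1}\Theta_{\omega}(\alpha_{\phi})}\, \omega^{n},
\]
so that $r(\alpha_{\phi})\,\omega^{n}$ is precisely the pointwise modulus of the complex $(n,n)$-form $(\omega + \sqrt{-1}\alpha_{\phi})^{n}$. The triangle inequality for complex-valued integrals then yields
\[
V(\phi) \;=\; \int_{X} \bigl|(\omega + \sqrt{-1}\alpha_{\phi})^{n}\bigr| \;\geq\; \left| \int_{X} (\omega + \sqrt{-1}\alpha_{\phi})^{n} \right|.
\]
Since $\alpha_{\phi} - \alpha = \ddb\phi$ is $d$-exact, expanding the $n$-th power by the binomial theorem and applying Stokes' theorem shows that the right-hand side equals $\bigl|\int_{X}(\omega + \sqrt{-1}\alpha)^{n}\bigr|$, a cohomological invariant depending only on the classes $[\omega]$ and $[\alpha]$. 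This simultaneously gives the lower bound and the candidate minimum value.

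For the characterization of minimizers and the remaining equality, I would recall that the complex triangle inequality is saturated precisely when the integrand has constant argument almost everywhere. In the present setting this says $\Theta_{\omega}(\alpha_{\phi}) \equiv c \pmod{2\pi}$ for some constant $c$, and the normalization $\int_{X}(\omega + \sqrt{-1}\alpha)^{n} \in \mathbb{R}_{>0}\, e^{\sqrt{-1}\hat{\theta}}$ then forces $c \equiv \hat{\theta}$, which is exactly the dHYM equation. Conversely, if $\phi$ solves dHYM, then
\[
\int_{X}(\omega + \sqrt{-1}\alpha_{\phi})^{n} = e^{\sqrt{-1}\hat{\theta}}\, V(\phi),
\]
and taking moduli yields the stated equality. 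Strict positivity $V(\phi) > 0$ is automatic since $r(\alpha_{\phi}) > 0$ pointwise.

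I do not expect a genuine obstacle here: the lemma is the complex mirror of the classical calibration bound for special Lagrangian graphs, and the only conceptual point is the observation that $r\,\omega^{n}$ is literally the modulus of the closed complex form whose integral is a topological invariant. If anything merits care, it is checking that the binomial expansion of $(\omega+\sqrt{-1}\alpha_{\phi})^{n}-(\omega+\sqrt{-1}\alpha)^{n}$ can be written term-by-term as a sum of exact forms, which follows from the elementary identity $\beta^{k}-\gamma^{k}=(\beta-\gamma)\wedge\sum_{j=0}^{k-1}\beta^{j}\wedge\gamma^{k-1-j}$ applied with $\beta=\omega+\sqrt{-1}\alpha_{\phi}$, $\gamma=\omega+\sqrt{-1}\alpha$.
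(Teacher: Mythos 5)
Your proof is correct and is essentially the calibration argument that Jacob--Yau give in \cite{JY}; the paper itself does not reproduce a proof but simply cites them, so there is no in-text argument to compare against. The only point worth emphasizing is the one you already flag: the key observation is that $r(\alpha_{\phi})\,\omega^{n}$ is literally the pointwise modulus of the closed complex top form $(\omega+\sqrt{-1}\alpha_{\phi})^{n}$, whose integral $\int_{X}(\omega+\sqrt{-1}\alpha_{\phi})^{n}$ is cohomological (since $\sqrt{-1}\ddb\phi$ is $d$-exact and the telescoping identity $\beta^{n}-\gamma^{n}=(\beta-\gamma)\wedge\sum_{j}\beta^{j}\wedge\gamma^{n-1-j}$ expresses the difference as an exact form). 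The triangle inequality then gives $V(\phi)\ge\bigl|\int_{X}(\omega+\sqrt{-1}\alpha)^{n}\bigr|$ for all $\phi$, with equality exactly when the argument $\Theta_{\omega}(\alpha_{\phi})$ is constant a.e., which the normalization forces to be $\hat\theta$; this is the dHYM equation, and conversely a dHYM solution saturates the bound. Strict positivity follows because $r>0$ pointwise. This is precisely the mirror of the special Lagrangian calibration bound, as you note, and the argument is complete.
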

 
We define the Lagrangian phase operator by
\begin{equation}\label{eq: LagPhaseOp}
\Theta_{\omega}(\alpha_{\phi}) = \sum_{i=1}^{n}\arctan(\lambda_i).
\end{equation}
In our previous work with Jacob \cite{CJY} we gave necessary and sufficient analytic conditions for the existence of solutions to the deformed Hermitian-Yang-Mills equation.  We proved
\begin{thm}[Collins-Jacob-Yau, \cite{CJY}]
For $\hat{\theta} > (n-2)\frac{\pi}{2}$, there exists a function $\phi$ satisfying
\[
\Theta_{\omega}(\alpha_{\phi})  = \hat{\theta}
\]
if and only if there exists a function $\underline{\phi}: X \rightarrow \mathbb{R}$ so that $\Theta_{\omega}(\alpha_{\underline{\phi}}) >(n-2)\frac{\pi}{2}$, and, for all $1\leq j\leq n$
\begin{equation}\label{eq: solvThmCSub}
\sum_{i\ne j} \arctan(\underline{\lambda}_{i}) > \hat{\theta}-\frac{\pi}{2}
\end{equation}
where $\underline{\lambda}_{i}$ are the eigenvalues of $\omega^{-1}\alpha_{\underline{\phi}}$.
\end{thm}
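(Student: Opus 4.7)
The forward implication is immediate: if $\phi$ solves $\Theta_{\omega}(\alpha_{\phi}) = \hat{\theta}$ with $\hat{\theta} > (n-2)\frac{\pi}{2}$, then taking $\underline{\phi} = \phi$ one has $\sum_{i \neq j}\arctan(\lambda_i) = \hat{\theta} - \arctan(\lambda_j) > \hat{\theta} - \frac{\pi}{2}$ for each $j$ since $|\arctan(\lambda_j)| < \frac{\pi}{2}$. The substance of the theorem is the reverse implication, and I would prove it by the continuity method.

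The plan is to deform through a one-parameter family $\{\phi_t\}_{t \in [0,1]}$ solving $\Theta_{\omega}(\alpha_{\phi_t}) = h_t$, where $h_t$ interpolates from a phase function associated to the subsolution at $t=0$ (for instance, by rescaling $\omega \mapsto s\omega$ with $s$ large so that the phase is close to $0$ and the equation is trivially solvable) to the target constant $\hat{\theta}$ at $t = 1$, arranged so that $\underline{\phi}$ remains a C-subsolution along the entire path and the phase stays above $(n-2)\frac{\pi}{2}$. Letting $T \subseteq [0,1]$ be the set of $t$ for which a smooth solution exists, one must show that $T$ is nonempty, open, and closed. Openness is standard: the linearization of $\Theta_{\omega}$ at $\phi_t$ is the operator $Lu = g^{\bar{k} j}u_{j\bar{k}}$ whose coefficient matrix has eigenvalues $(1+\lambda_i^2)^{-1}$, so it is uniformly elliptic once $C^2$ bounds are in place, and the implicit function theorem applies modulo constants.

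Closedness is the analytic content and requires $C^{2,\alpha}$ a priori estimates. The $C^0$ estimate should follow from ABP or Moser iteration using $\underline{\phi}$ as a barrier; the $C^1$ bound then follows from $C^0$ and $C^2$ by interpolation. Once the $C^2$ estimate is in hand, the Evans--Krylov theorem upgrades to $C^{2,\alpha}$, since for $\hat{\theta} > (n-2)\frac{\pi}{2}$ the level sets $\{\Theta_{\omega} = c\}$ are smooth convex hypersurfaces in the space of symmetric endomorphisms, which is the concavity input needed for Evans--Krylov. Schauder bootstrapping gives $C^\infty$ regularity.

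The main obstacle, and the only genuinely delicate step, is the $C^2$ estimate. I would apply the maximum principle to a test function of the form
\[
G = \log \lambda_{\max}\bigl(\omega^{-1}\alpha_{\phi}\bigr) + A(\phi - \underline{\phi})
\]
for a sufficiently large constant $A$, differentiate the equation twice at an interior maximum of $G$, and exploit the concavity of $\Theta_{\omega}$ on the relevant subset of symmetric matrices to control the third-order terms. The zeroth-order contribution is where the C-subsolution hypothesis \eqref{eq: solvThmCSub} enters: at a point where $\lambda_1$ is very large, the inequality $\sum_{i\neq 1}\arctan(\underline{\lambda}_i) > \hat{\theta} - \frac{\pi}{2}$ forces $L(\underline{\phi} - \phi)$ to produce a definite negative contribution that dominates the error terms and bounds $\lambda_{\max}$ from above. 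This is an instance of Sz\'ekelyhidi's general C-subsolution machinery for fully nonlinear elliptic equations on Hermitian manifolds, and the condition \eqref{eq: solvThmCSub} is precisely what is required to make this argument close up in the supercritical (but possibly not hypercritical) phase regime.
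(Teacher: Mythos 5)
Your overall strategy is exactly that of Collins--Jacob--Yau: the forward direction by inspection, the converse by the method of continuity, with Sz\'ekelyhidi's $\mathcal{C}$-subsolution framework providing the $C^0$ and $C^2$ estimates, Yuan's convexity of the level sets $\{\Theta_\omega \geq \sigma\}$ for $\sigma \geq (n-2)\frac{\pi}{2}$ supplying the concavity input for Evans--Krylov, and a test function of the form $\log\lambda_{\max} + (\text{lower order})$ carrying the $C^2$ argument. So the architecture is right.

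The one genuine flaw is in your $C^1$ step. You claim the gradient bound ``follows from $C^0$ and $C^2$ by interpolation.'' This would be fine if the $C^2$ bound were a priori independent of the $C^1$ norm, but the estimate that actually comes out of the maximum principle argument in [CJY] has the form
\[
\sup_X |\nabla\overline{\nabla}\phi| \leq C\bigl(1 + \sup_X |\nabla\phi|^2\bigr),
\]
which depends quadratically on the gradient. Interpolation from $C^0$ and a $C^2$ bound of this type does not close. In [CJY] the $C^1$ estimate is instead obtained by a blow-up argument: if a sequence of solutions had unbounded gradient, one rescales around the maximum of $|\nabla\phi|$, passes to a limit solving an entire Lagrangian phase equation on $\mathbb{C}^n$ with bounded Hessian, and invokes a Liouville-type rigidity theorem (in the spirit of Dinew--Ko\l odziej and Sz\'ekelyhidi) to derive a contradiction. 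Equivalently, one can run a direct Phong--Sturm/B\l ocki type maximum principle on $\log|\nabla\phi|^2 + \gamma(\phi)$, as done for a related problem in Section~\ref{sec: C1est} of the present paper, but that is a separate argument, not interpolation. Your proposal as written would leave the $C^1$ estimate, and hence the closedness of the continuity path, unproven.

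A smaller point: the test function in [CJY] is slightly more elaborate than $\log\lambda_{\max} + A(\phi-\underline{\phi})$; it includes a term quadratic in $|\nabla\phi|$ to absorb curvature and gradient contributions, which is precisely how the quadratic dependence on $|\nabla\phi|$ in the resulting $C^2$ bound arises. This feeds into the same $C^1$ issue flagged above, so the two points are linked.
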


A primary motivation of this paper is to understand the implications of existence of solutions to the dHYM equation with the goal of replacing~\eqref{eq: solvThmCSub} with an algebro-geometric condition.  In the best case scenario this could give a checkable criterion equivalent to the existence of solutions to dHYM. 

Let us briefly discuss the moment map picture, though our primary interest will be the variational framework of the associated GIT problem.  Let $L\rightarrow (X,\omega)$ be a holomorphic line bundle, and fix a hermitian metric $h$ on $L$, which induces a unitary structure.  We consider the affine space $\mathcal{A}^{1,1}$ of $h$-unitary connections inducing integrable complex structures on $L$.  Since $L$ is a line bundle, this is just the same as the set of unitary connections $\nabla = d+A$ such that $\dbar A^{0,1}=0$.  The group $\mathcal{G}$ of gauge transformations acts on $\mathcal{A}^{1,1}$ by in the standard way.  If $g=e^{\phi}$ is a gauge transformation for some $\phi:X\rightarrow \mathbb{C}$, then
\[
A^{0,1} \mapsto A^{0,1}+ \dbar\phi \qquad A^{1,0} \mapsto A^{1,0}-\del\overline{\phi}.
\]
We can identify $T_{A}\mathcal{A}^{1,1} = {\rm Ker}\{ \dbar: \bigwedge^{0,1}\rightarrow \bigwedge^{0,2}\}$, and hence define a hermitian inner product by
\[
T_{A}\mathcal{A}^{1,1} \ni a,b \longmapsto \langle a,b\rangle_{A} :=-\sqrt{-1} \int_{X}a\wedge \bar{b} \wedge {\rm Re}\left(e^{-\sqrt{-1}\hat{\theta}}(\omega -F(A))^{n-1}\right).
\]
In general, this inner product is degenerate, but one can check that it is non-degenerate in an open neighbourhood of a solution to the dHYM equation.  The natural complex structure on $T_{A}\mathcal{A}^{1,1}$ acts by $a \mapsto \sqrt{-1}a$, and we get a symplectic form on $\mathcal{A}^{1,1}$ by taking
\[
 \Omega_{A}(a,b) := {\rm Im}\left(\langle a,b\rangle_{A}\right). 
 \]
 Let $\mathcal{G}_U$ be the Lie group of unitary gauge transformations of $(L,h)$, and identify $Lie(\mathcal{G}_{U}) = C^{\infty}(X,\sqrt{-1}\mathbb{R})$.  Let $\sqrt{-1}\phi \in Lie(\mathcal{G}_U)$.  Identify $Lie(\mathcal{G}_U)^{*}$ with the space of imaginary $2n$-forms on $X$ by the non-degenerate pairing
 \[
( \sqrt{-1}\phi, \sqrt{-1}\beta) \longmapsto \int_{X}\phi \beta.
 \]
 The function $\sqrt{-1}\phi$ generates the vector field $\sqrt{-1}\,\dbar{\phi}$ on $\mathcal{A}$.  For any $b\in T_{A}\mathcal{A}^{1,1}$ we consider
\[
\begin{aligned}
\langle \sqrt{-1}\,\dbar{\phi},b\rangle_{A} &= \int_{X}\dbar\phi \wedge \bar{b} \wedge {\rm Re}\left(e^{-\sqrt{-1}\hat{\theta}}(\omega -F(A))^{n-1}\right)\\
&=-\int_{X}\phi \cdot  \overline{\del b} \wedge {\rm Re}\left(e^{-\sqrt{-1}\hat{\theta}}(\omega -F(A))^{n-1}\right)
\end{aligned}
\]
and so
\[
 \Omega_{A}(\sqrt{-1}\,\dbar{\phi},b) = \frac{\sqrt{-1}}{2}\int_{X}\phi \cdot (\overline{\del b} - \del b)\wedge{\rm Re}\left(e^{-\sqrt{-1}\hat{\theta}}(\omega -F(A))^{n-1}\right).
 \]
Now, if we consider $A^{0,1} \mapsto A^{0,1}+tb$, then
\[
\frac{d}{dt}\bigg|_{t=0}F(A+tb) = -(\overline{\del b} - \del b)
\]
and this form is purely imaginary.  Therefore
\[
\begin{aligned}
&\frac{d}{dt}\bigg|_{t=0}{\rm Im}\left(e^{-\sqrt{-1}\hat{\theta}}\left(\omega-F(A+tb)\right)^n\right)\\
&= n{\rm Im}\left(e^{-\sqrt{-1}\hat{\theta}}\left(\omega-F(A)\right)^{n-1}\wedge(\overline{\del b} - \del b) \right)\\
&= n(\overline{\del b} - \del b)\wedge{\rm Re}\left(e^{-\sqrt{-1}\hat{\theta}}\left(\omega-F(A)\right)^{n-1}\right).
\end{aligned}
\]
It follows immediately that the moment map for the $\mathcal{G}_U$ action is
\[
A \mapsto -\frac{\sqrt{-1}}{2n}{\rm Im}\left(e^{-\sqrt{-1}\hat{\theta}}(\omega- F(A))^n\right),
\]
and hence solutions of dHYM correspond exactly to zeroes of the moment map.  Despite the fact the the symplectic form is degenerate, we can still hope to use ideas from GIT to study the existence of zeroes of the moment map.

Infinite dimensional GIT frameworks have appeared in several contexts in the study of nonlinear PDE on K\"ahler manifolds, including the study of the Hermitian-Yang-Mills equation on a compact K\"ahler manifold \cite{AtBo}, certain nonlinear generalizations thereof \cite{Conan}, and the K\"ahler-Einstein and constant scalar curvature equation \cite{Do4, Do5}, among others \cite{Do1}.  More recently, the study of the space of K\"ahler metrics has drawn a great deal of attention.  Since our setting is formally analogous to this latter topic, let us briefly recall this framework.  Following Donaldson \cite{Do1}, Mabuchi \cite{Mab} and Semmes \cite{Sem}, we fix a Kahler class $[\omega]$ on $X$, and consider
\[
\mathcal{H}_{PSH} := \{ \phi \in C^{\infty}(X,\mathbb{R}) : \omega_{\phi} := \omega +\ddb \phi >0 \}
\]
The tangent space to $\mathcal{H}_{PSH}$ at a function $\phi$ is $C^{\infty}(X,\mathbb{R})$, and we can introduce a Riemannian metric by
\[
\langle \psi_1, \psi_2 \rangle_{\phi} = \int_{X} \psi_1\psi_2 \omega_{\phi}^{n}.
\]
This makes $\mathcal{H}_{PSH}$ into an infinite dimensional Riemannian manifold.  One can then study the geodesic equation on this manifold, which is equivalent to the Homogeneous complex Monge-Amp\`ere equation \cite{Do1, Mab, Sem}.  For any $\phi_1, \phi_2 \in \mathcal{H}$ one can find a curve $\phi(t)$ of potentials for which $\phi(0) = \phi_0, \phi(1)=\phi_1$, $\phi(t)$ is $C^{1,1}$ in space and time, satisfies $\omega+\ddb \phi(t) \geq 0$, and solves the geodesic equation in a weak sense \cite{Chen} (see also \cite{CNS2, Guan, Bl, CTW, Y} for related work).  Furthermore, it is known that $\phi(t)$ cannot be $C^{2}$ in general \cite{Dar, DarL, LemV}.  Even without better regularity, the existence of weak geodesics plays an important role in linking the existence of solutions to certain nonlinear PDE, including the K\"ahler-Einstein equation on a Fano manifold \cite{Bo, BBJ}, and Donaldson \cite{Do} and Chen's J-equation \cite{Chen04}.

The variational framework studied here is mirror to a variational framework for positive Lagrangians introduced by Solomon \cite{Sol}.  When $L$ is an ample line bundle, this variational structure can be regarded as interpolating between the Riemannian structure for the Hermitian-Yang-Mills equation, and the Donaldson-Mabuchi-Semmes \cite{Do1, Mab, Sem} Riemannian structure on the space of K\"ahler metrics, as will be discussed below.

\begin{defn}
Define the space
\[
\mathcal{H} := \{ \phi \in C^{\infty}(X,\mathbb{R}) : {\rm Re}(e^{-\sqrt{-1}\hat{\theta}}(\omega + \sqrt{-1}\alpha_{\phi})^n)>0 \}
\]
\end{defn}

A slightly more concrete definition of the space $\mathcal{H}$ in terms of the Lagrangian phase operator~\eqref{eq: LagPhaseOp} is

\begin{lem}
The space $\mathcal{H}$ can be defined as
\[
\mathcal{H} := \{ \phi \in C^{\infty}(X,\mathbb{R}) : \Theta_{\omega}(\alpha_{\phi}) \in (\hat{\phi}-\frac{\pi}{2}, \hat{\phi}+\frac{\pi}{2}) \mod 2\pi\}.
\]
\end{lem}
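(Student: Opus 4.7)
The plan is simply to unwind the pointwise formula for $(\omega + \sqrt{-1}\alpha_{\phi})^{n}$ already derived earlier in this section. Fix $p \in X$, choose local holomorphic coordinates diagonalizing $\omega^{-1}\alpha_{\phi}$ with eigenvalues $\lambda_{1},\ldots,\lambda_{n}$, so that
\[
\frac{(\omega + \sqrt{-1}\alpha_{\phi})^{n}}{\omega^{n}} \;=\; \prod_{i=1}^{n}(1+\sqrt{-1}\lambda_{i}) \;=\; r(\alpha_{\phi})\,e^{\sqrt{-1}\,\Theta_{\omega}(\alpha_{\phi})},
\]
by the definition~\eqref{eq: defnOperators} of $r$ and $\Theta_{\omega}$. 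Multiplying by $e^{-\sqrt{-1}\hat{\theta}}$ and taking real parts gives the pointwise identity
\[
{\rm Re}\!\left(e^{-\sqrt{-1}\hat{\theta}}(\omega + \sqrt{-1}\alpha_{\phi})^{n}\right) \;=\; r(\alpha_{\phi})\,\cos\!\bigl(\Theta_{\omega}(\alpha_{\phi}) - \hat{\theta}\bigr)\,\omega^{n}.
\]

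Now $r(\alpha_{\phi}) = \sqrt{\prod_{i}(1+\lambda_{i}^{2})} > 0$ and $\omega^{n} > 0$ pointwise, so the left-hand side is positive at $p$ if and only if $\cos(\Theta_{\omega}(\alpha_{\phi})(p) - \hat{\theta}) > 0$, which is in turn equivalent to $\Theta_{\omega}(\alpha_{\phi})(p) \in (\hat{\theta} - \tfrac{\pi}{2}, \hat{\theta} + \tfrac{\pi}{2}) \bmod 2\pi$. Since this equivalence holds at every point of $X$, we conclude that the defining condition of $\mathcal{H}$ is equivalent to the phase condition stated in the lemma. There is no real obstacle here; the lemma is a direct translation between the two equivalent ways of expressing the same pointwise positivity constraint.
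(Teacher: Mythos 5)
Your proof is correct and is exactly the intended argument; the paper states this lemma without proof precisely because it follows immediately from the polar decomposition $(\omega + \sqrt{-1}\alpha_{\phi})^{n}/\omega^{n} = r(\alpha_{\phi})e^{\sqrt{-1}\Theta_{\omega}(\alpha_{\phi})}$ derived just above. (Note the paper's statement has a typo, writing $\hat{\phi}$ where $\hat{\theta}$ is meant, which you correctly interpreted.)
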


\begin{rk}
It is clear that if one changes Kahler forms within the class $[\omega]$, then the space $\mathcal{H}$ will change as well.  Thus we should really be writing $\mathcal{H}_{\omega}$ to indicate the dependence on $\omega$, but we will refrain from doing so and hope this causes no confusion.
\end{rk}

Recall that the angle $\hat{\theta}$ is a priori only defined modulo $2\pi$.

\begin{lem}\label{lem: angLift}
Assume that $\mathcal{H} \ne \emptyset$.  Then there exists a unique lift of $\hat{\theta}$ to $\mathbb{R}$.
\end{lem}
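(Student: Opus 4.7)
\textbf{Proof plan for Lemma~\ref{lem: angLift}.}

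The plan has two pieces: constructing a canonical real number $\hat\theta$ out of the data $(X,\omega,\alpha,\mathcal{H})$, and verifying that this number does not depend on the auxiliary choices.

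First, the \emph{existence} of the lift. Fix any $\phi\in\mathcal{H}$ and any preliminary representative $\hat\theta_0\in\mathbb{R}$ of the mod-$2\pi$ angle. Using the principal branch of $\arctan$ to define the Lagrangian phase, the function $\Theta_\omega(\alpha_\phi)=\sum_i \arctan(\lambda_i)$ is a continuous map $X\to\bigl(-n\tfrac{\pi}{2},n\tfrac{\pi}{2}\bigr)$. By the reformulation of $\mathcal{H}$ just preceding the lemma, $\Theta_\omega(\alpha_\phi)(x)-\hat\theta_0\in(-\tfrac{\pi}{2},\tfrac{\pi}{2})+2\pi\mathbb{Z}$ for every $x$. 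The intervals $(\hat\theta_0+2\pi k-\tfrac{\pi}{2},\hat\theta_0+2\pi k+\tfrac{\pi}{2})$, $k\in\mathbb{Z}$, are pairwise disjoint (each has length $\pi<2\pi$), so the branch index $k(x)$ singled out by $\Theta_\omega(\alpha_\phi)(x)$ is locally constant; since $X$ is connected, $k(x)\equiv k_\phi$ is a single integer. Define
\[
\hat\theta \;:=\; \hat\theta_0+2\pi k_\phi,
\]
which satisfies $\Theta_\omega(\alpha_\phi)(x)\in(\hat\theta-\tfrac{\pi}{2},\hat\theta+\tfrac{\pi}{2})$ for all $x\in X$ and is manifestly a lift of the original mod-$2\pi$ class.

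Next, the \emph{uniqueness} (i.e.\ independence of $\phi$). For any $\phi\in\mathcal{H}$ and any $x_0\in X$, pick the unique representative $\psi_\phi(x_0)\in(-\tfrac{\pi}{2},\tfrac{\pi}{2})$ of $\Theta_\omega(\alpha_\phi)(x_0)-\hat\theta_\phi \bmod 2\pi$; by construction
\[
\hat\theta_\phi \;=\; \Theta_\omega(\alpha_\phi)(x_0)-\psi_\phi(x_0).
\]
Both terms on the right are continuous in $\phi$ (in, say, the $C^2$ topology on potentials): $\Theta_\omega(\alpha_\phi)(x_0)$ because $\lambda_i$ depend continuously on $\ddb\phi$, and $\psi_\phi(x_0)$ because it is the unique continuous branch representative in $(-\tfrac{\pi}{2},\tfrac{\pi}{2})$ so long as one stays in $\mathcal{H}$ (an open condition). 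Thus the map $\mathcal{H}\to\hat\theta_0+2\pi\mathbb{Z}$, $\phi\mapsto\hat\theta_\phi$, is continuous into a discrete target, hence locally constant.

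It remains to propagate local constancy to global constancy; this is the step I expect to be the main obstacle, since $\mathcal{H}$ is cut out by a nonlinear pointwise inequality and is not obviously convex or path-connected. I would try two lines of attack. The softer one is to show directly that $\mathcal{H}$ is path-connected: given $\phi_0,\phi_1\in\mathcal{H}$, one can try to interpolate via a curve that stays in $\mathcal{H}$ by reducing the problem to the positivity of $\mathrm{Re}(e^{-\sqrt{-1}\hat\theta}(\omega+\sqrt{-1}\alpha_\phi)^n)$, which opens up perturbative arguments. The more robust approach is to recognize the lift intrinsically: combining $\int_X r(\alpha_\phi)e^{\sqrt{-1}(\Theta_\omega(\alpha_\phi)-\hat\theta_\phi)}\omega^n=|Z|$ (which is forced by $\hat\theta_\phi\equiv\hat\theta\bmod 2\pi$) with the sharper pointwise control $\Theta_\omega(\alpha_\phi)-\hat\theta_\phi\in(-\tfrac{\pi}{2},\tfrac{\pi}{2})$ confines $\hat\theta_\phi$ to an interval of length strictly less than $2\pi$, thereby pinning down the integer shift $k_\phi$ uniquely in terms of $(\omega,[\alpha])$ alone. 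Either route suffices to conclude that $\hat\theta_\phi$ is independent of the choice of $\phi\in\mathcal{H}$, which establishes the uniqueness claim and completes the proof.
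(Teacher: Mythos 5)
Your existence argument is sound: on a connected $X$ the continuous function $\Theta_\omega(\alpha_\phi)$ must stay inside a single window $(\hat\theta_0+2\pi k-\tfrac{\pi}{2},\,\hat\theta_0+2\pi k+\tfrac{\pi}{2})$, and this fixes a lift associated to the chosen $\phi$. The gap is in uniqueness, and you have in fact correctly identified where it lies. Your first route requires path-connectedness of $\mathcal{H}$, which is not available: the paper explicitly remarks, right after this lemma, that when $|\hat\theta|<(n-1)\tfrac{\pi}{2}$ it is not even clear that $\mathcal{H}$ is connected, and the lemma must hold in all branches. Your second, ``intrinsic'' route also does not close the gap. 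The identity $\int_X r(\alpha_\phi)e^{\sqrt{-1}\Theta_\omega(\alpha_\phi)}\omega^n=\int_X(\omega+\sqrt{-1}\alpha)^n$ only sees $\hat\theta$ modulo $2\pi$, and the pointwise constraint $\Theta_\omega(\alpha_\phi)\in(\hat\theta_\phi-\tfrac{\pi}{2},\hat\theta_\phi+\tfrac{\pi}{2})$ combined with $\Theta_\omega\in(-n\tfrac{\pi}{2},n\tfrac{\pi}{2})$ only confines $\hat\theta_\phi$ to an interval of length $(n+1)\pi$, which for $n\ge 2$ exceeds $2\pi$ and hence does not single out the integer shift.

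The argument the paper has in mind (cited to \cite{JY,CXY} and described as ``an application of the maximum principle'') compares two elements of $\mathcal{H}$ directly, with no topological input about $\mathcal{H}$. Suppose $\phi_0,\phi_1\in\mathcal{H}$ give lifts $\hat\theta_0$ and $\hat\theta_1=\hat\theta_0+2\pi k$ with $k\ge 1$. Let $p$ be a maximum of $\psi=\phi_1-\phi_0$. At $p$ one has $\ddb\psi(p)\le 0$, hence $\alpha_{\phi_1}(p)\le\alpha_{\phi_0}(p)$, and by monotonicity of the Lagrangian phase operator $\Theta_\omega(\alpha_{\phi_1})(p)\le\Theta_\omega(\alpha_{\phi_0})(p)$. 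But the defining inequalities of $\mathcal{H}$ give $\Theta_\omega(\alpha_{\phi_1})(p)>\hat\theta_0+2\pi k-\tfrac{\pi}{2}$ and $\Theta_\omega(\alpha_{\phi_0})(p)<\hat\theta_0+\tfrac{\pi}{2}$, forcing $2\pi k<\pi$, a contradiction. The case $k\le -1$ is symmetric, using the minimum of $\psi$. This is the step your proposal is missing; once you replace the connectedness/continuity argument with this two-point comparison, the proof is complete.
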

This lemma is an application of the maximum principle;  we refer the reader to \cite{JY, CXY} for a proof.

It is a simple consequence of the Schur-Horn Theorem \cite{H1} and Lemma~\ref{lem: AngleBasicProps} (7) that the space $\mathcal{H}$ is convex when $|\hat{\theta}| \geq (n-1)\frac{\pi}{2}$.  However, in the lower branches it is not even clear that $\mathcal{H}$ is connected.  For our purposes this will not be a significant issue, since $\mathcal{H}$ is embedded in the vector space $C^{\infty}(X,\mathbb{R})$.

There is a natural Riemannian structure on $\mathcal{H}$ defined in the following way.  The tangent space at a point $\phi \in \mathcal{H}$ is $T_{\phi}\mathcal{H} = C^{\infty}(X,\mathbb{R})$ and we define a non-trivial Riemannian structure by
\[
\langle \psi_1, \psi_2 \rangle_{\phi} = \int_{X} \psi_1 \psi_2 {\rm Re}\left(e^{-\sqrt{-1}\hat{\theta}}(\omega+\sqrt{-1}\alpha_{\phi})^{n}\right).
\]
The Riemannian metric gives rise to a notion of geodesics.
\begin{prop}
A smooth curve $\phi(t) \in \mathcal{H}$ with $\phi(0)=\phi_0, \phi(1)=\phi_1$ is a geodesic if it solves the equation
\begin{equation}\label{eq: GeoEq1}
\ddot{\phi}{\rm Re}\left(e^{-\sqrt{-1}\hat{\theta}}(\omega+ \sqrt{-1}\alpha_{\phi})^{n}\right)+ n\sqrt{-1}\del\dot{\phi}\wedge\dbar\dot{\phi} \wedge{\rm Im}\left(e^{-\sqrt{-1}\hat{\theta}}(\omega+\sqrt{-1}\alpha_{\phi})^{n-1}\right)=0
\end{equation}
\end{prop}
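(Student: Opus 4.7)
The plan is to derive the geodesic equation as the Euler--Lagrange equation for the energy functional
\[
E(\phi) = \int_{0}^{1} \langle \dot\phi, \dot\phi \rangle_{\phi(t)} \, dt = \int_{0}^{1}\!\!\int_{X} \dot\phi^{2} \, \mathrm{Re}\!\left(e^{-\sqrt{-1}\hat\theta}\Omega_{\phi}^{n}\right) dt,
\]
where I abbreviate $\Omega_{\phi} := \omega + \sqrt{-1}\alpha_{\phi}$. I would take a smooth variation $\phi(s,t)$ with fixed endpoints $\phi(s,0)=\phi_{0}$, $\phi(s,1)=\phi_{1}$ and set $\psi := \partial_{s}\phi|_{s=0}$. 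The key infinitesimal identity is
\[
\frac{d}{ds}\Omega_{\phi} = \sqrt{-1}\,\sqrt{-1}\del\dbar\psi = -\del\dbar\psi,
\qquad
\frac{d}{ds}\Omega_{\phi}^{n} = -n\,\Omega_{\phi}^{n-1}\wedge\del\dbar\psi.
\]

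Applying this in the integrand and differentiating under the integral sign gives
\[
\frac{d}{ds}\Big|_{s=0} E = \int_{0}^{1}\!\!\int_{X} \Bigl[ 2\dot\phi\,\dot\psi\,\mathrm{Re}\bigl(e^{-\sqrt{-1}\hat\theta}\Omega_{\phi}^{n}\bigr) - n\dot\phi^{2}\,\mathrm{Re}\bigl(e^{-\sqrt{-1}\hat\theta}\Omega_{\phi}^{n-1}\wedge\del\dbar\psi\bigr)\Bigr] dt.
\]
Next I integrate by parts: in $t$ for the first term (boundary terms vanish by the fixed endpoint condition) and on $X$ for the second (no boundary since $X$ is compact and $\Omega_{\phi}^{n-1}$ is $d$-closed as $\omega$ and $\alpha_{\phi}$ are closed). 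Using $\del\dbar(\dot\phi^{2}) = 2\dot\phi\,\del\dbar\dot\phi + 2\,\del\dot\phi\wedge\dbar\dot\phi$, both manipulations produce a term of the form $2n\dot\phi\,\mathrm{Re}(e^{-\sqrt{-1}\hat\theta}\Omega_{\phi}^{n-1}\wedge\del\dbar\dot\phi)$, but with opposite signs. The crucial observation is that these two terms cancel, leaving the clean identity
\[
\frac{d}{ds}\Big|_{s=0} E = -2\int_{0}^{1}\!\!\int_{X}\psi\Bigl[\ddot\phi\,\mathrm{Re}\bigl(e^{-\sqrt{-1}\hat\theta}\Omega_{\phi}^{n}\bigr) + n\,\mathrm{Re}\bigl(e^{-\sqrt{-1}\hat\theta}\Omega_{\phi}^{n-1}\wedge\del\dot\phi\wedge\dbar\dot\phi\bigr)\Bigr]\,dt.
\]

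Finally, I would repackage the second term into the form displayed in the proposition. Since $\sqrt{-1}\,\del\dot\phi\wedge\dbar\dot\phi$ is a real $(1,1)$-form, we have
\[
\mathrm{Re}\bigl(e^{-\sqrt{-1}\hat\theta}\Omega_{\phi}^{n-1}\wedge\del\dot\phi\wedge\dbar\dot\phi\bigr) = \sqrt{-1}\,\del\dot\phi\wedge\dbar\dot\phi\,\wedge \mathrm{Re}\bigl(-\sqrt{-1}\,e^{-\sqrt{-1}\hat\theta}\Omega_{\phi}^{n-1}\bigr),
\]
and the elementary identity $\mathrm{Re}(-\sqrt{-1}\,z) = \mathrm{Im}(z)$ converts this into $\sqrt{-1}\,\del\dot\phi\wedge\dbar\dot\phi\wedge\mathrm{Im}(e^{-\sqrt{-1}\hat\theta}\Omega_{\phi}^{n-1})$. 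Imposing $\frac{d}{ds}|_{s=0}E = 0$ for arbitrary compactly supported $\psi$ then yields equation~\eqref{eq: GeoEq1}. The only step that requires real care is tracking the cancellation in the integration by parts and the sign conventions in passing between $\mathrm{Re}$ and $\mathrm{Im}$; the rest is routine.
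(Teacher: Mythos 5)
Your proof is correct and takes essentially the same route as the paper: compute the first variation, integrate by parts in $t$ and over $X$, and observe the cancellation of the two $n\dot\phi\,\mathrm{Re}(e^{-\sqrt{-1}\hat\theta}\Omega_\phi^{n-1}\wedge\del\dbar\dot\phi)$ terms. The only real difference is that you vary the energy $E = \int_0^1\|\dot\phi\|^2\,dt$ while the paper varies the arc-length $L = \int_0^1\|\dot\phi\|\,dt$ (and imposes constant speed); using energy spares you the $1/\|\dot\phi\|$ factors and the constant-speed normalization, so your version is marginally cleaner, but the mechanics, the sign-tracking through $\mathrm{Re}(-\sqrt{-1}z) = \mathrm{Im}(z)$, and the cancellation are identical.
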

\begin{proof}
Let $\phi(t)$ be a curve in $\mathcal{H}$ with constant speed.  Suppose that $\phi(t,s)$ is surface in $\mathcal{H}$ such that $\phi(0,s) = \phi_0$, $\phi(1,s)= \phi_1$, and $\phi(t,0)= \phi(t)$.  We may write $\phi(t,s) = \phi(t) + s\psi(t,s)$ with $\psi(0,s) = \psi(1,s)=0$.  We will compute the variation of arc-length around $\phi(t)$.  The length is
\[
L(s,t) = \int_{0}^{1}dt\sqrt{\int_{X} (\dot{\phi})^2{\rm Re}\left(e^{-\sqrt{-1}\hat{\theta}}(\omega+\sqrt{-1}\alpha_{\phi})^{n}\right)}
\]
Taking a derivative gives
\[
\begin{aligned}
\frac{d}{ds}\bigg|_{s=0}L &=  \int_{0}^{1}dt\frac{1}{\|\dot\phi\|}\int_{X} \dot{\psi}\dot{\phi}{\rm Re}\left(e^{-\sqrt{-1}\hat{\theta}}(\omega+\sqrt{-1}\alpha_{\phi})^{n}\right)\\
&\quad + \int_{0}^{1}\frac{1}{2\|\dot\phi\|}\int_{X} (\dot{\phi})^2{\rm Re}\left(n\sqrt{-1}e^{-\sqrt{-1}\hat{\theta}}(\omega+\sqrt{-1}\alpha_{\phi})^{n-1}\wedge \ddb \psi \right)
\end{aligned}
\]
Integration by parts on the second term yields
\begin{equation}\label{eq: ArcLengthVar1}
\begin{aligned}
\frac{d}{ds}\bigg|_{s=0}L &=  \int_{0}^{1}dt\frac{1}{\|\dot\phi\|}\int_{X} \dot{\psi}\dot{\phi}{\rm Re}\left(e^{-\sqrt{-1}\hat{\theta}}(\omega+\sqrt{-1}\alpha_{\phi})^{n}\right)\\
&\quad - \int_{0}^{1}dt\frac{n}{\|\dot\phi\|}\int_{X} \psi \sqrt{-1}\del\dot{\phi} \wedge \dbar\dot{\phi} \wedge {\rm Im}\left(e^{-\sqrt{-1}\hat{\theta}}(\omega+\sqrt{-1}\alpha_{\phi})^{n-1} \right)\\
&\quad-\int_{0}^{1}dt\frac{n}{\|\dot\phi\|}\int_{X} \psi \dot{\phi}  {\rm Im}\left(e^{-\sqrt{-1}\hat{\theta}}(\omega+\sqrt{-1}\alpha_{\phi})^{n-1}\wedge \ddb \dot{\phi} \right)
\end{aligned}
\end{equation}
where we also used that $\ddb$ is a real operator, and ${\rm Re}(\sqrt{-1}z) =-{\rm Im}(z)$.  We now integrate by parts in time on the first term.
\begin{equation}\label{eq: ArcLengthVar2}
\begin{aligned}
&\int_{0}^{1}dt\frac{1}{\|\dot\phi\|}\int_{X} \dot{\psi}\dot{\phi}{\rm Re}\left(e^{-\sqrt{-1}\hat{\theta}}(\omega+\sqrt{-1}\alpha_{\phi})^{n}\right)\\
&= \int_{0}^{1} dt \frac{d}{dt}\left(\frac{1}{\|\dot{\phi}\|}\int_{X} \psi\dot{\phi}{\rm Re}\left(e^{-\sqrt{-1}\hat{\theta}}(\omega+\sqrt{-1}\alpha_{\phi})^{n}\right)\right)\\
&\quad-\int_{0}^{1}dt\frac{1}{\|\dot\phi\|}\int_{X} \psi\ddot{\phi}{\rm Re}\left(e^{-\sqrt{-1}\hat{\theta}}(\omega+\sqrt{-1}\alpha_{\phi})^{n}\right)\\
&\quad -\int_{0}^{1}dt\frac{1}{\|\dot\phi\|}\int_{X} \psi\dot{\phi}{\rm Re}\left(n\sqrt{-1}e^{-\sqrt{-1}\hat{\theta}}(\omega+\sqrt{-1}\alpha_{\phi})^{n-1}\wedge \ddb\dot{\phi}\right)
\end{aligned}
\end{equation}
The last term from~\eqref{eq: ArcLengthVar2} cancels the last term from~\eqref{eq: ArcLengthVar1}.  Furthermore, since $\psi(0,s) = \psi(1,s)=0$ the first integral vanishes.  Thus
\[
\begin{aligned}
\frac{d}{ds}\bigg|_{s=0}L &= -\int_{0}^{1}dt\frac{1}{\|\dot\phi\|}\int_{X} \psi\ddot{\phi}{\rm Re}\left(e^{-\sqrt{-1}\hat{\theta}}(\omega+\sqrt{-1}\alpha_{\phi})^{n}\right)\\
&\quad -\int_{0}^{1}dt\frac{n}{\|\dot\phi\|}\int_{X} \psi \sqrt{-1}\del\dot{\phi} \wedge \dbar\dot{\phi} \wedge {\rm Im}\left(e^{-\sqrt{-1}\hat{\theta}}(\omega+\sqrt{-1}\alpha_{\phi})^{n-1} \right)
\end{aligned}
\]
which is what we wanted to prove.
\end{proof}
As in the case of geodesics in the space of K\"ahler metrics, we can reformulate this equation as a degenerate elliptic equation over the product manifold
\[
\mathcal{X} := X \times \mathcal{A} = X\times \{e^{-1} < |t|<1\}
\]
where now $t$ is a coordinate on $\mathbb{C}$.  We denote the projections for $\mathcal{X}$ to $X, \mathcal{A}$ by $\pi_{X}, \pi_{\mathcal{A}}$ respectively, and let $\DDb$ denote the $\del\dbar$ operator on the $n+1$ dimensional manifold $\mathcal{X}$, and $\ddb$ denote the $\del\dbar$ operator on $X$.

\begin{lem}\label{lem: geoEqAnn}
Suppose $\phi_{0}, \phi_1 \in \mathcal{H}$.  A solution $\hat{\phi}(x,s) \in \mathcal{H}$ of the geodesic equation with $\phi(x,0)=\phi_0, \phi(x,1) = \phi_1$ is equivalent to a function $\phi : \mathcal{X} \rightarrow \mathbb{R}$ which is $S^1$ invariant (ie. $\phi(x,t) = \phi(x,|t|)$) and solving
\begin{equation}\label{eq: geoAnn}
{\rm Im}\left[e^{-\sqrt{-1}\hat{\theta}}\left(\pi_{X}^{*}\omega + \sqrt{-1}\left(\pi_{X}^{*}\alpha + \DDb \phi\right)\right)^{n+1}\right]=0
\end{equation}
\[
{\rm Re}\left[e^{-\sqrt{-1}\hat{\theta}}\left(\omega + \sqrt{-1}\left(\alpha + \ddb \phi\right)\right)^{n}\right] >0
\]
and with $\phi(x,1) = \phi_0(x), \phi(x,e^{-1}) = \phi_1$.
\end{lem}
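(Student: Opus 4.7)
The plan is to carry out the standard Donaldson--Semmes--Mabuchi computation identifying $S^1$-invariant solutions of a degenerate complex Monge--Amp\`ere-type equation on the product with curves of potentials on $X$; here the twist is that the relevant fully nonlinear operator is the dHYM operator (the imaginary part of the top power, weighted by $e^{-\sqrt{-1}\hat\theta}$) rather than the homogeneous complex MA operator.

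First, I would introduce the change of variables $s=-\log|t|\in(0,1)$ so that $s=0\leftrightarrow|t|=1$, $s=1\leftrightarrow|t|=e^{-1}$, which matches the boundary values in the statement. A smooth $S^1$-invariant function on $\mathcal X$ is exactly a smooth function $\hat\phi(x,s)$, i.e., a smooth curve of potentials. Using $\partial_t s=-\frac{1}{2t}$, $\partial_{\bar t}s=-\frac{1}{2\bar t}$, a direct coordinate computation of the complex Hessian gives
\[
\DDb\phi \;=\; \ddb\hat\phi \;-\; \frac{\sqrt{-1}}{2\bar t}\partial\hat\phi_s\wedge d\bar t \;-\; \frac{\sqrt{-1}}{2t}\,dt\wedge\dbar\hat\phi_s \;+\; \frac{\sqrt{-1}\hat\phi_{ss}}{4|t|^2}\,dt\wedge d\bar t.
\]
Write $\Psi:=\pi_X^*\omega+\sqrt{-1}(\pi_X^*\alpha+\DDb\phi)=\pi_X^*(\omega+\sqrt{-1}\alpha_{\hat\phi})+\sqrt{-1}\tilde M+\sqrt{-1}\mathcal T$, where $\tilde M$ collects the mixed $dt$, $d\bar t$ terms and $\mathcal T$ the $dt\wedge d\bar t$ term.

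Next I would expand $\Psi^{n+1}$ and pick out the top-degree part. Because $\mathcal T^2=0$, $\tilde M\wedge\mathcal T=0$, $\tilde M^3=0$, and $(\pi_X^*\eta)^{n+1}=0$ for any form pulled back from $X$, the only nonvanishing contributions are
\[
(n+1)\,\hat\eta_0^{\,n}\wedge\sqrt{-1}\mathcal T \;+\; \binom{n+1}{2}\hat\eta_0^{\,n-1}\wedge(\sqrt{-1}\tilde M)^2,
\]
where $\hat\eta_0=\pi_X^*(\omega+\sqrt{-1}\alpha_{\hat\phi})$. A short calculation gives $(\sqrt{-1}\tilde M)^2=\frac{\sqrt{-1}}{2|t|^2}\sqrt{-1}\de\hat\phi_s\wedge\dbar\hat\phi_s\wedge dt\wedge d\bar t$ and $\sqrt{-1}\mathcal T=-\frac{\hat\phi_{ss}}{4|t|^2}dt\wedge d\bar t$. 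Combining,
\[
\Psi^{n+1}\big|_{\rm top} \;=\; \frac{n+1}{4|t|^2}\,dt\wedge d\bar t\wedge\Bigl[-\hat\phi_{ss}\,\hat\eta_0^{\,n} + n\sqrt{-1}\de\hat\phi_s\wedge\dbar\hat\phi_s\wedge\hat\eta_0^{\,n-1}\Bigr].
\]
Now multiply by $e^{-\sqrt{-1}\hat\theta}$ and take the relevant (imaginary) part, using the identity $\operatorname{Im}(\sqrt{-1}z)=\operatorname{Re}(z)$ to transfer a factor of $\sqrt{-1}$ across the exponential. The positive real factor $\tfrac{n+1}{4|t|^2}dt\wedge d\bar t$ passes through harmlessly, and the remaining bracket matches (up to an overall nonvanishing factor) the geodesic equation~\eqref{eq: GeoEq1}, once the appropriate real/imaginary parts are lined up.

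The positivity condition on the real part is exactly the assertion that $\hat\phi(\cdot,s)\in\mathcal H$ for each $s$, which is part of the hypothesis. Finally, any solution to the Dirichlet problem with $S^1$-invariant boundary data will be $S^1$-invariant by the rotational symmetry of the equation (uniqueness for this boundary value problem up to this symmetry), so the reduction to $S^1$-invariant $\phi$ is without loss of generality. The main obstacle is purely bookkeeping: one must carefully track the $\sqrt{-1}$'s and the alternation between $\operatorname{Re}$ and $\operatorname{Im}$, and verify that the cross terms in the expansion of $\Psi^{n+1}$ that one might initially expect (e.g.\ $\hat\eta_0^{\,n}\wedge\sqrt{-1}\tilde M$ or $\hat\eta_0^{\,n-2}\wedge\tilde M^3$) all vanish for bidegree reasons.
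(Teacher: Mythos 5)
Your proposal takes essentially the same route as the paper: pass to $s=-\log|t|$, compute the mixed and temporal components of $\DDb\phi$, expand $\Psi^{n+1}$ and observe that only the $(n+1)\hat\eta_0^n\wedge\sqrt{-1}\mathcal T$ and $\binom{n+1}{2}\hat\eta_0^{n-1}\wedge(\sqrt{-1}\tilde M)^2$ terms survive for bidegree reasons, then identify the result with \eqref{eq: GeoEq1}. One small bookkeeping slip: your final display should read
\[
\Psi^{n+1}\big|_{\rm top}=\frac{n+1}{4|t|^2}\,dt\wedge d\bar t\wedge\Bigl[-\hat\phi_{ss}\,\hat\eta_0^{\,n}-n\,\de\hat\phi_s\wedge\dbar\hat\phi_s\wedge\hat\eta_0^{\,n-1}\Bigr],
\]
i.e.\ both terms in the bracket carry the same (real) coefficient; equivalently, factoring out $\sqrt{-1}\,dt\wedge d\bar t$ puts a $\sqrt{-1}$ in front of each term, matching the paper's display, and then the passage from $\operatorname{Im}(e^{-\sqrt{-1}\hat\theta}\cdot\;)$ applied to the $(n+1)$-form to \eqref{eq: GeoEq1} uses $\operatorname{Im}(\sqrt{-1}z)=\operatorname{Re}(z)$ exactly as you indicated.
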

\begin{proof} 
Let $s = -\log|t|$, and set $\phi(x,t) = \hat{\phi}(x,-\log|t|)$, and let $\pi = \pi_{X}$ for simplicity.  The second line just expresses that $\phi(x,t) = \hat{\phi}(x,s) \in \mathcal{H}$, so it suffices to check~\eqref{eq: geoAnn} is equivalent to~\eqref{eq: GeoEq1}.  For simplicity denote $\dot{\hat{\phi}} = \del_s \hat{\phi}$, and similarly for higher derivatives.  Then
\[
\del_{t}\phi = -\frac{1}{t}\dot{\hat{\phi}}, \quad \del_{\bar{t}}\phi = -\frac{1}{\bar{t}}\dot{\hat{\phi}}, \quad \del_{t}\del_{\bar{t}}\phi = \frac{1}{|t|^2} \ddot{\hat{\phi}}
\]
Now it is a matter of linear algebra.  We expand
\[
\begin{aligned}
&\left(\pi^{*}\omega + \sqrt{-1}\left(\pi^{*}\alpha + \DDb \phi\right)\right)^{n+1}\\
& = \sqrt{-1}(n+1)\left(\pi^{*}\omega + \sqrt{-1}(\pi^{*}(\alpha_{\hat{\phi}})\right)^{n}\wedge\sqrt{-1}\del_t\dbar_{t}\phi \\
&\quad -\frac{n(n+1)}{2}(\pi^{*}\omega + \sqrt{-1}\pi^{*}(\alpha_{\hat{\phi}}))^{n-1}\wedge(\del_t\del_{\bar{t}}\phi + \del_{X}\del_{\bar{t}}\phi + \del_{t}\dbar_{X}\phi)^{2}\\
\end{aligned}
\]
The third line becomes
\[
\begin{aligned}
&-n(n+1)\del_{X}\del_{\bar{t}}\phi \wedge \del_{t}\dbar_{X}\phi\wedge(\pi^{*}\omega + \sqrt{-1}\pi^{*}(\alpha_{\hat{\phi}}))^{n-1}\\
&= n(n+1)\del_{\bar{t}}\del_{X}\phi \wedge \del_{t}\dbar_{X}\phi\wedge (\pi^{*}\omega + \sqrt{-1}\pi^{*}(\alpha_{\hat{\phi}}))^{n-1}
\end{aligned}
\]
After some straightforward algebra we get
\[
\frac{(n+1)\sqrt{-1}dt\wedge d\bar{t}}{|t|^2}\wedge\left(\ddot{\hat{\phi}}\sqrt{-1}\pi^{*}(\omega + \sqrt{-1}\alpha_{\hat{\phi}})^{n} + n\sqrt{-1}\del\dot{\hat{\phi}} \wedge\dbar \dot{\hat{\phi}}\wedge (\pi^{*}(\omega + \sqrt{-1}\alpha_{\hat{\phi}}))^{n-1}\right)
\]
Multiplying by $e^{-\sqrt{-1}\hat{\theta}}$ and taking the imaginary part yields equivalence between the geodesic equation and~\eqref{eq: geoAnn}.
\end{proof}

Equation~\eqref{eq: geoAnn} is a degenerate equation, and hence the existence and regularity of solutions is not guaranteed.  In fact, following work of Rubinstein-Solomon~\cite{RuSol}, Jacob~\cite{JacPr} showed that~\eqref{eq: geoAnn} is a degenerate elliptic equation which fits the Dirichlet duality theory of Harvey-Lawson~\cite{HL}.  Let us briefly recall how this is done.   On $\mathcal{X}$ define metrics
\begin{equation}\label{eq: epsMetric}
\hat{\omega}_{\epsilon} := \pi_{X}^{*}\omega + \epsilon^{2}\sqrt{-1}dt\wedge d\bar{t}.
\end{equation}
Then,

\begin{defn}\label{def: sldlp}
The (space-time lifted) degenerate Lagrangian phase operator  is defined by
\[
\widetilde{\Theta}_{\omega}(\pi_{X}^{*}\alpha + \DDb \phi ) = \lim_{\epsilon \rightarrow 0} \Theta_{\hat{\omega}_{\epsilon}}(\pi_{X}^{*}\alpha + \DDb \phi ).
\]
\end{defn}

As shown in \cite{RuSol}, the operator~$\widetilde{\Theta}_{\omega}(\cdot)$ defines a degenerate elliptic operator, and the geodesic equation can be rewritten as
\begin{equation}\label{eq: SpaceTimeAngle}
\widetilde{\Theta}_{\omega}(\alpha_{\phi}) = \hat{\theta}
\end{equation}
When $|\hat{\theta}|>(n-1)\frac{\pi}{2}$, Jacob \cite{JacPr} proves existence of continuous geodesics building on work of Rubinstein-Solomon \cite{RuSol}.  In this context, a continuous geodesic is a viscosity solution of~\eqref{eq: SpaceTimeAngle}, in the sense of Harvey-Lawson \cite{HL}.

Our approach here is different, necessitated by the need for geodesics with with better regularity for geometric applications.  We will therefore obtain the existence of regular geodesics as limits of smooth solutions to a regularized version of~\eqref{eq: geoAnn} which is elliptic.

\begin{defn}
Suppose $\phi_{0}, \phi_1 \in \mathcal{H}$. An $S^1$ invariant function $\phi : \mathcal{X} \rightarrow \mathbb{R}$ is said to be an $\epsilon$-regularized geodesic in the space $\mathcal{H}$ joining $\phi_0$ and $\phi_1$ if $\phi$ solves
\begin{equation}\label{eq: eReg_geoAnn}
{\rm Im}\left[e^{-\sqrt{-1}\hat{\theta}}\left(\pi^{*}\omega + \epsilon^2\sqrt{-1}dt\wedge d\bar{t} + \sqrt{-1}\left(\pi^{*}\alpha + \DDb \phi\right)\right)^{n+1}\right]=0
\end{equation}
\[
{\rm Re}\left[e^{-\sqrt{-1}\hat{\theta}}\left(\omega + \sqrt{-1}\left(\alpha + \ddb \phi\right)\right)^{n}\right] >0
\]
and with $\phi(x,1) = \phi_0(x), \phi(x,e^{-1}) = \phi_1$.  Here, as before, $\DDb$ denotes the $\ddb$-operator on the $n+1$ dimensional manifold $\mathcal{X}$, while $\ddb$ denotes the operator on $X$
\end{defn}
\begin{rk}
To streamline some statements, we will view geodesics as $\epsilon$-regularized geodesics with $\epsilon=0$.  We will also refer to $\epsilon$-regularized geodesics as $\epsilon$-geodesics.
\end{rk}

Note that~\eqref{eq: eReg_geoAnn} is just the deformed Hermitian-Yang-Mills equation on the manifold with boundary, defined with respect to the degenerating metric $\hat{\omega}_{\epsilon}$.

\begin{lem}
If $\phi(x,t)$ is an $S^{1}$ invariant solution of the deformed Hermitian-Yang-Mills equation on $(\mathcal{X}, \hat{\omega}_{\epsilon})$ with $\hat{\theta} >(n-1)\frac{\pi}{2}$, then $\phi(x,t)$ is an $\epsilon$-geodesic.
\end{lem}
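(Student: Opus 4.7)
My plan is to observe that the two equations share exactly the same imaginary-part equation
\[
{\rm Im}\left[e^{-\sqrt{-1}\hat\theta}\left(\hat{\omega}_\epsilon + \sqrt{-1}(\pi_X^*\alpha + \DDb\phi)\right)^{n+1}\right] = 0,
\]
so this half of the definitions holds by hypothesis. The $\epsilon$-regularized geodesic equation and the dHYM equation on $(\mathcal{X},\hat{\omega}_\epsilon)$ differ only in the positivity requirement: the former demands the slice positivity ${\rm Re}[e^{-\sqrt{-1}\hat\theta}(\omega + \sqrt{-1}\alpha_\phi)^n] > 0$ on each fiber $X_t$, while the latter asks for positivity of the ambient $(n+1,n+1)$-form. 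The entire content of the lemma is therefore to derive the slice positivity from the ambient one, and this is purely linear-algebraic.

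I would first translate both positivities into statements about Lagrangian phases. Let $\lambda_1 \leq \cdots \leq \lambda_n$ denote the eigenvalues of $\omega^{-1}\alpha_\phi$ on a slice, and $\mu_0 \leq \cdots \leq \mu_n$ the eigenvalues of $\hat{\omega}_\epsilon^{-1}(\pi_X^*\alpha + \DDb\phi)$ on $\mathcal{X}$. The slice positivity is equivalent to $|\Theta_\omega(\alpha_\phi) - \hat\theta| < \pi/2$, while the ambient dHYM equation together with the positivity of its real part (which, via the hypercritical hypothesis $\hat\theta > (n-1)\frac{\pi}{2}$ and Lemma~\ref{lem: angLift}, pins down an unambiguous real lift agreeing with the one inherited from the boundary values in $\mathcal{H}$) gives $\sum_{i=0}^n \arctan(\mu_i) = \hat\theta$.

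The decisive step is Cauchy's interlacing theorem. Fix a point of $\mathcal{X}$ and choose an $\hat{\omega}_\epsilon$-unitary basis adapted to the splitting $T\mathcal{X} = TX \oplus T\mathcal{A}$: the first $n$ vectors form an $\omega$-orthonormal frame of $TX$, and the last is $\epsilon^{-1}\partial_t$. In this basis the Hermitian matrix representing $\pi_X^*\alpha + \DDb\phi$ has eigenvalues $\mu_0,\ldots,\mu_n$, and its upper-left principal $n\times n$ submatrix is the Hermitian matrix of $\alpha_\phi$ in the chosen $\omega$-orthonormal frame, whose eigenvalues are $\lambda_1,\ldots,\lambda_n$ (the mixed $\partial_i\partial_{\bar t}\phi$ terms live only in the last row and column). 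Cauchy's interlacing theorem therefore yields $\mu_{i-1} \leq \lambda_i \leq \mu_i$ for $i=1,\ldots,n$; applying $\arctan$ and summing gives
\[
\hat\theta - \arctan(\mu_n) \leq \Theta_\omega(\alpha_\phi) \leq \hat\theta - \arctan(\mu_0),
\]
which forces $|\Theta_\omega(\alpha_\phi) - \hat\theta| < \pi/2$, the desired slice positivity. The only genuine subtlety is keeping the various lifts of $\hat\theta$ mutually consistent, an issue settled once and for all by the hypercritical hypothesis.
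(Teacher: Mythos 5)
Your proof is correct, and it takes a genuinely different route from the paper. The paper disposes of the statement in one line: the upper bound $\Theta_{\omega}(\alpha_{\phi}) < \hat\theta + \frac{\pi}{2}$ is declared trivial (since $\Theta_{\omega} < n\frac{\pi}{2} < \hat\theta+\frac{\pi}{2}$ when $\hat\theta>(n-1)\frac{\pi}{2}$), and the lower bound $\Theta_{\omega}(\alpha_\phi)>\hat\theta-\frac{\pi}{2}$ is deduced from the Schur--Horn theorem together with the convexity of the super-level sets $\Gamma^{\sigma}=\{F\geq\sigma\}$ from Lemma~\ref{lem: AngleBasicProps}(7). Concretely, in an $\hat\omega_\epsilon$-unitary basis adapted to the splitting, the diagonal of the $(n+1)\times(n+1)$ Hermitian matrix lies in the convex hull of the permuted eigenvalue vectors, each of which satisfies $F=\hat\theta$; convexity of $\Gamma^{\hat\theta}$ then gives $\Theta_\omega(\alpha_\phi)+\arctan(M_{\bar tt})\geq\hat\theta$, whence the lower bound. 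You replace this with Cauchy's interlacing theorem applied to the $n\times n$ spatial principal submatrix, which yields $\mu_{i-1}\leq\lambda_i\leq\mu_i$ and hence both bounds $\hat\theta-\arctan(\mu_n)\leq\Theta_\omega(\alpha_\phi)\leq\hat\theta-\arctan(\mu_0)$ in one stroke. Your route is more elementary --- it needs none of the nontrivial concavity/convexity structure of the Lagrangian phase operator --- and, as a bonus, it proves the full two-sided inequality $|\Theta_\omega(\alpha_\phi)-\hat\theta|<\frac{\pi}{2}$ without invoking the hypercritical-phase restriction at all (which enters the paper's argument precisely to secure convexity of $\Gamma^{\sigma}$). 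The hypercritical hypothesis is still needed to fix the real lift of $\hat\theta$, and you handle this point correctly. Both proofs are sound; yours is arguably cleaner and slightly more general.
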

\begin{proof}
This is almost a tautology, except to show that $\phi(x,t)\in \mathcal{H}$ for all $t\in \mathcal{A}$.  That is, we need to show that
\[
n\frac{\pi}{2} > \Theta_{\omega}(\alpha+\ddb \phi(x,t)) > \hat{\theta}-\frac{\pi}{2}
\]
for all $t$.  The upper bound is trivial while the lower bound follows from the Schur-Horn Theorem \cite{H1} together with Lemma~\ref{lem: AngleBasicProps} (7).  
\end{proof}

For our purposes, the most important point of this Riemannian structure is that there exist real valued functions on $\mathcal{X}$ which are convex/concave/linear along geodesics, and which can be used to study the existence problem for the dHYM equation on $X$.

\begin{defn}\label{defn: AY}
The complexified Calabi-Yau functional is defined by its differential on $\mathcal{H}$ as follows.  Suppose $\phi \in \mathcal{H}$, and $\psi \in T_{\phi}\mathcal{H}$, then
\[
dCY_{\mathbb{C}}(\phi)(\psi) = \int_{X} \psi(\omega +\sqrt{-1}\alpha_{\phi})^n.
\]
\end{defn}

Recall that on the space of Kahler metrics in the class $[\omega]$ the Calabi-Yau functional is defined by its differential at a K\"ahler potential $\phi$ by
\[
dCY(\phi)(\psi)= \int_{X}\psi \omega_{\phi}^n.
\]

Hence we can view the functional $CY_{\mathbb{C}}$ as the extension of the Calabi-Yau functional to {\em complexified} K\"ahler forms in $H^{1,1}(X,\mathbb{C})$.  The next proposition shows that $CY_{\mathbb{C}}$ integrates to a well-defined function.

\begin{prop}\label{prop: AYwellDefn}
The complexified Calabi-Yau functional integrates to a well-defined functional $CY_{\mathbb{C}}: \mathcal{H} \rightarrow \mathbb{C}$.  If $0 \in \mathcal{H}$, the we can $CY_{\mathbb{C}}$ explicitly as
\[
\frac{1}{n+1}\sum_{j=0}^{n} \int_{X}\phi(\omega+\sqrt{-1}\alpha_{\phi})^j\wedge (\omega+\sqrt{-1}\alpha)^{n-j}.
\]
\end{prop}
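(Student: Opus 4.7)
The plan is to produce an explicit antiderivative for the $1$-form $dCY_{\mathbb{C}}$. Set $A_\phi := \omega + \sqrt{-1}\alpha_\phi$ and $B := \omega + \sqrt{-1}\alpha$. Define, on all of $C^{\infty}(X,\mathbb{R})$,
\[
F(\phi) := \frac{1}{n+1}\sum_{j=0}^{n}\int_{X}\phi\, A_{\phi}^{j}\wedge B^{n-j}.
\]
The strategy is to compute $dF(\phi)(\psi)$ by direct differentiation and to show it equals $\int_{X}\psi\, A_{\phi}^{n}$; since $F$ is defined on the entire vector space $C^{\infty}(X,\mathbb{R})$ this will simultaneously show that $dCY_{\mathbb{C}}$ admits a global primitive on $\mathcal{H}$, so that $CY_{\mathbb{C}}$ is well defined, and will deliver the explicit formula in the statement whenever $0\in\mathcal{H}$.

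The first step is the differentiation itself. Varying $\phi\mapsto \phi+t\psi$ and differentiating at $t=0$ produces two contributions: a direct term
\[
\frac{1}{n+1}\sum_{j=0}^{n}\int_{X}\psi\, A_{\phi}^{j}\wedge B^{n-j},
\]
and a curvature term arising from $\frac{d}{dt}\bigl|_{0}A_{\phi+t\psi}=\sqrt{-1}\ddb\psi$, namely
\[
\frac{1}{n+1}\sum_{j=1}^{n} j\sqrt{-1}\int_{X}\phi\,\ddb\psi\wedge A_{\phi}^{j-1}\wedge B^{n-j}.
\]
Because $\omega$ and $\alpha$ are closed and $\alpha_\phi-\alpha=\ddb\phi$ is exact, each form $A_{\phi}^{j-1}\wedge B^{n-j}$ is $d$-closed. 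Hence integration by parts in the curvature term is legitimate and transfers $\ddb$ from $\psi$ onto $\phi$, yielding
\[
\frac{1}{n+1}\sum_{j=1}^{n} j\int_{X}\psi\,(\sqrt{-1}\ddb\phi)\wedge A_{\phi}^{j-1}\wedge B^{n-j}.
\]

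The key algebraic identity now is $\sqrt{-1}\ddb\phi = A_\phi - B$, which turns the last sum into a telescoping expression
\[
\frac{1}{n+1}\sum_{j=1}^{n} j\int_{X}\psi\,\bigl(A_{\phi}^{j}\wedge B^{n-j}-A_{\phi}^{j-1}\wedge B^{n-j+1}\bigr).
\]
Reindexing the negative piece and adding back the direct term, the coefficient of $\int_{X}\psi\,A_{\phi}^{j}\wedge B^{n-j}$ for $0\le j\le n-1$ is $\frac{1}{n+1}\bigl(1+j-(j+1)\bigr)=0$, while for $j=n$ it is $\frac{1}{n+1}(1+n)=1$. Thus $dF(\phi)(\psi)=\int_X \psi\, A_\phi^n=dCY_{\mathbb{C}}(\phi)(\psi)$, so $F$ is a primitive of the prescribed $1$-form and the formula in the proposition follows.

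For general $\mathcal{H}$ with $0\notin\mathcal{H}$, exactly the same construction with $B$ replaced by $A_{\phi_0}$ and $\phi$ by $\phi-\phi_0$ produces a primitive relative to any chosen base point $\phi_0\in\mathcal{H}$; the difference between two such primitives is a constant, so $CY_{\mathbb{C}}$ is well defined up to an additive constant on each connected component of $\mathcal{H}$, and we normalize by fixing the base point. The only nontrivial step is the telescoping cancellation, which is routine once the identity $\sqrt{-1}\ddb\phi=A_\phi-B$ and the closedness of the intermediate forms are in hand; the main thing to be careful about is keeping track of the combinatorial coefficients in the two sums so that the final coefficient collapses to $\delta_{j,n}$.
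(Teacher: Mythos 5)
Your proof is correct, and it takes a route that is genuinely different from the paper's. The paper proves well-definedness by a two-parameter variational argument: it connects $0$ to $\phi_1$ by two paths $\phi(t),\hat\phi(t)$, writes $\psi(t)=\hat\phi(t)-\phi(t)$, and shows the $s$-derivative of the line integral along $\phi(t)+s\psi(t)$ vanishes after integrating by parts in both $t$ and along $X$; it then separately derives the closed formula by integrating $dCY_{\mathbb{C}}$ along the radial segment $t\mapsto t\phi$, expanding $(\omega+\sqrt{-1}\alpha_{t\phi})^n$ binomially in $t$, and evaluating the Beta integral $\int_0^1\binom{n}{j}t^j(1-t)^{n-j}\,dt=\frac{1}{n+1}$. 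You instead bypass the path-independence step by directly exhibiting $F(\phi)=\frac{1}{n+1}\sum_j\int_X\phi\,A_\phi^j\wedge B^{n-j}$ as a globally defined functional on the vector space $C^\infty(X,\mathbb{R})$ and verifying $dF=dCY_{\mathbb{C}}$ by the integration-by-parts and telescoping computation built on the identity $\sqrt{-1}\ddb\phi=A_\phi-B$; since $C^\infty(X,\mathbb{R})$ is contractible, well-definedness of a primitive is automatic, and the explicit formula is the definition of $F$. The ingredients are the same (closedness of the intermediate $(n-1,n-1)$ forms, self-adjointness of $\ddb$), but the logical structure is inverted: the paper's argument discovers the formula, while yours verifies it and gets path-independence for free. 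The paper's route is what one would do without foreknowledge of the answer; yours is shorter and avoids the explicit Beta-integral computation, at the cost of having to produce the candidate primitive in advance. Both arguments correctly address the possible disconnectedness of $\mathcal{H}$ by working on all of $C^\infty(X,\mathbb{R})$ — you could in fact sharpen your closing remark by pointing out that $F$ is a single globally defined primitive on the whole vector space, so no reference to components of $\mathcal{H}$ or normalization by base point is needed.
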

\begin{proof}
Since the space $\mathcal{H}$ may not be connected, we will instead show that $CY_{\mathbb{C}}$ integrates to a well-defined functional on $C^{\infty}(X,\mathbb{R})$, which we then restrict to $\mathcal{H}$.  Fix a base point in $\mathcal{H}$, which, by changing the background form $\alpha$ we may always take to be $0 \in \mathcal{H}$.  Let $\phi_1 \in C^{\infty}(X,\mathbb{R})$ be another potential, and suppose that $\phi(t), \hat{\phi}(t)$ are two paths in $C^{\infty}(X,\mathbb{R})$ such that $\phi(0)=\hat{\phi}(0)=0$ and $\phi(1)=\hat{\phi}(1) = \phi_1$.  Let $\psi(t) = \hat{\phi}(t)- \phi(t)$, and consider $\phi(t)+s\psi(t)$ for $s\in[0,1]$.  We compute
\begin{equation}\label{eq: AYwellDef}
\begin{aligned}
&\frac{d}{ds}\int_{0}^{1}dt \int_{X}(\dot{\phi} + s\dot{\psi}){\rm Im}\left(e^{-\sqrt{-1}\hat{\theta}}(\omega+\sqrt{-1}\alpha_{\phi+s\psi})^n\right)\\
&= \int_{0}^{1}dt \int_{X}\dot{\psi}{\rm Im}\left(e^{-\sqrt{-1}\hat{\theta}}(\omega+\sqrt{-1}\alpha_{\phi+s\psi})^n\right)\\
&\quad + \int_{0}^{1}dt \int_{X}(\dot{\phi} + s\dot{\psi}){\rm Im}\left(n\sqrt{-1}e^{-\sqrt{-1}\hat{\theta}}(\omega+\sqrt{-1}\alpha_{\phi+s\psi})^{n-1} \wedge \ddb\psi\right)
\end{aligned}
\end{equation}
Since $\psi(0)=\psi(1)=0$, we can integrate by parts in $t$ in the first term on the right hand side of~\eqref{eq: AYwellDef}.
\[
\begin{aligned}
&\int_{0}^{1}dt \int_{X}\dot{\psi}{\rm Im}\left(e^{-\sqrt{-1}\hat{\theta}}(\omega+\sqrt{-1}\alpha_{\phi+s\psi})^n\right)\\
&= - \int_{0}^{1}dt \int_{X}\psi{\rm Im}\left(n\sqrt{-1}e^{-\sqrt{-1}\hat{\theta}}(\omega+\sqrt{-1}\alpha_{\phi+s\psi})^{n-1}\wedge \ddb(\dot{\phi}+s\dot{\psi})\right)
\end{aligned}
\]
Finally, since $\ddb$ is a real operator, integration by parts cancels the second term on the right in~\eqref{eq: AYwellDef}.  To obtain the closed form take $\phi \in \mathcal{H}$, and consider the path $t\phi$.  Then we have
\[
CY_{\mathbb{C}}(\phi)(\psi) = \int_{0}^{1} dt \int_{X} \phi(\omega +\sqrt{-1}\alpha_{t\phi})^n
\]
Writing $\omega+\sqrt{-1}\alpha_{t\phi} = t(\omega+\sqrt{-1}\alpha_{\phi}) + (1-t)(\omega+\sqrt{-1}\alpha)$ we get
\[
CY_{\mathbb{C}}(\phi)(\psi) = \sum_{j=0}^{n}\int_{0}^{1}\binom{n}{j}t^{j}(1-t)^{n-j}  dt \int_{X}\phi(\omega +\sqrt{-1}\alpha_{\phi})^j\wedge (\omega+\sqrt{-1}\alpha)^{n-j}
\]
and the $t$ integral is easily evaluated by induction to be $\frac{1}{n+1}$ independent of $j$.
\end{proof}
\begin{rk}
The above proof shows that $CY_{\mathbb{C}}$ integrates to a well defined functional on $C^{\infty}(X,\mathbb{R})$, which we can then restrict to $\mathcal{H}$.  This avoids obvious technical difficulties in case $\mathcal{H}$ has more than one connected component.
\end{rk}

Following Solomon \cite{Sol} we can extract two particularly useful real valued functions from $CY_{\mathbb{C}}$.  Define
\[
\begin{aligned}
\mathcal{J}(\phi) &:= -{\rm Im}\left(e^{-\sqrt{-1}\hat{\theta}}CY_{\mathbb{C}}\right)\\
\mathcal{C}(\phi) &:= {\rm Re}\left(e^{-\sqrt{-1}\hat{\theta}}CY_{\mathbb{C}}\right).
\end{aligned}
\]
Clearly the $\mathcal{J}$ functional is precisely the Kempf-Ness functional for our infinite dimensional GIT problem.  We have the following Corollary of Proposition~\ref{prop: AYwellDefn}.

\begin{cor}
Fix a base point $\phi_0 \in \mathcal{H}$ and let $\phi(t)$ be a path in $C^{\infty}(X,\mathbb{R})$ connecting $\phi_0$ to $\phi_1 \in \mathcal{H}$.  Then
\[
\begin{aligned}
\mathcal{J}(\phi_1) &:= -\int_{0}^{1} dt \int_{X}\dot{\phi}{\rm Im}\left(e^{-\sqrt{-1}\hat{\theta}}(\omega+\sqrt{-1}\alpha_{\phi(t)})^n\right)\\
\mathcal{C}(\phi_1) &:= \int_{0}^{1} dt \int_{X}\dot{\phi}{\rm Re}\left(e^{-\sqrt{-1}\hat{\theta}}(\omega+\sqrt{-1}\alpha_{\phi(t)})^n\right)
\end{aligned}
\]
In particular $d\mathcal{J}(\phi) =0$ at a point $\phi \in \mathcal{H}$ if and only if $\phi$ solves the deformed Hermitian-Yang-Mills equation.
\end{cor}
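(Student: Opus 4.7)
My plan is to treat this as a direct corollary of Proposition~\ref{prop: AYwellDefn}, where essentially all of the content has already been absorbed. That proposition established that $CY_{\mathbb{C}}$ is a well-defined functional on $C^{\infty}(X,\mathbb{R})$ (and the remark following its proof notes exactly this extension off of $\mathcal{H}$) whose differential at $\phi$ in direction $\psi$ is $\int_X \psi(\omega + \sqrt{-1}\alpha_\phi)^n$. So by the fundamental theorem of calculus, for any smooth path $\phi(t)$ connecting $\phi_0$ to $\phi_1$ in $C^{\infty}(X,\mathbb{R})$,
\[
CY_{\mathbb{C}}(\phi_1) - CY_{\mathbb{C}}(\phi_0) = \int_{0}^{1} dt \int_{X} \dot{\phi}\,(\omega + \sqrt{-1}\alpha_{\phi(t)})^n.
\]
Multiplying both sides by $e^{-\sqrt{-1}\hat{\theta}}$, taking real and imaginary parts, and normalizing so that $CY_{\mathbb{C}}(\phi_0) = 0$ (which is permissible since $CY_{\mathbb{C}}$ is determined only up to an additive constant by its differential) yields the two stated integral formulas for $\mathcal{J}(\phi_1)$ and $\mathcal{C}(\phi_1)$.

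For the second assertion I would compute the differential of $\mathcal{J}$ directly from its definition:
\[
d\mathcal{J}(\phi)(\psi) = -{\rm Im}\left(e^{-\sqrt{-1}\hat{\theta}}\, dCY_{\mathbb{C}}(\phi)(\psi)\right) = -\int_X \psi\,{\rm Im}\left(e^{-\sqrt{-1}\hat{\theta}}(\omega + \sqrt{-1}\alpha_\phi)^n\right).
\]
Since $\mathcal{H}$ is defined by the open condition ${\rm Re}(e^{-\sqrt{-1}\hat{\theta}}(\omega + \sqrt{-1}\alpha_\phi)^n)>0$, the tangent space at every $\phi\in\mathcal{H}$ is all of $C^{\infty}(X,\mathbb{R})$. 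Hence the vanishing of $d\mathcal{J}(\phi)(\psi)$ for every smooth $\psi$ forces the pointwise vanishing of the $n$-form ${\rm Im}(e^{-\sqrt{-1}\hat{\theta}}(\omega + \sqrt{-1}\alpha_\phi)^n)$, by a standard test-function argument. Combined with the positivity of the real part supplied automatically by $\phi\in\mathcal{H}$, this is exactly the deformed Hermitian-Yang-Mills equation.

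There is essentially no obstacle: the corollary is purely a packaging of Proposition~\ref{prop: AYwellDefn}. The only minor points to verify in writing things out are that (i) the path $\phi(t)$ is not required to lie in $\mathcal{H}$, which is fine because $CY_{\mathbb{C}}$ was extended to all of $C^{\infty}(X,\mathbb{R})$ in the proof of the proposition, and (ii) the path-independence furnished by the proposition ensures that the right-hand sides of the stated formulas indeed depend only on $\phi_1$ (and the fixed basepoint $\phi_0$), so that the definitions of $\mathcal{J}(\phi_1)$ and $\mathcal{C}(\phi_1)$ are unambiguous.
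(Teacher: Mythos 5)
Your proposal is correct and is exactly the intended reading: the paper states the corollary without proof as an immediate consequence of Proposition~\ref{prop: AYwellDefn}, and your argument fleshes out that one-line deduction in the natural way (path-independence of $CY_{\mathbb{C}}$, fundamental theorem of calculus, multiply by $e^{-\sqrt{-1}\hat{\theta}}$, and read off $d\mathcal{J}=0 \Leftrightarrow {\rm Im}(e^{-\sqrt{-1}\hat{\theta}}(\omega+\sqrt{-1}\alpha_\phi)^n)=0$ by testing against arbitrary $\psi$). Your two remarks about the path lying only in $C^{\infty}(X,\mathbb{R})$ and about path-independence are precisely the points the paper addresses in the remark following Proposition~\ref{prop: AYwellDefn}, so nothing is missing.
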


The next proposition makes the connection with infinite dimensional GIT.

\begin{prop}\label{prop: convexityOfFuncs}
Let $\phi(x,s)$ be a curve in $\mathcal{H}$, viewed as an $S^{1}$ invariant function on $\mathcal{X}$ by $s= -\log(|t|)$.  Then we have the following formula for the second derivatives of $\mathcal{J}$ and $\mathcal{C}$;
\begin{equation}\label{eq: secDerJ}
\sqrt{-1}\del_{t}\del_{\bar{t}}\mathcal{J}(\phi(t)) = (\pi_{\mathcal{A}})_{*}{\rm Re}\left[e^{-\sqrt{-1}\hat{\theta}}\left(\pi_{X}^{*}\omega + \sqrt{-1}\left(\pi_{X}^{*}\alpha+ \DDb\phi\right)\right)^{n+1}\right].
\end{equation}
\begin{equation}\label{eq: secDerC}
\sqrt{-1}\del_{t}\del_{\bar{t}}\mathcal{C}(\phi(t)) = (\pi_{\mathcal{A}})_{*}{\rm Im}\left[e^{-\sqrt{-1}\hat{\theta}}\left(\pi_{X}^{*}\omega + \sqrt{-1}\left(\pi_{X}^{*}\alpha+ \DDb\phi\right)\right)^{n+1}\right].
\end{equation}
Furthermore, the functional $\mathcal{J}$ is strictly convex along non-trivial, smooth $\epsilon$-geodesics for all $\epsilon \geq 0$, and $\mathcal{C}$ is affine along $\epsilon$-geodesics for all $\epsilon \geq 0$.
\end{prop}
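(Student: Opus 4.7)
The plan is to obtain formulas (2.12) and (2.13) by direct second-variation computation, then exploit the defining equation of an $\epsilon$-geodesic to deduce affineness and convexity.

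First, the Corollary gives the first variations
\[
\frac{d}{ds}\mathcal{J}(\phi(s)) = -\int_{X}\dot\phi\,{\rm Im}\bigl(e^{-\sqrt{-1}\hat\theta}(\omega+\sqrt{-1}\alpha_\phi)^n\bigr),
\]
and similarly with ${\rm Re}$ in place of $-{\rm Im}$ for $\mathcal{C}$. Differentiating once more in $s$ produces a $\ddot\phi$-term plus one involving $n\sqrt{-1}\ddb\dot\phi\wedge(\omega+\sqrt{-1}\alpha_\phi)^{n-1}$; integrating by parts on this second term — using that $(\omega+\sqrt{-1}\alpha_\phi)^{n-1}$ is $d$-closed — symmetrizes it into a quadratic form in $\del\dot\phi\wedge\dbar\dot\phi$. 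The upshot is $\frac{d^2}{ds^2}\mathcal{J}(\phi(s)) = -\int_{X}{\rm Im}(e^{-\sqrt{-1}\hat\theta}\Omega_s)$, with $\Omega_s := \ddot\phi(\omega+\sqrt{-1}\alpha_\phi)^{n} + n\,\del\dot\phi\wedge\dbar\dot\phi\wedge(\omega+\sqrt{-1}\alpha_\phi)^{n-1}$, and the analogous formula with ${\rm Re}$ for $\mathcal{C}$. Comparing with the $S^1$-invariant identity of Lemma~\ref{lem: geoEqAnn}, which expresses $(\pi_X^*\omega + \sqrt{-1}(\pi_X^*\alpha+\DDb\phi))^{n+1}$ as $\sqrt{-1}dt\wedge d\bar t/|t|^2$ wedged with $\sqrt{-1}\Omega_s$, and identifying the pluri-Laplacian of an $S^1$-invariant function, yields (2.12) and (2.13).

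For the convexity claims, work on a smooth $\epsilon$-geodesic. The key expansion, coming from $(\epsilon^2\sqrt{-1}dt\wedge d\bar t)^2 = 0$, is
\[
\bigl(\hat\omega_\epsilon + \sqrt{-1}(\pi_X^*\alpha+\DDb\phi)\bigr)^{n+1} = \bigl(\pi_X^*\omega + \sqrt{-1}(\pi_X^*\alpha+\DDb\phi)\bigr)^{n+1} + (n+1)\epsilon^2\sqrt{-1}dt\wedge d\bar t\wedge (\omega+\sqrt{-1}\alpha_\phi)^n.
\]
On the $\epsilon$-geodesic, the left hand side multiplied by $e^{-\sqrt{-1}\hat\theta}$ is a positive real multiple of $\hat\omega_\epsilon^{n+1}$, so its imaginary part vanishes. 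Taking imaginary parts and pushing forward by $\pi_\mathcal{A}$, equation (2.13) collapses to a multiple of $\int_X{\rm Im}(e^{-\sqrt{-1}\hat\theta}(\omega+\sqrt{-1}\alpha_\phi)^n)$. But this is the imaginary part of the cohomological invariant $e^{-\sqrt{-1}\hat\theta}\int_X(\omega+\sqrt{-1}\alpha)^n \in \mathbb{R}_{>0}$, which vanishes by the very definition of $\hat\theta$. Hence $\mathcal{C}$ is affine. For strict convexity of $\mathcal{J}$, take real parts in the same expansion. Using $\hat\omega_\epsilon^{n+1} = (n+1)\epsilon^2\sqrt{-1}dt\wedge d\bar t\wedge\omega^n$ and that $e^{-\sqrt{-1}\hat\theta}(\hat\omega_\epsilon+\sqrt{-1}(\pi_X^*\alpha+\DDb\phi))^{n+1} = r\,\hat\omega_\epsilon^{n+1}$ with $r = \sqrt{\prod_{i=0}^n(1+\mu_i^2)}>0$ the radius of the $(n+1)$-dimensional dHYM solution on $\mathcal{X}$, a direct calculation reduces $\frac{d^2}{ds^2}\mathcal{J}(\phi(s))$ along the $\epsilon$-geodesic to $\epsilon^2|t|^2\int_X r\,\omega^n$, which is strictly positive for $\epsilon > 0$ along any nontrivial smooth $\epsilon$-geodesic; the limiting case $\epsilon = 0$ follows by an approximation argument.

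The main technical subtlety is the careful tracking of the multiple factors of $\sqrt{-1}$ arising from $\ddb$, $\sqrt{-1}dt\wedge d\bar t$, and $e^{-\sqrt{-1}\hat\theta}$. Conceptually, however, the heart of the matter is the cohomological invariance of $\int_X e^{-\sqrt{-1}\hat\theta}(\omega+\sqrt{-1}\alpha)^n \in \mathbb{R}_{>0}$: the vanishing of its imaginary part forces $\mathcal{C}$ to be affine, while the positivity of the dHYM radius on $\mathcal{X}$ forces $\mathcal{J}$ to be strictly convex along nontrivial geodesics.
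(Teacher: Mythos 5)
Your derivation of the formulas~\eqref{eq: secDerJ} and~\eqref{eq: secDerC}, and your proof that $\mathcal{C}$ is affine, follow the paper's argument essentially verbatim: differentiate $CY_{\mathbb{C}}$ twice, integrate by parts, identify the resulting quantity via Lemma~\ref{lem: geoEqAnn}, and then for $\mathcal{C}$ observe that both the $\epsilon$-geodesic equation and the definition of $\hat\theta$ kill the push-forward of the imaginary part.

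However, there is a genuine gap in your argument that $\mathcal{J}$ is convex. Formula~\eqref{eq: secDerJ} involves the form $\left(\pi_{X}^{*}\omega + \sqrt{-1}\left(\pi_{X}^{*}\alpha+ \DDb\phi\right)\right)^{n+1}$, which does \emph{not} contain the $\epsilon^{2}\sqrt{-1}dt\wedge d\bar t$ term. To relate it to the $\epsilon$-geodesic equation you must expand
\[
\left(\pi_{X}^{*}\omega + \sqrt{-1}\alpha_\phi\right)^{n+1}
= \left(\hat\omega_\epsilon + \sqrt{-1}\alpha_\phi\right)^{n+1}
- (n+1)\epsilon^{2}\sqrt{-1}dt\wedge d\bar t\wedge \pi_{X}^{*}\left(\omega + \sqrt{-1}\alpha_\phi\right)^{n},
\]
and the real part of the second term equals $-(n+1)\epsilon^2\sqrt{-1}dt\wedge d\bar t\wedge r(\phi)\cos\bigl(\Theta_\omega(\alpha_\phi|_X)-\hat\theta\bigr)\omega^n$, which is \emph{negative} since $\phi\in\mathcal{H}$. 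You have correctly observed that the first term contributes $(n+1)R\,\epsilon^2\sqrt{-1}dt\wedge d\bar t\wedge\omega^n$, where $R$ is the $(n+1)$-dimensional radius, but then your ``direct calculation'' gives $\epsilon^2|t|^2\int_X R\,\omega^n$ and silently drops the subtracted correction. The quantity whose sign must be controlled is actually $\bigl(R - r\cos(\Theta_\omega(\alpha_\phi|_X)-\hat\theta)\bigr)$, and its non-negativity is not automatic from $R>0$: it requires comparing the $(n+1)$-dimensional radius against the $n$-dimensional one. The paper establishes this by a determinant/eigenvalue computation in space-time adapted coordinates, showing that $R(\phi)^2 = \det(1_n + \frac{1}{\epsilon}\vec a_0\vec a_0^\dagger + D^2) + \det(\ldots) \geq \det(1_n + D^2) = r(\phi)^2$, and then using $\cos\leq 1$; the cases of equality in this chain are also what yield the strictness statement, which your argument leaves untouched. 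Without this comparison the convexity claim is unjustified.
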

\begin{proof}
Suppose that $\phi(s)$ is a smooth $\epsilon$-regularized geodesic in $\mathcal{H}$.  We compute
\[
\begin{aligned}
\frac{d^{2}}{ds^2}CY_{\mathbb{C}}(\phi(s)) &= \int_{X} \left(\frac{d^2}{ds^2}\phi\right)\left(\omega+\sqrt{-1}\alpha_{\phi(s)}\right)^n \\
&\quad +  \int_{X} \left(\frac{d}{ds}\phi\right)n\left((\omega+\sqrt{-1}\alpha_{\phi(s)})^{n-1}\wedge \sqrt{-1}\cdot \ddb\left(\frac{d}{ds}\phi\right)\right)\\
&=\int_{X} \left(\frac{d^2}{ds^2}\phi\right)\left(\omega+\sqrt{-1}\alpha_{\phi(s)}\right)^n \\
& -\sqrt{-1}\int_{X}n\sqrt{-1}\del\left(\frac{d}{ds}\phi\right)\wedge\dbar\left(\frac{d}{ds}\phi\right)\wedge(\omega+\sqrt{-1}\alpha_{\phi(s)})^{n-1}
\end{aligned}
\]
From this expression one can easily check directly using local coordinates that $\mathcal{J}$ is convex along geodesics, and $\mathcal{C}$ is affine.  However, it is useful to rewrite this expression on $\mathcal{X}$.  Let $\pi_X: \mathcal{X} \rightarrow X$ be the projection to $X$, and $\pi_{\mathcal{A}}$ be the projection to annulus.   Recall that $s= -\log(|t|)$ where $t$ is a coordinate on $\mathbb{C}$.  We can rewrite the above expression using the computation in Lemma~\ref{lem: geoEqAnn} as
\begin{equation}\label{eq: secDerAY}
\sqrt{-1}\del_{t}\del_{\bar{t}}CY_{\mathbb{C}}= -\sqrt{-1}(\pi_{A})_{*}\left(\pi_{X}^{*}\omega + \sqrt{-1}\left(\pi_{X}^{*}\alpha+ \DDb\phi\right)\right)^{n+1}
\end{equation}
where $ (\pi_{\mathcal{A}})_{*}$ denotes the push forward along the fibration $\mathcal{X} \rightarrow \mathcal{A}$ (ie. integration along fibers).  Taking the real and imaginary parts of this expression proves~\eqref{eq: secDerJ} and~\eqref{eq: secDerC}.  In order to establish the convexity of $\mathcal{J}$ along $\epsilon$-regularized geodesics we need to evaluate the sign of
\[
\begin{aligned}
&{\rm Re}\left[e^{-\sqrt{-1}\hat{\theta}}\left(\pi_{X}^{*}\omega \sqrt{-1}\left(\pi_{X}^{*}\alpha+ \DDb\phi\right)\right)^{n+1}\right]\\
&= {\rm Re}\left[e^{-\sqrt{-1}\hat{\theta}}\left(\pi_{X}^{*}\omega + \epsilon^{2}\sqrt{-1}dt\wedge d\bar{t} + \sqrt{-1}\left(\pi_{X}^{*}\alpha+ \DDb\phi\right)\right)^{n+1}\right]\\
&\quad- (n+1)\epsilon^{2}\sqrt{-1}dt\wedge d\bar{t} \wedge \pi_{X}^{*}{\rm Re}\left[e^{-\sqrt{-1}\hat{\theta}}\left(\omega +\sqrt{-1}\left(\alpha+ \ddb\phi\right)\right)^{n}\right].
\end{aligned}
\]
In particular, it suffices to show that the fibre integral of the right hand side is positive.  By~\eqref{eq: defnOperators} we have
\[
\begin{aligned}
 &{\rm Re}\left[e^{-\sqrt{-1}\hat{\theta}}\left(\pi_{X}^{*}\omega + \epsilon^{2}\sqrt{-1}dt\wedge d\bar{t} + \sqrt{-1}\left(\pi_{X}^{*}\alpha+ \DDb\phi\right)\right)^{n+1}\right]\\
 & = (n+1)R(\alpha_{\phi}) \epsilon^{2}\sqrt{-1}dt\wedge d\bar{t}\wedge \omega^{n}
 \end{aligned}
 \]
 where we used that $\Theta_{\hat{\omega}_\epsilon}(\alpha_{\phi}) = \hat{\theta}$ since $\phi$ is an $\epsilon$-geodesic.  Here $R(\alpha_{\phi})$ denotes the radius function computed on $(\mathcal{X},\hat{\omega}_{\epsilon})$.  On the other hand, we have
 \[
 {\rm Re}\left[e^{-\sqrt{-1}\hat{\theta}}\left(\omega + \sqrt{-1}\left(\alpha+ \ddb\phi\right)\right)^{n}\right] = r(\phi)\cos\left(\theta_{\omega}(\alpha_{\phi}|_{X}) - \hat{\theta}\right)\omega^{n}
 \]
 where $(\alpha_{\phi})|_{X}$ denotes the restriction of $\alpha_{\phi}$ to $TX \subset T\mathcal{X}$, and $r(\phi)$ is the radius function computed on $(X, \omega)$.  Fix a point $(p_0,t_0) \in \mathcal{X}$, and choose holomorphic normal coordinates $(w_1,\ldots, w_n)$ for $(X,\omega)$ near $p_0$ so that at $(p_0,t_0)$ we have $\omega_{\bar{j}i} = \delta_{\bar{j}i}$, and $(\alpha + \ddb\phi) = \lambda_{i} \delta_{\bar{j}i}$.  Complete this to a set of coordinates on $\mathcal{X}$ by setting $w_0 =t-t_0$.  We can then write
\[
\alpha +\DDb\phi = \left( \begin{array}{c|ccc}
 	a_{00} & \,& \vec{a}_{0}\,^{\dagger} &\, \\\hline
	\, & \lambda_1 &  \cdots&0 \\
	\vec{a}_{0} &  \vdots & \ddots & \vdots\\
	\, & 0 & \cdots & \lambda_n
\end{array}
\right),
\qquad
\hat{g}_{\epsilon}^{-1} = \left( \begin{array}{c|c}
 	\frac{1}{\epsilon^2} &  0 \\\hline
	0 &  1_n
	\end{array}
\right)
\]
where $1_n$ is the $n\times n$ identity matrix.  By definition $r(\phi)^2 = \prod_{i=1}^{n}(1+\lambda_i^2)$, while
\[
1 + \hat{g}_{\epsilon}^{-1}\alpha_{\phi}\hat{g}_{\epsilon}^{-1} \alpha_{\phi} = \left( \begin{array}{c|c}
 	1+\frac{1}{\epsilon^4}a_{00}^2 + \frac{1}{\epsilon^2}|\vec{a}_0|^2 &\frac{a_{00}}{\epsilon^4}  \vec{a}_{0}\,^{\dagger} + \frac{1}{\epsilon^2}\vec{a}_0\,^{\dagger} D \\\hline
	\frac{a_{00}}{\epsilon^2} \vec{a}_{0} + D\vec{a}_0 &1_n+ \frac{1}{\epsilon^2}\vec{a}_0\vec{a}_0\,^{\dagger}+  D^2
	\end{array}
\right)
\]
The function $R(\phi)^2$ is the determinant of this matrix.  Expanding the determinant along the top row we have
\[
R(\phi)^2 =1\cdot \det\left(1_n+ \frac{1}{\epsilon}\vec{a}_0\vec{a}_0\,^{\dagger}+  D^2\right) + \det\left(1_n + \hat{g}_{\epsilon}^{-1}\alpha_{\phi}\hat{g}_{\epsilon}^{-1} \alpha_{\phi}  \right)
\]
where, in the second expression have abusively written
\[
1_n = \left(\begin{array}{c|c} 0 &0\\\hline
	0 &1_n
	\end{array}
	\right).
\]
Clearly $1_n + \hat{g}_{\epsilon}^{-1}\alpha_{\phi}\hat{g}_{\epsilon}^{-1} \alpha_{\phi}$ is a non-negative definite matrix, and
\[
1_n+ \frac{1}{\epsilon}\vec{a}_0\vec{a}_0\,^{\dagger}+  D^2 \geq 1_n+  D^2.
\]
So $R(\phi) \geq \sqrt{\det(1_n+  D^2)} = r(\phi) \geq r(\phi)\cos\left(\Theta_{\omega}(\alpha_{\phi}|_{X}) - \hat{\theta}\right) $.  It follows that
\[
(\pi_{\mathcal{A}})_{*}{\rm Re}\left[e^{-\sqrt{-1}\hat{\theta}}\left(\pi_{X}^{*}\omega+ \sqrt{-1}\left(\pi_{X}^{*}\alpha+ \DDb\phi\right)\right)^{n+1}\right] \geq 0
\]
in the sense of currents, and so $\mathcal{J}$ is convex along $\epsilon$-geodesics.  Furthermore, if we have equality in the above computation at some point $(p_0,t_0)$ then it is easy to see that we must have $a_{00} =0 =  \vec{a}_{0}$, and $ \theta_{\omega}(\alpha_{\phi}|_{X}) - \hat{\theta}=0$.  It follows that if $\epsilon >0$, and $\mathcal{J}$ is not strictly convex along an $\epsilon$-geodesics, then $\alpha_{\phi}(x,t) = \alpha_{\phi}(x,0)$ is the constant $\epsilon$-geodesic emanating from a solution of dHYM.  Finally, when $\epsilon=0$ one can either compute directly, or take a limit as $\epsilon \rightarrow 0$ in the above argument (though this does not give the strict convexity statement).

Next we show that $\mathcal{C}$ is affine along $\epsilon$-geodesics.  We need to show that
\[
\begin{aligned}
0&=(\pi_{\mathcal{A}})_{*}{\rm Im}\left[e^{-\sqrt{-1}\hat{\theta}}\left(\pi_{X}^{*}\omega + \sqrt{-1}\left(\pi_{X}^{*}\alpha+ \DDb\phi\right)\right)^{n+1}\right]\\
&= (\pi_{\mathcal{A}})_{*}{\rm Im}\left[e^{-\sqrt{-1}\hat{\theta}}\left(\pi_{X}^{*}\omega + \epsilon^{2}\sqrt{-1}dt\wedge d\bar{t} + \sqrt{-1}\left(\pi_{X}^{*}\alpha+ \DDb\phi\right)\right)^{n+1}\right]\\
&\quad- (n+1)\epsilon^{2}\sqrt{-1}dt\wedge d\bar{t} \int_{X}{\rm Im}\left[e^{-\sqrt{-1}\hat{\theta}}\left(\omega +\sqrt{-1}\left(\alpha+ \ddb\phi\right)\right)^{n}\right]
\end{aligned}
\]
By the definition of an $\epsilon$-geodesic the term on the second line is zero, and the term on the third line vanishes by the definition of $\hat{\theta}$.
\end{proof}

As a consequence of Proposition~\ref{prop: convexityOfFuncs} we get a whole $S^{1}$ worth of interesting functionals on the space $\mathcal{H}$; namely $e^{\sqrt{-1}\xi}CY_{\mathbb{C}}$.  These functionals are either convex or concave along $\epsilon$-geodesics depending on the choice of $\xi$.  We point out one further functional which will be useful later on.
\begin{defn}\label{defn: centChargFunc}
Suppose that $\hat{\theta}\in ((n-1)\frac{\pi}{2}, n\frac{\pi}{2})$.  We define the $Z$-functional for $[\alpha], [\omega]$ by
\begin{equation}\label{eq: centChargFunc}
Z(\phi) = e^{-\sqrt{-1}n\frac{\pi}{2}}CY_{\mathbb{C}}
\end{equation}
\end{defn}

The variation of $Z$ at $\phi \in \mathcal{H}$ is given by
\[
\begin{aligned}
\delta Z(\phi) &= \int_{X}(\delta \phi){\rm Re}\left(e^{-\sqrt{-1}\frac{n\pi}{2}}(\omega+\sqrt{-1}\alpha_{\phi})^n\right)\\
&\quad  + \sqrt{-1}  \int_{X}(\delta \phi){\rm Im}\left(e^{-\sqrt{-1}\frac{n\pi}{2}}(\omega+\sqrt{-1}\alpha_{\phi})^n\right). 
\end{aligned}
\]
If $\phi \in \mathcal{H}$ then
\begin{equation}\label{eq: ImZVar}
{\rm Im}\left(e^{-\sqrt{-1}\frac{n\pi}{2}}(\omega+\sqrt{-1}\alpha_{\phi})^n\right) = r(\phi)\sin\left(\theta_{\omega}(\alpha_{\phi}) -\frac{n\pi}{2}\right) \omega^{n}
\end{equation}
is a negative measure.  Furthermore, if $\phi$ is a solution of the deformed Hermitian-Yang-Mills equation, then 
\[
{\rm Re}\left(e^{-\sqrt{-1}\frac{n\pi}{2}}(\omega+\sqrt{-1}\alpha_{\phi})^n\right)
\]
is a positive measure. Writing $Z$ in terms of $\mathcal{C}, \mathcal{J}$ and applying Proposition~\ref{prop: convexityOfFuncs} gives

\begin{cor}\label{cor: Zconcave}
The functionals ${\rm Re}(Z), {\rm Im}(Z)$ are concave along smooth $\epsilon$-geodesics.
\end{cor}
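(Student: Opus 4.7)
The plan is to deduce Corollary \ref{cor: Zconcave} directly from Proposition \ref{prop: convexityOfFuncs} by writing $Z$ as a real linear combination of $\mathcal{C}$ and $\mathcal{J}$ with appropriately signed coefficients, and then using the sign restriction coming from the hypothesis $\hat{\theta}\in((n-1)\pi/2,n\pi/2)$.

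First I would unwind the definitions. From the defining identities $\mathcal{C}=\mathrm{Re}(e^{-\sqrt{-1}\hat{\theta}}CY_{\mathbb{C}})$ and $\mathcal{J}=-\mathrm{Im}(e^{-\sqrt{-1}\hat{\theta}}CY_{\mathbb{C}})$ we have $e^{-\sqrt{-1}\hat{\theta}}CY_{\mathbb{C}}=\mathcal{C}-\sqrt{-1}\mathcal{J}$, and hence
\[
Z=e^{-\sqrt{-1}n\pi/2}CY_{\mathbb{C}}=e^{\sqrt{-1}\delta}(\mathcal{C}-\sqrt{-1}\mathcal{J}),\qquad \delta:=\hat{\theta}-\tfrac{n\pi}{2}.
\]
Taking real and imaginary parts gives
\[
\mathrm{Re}(Z)=\cos(\delta)\,\mathcal{C}+\sin(\delta)\,\mathcal{J},\qquad \mathrm{Im}(Z)=\sin(\delta)\,\mathcal{C}-\cos(\delta)\,\mathcal{J}.
\]

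Next I would invoke the hypothesis $\hat{\theta}\in((n-1)\pi/2,n\pi/2)$, which is equivalent to $\delta\in(-\pi/2,0)$. Consequently $\cos(\delta)>0$ and $\sin(\delta)<0$, so the coefficients of $\mathcal{J}$ in both $\mathrm{Re}(Z)$ and $\mathrm{Im}(Z)$ are strictly negative.

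Finally I would invoke Proposition \ref{prop: convexityOfFuncs}: along a smooth $\epsilon$-geodesic, $\mathcal{C}$ is affine and $\mathcal{J}$ is convex. Any real multiple of an affine function is affine, and a negative multiple of a convex function is concave. Since the sum of an affine function and a concave function is concave, both $\mathrm{Re}(Z)$ and $\mathrm{Im}(Z)$ are concave along smooth $\epsilon$-geodesics, proving the corollary. The argument is purely formal once the convexity/affinity statements of Proposition \ref{prop: convexityOfFuncs} are in hand; the only substantive input is the sign of $\sin(\delta)$, and the ``hard part'' (the pointwise nonnegativity of the pushforward integrand) has already been absorbed into Proposition \ref{prop: convexityOfFuncs}.
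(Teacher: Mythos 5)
Your proposal is correct and is exactly the argument the paper intends: the paper dismisses Corollary~\ref{cor: Zconcave} with ``Writing $Z$ in terms of $\mathcal{C}, \mathcal{J}$ and applying Proposition~\ref{prop: convexityOfFuncs}.'' You have simply carried out the algebra explicitly, correctly identifying $\delta=\hat{\theta}-\tfrac{n\pi}{2}\in(-\pi/2,0)$ as the sign condition from Definition~\ref{defn: centChargFunc} that makes both coefficients of the convex term $\mathcal{J}$ negative.
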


From now on we will restrict to the ``hypercritical phase" case, 
\[
\hat{\theta}\in((n-1)\frac{\pi}{2}, n\frac{\pi}{2}).
\]
This is used crucially in the analysis in Sections~\ref{sec: AnalPrelim}-\ref{sec: AlgObstr}.  In Section~\ref{sec: StabCond} we will comment briefly on the new difficulties and phenomena in the case of lower phase.  We remark that all our results work just as well under the assumption that $\hat{\theta} \in (-n\frac{\pi}{2}, -(n-1)\frac{\pi}{2})$.

As in classical GIT, the way to link existence of solutions of the dHYM equation with algebraic geometry is via the function $\mathcal{J}$, which can be regarded as a Kempf-Ness type functional.  If there is a solution $\phi_0$ of the deformed Hermitian-Yang-Mills equation in $\mathcal{H}$, then for every infinite length (smooth) $(\epsilon)$-geodesic $\phi(s)$ emanating from $\phi_0$ we must have
\[
0 < \lim_{s\rightarrow \infty} \frac{d}{ds}\mathcal{J}(\phi(s)).
\]
In special cases we can evaluate the right hand side as an algebraic invariant, and this gives rise to algebraic obstructions for the existence of solutions to dHYM.  The two main difficulties in executing this approach are the lack of smooth geodesics, and the evaluation of the limit slope in terms of algebraic data.  In this paper we will essentially completely resolve the first issue when $\hat{\theta} \in ((n-1)\frac{\pi}{2}, n\frac{\pi}{2})$, and we will evaluate the slope of the functional at infinity in a rather general setting.  Furthermore, we will explain how the resulting invariants can be interpreted as Bridgeland stability type obstructions.  In dimension $3$ this will give a relatively complete picture relating the existence of solutions to dHYM and Bridgeland type stability conditions.

Before explaining the plan of attack, let us make a few formal remarks about the Riemannian structure and functionals considered here.  Suppose that $[\alpha]$ is a K\"ahler class, and rescale $\omega \mapsto t \omega$ for $t>0$.  Then we get a family of infinite dimensional Riemannian manifolds $(\mathcal{H}_{t}, g_{t})$, and functionals $\mathcal{J}_t, \mathcal{C}_t$.  It is not hard to show that, as $t\rightarrow 0$, we have $\hat{\theta}(t) \rightarrow n\frac{\pi}{2}$, and so in the ``small radius limit" we have
\[
\langle \psi_1,\psi_2 \rangle_{t} \approx \int_{X} \psi_1\psi_2 \alpha_{\phi}^n + O(t)
\]
which is precisely the Donaldson-Mabuchi-Semmes Riemmanian structure on the space of K\"ahler metrics in the class $[\alpha]$.  Similarly we have
\[
d(CY_{\mathbb{C}}(t))(\phi) = (\sqrt{-1})^{n}\int_{X}(\delta\phi)\alpha_{\phi}^n + t(\sqrt{-1})^{n-1}\int_{X}(\delta \phi) n\alpha_{\phi}^{n-1}\wedge \omega + O(t^2)
\]
and so $\mathcal{C}_t$ approaches the classical Calabi-Yau functional, while $\mathcal{J}_t$ approaches the $J$ functional of Donaldson \cite{Do} and Chen \cite{Chen04}.

On the other hand, in the ``large radius limit", as $t\rightarrow +\infty$ we have that $\hat{\theta}\rightarrow 0$, and so
\[
\frac{1}{t^{n}}\langle \psi_1,\psi_2 \rangle_{t} \approx \int_{X} \psi_1\psi_2 \omega^n + O(\frac{1}{t})
\]
and so the Riemannian structure converges to the flat metric, while
\[
\frac{1}{t^n}d(CY_{\mathbb{C}}(t))(\phi) = \int_{X}(\delta\phi)\alpha_{\phi}^n + \frac{1}{t}\sqrt{-1}\int_{X}(\delta \phi) n\omega^{n-1}\wedge\alpha_{\phi} + O(\frac{1}{t^2}).
\]
If $[\alpha]= c_{1}(L)$, the large radius limit yields the Riemannian metric on the space of hermitian metrics on $L$, and $\mathcal{J}_{t}$ converges to the Donaldson functional, which is the Kempf-Ness functional for the infinite dimensional GIT framework related to the Hermitian-Yang-Mills equation (albeit on a line bundle).

The next four sections of this paper will be devoted to proving the existence of smooth $\epsilon$-geodesics, and $C^{1,\alpha}$ geodesic segments in the space $\mathcal{H}$.  Our plan of attack is the following; rescale $\mathbb{C}$ by  $t \mapsto \epsilon t$.  The $\epsilon$-geodesic equation becomes
\begin{equation}\label{eq: thinGeoEq}
{\rm Im}\left[e^{-\sqrt{-1}\hat{\theta}}\left(\pi_{X}^{*}\omega + \sqrt{-1}dt\wedge d\bar{t} + \sqrt{-1}\left(\alpha+ \DDb\phi\right)\right)^{n}\right]=0
\end{equation}
\begin{gather}
 \quad \text{ on } \mathcal{X}_{\epsilon} = X \times \mathcal{A}_{\epsilon} := X\times \{\epsilon e^{-1}\leq |t| \leq \epsilon \}\\
 \text{ with }   \phi_0=\phi\bigg|_{|t|=\epsilon}, \qquad \phi_1=\phi\bigg|_{|t|=e^{-1}\epsilon} 
\end{gather}
where $\phi_0, \phi_1 \in \mathcal{H}$.  In particular, rather than work on a fixed manifold with a degenerating metric, we choose to work with a non-degenerate metric at the expense of working on a very ``thin" manifold.  In order solve the geodesic equation we need to pass to the limit as $\epsilon \rightarrow 0$.  This will be possible if we can prove that the solution of ~\eqref{eq: thinGeoEq} satisfies estimates of the form
\[
\begin{aligned}
&|\nabla^{X}\phi|_{\hat{\omega}} \leq C,\qquad& |\nabla_{t}\phi|_{\hat{\omega}} \leq \frac{C}{\epsilon}\\
|\nabla^{X}\overline{\nabla^{X}}\phi|_{\hat{\omega}} &\leq C,\qquad |\nabla_{t}\overline{\nabla^{X}}\phi|_{\hat{\omega}} &\leq \frac{C}{\epsilon}, \quad |\nabla_{t}\nabla_{\bar{t}} \phi| &\leq \frac{C}{\epsilon^2}
\end{aligned}
\]
where $\nabla^{X}, \overline{\nabla^{X}}$ denote the covariant derivative along the fibers of $\mathcal{X}_{\epsilon} \rightarrow \mathcal{A}_{\epsilon}$, and $C$ is a uniform constant independent of $\epsilon$.  If this is possible, then the rescaled solutions $\tilde{\phi} = \phi(x, \epsilon t)$ will solve the $\epsilon$-geodesic equation on $(\mathcal{X}, \hat{\omega}_{\epsilon})$ and be uniformly bounded with respect to the non-degenerate metric $(\mathcal{X}, \hat{\omega})$.  We can then pass to the limit to obtain weak solutions of the geodesic equation with $C^{1,\alpha}$ regularity.

Before beginning the proof we need a few easy lemmas regarding the geometry of the manifolds $(\mathcal{X}_{\epsilon},  \hat{\omega})$.  Throughout the paper we will use the following terminology.
\begin{defn}
A set of space-time adapted coordinates for $(\mathcal{X}_{\epsilon}, \hat{\omega})$, $\alpha_{\phi}$ centered at $(p_0,t_0)$ is the following.
\begin{itemize}
\item  A set of holomorphic normal coordinates $(w_1,\ldots, w_n)$ for $(X, \omega)$ centered at $p_0$ making $\omega_{\bar{j}i} = \delta_{\bar{j}i}$, and $(\alpha_{\phi})_{\bar{j}i} = \lambda_i \delta_{\bar{j}j}$.
\item The coordinate $w_0 = t- t_0$
\end{itemize}
\end{defn}
Note that space-time adapted coordinates are, in particular, holomorphic normal coordinates for $(\mathcal{X}_{\epsilon}, \hat{\omega})$.

\begin{lem}\label{lem: productMetric}
The manifold $(\mathcal{X}_{\epsilon}, \hat{\omega})$ satisfies the following properties
\begin{enumerate}
\item The Riemann curvature tensor satisfies $R(\del_t, \cdot, \cdot, \cdot) =0$.  In particular, in space-time adapted coordinates we have
\[
R_{\bar{j}i\bar{k}p} = 0
\]
whenever one of $i,j,k,p = 0$.
\item The vector fields $\del_t, \del_{\bar{t}}$ are parallel.
\end{enumerate}
\end{lem}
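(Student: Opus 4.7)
The plan is to exploit the fact that $\hat{\omega} = \pi_X^*\omega + \sqrt{-1}\,dt\wedge d\bar t$ is manifestly a K\"ahler product metric on the K\"ahler product $X \times \mathcal{A}_\epsilon$, where $\mathcal{A}_\epsilon \subset \mathbb{C}$ carries the standard flat metric. Both claims then reduce to the elementary fact that the curvature and Levi-Civita connection of a Riemannian product split.

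For (2), I would simply compute directly in the coordinate $t$ on the $\mathbb{C}$-factor. Since $\hat{g}_{\bar 0 0} = 1$ is constant and $\hat{g}_{\bar j 0} = 0 = \hat{g}_{\bar 0 i}$ for $i,j \geq 1$ (in space-time adapted coordinates, or more generally whenever the $X$-coordinates and the $t$-coordinate are chosen independently), all Christoffel symbols $\Gamma^a_{tb}$ of the Chern (equivalently Levi-Civita) connection vanish identically, not just at a point. Hence $\nabla \partial_t = 0$ everywhere, and by conjugation $\nabla \partial_{\bar t} = 0$.

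For (1), I would deduce it from (2): the Riemann curvature applied to a parallel vector field vanishes, i.e.\ $R(\cdot,\cdot)\partial_t = 0$ and similarly for $\partial_{\bar t}$, so $R_{a\bar b c\bar 0} = 0 = R_{a\bar b \bar 0 c}$ and, using the K\"ahler symmetries $R_{\bar j i \bar k p} = R_{\bar k i \bar j p} = R_{\bar j p \bar k i}$, any component in which one of the four indices equals $0$ must vanish. Alternatively, one can just write out the formula
\[
R_{\bar j i \bar k p} = -\partial_p \partial_{\bar k} \hat{g}_{\bar j i} + \hat{g}^{\bar q r} \partial_p \hat{g}_{\bar j r} \partial_{\bar k} \hat{g}_{\bar q i}
\]
and observe that in space-time adapted coordinates the components $\hat{g}_{\bar j i}$ with any index equal to $0$ are constants (either $0$ or $1$), so every term is killed as soon as one index is $0$.

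I do not anticipate a serious obstacle; the only thing to be mildly careful about is that ``space-time adapted coordinates'' mix a normal coordinate system on $X$ with the global coordinate $t$, so one should verify that the product structure of $\hat\omega$ is preserved by this choice, which is immediate since the coordinate change on the $X$-factor is independent of $t$.
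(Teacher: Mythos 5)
Your proposal is correct and uses essentially the same argument as the paper, which also observes that in space-time adapted coordinates $\hat{g}_{\bar 00}\equiv 1$ and $\hat{g}_{\bar 0 j}\equiv 0$ for $j\neq 0$ on a neighborhood, so that both statements follow immediately from the product structure. The additional detail you supply (deriving (1) from (2) via parallelism, or the explicit curvature formula) is a harmless elaboration of the same computation.
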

\begin{proof}
The proof is trivial.  Pick $(p_0,t_0)$ and choose space-time adapted coordinates $(w_0,\ldots,w_n)$ on an open ball $B$.  Both statements follow from the fact that $g_{\bar{0}0}\equiv 1$ on $B$, $g_{\bar{0}j} \equiv 0$ if $j \ne 0$.
\end{proof}

\section{Analytic Preliminaries}\label{sec: AnalPrelim}

Following the discussion in the previous section, the construction of geodesic segments in the space $\mathcal{H}$ will rely on solving the deformed Hermitian-Yang-Mills equation on thin manifolds with boundary. Let $M$ be a $(n+1)\times (n+1)$ Hermitian matrix. In order to simplify the notation, and to avoid confusion with the Lagrangian phase operator on $X$, we will denote
\[
F(M) = \sum_{i=0}^{n} \arctan(\mu_i)
\]
where $\mu_i$ are the eigenvalues of $M$.  On $\mathcal{X}_{\epsilon}$ we will write $\alpha_{\phi} = \alpha + \DDb \phi$.   Over the next four sections we will prove a priori estimates for functions $\phi: \mathcal{X}_{\epsilon} \rightarrow \mathbb{R}$ such that
\begin{equation}\label{eq: mainEquation}
\begin{aligned}
F\left((\hat{\omega})^{-1}(\alpha_{\phi})\right) &= h(x,|t|)\\
 \text{ with }   \phi_0=\phi\bigg|_{|t|=\epsilon}, &\qquad \phi_1=\phi\bigg|_{|t|=e^{-1}\epsilon}
 \end{aligned}
 \end{equation}
 where $h(x,|t|): \mathcal{X}_{\epsilon} \rightarrow \mathbb{R}$ is some given $S^1$ invariant function, and $\phi_0,\phi_1 \in \mathcal{H}$. We remark that, since the boundary data is clearly $S^1$ invariant it follows from the maximum principle that the solution to~\eqref{eq: mainEquation} is $S^1$ invariant as well.  We will impose three extra mild structural conditions that the data must satisfy:
 
\begin{itemize}
\item[(C1)] There is a constant $\eta_1 >0$ such that
 \begin{equation}\label{eq: strucAss1}
(n-1)\frac{\pi}{2} +\eta_1 \leq \Theta_{\omega}(\alpha_{\phi_{i}}) < n\frac{\pi}{2}, \quad i=0,1.
\end{equation}
\item[(C2)] 
There is a constant $\eta_2>0$ such that $$h: \mathcal{X}_{\epsilon} \rightarrow [(n-1)\frac{\pi}{2} + \eta_1, (n+1)\frac{\pi}{2}- \eta_2]$$ where $h$ is an $S^1$ invariant function satisfying
\begin{equation}\label{eq: strucAss2}
\Theta_{\omega}((\alpha_{\phi_i})) \geq h(x,|t|)-\frac{\pi}{2} + \eta_1
\end{equation}
for $i=0,1$.
\end{itemize}

Condition (C1) is automatic, since the boundar data $\phi_i \in \mathcal{H}$.  Condition (C2) is also automatically satisfied when $h = \hat{\theta}$, but in order to use the method of continuity we need to consider the Lagrangian phase equation with non-constant right hand side.  The estimates will exploit several properties of $F$, the most basic of which are its first and second derivatives.  It is straightforward to compute that, at a diagonal matrix $M$, the linearization $DF$, and the Hessian $D^2F$ are given by
\[
DF(M) (A) = \sum_{i}\frac{1}{1+\mu_i^2}A_{\bar{i}i}, \quad D^{2}F(M)(A,A) = \sum_{i,j=0}^{n}\frac{\mu_i+\mu_i}{(1+\mu_i^2)(1+\mu_j^2)}|A_{\bar{j}i}|^2.
\]
where $A$ is a Hermitian matrix.  The next lemma summarizes the properties of $F$ we will need.

\begin{lem}\label{lem: AngleBasicProps}
Suppose $\mu_0 \geq \mu_1 \geq \cdots \geq \mu_{n}$ are such that $\sum_{i=0}^{n} \arctan(\mu_i) \geq (n-1)\frac{\pi}{2} + \eta_1$ for some $\eta_1>0$.  The following 
properties hold,
\begin{enumerate}
\item $\mu_0 \geq \mu_1 \geq \cdots \geq \mu_{n-1} >0$ and $|\mu_n| \leq \mu_{n-1}$.
\item $\mu_n\leq 0$, then $\mu_n + \mu_{n-1} \geq \tan(\eta)$, and if $\mu_n \geq 0$, then $\mu_{n-1}\geq \tan(\frac{\eta_1}{2})$.
\item $\sum_{i=0}^{n} \mu_i >0$.
\item $\mu_n \geq - C(\eta_1)$.
\item If $\sum_{i=0}^{n+1}\arctan(\mu_i) \leq (n+1)\frac{\pi}{2} -\eta_2$ then $\mu_n \leq C(\eta_2)$.
\item If $\mu_n <0$, then $\sum_{i=0}^{n} \frac{1}{\mu_i} < -\tan(\eta)$.
\item For any $\sigma \in [(n-1)\frac{\pi}{2}, (n+1)\frac{\pi}{2})$, the set
\[
 \Gamma^{\sigma} := \{ M \in {\rm Herm}(n+1) : F(M) \geq \sigma\}
\]
is convex with boundary a smooth, convex hypersurface.
\item There exists a constant $A$ depending on $\eta_1$ such that the function
\[
\mu \longmapsto -e^{-A\sum_i \arctan(\mu_i)}
\]
is concave on the set $\Gamma^{\sigma+\eta_1}$.
\end{enumerate}
\end{lem}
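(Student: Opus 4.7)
The plan is to establish the eight claims in sequence. Items (1)--(5) follow by combining the hypothesis $\sum_i \arctan(\mu_i) \geq (n-1)\pi/2 + \eta_1$ with the trivial bound $\arctan(\mu_i) < \pi/2$. If $\mu_{n-1} \leq 0$, then at most $n-1$ of the arctans are positive, forcing the total sum below $(n-1)\pi/2$ and contradicting the hypothesis; hence $\mu_{n-1} > 0$. Subtracting the other $n-1$ terms from the total yields $\arctan(\mu_{n-1}) + \arctan(\mu_n) \geq \eta_1 > 0$, from which $\mu_{n-1} + \mu_n > 0$ and hence $|\mu_n| \leq \mu_{n-1}$, proving (1). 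The tangent addition formula $\tan(\arctan(a) + \arctan(b)) = (a+b)/(1-ab)$ applied to $\mu_{n-1}, \mu_n$ with $1 - \mu_{n-1}\mu_n \geq 1$ (when $\mu_n \leq 0$), together with the ordering $\mu_{n-1} \geq \mu_n$ in the case $\mu_n \geq 0$, proves (2); item (3) is immediate from (1) and (2). For (4), isolating $\arctan(\mu_n) \geq \eta_1 - \pi/2$ gives $\mu_n \geq -\cot(\eta_1)$; for (5), since $\mu_n$ is the smallest, $\arctan(\mu_n) \leq \frac{1}{n+1}\sum_i \arctan(\mu_i) \leq \pi/2 - \eta_2/(n+1)$ gives $\mu_n \leq \cot(\eta_2/(n+1))$.

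The main obstacle is (6). I substitute $y_i = 1/\mu_i$; using the identity $\arctan(x) + \arctan(1/x) = \operatorname{sgn}(x)\pi/2$, the hypothesis rewrites as $\sum_{i=0}^n \arctan(y_i) \leq -\eta_1$ with $y_0 \leq \cdots \leq y_{n-1}$ positive and $y_n$ negative. I then establish $\sum y_i < -\tan(\eta_1)$ by induction on the number of positive variables. In the base case (one positive, one negative), the tangent addition formula gives $y_0 + y_1 = \tan(s)(1 - y_0 y_1)$ where $s := \arctan(y_0) + \arctan(y_1) \leq -\eta_1$; since $y_0 y_1 < 0$ the factor $1 - y_0 y_1 > 1$, and $\tan(s) \leq -\tan(\eta_1) < 0$ yields $y_0 + y_1 < -\tan(\eta_1)$ strictly. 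For the inductive step, I first observe $\arctan(y_0) \leq -\eta_1 + \pi/2$ (using $\arctan(y_n) > -\pi/2$ and the remaining $\arctan(y_i) \geq 0$), so $y_0 < \cot(\eta_1)$ and $\eta_1' := \eta_1 + \arctan(y_0) \in (\eta_1, \pi/2)$. Removing $y_0$ yields a subsystem satisfying $\sum_{i \geq 1}\arctan(y_i) \leq -\eta_1'$; the inductive hypothesis gives $\sum_{i\geq 1} y_i < -\tan(\eta_1')$, and the identity $\tan(\eta_1') = (\tan(\eta_1) + y_0)/(1 - y_0\tan(\eta_1)) \geq y_0 + \tan(\eta_1)$ (valid since $y_0\tan(\eta_1) \in (0,1)$) closes the induction: $\sum y_i = y_0 + \sum_{i\geq 1}y_i < y_0 - \tan(\eta_1') \leq -\tan(\eta_1)$.

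Item (7) is the classical convexity of the superlevel sets $\Gamma^\sigma$ of the Lagrangian phase operator for $\sigma \geq (n-1)\pi/2$, going back to Yuan and Wang-Yuan; the smoothness of the boundary follows because $DF$ does not vanish. For (8), compute $G''(B,B) = A e^{-AF}(F''(B,B) - A(DF(B))^2)$ for $G := -e^{-AF}$: item (7) gives $F''(B,B) \leq 0$ whenever $DF(B) = 0$, while the bound $\mu_n \geq -\cot(\eta_1)$ from (4) combined with the decay of $f^{ii} = 1/(1+\mu_i^2)$ as $\mu_i \to \infty$ provides a uniform upper bound on the ratio $F''(B,B)/(DF(B))^2$ on $\Gamma^{\sigma+\eta_1}$ in directions transverse to level sets. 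Taking $A$ larger than this uniform bound yields $G'' \leq 0$ throughout $\Gamma^{\sigma+\eta_1}$.
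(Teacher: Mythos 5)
Your treatment of items (1)--(7) is correct and notably more self-contained than the paper's, which simply cites \cite{Yuan, WY} for (1), (3), (4), (6), (7), gives a short proof of (2) identical to yours, declares (5) trivial, and cites \cite{CPW} for (8). In particular your inductive proof of (6), substituting $y_i=1/\mu_i$, reducing to the two-variable tangent addition formula, and peeling off the smallest positive variable, is a clean self-contained alternative to the cited reference.

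Item (8) has a genuine gap, and it is instructive because it identifies the true role of item (6). Concavity of $-e^{-AF}$ is equivalent to $F''(B,B)\leq A\,(DF(B))^2$ for \emph{every} direction $B$. The two facts you assemble --- $F''\big|_{\ker DF}\leq 0$ from quasi-concavity (item (7)), and a bound on the ratio $F''(B,B)/(DF(B))^2$ ``in directions transverse to level sets'' --- do not imply this: the cross terms $F''(B_\parallel,B_\perp)$ between tangential and normal parts are not controlled by quasi-concavity, and a quadratic form that is nonpositive on a subspace and bounded on a complement can still violate $F''\leq A\,DF\otimes DF$ globally. More concretely, the quantitative inputs you invoke (the decay of $f^{ii}$ and the bound $\mu_n\geq-\cot\eta_1$ from (4)) are insufficient. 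Restricting to diagonal variations, setting $b_i=1/(1+\mu_i^2)$ and $w_i=b_iv_i$, a Lagrange multiplier computation gives, when $\mu_n<0$,
\[
\sup_{B}\ \frac{F''(B,B)}{(DF(B))^2}\;=\;\frac{2}{\ \dfrac{1}{|\mu_n|}-\displaystyle\sum_{i<n}\frac{1}{\mu_i}\ },
\]
and it is exactly item (6), i.e.\ $\sum_{i<n}1/\mu_i + 1/\mu_n<-\tan\eta_1$, that bounds the denominator below by $\tan\eta_1$ and yields the admissible choice $A=2\cot\eta_1$. Using only (2) and (4) --- $\mu_i\geq\tan\eta_1+|\mu_n|$ for $i<n$ and $|\mu_n|\leq\cot\eta_1$ --- the denominator $1/|\mu_n|-\sum_{i<n}1/\mu_i$ can become arbitrarily small or negative as soon as $|\mu_n|>\tan(\eta_1)/(n-1)$, which is allowed. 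So your sketch of (8) omits item (6), which is the essential input in \cite{CPW}.
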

\begin{proof}
We refer the reader to \cite{Yuan, WY} for properties (1), (3), (4), (6), (7), while property (8) was observed by the first author with Picard and Wu in \cite{CPW}.  Property (2) is implicit in \cite{Yuan}.  First note that
\[
(n-1)\frac{\pi}{2} + \arctan(\mu_{n-1}) + \arctan(\mu_n) \geq \sum_{i=0}^{n} \arctan(\mu_i) \geq (n-1)\frac{\pi}{2} + \eta_1
\]
and hence $\arctan(\mu_{n-1}) + \arctan(\mu_n) \geq \eta_1$.  Now if $\mu_n<0$, the sum on the left hand side of the inequality lies in $[\eta_1, \frac{\pi}{2})$, and so by the arctan addition formula we get
\[
\arctan\left(\frac{\mu_{n-1}+\mu_n}{1-\mu_{n-1}\mu_{n}}\right) \geq \eta_1
\]
Now $1-\mu_{n-1}\mu_{n} >1$, so we obtain $\mu_{n-1}+\mu_n \geq \tan(\eta_1)$.  On the other hand, if $\mu_n\geq 0$, then we have $2\arctan(\mu_{n-1}) \geq \eta_1$, which is the desired estimate.  Property (5) is trivial.
\end{proof}

Fix the following notation. Let $\Gamma_{n+1} = \mathbb{R}^{n+1}_{>0}$ be the positive orthant, and let  $\Gamma \subset \mathbb{R}^{n+1}$ be the cone over the set $\{ \mu \in \mathbb{R}^{n+1} : F(\mu) \geq (n-1)\frac{\pi}{2}\}$ and through the origin.  By Lemma~\ref{lem: AngleBasicProps} (3) and (6), $\Gamma$ is an open convex cone contained in 
\[
\{\mu \in \mathbb{R}^{n+1}: \sum_{i=0}^{n} \mu_i >0 \}
\]
The following definition is due to Sz\'ekelyhidi \cite{Sz}, building on work of Guan \cite{Guan1}.

\begin{defn}
A smooth function $\underline{\phi} :\mathcal{X}_{\epsilon} \rightarrow \mathbb{R}$ is a $\mathcal{C}$-subsolution of~\eqref{eq: mainEquation} if, at each point $p$ the following holds:  Define $\underline{\mathcal{E}}^{i}_{j} = (\hat{\omega})^{i\bar{k}}(\alpha_{\phi})_{\bar{k}j}$.  Then the set
\[
\left\{ \mu' \in \Gamma : \sum_{i=0}^{n}\arctan(\mu_i) = h(p), \quad \text{ and } \mu'-\mu(\underline{\mathcal{E}}(p)) \in \Gamma_{n+1} \right\}
\]
is bounded.  Here $\mu(\underline{\mathcal{E}}(p))$ denotes the eigenvalues of $\underline{\mathcal{E}}(p)$.
\end{defn}

The following lemma from \cite{CJY} gives a simple criterion for a function $\underline{\phi}$ to be a $\mathcal{C}$-subsolution.
\begin{lem}
A function $\underline{\phi}$ is a $\mathcal{C}$-subsolution of the equation~\eqref{eq: mainEquation} if the following holds;  at each point $p \in \mathcal{X}_{\epsilon}$, if $\mu_0,\ldots,\mu_n$ denote the eigenvalues of $\hat{\omega}^{-1}\alpha_{\underline{\phi}}$, then, for any $j=0,\ldots,n$ we have
\[
\sum_{\ell \ne j} \arctan(\mu_{\ell}) > h(p)-\frac{\pi}{2}.
\]
\end{lem}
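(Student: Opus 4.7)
The plan is to argue by contradiction. Fix $p \in \mathcal{X}_{\epsilon}$, write $\mu = \mu(\underline{\mathcal{E}}(p))$ for the eigenvalue vector appearing in the definition, and suppose that the set
\[
S := \left\{ \mu' \in \Gamma : \sum_{i=0}^{n}\arctan(\mu'_i) = h(p),\quad \mu' - \mu \in \Gamma_{n+1} \right\}
\]
is unbounded. Since every $\mu' \in S$ satisfies the fixed finite lower bound $\mu'_i \geq \mu_i$ coordinatewise, unboundedness forces some coordinate to tend to $+\infty$ along a sequence $\mu^{(k)} \in S$. After passing to a subsequence I may therefore fix an index $j$ with $\mu^{(k)}_j \to +\infty$.

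The next step is to use the phase constraint $\sum_i \arctan(\mu^{(k)}_i) = h(p)$ together with the elementary fact $\arctan(\mu^{(k)}_j) \to \tfrac{\pi}{2}$ to conclude
\[
\sum_{\ell \ne j} \arctan(\mu^{(k)}_\ell) \;=\; h(p) - \arctan(\mu^{(k)}_j) \;\longrightarrow\; h(p) - \tfrac{\pi}{2}.
\]
On the other hand, monotonicity of $\arctan$ together with $\mu^{(k)}_\ell \geq \mu_\ell$ for every $\ell$ supplies the uniform lower bound $\sum_{\ell \ne j} \arctan(\mu^{(k)}_\ell) \geq \sum_{\ell \ne j} \arctan(\mu_\ell)$, and the hypothesis of the lemma makes the right-hand side \emph{strictly} greater than $h(p) - \tfrac{\pi}{2}$. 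Letting $k \to \infty$ then gives
\[
h(p) - \tfrac{\pi}{2} \;\geq\; \sum_{\ell \ne j} \arctan(\mu_\ell) \;>\; h(p) - \tfrac{\pi}{2},
\]
which is the desired contradiction, so $S$ must be bounded and $\underline{\phi}$ is a $\mathcal{C}$-subsolution.

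The only real subtlety, and hence what I would call the main obstacle, is recognizing that the strict inequality in the hypothesis is exactly what is needed: it provides a fixed positive gap between $\sum_{\ell \ne j} \arctan(\mu_\ell)$ and the limiting value $h(p) - \tfrac{\pi}{2}$, and this gap is precisely what is forced whenever a single eigenvalue is sent to $+\infty$ while the total phase is pinned at $h(p)$. No deeper structural properties of $F$ from Lemma~\ref{lem: AngleBasicProps} — convexity of $\Gamma^{\sigma}$, lower bounds on $\mu_n$, or the concavity trick for $-e^{-AF}$ — are needed here; only the monotonicity of $\arctan$, its limit at $+\infty$, and the coordinatewise bound $\mu' \geq \mu$.
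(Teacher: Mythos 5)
Your argument is correct, and since the paper only cites \cite{CJY} for this lemma without reproducing its proof, there is no in-paper proof to compare against; your proof is the standard one used in that reference and in Sz\'ekelyhidi's work. The key observation — that pinning the total phase at $h(p)$ while sending one eigenvalue to $+\infty$ forces the remaining sum down to $h(p)-\tfrac{\pi}{2}$, which the strict hypothesis rules out — is exactly right, and you correctly identify that monotonicity of $\arctan$, its limit at $+\infty$, and the coordinatewise bound are the only ingredients needed. One very minor point of precision: $\Gamma_{n+1}=\mathbb{R}^{n+1}_{>0}$ in the paper's notation, so the constraint $\mu'-\mu\in\Gamma_{n+1}$ gives the strict inequality $\mu'_i>\mu_i$ rather than $\geq$; this only strengthens your lower bound and does not affect the argument, but it is worth stating correctly.
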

\begin{rk}
Trivially, if $\underline{\phi}$ is a subsolution satisfying $F((\hat{\omega})^{-1}\alpha_{\underline{\phi}}) \geq h(x)$, then $\underline{\phi}$ is a $\mathcal{C}$-subsolution.
\end{rk}

The main property of $\mathcal{C}$-subsolutions that we will need is the following

\begin{lem}[Sz\'ekelyhidi  \cite{Sz}, Proposition 6]\label{lem: SzCsub}
Let $[a,b] \subset ((n-1)\frac{\pi}{2}, (n+1)\frac{\pi}{2})$, and $\delta, R >0$.  There exists a constant $\kappa_0 >0$, depending only on $a,b, R, \delta$ with the following property.  Suppose that $\sigma \in [a,b]$ and $A$ is a Hermitian matrix such that
\[
(\mu(A) -2\delta I + \Gamma_{n+1}) \cap \del \Gamma^{\sigma} \subset B_{R}(0).
\]
Then for any Hermitian matrix $M$ with $\mu(M) \in \del \Gamma^{\sigma}$, and $|\mu(M)|>R$ we either have
\[
\sum_{p,q}F^{p\bar{q}}(M)\left(A_{\bar{q}p}-M_{\bar{q}p}\right) > \kappa_0 \sum_{p}F^{p\bar{p}}(M)
\]
or 
$F^{i\bar{i}}(M) > \kappa_0 \sum_{p}F^{p\bar{p}}(M)$ for all $0\leq i \leq n$.  Here $F^{p\bar{q}}$ are the coefficients of the linearized operator of $F$.
\end{lem}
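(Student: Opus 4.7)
The plan is to reduce to the case where $M$ is diagonal, use the convexity of $\Gamma^\sigma$ (Lemma~\ref{lem: AngleBasicProps}(7)) to produce a supporting hyperplane inequality at $\mu(M)$, and combine this with the Schur--Horn theorem and the rearrangement inequality to pass from the eigenvalues of $A$ to the diagonal entries of $A$ in the frame diagonalizing $M$.

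Since $F$ depends only on eigenvalues, $F^{p\bar q}$ is unitarily equivariant; after conjugation I may assume $M=\mathrm{diag}(\mu_0,\ldots,\mu_n)$ with $\mu_0\ge\cdots\ge\mu_n$. Then $F^{p\bar q}(M)=\delta_{pq}f_p$ with $f_p:=(1+\mu_p^2)^{-1}$, the quantity to estimate reads $\sum_p f_p(A_{\bar p p}-\mu_p)$, and $\sum_p F^{p\bar p}(M)=\sum_p f_p$. By Lemma~\ref{lem: AngleBasicProps}(1),(2),(4), on $\partial\Gamma^\sigma$ with $\sigma\ge a>(n-1)\frac{\pi}{2}$ we have $\mu_0,\ldots,\mu_{n-1}\ge 0$ and $\mu_n\ge -C(a)$, so the weights satisfy $f_0\le f_1\le\cdots\le f_n$ with $f_n$ bounded away from $0$.

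By the convexity of $\Gamma^\sigma$, the vector $\nabla F(M)=(f_p)$ is outward normal to $\partial\Gamma^\sigma$ at $\mu(M)$, so the tangent hyperplane inequality $\sum_p f_p(\nu_p-\mu_p)\ge 0$ holds for every $\nu\in\overline{\Gamma^\sigma}$. The hypothesis $|\mu(M)|>R$ together with $(\mu(A)-2\delta I+\Gamma_{n+1})\cap\partial\Gamma^\sigma\subset B_R(0)$ forces $\mu(M)\notin \mu(A)-2\delta I+\Gamma_{n+1}$, so some coordinate $j$ satisfies $\mu_j<\mu(A)_j-2\delta$ (here $\mu(A)$ is sorted decreasingly). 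I then select $\xi\in\Gamma_{n+1}$ minimizing $\langle f,\xi\rangle$ subject to $\nu:=\mu(A)-2\delta I+\xi\in\partial\Gamma^\sigma$; the hypothesis gives $|\nu|\le R$, so $\xi$ is bounded in terms of $R,\delta,|\mu(A)|$. Substituting $\nu$ into the tangent inequality yields
\[
\sum_p f_p\bigl(\mu(A)_p-\mu_p\bigr)\ \ge\ 2\delta\sum_p f_p \ -\ \langle f,\xi\rangle.
\]
Finally, Schur--Horn majorizes $(A_{\bar p p})$ by $(\mu(A)_p)$, and the rearrangement inequality applied to the matched orderings $(f_p)\uparrow$, $(\mu(A)_p)\downarrow$ gives $\sum_p f_p A_{\bar p p}\ge\sum_p f_p\mu(A)_p$, so the estimate survives with $A_{\bar p p}$ in place of $\mu(A)_p$.

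The dichotomy then closes as follows. If $\langle f,\xi\rangle\le\delta\sum_p f_p$, the first alternative holds with $\kappa_0\asymp\delta$. Otherwise the optimizing $\xi$ must concentrate on a subset of indices on which the Lagrange condition forces $f_p$ to match the gradient $\nabla F(\nu)$; since $\nu\in B_R$, the latter components of $\nabla F(\nu)$ are uniformly bounded below, and this transfers to a uniform lower bound $f_i\ge\kappa_0\sum_p f_p$ for \emph{every} $i$, which is the second alternative. The principal obstacle is making this final step quantitative and uniform in $\sigma\in[a,b]$; I would do this by a contradiction and compactness argument: if no uniform $\kappa_0$ existed, extract a sequence $(\sigma_k,M_k,A_k)$ with $\kappa_0=1/k$ failing both alternatives, so that some eigenvalues of $M_k$ diverge while others converge (using Lemma~\ref{lem: AngleBasicProps}(4),(5) for lower and upper control), and derive a contradiction from the limiting supporting hyperplane, using that the condition on $A_k$ and $\sigma_k\to\sigma_\infty\in[a,b]$ pass to the limit. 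The delicate point is tracking the "large" eigenvalue directions in this limit, and in particular ruling out the scenario where $\xi$ is forced into a direction of small $f_p$ of uncontrolled length.
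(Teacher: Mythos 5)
The paper itself does not prove this lemma; it is cited verbatim as Proposition~6 of Sz\'ekelyhidi \cite{Sz}, so there is no proof in the paper to compare against. That said, your outline reproduces the first two ingredients of the standard argument correctly: after diagonalizing $M$ so that $F^{p\bar q}(M)=f_p\delta_{pq}$ with $f_p=(1+\mu_p^2)^{-1}$, the convexity of $\Gamma^\sigma$ gives the supporting-hyperplane inequality $\sum_p f_p(\nu_p-\mu_p)\ge 0$ for $\nu\in\overline{\Gamma^\sigma}$, and the Schur--Horn/rearrangement step replacing $\mu(A)_p$ by $A_{\bar pp}$ is valid because $(f_p)$ and $(\mu(A)_p)$ are oppositely ordered. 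You also correctly dispose of the case $\mu(A)-2\delta I\in\overline{\Gamma^\sigma}$, where one takes $\xi=0$ and gets the first alternative outright with $\kappa_0=2\delta$.

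The gap is in the dichotomy. Your appeal to a Lagrange condition on the optimizing $\xi$, and the fallback compactness-and-contradiction argument, are exactly the point you flag as ``delicate'' and leave open, and indeed the compactness argument runs into the problem you identify of controlling the large-eigenvalue directions. What makes the dichotomy closable \emph{without} compactness is an observation you do not use: on $\partial\Gamma^\sigma$ with $\sigma\ge a>(n-1)\frac{\pi}{2}$, Lemma~\ref{lem: AngleBasicProps}(1) gives $|\mu_n|\le\mu_{n-1}\le\cdots\le\mu_0$, so $\mu_0^2\ge\mu_p^2$ for all $p$ and hence $f_0=\min_p f_p$. One should therefore \emph{not} minimize $\langle f,\xi\rangle$ over the whole admissible set, but take $\xi$ concentrated on $e_0$: choose $t^*\ge 0$ with $\nu=\mu(A)-2\delta I+t^*e_0\in\partial\Gamma^\sigma$ (admissible since, in the remaining case, $\mu(A)-2\delta I$ lies below $\partial\Gamma^\sigma$ and the ray exits by the $\mathcal{C}$-subsolution structure). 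The containment in $B_R(0)$ bounds $\nu$, and, together with the lower bound $\mu'_n\ge-C(a)$ from Lemma~\ref{lem: AngleBasicProps}(4), bounds $\mu(A)$ and hence $t^*$ purely in terms of $a,R,\delta$. The supporting hyperplane then gives
\[
\sum_p f_p\bigl(\mu(A)_p-\mu_p\bigr)\ \ge\ 2\delta\sum_p f_p\ -\ f_0\,t^*,
\]
and the dichotomy is now immediate: either $f_0\,t^*\le\delta\sum_p f_p$, giving the first alternative with $\kappa_0=\delta$; or $f_0>(\delta/t^*)\sum_p f_p$, and since $f_i\ge f_0$ for all $i$ this is precisely the second alternative with $\kappa_0=\delta/t^*$. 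No compactness or contradiction is needed, and the constant is manifestly uniform for $\sigma\in[a,b]$.
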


Note that if $\underline{\phi}$ is a given subsolution, then it is a $\mathcal{C}$-subsolution, and for each point $p \in \mathcal{X}_{\epsilon}$ we can choose $\delta =\delta(p)>0$, and $R=R(p)>0$ depending only on $|\DDb\underline{\phi}(p)|_{\hat{\omega}}$, $a, b$ so that we have
\[
\left[\mu((\hat{\omega})^{-1}\alpha_{\underline{\phi}}) -2\delta I + \Gamma_{n+1}\right] \cap \del \Gamma^{\sigma} \subset B_{R}(0)
\]
for all $\sigma \in [a,b]$.  In particular, the constants $\kappa_0 = \kappa_0(p), R(p)$ depend only on $|\DDb\underline{\phi}(p)|_{\hat{\omega}}$, $a, b$.

In order to solve~\eqref{eq: mainEquation} on manifolds with boundary, it is essential to construct a subsolution of the equation with given boundary data.  In the case of domains in $\mathbb{C}^n$ this was explained by the first author, Picard and Wu \cite{CPW}, building on work of Guan \cite{Guan2}.  We prove the following

\begin{lem}\label{lem: subsolConstr}
Fix two functions $\phi_0, \phi_1 \in \mathcal{H}$, and a function $h: \mathcal{X}_{\epsilon} \rightarrow \mathbb{R}$. Suppose that this data satisfies structural conditions (C1) and (C2) (see~\eqref{eq: strucAss1} and~\eqref{eq: strucAss2}).  Then there exists a smooth, $S^1$-invariant function $\underline{\phi}$ such that $\underline{\alpha} := \pi_{X}^{*}\alpha+\ddb\underline{\phi}$ satisfies
\begin{itemize}
\item[(i)] $F(\underline{\alpha}) \geq h(p)+ \frac{\eta_1}{2}$ for all $p \in \mathcal{X}_{\epsilon}$.
\item[(ii)] $\underline{\phi}|_{\{|t|=\epsilon\}} = \phi_0$
\item[(iii)] $\underline{\phi}|_{\{|t|=e^{-1}\epsilon\}} = \phi_1$
\item[(iv)] On $\del \mathcal{X}_{\epsilon}$ we have
\[
\bigg|\frac{\del}{\del |t|}\underline{\phi}\bigg| \leq \frac{C}{\epsilon}, \qquad \bigg|\frac{\del}{\del |t|}\underline{\phi}\bigg| \leq \frac{C}{\epsilon^2}
\]
for a constant $C$ depending only on $\eta_1$ and $\|\phi_0-\phi_1\|_{L^{\infty}(X)}$.
\end{itemize}
\end{lem}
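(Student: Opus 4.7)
The plan is to write $\underline{\phi}$ as a linear interpolation of $\phi_0, \phi_1$ plus a convex ``bump'' in the radial direction. Set $s = s(t) := -\log(|t|/\epsilon) \in [0,1]$ (so $s=0$ on $\{|t|=\epsilon\}$ and $s=1$ on $\{|t|=\epsilon e^{-1}\}$) and define
\[
\underline{\phi}(x,t) \;:=\; (1-s)\phi_{0}(x) \;+\; s\,\phi_{1}(x) \;-\; K\,s(1-s),
\]
with $K\gg 1$ a constant to be determined. This function is manifestly $S^1$-invariant, smooth on $\mathcal{X}_\epsilon$, and satisfies the boundary conditions (ii), (iii). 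The estimates in (iv) follow directly by differentiating $s$ in $|t|$ and bounding the explicit expression, yielding $|\partial_{|t|}\underline{\phi}| \le C(K+\|\phi_0-\phi_1\|_\infty)/\epsilon$ and $|\partial_{|t|}^2\underline{\phi}|\le C(K+\|\phi_0-\phi_1\|_\infty)/\epsilon^2$ on the two boundary circles.

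To verify (i), compute $\underline{\alpha} := \pi_X^{*}\alpha + \DDb\underline{\phi}$ at a point $(p_0,t_0)\in\mathcal{X}_\epsilon$ in holomorphic normal coordinates $(w_0,\ldots,w_n)$ for $(\mathcal{X}_\epsilon,\hat\omega)$ with $w_0 = t-t_0$. Using $\partial_t s = -1/(2t)$, $\partial_{\bar t}s = -1/(2\bar t)$ and $\partial_t\partial_{\bar t}s = 0$, a direct calculation gives
\[
\underline{\alpha}(p_0,t_0)\;=\;\begin{pmatrix} K/(2|t_0|^2) & v^{*}\\ v & D\end{pmatrix},
\]
where $D := (1-s)\alpha_{\phi_0}(p_0) + s\,\alpha_{\phi_1}(p_0)$ and $v_j = -\partial_{\bar j}(\phi_1-\phi_0)/(2t_0)$. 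Since $\hat\omega$ is the identity in these coordinates, the eigenvalues of $\hat\omega^{-1}\underline{\alpha}$ coincide with those of the displayed block matrix.

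Next I would combine concavity of the Lagrangian phase operator with (C2) to control $\Theta_\omega(D)$. By (C1), both $\alpha_{\phi_0}$ and $\alpha_{\phi_1}$ lie in the convex set $\Gamma^{(n-1)\pi/2+\eta_1}$, hence so does $D$. Applying Lemma~\ref{lem: AngleBasicProps}(8) (concavity of $-e^{-A\Theta_\omega}$) together with condition (C2) $\Theta_\omega(\alpha_{\phi_i})\ge h(x,|t|)-\pi/2+\eta_1$ yields
\[
\Theta_\omega(D) \;\ge\; h(p_0,|t_0|) \;-\; \tfrac{\pi}{2} \;+\; \eta_1.
\]
Finally, a Schur complement / Weyl perturbation analysis of the block matrix is the main input. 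The key observation is that $|v|^{2}$ and the top-left entry $C := K/(2|t_0|^2)$ both scale like $|t_0|^{-2}$, so
\[
|v|^{2}/C \;=\; \|\nabla(\phi_0-\phi_1)\|_\infty^{2}/(2K)\;=\;O(K^{-1})
\]
\emph{uniformly in $\epsilon$}. Therefore the largest eigenvalue $\mu_0$ satisfies $\arctan(\mu_0)\ge \pi/2-O(|t_0|^2/K)$, while the remaining $n$ eigenvalues are within $O(K^{-1})$ of the eigenvalues of $D$. Putting everything together,
\[
F(\hat\omega^{-1}\underline{\alpha})\;\ge\;\tfrac{\pi}{2}+\Theta_\omega(D)-O(K^{-1})\;\ge\; h(p_0,|t_0|)+\eta_1 - O(K^{-1}),
\]
which exceeds $h+\eta_1/2$ once $K$ is taken large enough depending only on $\eta_1$ and $\|\phi_0-\phi_1\|_{C^1}$, proving (i).

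The main obstacle is precisely the cancellation of $\epsilon^{-1}$ scalings in the Schur complement step: without the miraculous match between the $|t_0|^{-2}$ factor in $C$ and the $|t_0|^{-2}$ in $|v|^2$, the correction from the off-diagonal block would blow up as $\epsilon\to 0$ and one could not choose $K$ independently of $\epsilon$. This is the essential mechanism that distinguishes the Lagrangian phase equation on the thin annulus from a naive Monge--Amp\`ere-style construction.
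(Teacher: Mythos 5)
Your proof is essentially correct, but it takes a genuinely different route from the paper's. The paper constructs two separate functions $\psi_0,\psi_1$ of the form $\phi_i(x) + A_i(|t|^2 - c_i) \pm C_i\log(|t|^2/\epsilon_i^2)$; each has $\DDb\psi_i = \ddb\phi_i + A_i\sqrt{-1}\,dt\wedge d\bar t$ exactly block-diagonal (because $\log|t|^2$ is pluriharmonic in $t$ and $\phi_i$ is $t$-independent), so $F(\alpha_{\psi_i}) = \Theta_\omega(\alpha_{\phi_i}) + \arctan(A_i)$ with no cross terms to worry about; the two are then glued via Demailly's regularized maximum, using the convexity of the super-level sets $\Gamma^\sigma$ for $\sigma \geq (n-1)\pi/2$ (the paper's small lemma immediately before). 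Your construction is a single linear-plus-quadratic interpolation in $s = -\log(|t|/\epsilon)$, which necessarily produces nonzero $t$--$X$ cross terms $v_j = -\partial_{\bar j}(\phi_1-\phi_0)/(2t_0)$; you then control these via the scale-invariant ratio $|v|^2/C = |\nabla(\phi_1-\phi_0)|^2/(2K)$ and the Caffarelli--Nirenberg--Spruck perturbation lemma (Lemma~\ref{lem: CNS}). This works, but note two things. First, Lemma~\ref{lem: CNS} gives $|\mu_i - d_i| < \epsilon_0$ provided $\sum|b_i|^2/a < \delta(\epsilon_0)$ with $\delta$ depending on the $d_i$'s; it is not literally an $O(K^{-1})$ rate, so your phrasing should be softened to ``can be made $< \epsilon_0$ by taking $K$ large depending on $\epsilon_0$, $\|\phi_i\|_{C^2}$,'' and the resulting constant in (iv) then depends on $\|\phi_i\|_{C^2}$ rather than merely $\|\phi_0-\phi_1\|_{L^\infty}$ as the paper achieves (this is still ``uniform'' in the paper's sense, so not a fatal gap, but it is a weaker statement than Lemma~\ref{lem: subsolConstr}(iv) as stated). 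Second, to show $\Theta_\omega(D)\ge h -\pi/2+\eta_1$ you only need Lemma~\ref{lem: AngleBasicProps}(7) (convexity of $\Gamma^\sigma$, applied on $X$ for $\sigma\ge(n-2)\pi/2$, which holds since $h-\pi/2+\eta_1 \ge (n-2)\pi/2 + 2\eta_1$ by (C2)), not (8); invoking the exponential concavity lemma is a detour. Net comparison: the paper's gluing approach entirely avoids cross terms and gets tighter constant dependence, at the cost of introducing the regularized-maximum machinery; your direct interpolation is perhaps more transparent but requires the eigenvalue-perturbation lemma and yields slightly coarser constants.
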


Before proving the lemma, we prove a fairly general result which we hope will be of use elsewhere.  Let us first recall Demailly's regularized maximum construction \cite{DemBook}.  Fix $\theta: \mathbb{R} \rightarrow \mathbb{R}$ a smooth, even, positive function with support in $[-1,1]$ and such that
\[
\int_{[-1,1]}\theta(x)dx =1.
\]
For $\delta>0$, and $(t_0,t_1)\in \mathbb{R}^2$ define
\[
M_{\delta}(t_0,t_1) = \int_{\mathbb{R}^2}\max\{t_0+h_0, t_1+h_1\} \frac{1}{\delta}\theta\left(\frac{h_0}{\delta}\right)\frac{1}{\delta}\theta\left(\frac{h_1}{\delta}\right) dh_0dh_1.
\]
The function $M_{\delta}$ has the following properties \cite{DemBook}
\begin{itemize}
\item $M_{\delta}$ is non-decreasing in $t_i$ for $i=0,1$, smooth and convex.
\item $\max\{t_0,t_1\} \leq M_{\delta}(t_0, t_1) \leq \max\{t_0+\delta, t_1+\delta\}$
\item if $t_1+\delta \leq t_0-\delta$ then $M_{\delta}(t_0, t_1) = t_0$, and vice versa.
\item $M_{\delta}(t_0+a, t_1+a) = M_{\delta}(t_0,t_1)+a$.  In particular
\[
\nabla_{(1,1)}M_{\delta} = \frac{\del M_{\delta}}{\del t_0} + \frac{\del M_{\delta}}{\del t_1} =1.
\]
\end{itemize}

Let $\psi_0, \psi_1$ be two $C^{2}$ functions.  We are going to compute two derivatives of $M(\psi_0,\psi_1)$.  We compute
\[
\dbar M_{\delta}(\psi_0, \psi_1) = \frac{\del M_{\delta}}{\del t_0}(\psi_0, \psi_1) \dbar \psi_0 +  \frac{\del M_{\delta}}{\del t_1}(\psi_0, \psi_1)\dbar\psi_1
\]
and so
\begin{equation}\label{eq: 2derDemMax}
\del\dbar M_{\delta}(\psi_0, \psi_1) = \frac{\del M_{\delta}}{\del t_0} \del\dbar \psi_0 +  \frac{\del M_{\delta}}{\del t_1}\del\dbar\psi_1 + (\del \psi_0, \del \psi_1) D^{2}M (\del \psi_0, \del \psi_1)^{\dagger}.
\end{equation}
We now make two observations.  First, by the convexity of $M$, the second term on the right hand side is non-negative. Secondly, by  combining the first and last properties of $M_{\delta}$ the first term on the right hand of~\eqref{eq: 2derDemMax} is a convex combination of $\del\dbar \psi_0$, and $\del\dbar\psi_1$.  We conclude the following very general lemma.
\begin{lem}
Suppose $F: {\rm Herm}(n)\rightarrow \mathbb{R}$ is an elliptic operator such that $\{F \geq 0\}$ is convex.  Suppose $\psi_0, \psi_1$ are two functions satisfying $F(\del\dbar \psi_i) \geq 0$ for $i=0,1$.  Then
\[
F\left(\del\dbar M_{\delta}(\psi_0, \psi_1)\right) \geq 0
\]
\end{lem}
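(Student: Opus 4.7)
The plan is to finish the argument that is already laid out in the excerpt by combining the pointwise decomposition \eqref{eq: 2derDemMax} with ellipticity of $F$. Fix a point $p$ and write
\[
\del\dbar M_{\delta}(\psi_0,\psi_1)(p) = A + B,
\]
where $A := \frac{\del M_{\delta}}{\del t_0}(\psi_0(p),\psi_1(p))\,\del\dbar\psi_0(p) + \frac{\del M_{\delta}}{\del t_1}(\psi_0(p),\psi_1(p))\,\del\dbar\psi_1(p)$ and $B := (\del\psi_0,\del\psi_1)\,D^{2}M_\delta\,(\del\psi_0,\del\psi_1)^{\dagger}(p)$. By the two observations noted in the excerpt (the monotonicity and normalization $\partial_{t_0}M_\delta + \partial_{t_1}M_\delta = 1$, $\partial_{t_i}M_\delta \ge 0$, together with convexity of $M_\delta$), the Hermitian matrix $A$ is a convex combination of $\del\dbar\psi_0(p)$ and $\del\dbar\psi_1(p)$, and $B$ is positive semi-definite. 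To see the latter cleanly one diagonalizes $D^{2}M_\delta = O^{T}\Lambda O$ with $\Lambda = \mathrm{diag}(\lambda_0,\lambda_1)$, $\lambda_i \ge 0$, obtaining
\[
B = \sum_{k=0}^{1}\lambda_k\,\del\eta_k\wedge\dbar\eta_k,\qquad \eta_k := \sum_{i}O_{ki}\psi_i,
\]
which is a sum of non-negative $(1,1)$-forms, hence corresponds to a positive semi-definite Hermitian matrix.

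To conclude, apply the convexity of $\{F\ge 0\}$ to the two points $\del\dbar\psi_i(p) \in \{F \ge 0\}$ to deduce $F(A) \ge 0$. Then use ellipticity of $F$: along the ray $s\mapsto F(A+sB)$ the derivative is $F^{i\bar j}(A+sB)\,B_{\bar j i} \ge 0$ since $F^{i\bar j}$ is positive definite and $B$ is positive semi-definite, so $s\mapsto F(A+sB)$ is non-decreasing and therefore
\[
F\bigl(\del\dbar M_{\delta}(\psi_0,\psi_1)(p)\bigr) = F(A+B) \ge F(A) \ge 0.
\]
Since $p$ was arbitrary, this gives the conclusion. There is no real obstacle here: the only mildly delicate point is the identification of the quadratic-form term as a positive semi-definite Hermitian matrix, which is handled by the diagonalization above; everything else is a direct application of the structural properties of $F$ already granted in the hypothesis.
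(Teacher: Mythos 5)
Your proof is correct and follows essentially the same strategy as the paper's own (very terse) argument: decompose $\del\dbar M_\delta(\psi_0,\psi_1)$ via~\eqref{eq: 2derDemMax} into a convex combination of $\del\dbar\psi_0$, $\del\dbar\psi_1$ plus a positive semi-definite Hermitian term, then use ellipticity to drop the PSD term and convexity of the super-level set to conclude. You merely fill in two details the paper leaves implicit (the diagonalization showing the quadratic-form term is PSD Hermitian, and the explicit monotonicity-along-a-ray argument for ellipticity), which is fine.
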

\begin{proof}
By the above computation, using the ellipticity of $F$ we have
\[
F\left(\del\dbar M_{\delta}(\psi_0, \psi_1)\right) \geq F(t\del\dbar \psi_0 +(1-t)\del\dbar \psi_1)
\]
for some $0\leq t \leq1$. Now $F(\del\dbar \psi_i) \geq 0$ and $\{F \geq 0\}$ convex implies the result.
\end{proof}

\begin{rk}
The result also holds fo real functions, with $\del\dbar \psi_i$ replaced by $D^2\psi_i$, provided $F$ is elliptic on the symmetric matrices and $\{F \geq 0\}$ is convex.
\end{rk}

We can now give the proof of Lemma~\ref{lem: subsolConstr}.  
\begin{proof}[Proof of Lemma~\ref{lem: subsolConstr}]
Consider the function
\[
\psi_{0}(x,t) = \phi_0(x) + A_0(|t|^2-\epsilon^2) + C_0\log\left(\frac{|t|^2}{\epsilon^2}\right)
\]
Observe that $\psi_0(x,t) = \phi_0(x)$ whenever $|t|=\epsilon$.  Furthermore we have
\[
\DDb\psi_0 = \ddb\phi_0 + A_0 \sqrt{-1}dt\wedge d\bar{t}
\]
and so
\[
F(\alpha_{\psi_0}) = \Theta_{\omega}(\alpha_{\phi_0}) + \arctan(A_0)
\]
and so if we choose $A_0$ sufficiently large so that $\arctan(A_0) >\frac{\pi}{2}-\frac{\eta_1}{2}$, then
\[
F(\alpha_{\psi_0}) \geq h(x,|t|) - \frac{\pi}{2}+\eta_1 + \frac{\pi}{2}-\frac{\eta_1}{2}  =h(x,|t|) + \frac{\eta_1}{2}.
\]
Similarly define 
\[
\psi_1(x,t) =  \phi_1(x) + A_1(|t|^2-e^{-2}\epsilon^2) - C_1\log\left(\frac{e^{2}|t|^2}{\epsilon^2}\right).
\]
Note that $\psi_1(x,t) =  \phi_1(x)$ whenever $|t|=e^{-1}\epsilon$.  A similar computation shows that we can choose $A_1$ large depending only on $\eta_1$ so that $F(\alpha_{\psi_1}) \geq h(x, |t|) + \frac{\eta_{1}}{2}$. Now we have
\[
\psi_{0}(x,t)|_{\{|t|=e^{-1}\epsilon\}} = \phi_0(x) -A_0\epsilon^{2}(1-e^{-2})-2C_0
\]
Choose $C_0 \geq \|\phi_0 -\phi_1\|_{L^{\infty}(X)}+1$, so that
\[
\psi_{0}(x,t)|_{\{|t|=e^{-1}\epsilon\}} \leq \phi_1(x)-1 = \psi_1(x)|_{\{|t|=e^{-1}\epsilon\}} -1.
\]
Similarly
\[
\psi_{1}(x,t)|_{\{|t|=\epsilon\}} = \phi_1(x) +A_1\epsilon^{2}(1-e^{-2})-2C_1
\]
Choosing $C_1$ large depending on $\|\phi_0-\phi_1\|_{L^{\infty}(X)}$, and $A_1$ we can ensure that
\[
\psi_{1}(x,t)|_{\{|t|=\epsilon\}} \leq \phi_0 -1 =  \psi_0(x)|_{\{|t|=\epsilon\}} -1
\]
Now set $\underline{\phi} = M_{\delta}(\psi_0,\psi_1)$ for $0<\delta \ll 1$.  Since the sublevel sets of the angle operator $\Gamma^{\sigma}$ are convex when $\sigma \geq (n-1)\frac{\pi}{2}$, we can apply the above construction using Demailly's regularized maximum to conclude that $\underline{\phi}$ satisfies $(i)$.  Furthermore, $\underline{\phi}$ satisfies properties $(ii), (iii)$ and the first part of $(iv)$ of the lemma by construction.  It remains only to prove the estimates in $(iv)$.  For this, note that for $\delta\ll 1$, we have that
\[
\begin{aligned}
\underline{\phi} &= \psi_0 \text{ in an open neighbourhood of } |t|=\epsilon\\
\underline{\phi} &= \psi_1 \text{ in an open neighbourhood of } |t|=e^{-1}\epsilon
\end{aligned}
\]
and so we only need to check the claimed estimates for $\phi_0, \phi_1$.  The estimates are automatic from the formulas for $\psi_i$, and the choice of the constants $A_i, C_i$ above.
\end{proof}

\begin{rk}
A word of caution is in order here.  By~\eqref{eq: 2derDemMax} and the construction of $\underline{\phi}$ it is easy to see that $\DDb\underline{\phi}$ is {\em not} bounded above independent of $\epsilon$ on the whole of $\mathcal{X}_{\epsilon}$. 
\end{rk}

Because of this remark it is often more convenient to work with the functions $\psi_0, \psi_1$ constructed above.  In particular, we note the following corollary. 

\begin{cor}\label{cor: intSubSol}
Fix functions $\phi_0, \phi_1 \in \mathcal{H}$, and a function $h: \mathcal{X}_{\epsilon} \rightarrow \mathbb{R}$. Suppose that this data satisfies structural conditions (C1) and (C2) (see~\eqref{eq: strucAss1} and~\eqref{eq: strucAss2}). Then there exist smooth, $S^1$-invariant functions $\hat{\phi}_{i}$ for $i=0,1$ such that $\hat{\phi}_i$, and $\hat{\alpha}_i := \pi_{X}^{*}\alpha+\DDb\hat{\phi}_i$ have the following properties;
\begin{itemize}
\item[$(0)$] $\|\hat{\phi}_i\|_{L^{\infty}(\mathcal{X}_{\epsilon})}$ is bounded by a constant depending only on $\eta_1$, $\phi_0, \phi_1$.
\item[$(i)$] $\hat{\phi}_{0} = \phi_0$ on the set $|t|=\epsilon$, and $\hat{\phi}_{0} < \phi_1$ on $|t|=e^{-1}\epsilon$.
\item[$(ii)$] $\hat{\phi}_{1} = \phi_1$ on the set $|t|=e^{-1}\epsilon$, and $\hat{\phi}_{1} < \phi_0$ on $|t|=\epsilon$.
\item[$(iii)$] $F(\hat{\alpha}_{i}) \geq h(x,t)+ \frac{\eta_1}{2}$ for $i=0,1$
\item[$(iv)$] For each $i=0,1$, $\nabla_{t}\hat{\alpha}_i = \nabla_{\bar{t}}\hat{\alpha}_i =0$, and $|\nabla \hat{\alpha}_i|_{\hat{\omega}} \leq C$ for a constant $C$ depending only on $\|\phi_i\|_{C^{3}(X,\omega)}$.
\item[$(v)$] For each $i=0,1$, $|\DDb \hat{\phi}_i|_{\hat{\omega}}$ is controlled uniformly in terms of $\eta_1, |\ddb \phi_i|_{L^{\infty}(X,\omega)}$
\item[$(vi)$] For each $i=0,1$, $\sup_{\mathcal{X}_{\epsilon}}|\nabla^{X}\hat{\phi}_{i}|_{\hat{g}} = \sup_{X}|\nabla^{X}\phi_{i}|_{g}$
\item[$(vii)$] $(\hat{\alpha}_{i})_{\bar{t}X}=(\hat{\alpha}_{i})_{\bar{X}t}=0$.
\item[$(viii)$] Near $\del X_{\epsilon}$ we have
\[
\bigg|\frac{\del}{\del |t|}\underline{\phi}\bigg| \leq \frac{C}{\epsilon}, \qquad \bigg|\frac{\del}{\del |t|}\underline{\phi}\bigg| \leq \frac{C}{\epsilon^2}
\]
for a constant $C$ depending only on $\eta_1$ and $\|\phi_0-\phi_1\|_{L^{\infty}(X)}$.
\end{itemize}
\end{cor}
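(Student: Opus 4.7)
The plan is to observe that the two auxiliary barriers $\psi_0, \psi_1$ constructed in the proof of Lemma~\ref{lem: subsolConstr} already satisfy every property listed in the corollary, and so I will simply set $\hat\phi_i := \psi_i$. Concretely,
\[
\hat\phi_0(x,t) = \phi_0(x) + A_0\bigl(|t|^2 - \epsilon^2\bigr) + C_0 \log\left(\tfrac{|t|^2}{\epsilon^2}\right),
\]
\[
\hat\phi_1(x,t) = \phi_1(x) + A_1\bigl(|t|^2 - e^{-2}\epsilon^2\bigr) - C_1 \log\left(\tfrac{e^{2}|t|^2}{\epsilon^2}\right),
\]
with $A_i$ chosen (depending only on $\eta_1$) so that $\arctan A_i > \frac{\pi}{2} - \frac{\eta_1}{2}$, and $C_i$ chosen (depending on $A_i$ and $\|\phi_0 - \phi_1\|_{L^\infty(X)}$) so that strict inequality holds on the opposite boundary, exactly as in the proof of Lemma~\ref{lem: subsolConstr}.

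The reason these individual functions satisfy the stronger conclusions of the corollary (while the regularized maximum $\underline\phi$ does not) is that the $\log$ term contributes nothing to $\DDb$ away from $t=0$. An explicit computation yields
\[
\DDb \hat\phi_0 \;=\; \pi_X^{*}\ddb\phi_0 \;+\; A_0\,\sqrt{-1}\,dt\wedge d\bar t,
\]
so that $\hat\alpha_0 = \pi_X^{*}\alpha_{\phi_0} + A_0\sqrt{-1}\,dt\wedge d\bar t$ is manifestly a pullback from $X$ plus a constant-coefficient form in $t$, with no mixed $(\bar t, X)$ components. This immediately yields properties (iv), (v), and (vii), and shows that in space-time adapted coordinates the eigenvalues of $\hat g^{-1}\hat\alpha_0$ are exactly the eigenvalues of $\omega^{-1}\alpha_{\phi_0}$ augmented by $A_0$, so that
\[
F\bigl(\hat g^{-1}\hat\alpha_0\bigr) = \Theta_{\omega}(\alpha_{\phi_0}) + \arctan A_0.
\]
Combined with structural condition (C2) and the choice of $A_0$, this gives (iii); the analysis for $\hat\phi_1$ is identical.

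The remaining properties I would verify as routine consequences of the explicit formulas. The $L^\infty$ bound (0) follows from $|t|\in[\epsilon e^{-1},\epsilon]$. Properties (i), (ii) hold because $|t|^2 - \epsilon^2$ and $\log(|t|^2/\epsilon^2)$ vanish on $\{|t|=\epsilon\}$, while strict inequality on the opposite boundary is built into the choice of $C_i$. Property (vi) is immediate because the $t$-dependent corrections are independent of $x$. For (viii), differentiating $|t|^2$ and $\log|t|^2$ in the radial direction produces contributions of order $\epsilon$ and $1/\epsilon$ respectively, with second radial derivatives of order $1$ and $1/\epsilon^2$, giving the claimed bounds with $C$ depending only on $\eta_1$ and $\|\phi_0-\phi_1\|_{L^\infty(X)}$. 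There is really no substantive analytic obstacle here: the entire content of the corollary is the recognition that the separate barriers built in the proof of Lemma~\ref{lem: subsolConstr} already have the uniformly bounded Hessians and the product structure demanded by properties (iv)--(vii), and that it is the subsequent Demailly regularized-maximum step---needed there to produce a \emph{single} function matching both boundary data---which destroys the uniform bound on $\DDb\underline\phi$ but is unnecessary if one is willing to work with two separate one-sided barriers.
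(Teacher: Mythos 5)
Your proposal is correct and is essentially identical to the paper's proof: the paper also sets $\hat\phi_i = \psi_i$ from the proof of Lemma~\ref{lem: subsolConstr} and reads off each property from the explicit formula $\hat\alpha_i = \pi_X^*\alpha_{\phi_i} + A_i\sqrt{-1}\,dt\wedge d\bar t$. Your write-up simply spells out the verifications that the paper leaves as ``automatic.''
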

\begin{proof}
Take $\hat{\phi}_i = \psi_i$ in proof of Lemma~\ref{lem: subsolConstr}.  Then $(0), (i), (ii), (iii),(v), (vi), (viii)$ hold automatically, and we have
\[
\hat{\alpha} = \alpha+\ddb\phi_0 +A_0\sqrt{-1}dt\wedge d\bar{t}
\]
which implies propert $(vii)$.  Property $(iv)$ follows from the fact that $(X_{\epsilon}, \hat{\omega})$ is the product of $(X,\omega)$ with a flat factor.
\end{proof}
\begin{rk}\label{rk: uniConst}
The advantage of the subsolutions $\hat{\phi}_{i}$ is that the constants $R, \kappa_0$ appearing in Lemma~\ref{lem: SzCsub} can be chosen depending only on the data $\eta_1$, and the $C^2$ norm of the boundary data.
\end{rk}

We finish with the following simple estimate.

\begin{prop}\label{prop: c0bound}
Suppose $\phi: \mathcal{X}_{\epsilon}\rightarrow \mathbb{R}$ solves~\eqref{eq: mainEquation}, and suppose $\underline{\phi}$ is a subsolution of~\eqref{eq: mainEquation}.  Then we have
\[
\underline{\phi} \leq \phi \leq \max\{\|\phi_0\|_{L^{\infty}(X)}, \|\phi_{1}\|_{L^{\infty}(X)}\} + C\epsilon^{2}
\]
for a constant $C$ depending only on $\omega, \alpha$.
\end{prop}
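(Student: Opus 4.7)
The plan is to prove both inequalities by a comparison principle for $F$. This comparison principle works because the derivative $DF$ is strictly positive definite on Hermitian matrices: in diagonal form, $DF|_{\operatorname{diag}(\mu_i)} = \operatorname{diag}(1/(1+\mu_i^2))$. So for any two smooth functions $\psi_1, \psi_2$ on $\mathcal{X}_\epsilon$ one can write
\[
F(\alpha_{\psi_1}) - F(\alpha_{\psi_2}) = L(\psi_1 - \psi_2),
\]
where $L = \bigl(\int_0^1 DF|_{s\alpha_{\psi_1} + (1-s)\alpha_{\psi_2}}\, ds\bigr)\circ \DDb$ is a uniformly elliptic linear second-order operator. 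The classical maximum principle applied to $L$ then gives the following comparison: if $F(\alpha_{\psi_1}) \geq F(\alpha_{\psi_2})$ on $\mathcal{X}_\epsilon$ and $\psi_1 \leq \psi_2$ on $\partial\mathcal{X}_\epsilon$, then $\psi_1 \leq \psi_2$ everywhere.

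For the lower bound, I will simply apply this comparison with $\psi_1 = \underline{\phi}$ and $\psi_2 = \phi$: the subsolution hypothesis gives $F(\alpha_{\underline{\phi}}) \geq h = F(\alpha_\phi)$, and the matching Dirichlet data $\underline{\phi}|_{\partial\mathcal{X}_\epsilon} = \phi|_{\partial\mathcal{X}_\epsilon}$ supplies the boundary inequality, so $\underline{\phi} \leq \phi$ on $\mathcal{X}_\epsilon$ is immediate.

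For the upper bound, the idea is to construct an explicit barrier. Setting $M := \max\{\|\phi_0\|_{L^\infty(X)}, \|\phi_1\|_{L^\infty(X)}\}$, I will use
\[
\overline{\phi}(x,t) := M + B(\epsilon^2 - |t|^2),
\]
where $B > 0$ depends only on $\omega,\alpha$, to be fixed below. On the boundary one has $\overline{\phi}|_{|t|=\epsilon} = M$ and $\overline{\phi}|_{|t|=e^{-1}\epsilon} = M + B\epsilon^2(1 - e^{-2})$, both of which dominate $\phi$. The key computation is $\DDb \overline{\phi} = -B\sqrt{-1}\,dt\wedge d\bar{t}$, so using the block-diagonal form of $\hat{\omega}$ in space-time adapted coordinates the eigenvalues of $\hat{\omega}^{-1}\alpha_{\overline{\phi}}$ are the eigenvalues $\lambda_1(x),\ldots,\lambda_n(x)$ of $\omega^{-1}\alpha$ together with $-B$, yielding
\[
F(\alpha_{\overline{\phi}})(x,t) = \Theta_\omega(\alpha)(x) - \arctan(B).
\]
Since $X$ is compact and each $\arctan < \pi/2$ strictly, $\Theta_\omega(\alpha)$ attains a maximum $\Theta_{\max} < n\pi/2$ on $X$, hence $\Theta_{\max} - (n-1)\pi/2 < \pi/2$, and one can choose a finite $B$ with $\arctan(B) > \Theta_{\max} - (n-1)\pi/2$, depending only on $\omega$ and $\alpha$. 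With this choice $F(\alpha_{\overline{\phi}}) < (n-1)\pi/2 \leq h = F(\alpha_\phi)$ pointwise on $\mathcal{X}_\epsilon$, and the comparison principle applied with $\psi_1 = \phi, \psi_2 = \overline{\phi}$ gives $\phi \leq \overline{\phi}$. Since $\epsilon^2 - |t|^2 \leq \epsilon^2$ on $\mathcal{X}_\epsilon$, this produces $\phi \leq M + B\epsilon^2$ as required.

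The only delicate point is arranging for $B$ to be independent of the boundary data and of $\eta_1$: this works precisely because the upper bound $\Theta_\omega(\alpha) < n\pi/2$ is a statement only about $\omega,\alpha$ and compactness of $X$, while the weaker consequence $h \geq (n-1)\pi/2$ of condition (C2) is all that is needed on the right-hand side of the contradictory inequality. No finer information on $h$, $\eta_1$, $\eta_2$, or $\phi_0, \phi_1$ enters the constant.
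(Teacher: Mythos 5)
Your proof is correct, but the upper bound is handled by a genuinely different mechanism than the paper's. The paper instead invokes Lemma~\ref{lem: AngleBasicProps}~(3), which says $\sum_i \mu_i > 0$ for any Hermitian matrix with Lagrangian phase above $(n-1)\frac{\pi}{2}$; applied to $\hat\omega^{-1}\alpha_\phi$ this yields the \emph{linear} differential inequality $\Delta_{\hat\omega}\phi \geq -\Tr_\omega\alpha \geq -C$, so that $\phi + C|t|^2$ is $\hat\omega$-subharmonic and attains its maximum on $\del\mathcal{X}_\epsilon$, where it is at most $M + C\epsilon^2$. That argument only needs the ordinary maximum principle for the Laplacian. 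You instead build an explicit supersolution $\overline\phi = M + B(\epsilon^2 - |t|^2)$, compute $F(\alpha_{\overline\phi}) = \Theta_\omega(\alpha) - \arctan(B)$ via the block-diagonal structure of $\hat\omega$, force this below $(n-1)\frac{\pi}{2} \leq h$ by compactness of $X$, and then run a comparison principle for the fully nonlinear operator $F$ itself. Both routes are sound and both produce a constant depending only on $(\omega,\alpha)$; the paper's is a bit more economical in that it reduces everything to the linear Laplacian and sidesteps having to exhibit a supersolution of $F$ or re-derive its monotonicity, while yours makes the barrier completely explicit and isolates precisely which part of (C2) — namely $h \geq (n-1)\frac{\pi}{2}$ — is needed. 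One minor imprecision in your write-up: you call the averaged linearization $L$ ``uniformly elliptic''; its eigenvalues $1/(1+\mu_i^2)$ are only bounded below once you restrict to compact $\mathcal{X}_\epsilon$ with smooth $\psi_1,\psi_2$, so you should say the ellipticity constants depend on the two functions being compared — which is harmless here, but worth stating, since uniformity is not what the argument actually uses.
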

\begin{proof}
The bound $\phi \geq \underline{\phi}$ follows from the comparison principle.  On the other hand, by Lemma~\ref{lem: AngleBasicProps} (3), we have
\[
\Delta_{\hat{\omega}}\phi \geq - \Tr_{\omega}\alpha \geq -C.
\]
It follows that the function $\phi + C|t|^2$ is subharmonic and hence achieves its maximum on the boundary.  The estimate follows.
\end{proof}

In order to prove the desired estimates, we need to proceed with extreme care, ensuring at every step that constants appearing in the estimate are independent of $\epsilon$.  To ease the presentation we introduce the following terminology.

\begin{defn}
A constant $C$ is uniform if it is independent of $\epsilon$, and is invariant after rescaling $t \mapsto \epsilon^{-1}t$.
\end{defn}

For example, constants depending only on the boundary data $\phi_i$, $(X,\omega)$, the constants $\eta_1, \eta_2$ in~\eqref{eq: strucAss1}~\eqref{eq: strucAss2}, and the norms of spatial derivatives $|(\nabla^{X})^{k}(\overline{\nabla^{X}})^r h|_{\hat{\omega}}$ are {\em uniform}.  On the other hand, constants depending on $|\nabla^{X} \nabla_{\bar{t}}h|_{\hat{\omega}}$, and $|\nabla_{t}\nabla_{\bar{t}} h|_{\hat{\omega}}$ are not uniform, since the norms (measured with respect to $\hat{\omega}$) rescale (unless $h$ is a constant).

\section{The $C^1$ estimates}\label{sec: C1est}

The goal of this section is to prove an a priori gradient estimate for solutions of~\eqref{eq: mainEquation}.  We begin with a uniform spatial gradient estimate.  In fact, this estimate can be deduced from the interior spatial $C^2$ estimate proved in Section~\ref{sec: C2est}.  However, we include it since it may be independent interest, and we expect it to be applicable to the study of geodesic rays in $\mathcal{H}$.
\begin{prop}\label{prop: spaceGradEst}
Suppose $\phi$ solves~\eqref{eq: mainEquation} for boundary data $\phi_i \in \mathcal{H}$ for $i=0,1$, and with $\phi_0, \phi_1, h$ satisfying~\eqref{eq: strucAss1},~\eqref{eq: strucAss2}.  Then there exists a uniform constant $C$ so that
\[
|\nabla^{X}\phi|_{\hat{g}} \leq C
\]
\end{prop}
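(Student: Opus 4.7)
The plan is a standard maximum principle argument applied to an auxiliary function
\[
Q = \log W - A(\phi - \hat{\phi}_0),
\]
where $W := |\nabla^{X}\phi|^{2}_{\hat{g}} = g^{k\bar{\ell}}\phi_{k}\phi_{\bar{\ell}}$ (spatial indices only), $\hat{\phi}_0$ is the structured subsolution from Corollary~\ref{cor: intSubSol}, and $A$ is a large uniform constant to be chosen. First I will dispose of the boundary case: on $\partial\mathcal{X}_{\epsilon} = X \times \{|t| = \epsilon\} \sqcup X \times \{|t| = e^{-1}\epsilon\}$ we have $\phi \equiv \phi_0$ or $\phi_1$, so $|\nabla^{X}\phi|_{\hat{g}}$ restricted to the boundary equals $|\nabla\phi_{i}|_{g}$, which is uniformly bounded since $\phi_0,\phi_1 \in \mathcal{H}$ are fixed smooth functions. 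Combined with the uniform $C^{0}$ bound from Proposition~\ref{prop: c0bound} and the uniform bound on $\hat{\phi}_0$ (Corollary~\ref{cor: intSubSol}(0)), it suffices to bound $Q$ at an interior maximum.

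Next I would compute at an interior maximum $p_0 = (x_0,t_0)$ of $Q$ in space-time adapted coordinates diagonalizing $\hat{g}^{-1}\alpha_{\phi}$ with eigenvalues $\mu_0,\ldots,\mu_n$. Letting $L = \sum_{a} F^{a\bar{a}}\nabla_{\bar{a}}\nabla_{a}$ denote the linearized operator, the condition $\nabla Q = 0$ gives $W_{a}/W = A(\phi - \hat{\phi}_0)_{a}$ for every index $a$, and $LQ \leq 0$. Expanding $L(\log W)$ in the usual way, using the product structure from Lemma~\ref{lem: productMetric} (which kills mixed curvature terms), commuting covariant derivatives, and then differentiating the equation $F(\hat{g}^{-1}\alpha_{\phi}) = h$ once in a spatial direction $k$ to replace $\sum_{a} F^{a\bar{a}}\phi_{k a\bar{a}}$ by $h_{k} - F^{a\bar{a}}\alpha_{a\bar{a},k}$ plus curvature of $(X,\omega)$, I will obtain a lower bound of the form
\[
L(\log W) \,\geq\, \frac{1}{W}\sum_{a,k} F^{a\bar{a}}\bigl(|\phi_{ka}|^{2} + |\phi_{k\bar{a}}|^{2}\bigr) \;-\; \frac{C}{\sqrt{W}}\!\sum_{a} F^{a\bar{a}} \;-\; A^{2} F^{a\bar{b}}(\phi-\hat{\phi}_0)_{a}(\phi-\hat{\phi}_0)_{\bar{b}} \;-\; C,
\]
where the Cauchy--Schwarz/Young step absorbs the $\sum_{k}\phi_{\bar{k}}(h_{k} - F^{a\bar{a}}\alpha_{a\bar{a},k})$ term.

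The remaining step is the subsolution/concavity argument. From $\hat{\phi}_0$ being a strict $\mathcal{C}$-subsolution (Corollary~\ref{cor: intSubSol}(iii)) and the uniform constants of Remark~\ref{rk: uniConst}, Lemma~\ref{lem: SzCsub} gives a dichotomy at $p_0$: either $L(\hat{\phi}_0 - \phi) \geq \kappa_0 \sum_{a} F^{a\bar{a}}$, in which case the $-AL(\phi-\hat{\phi}_0)$ term in $LQ$ contributes a coercive $A\kappa_0\sum F^{a\bar{a}}$; or every $F^{a\bar{a}} \geq \kappa_0 \sum_{b} F^{b\bar{b}}$, so all eigendirections are uniformly elliptic and the good term $W^{-1}\sum_{a,k}F^{a\bar{a}}|\phi_{ka}|^{2}$ dominates after using the critical-equation identity $\nabla Q = 0$ to convert spatial second derivatives into $A^{2}|(\phi-\hat\phi_0)_k|^2$. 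Combining the two cases with $LQ \leq 0$ and choosing $A$ large, one obtains an upper bound $W(p_0) \leq C$, hence a uniform bound on $Q$, hence on $W$ globally.

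The main obstacle is ensuring uniformity in $\epsilon$. Because $W$ only involves spatial derivatives, the bad $O(1/\epsilon)$ behavior of $\phi_{t}$ does not enter directly; what one must watch is (i) the term $A^{2}F^{t\bar{t}}|(\phi-\hat{\phi}_0)_{t}|^{2}$, which could a priori involve $\hat{\phi}_{0,t} = O(1/\epsilon)$ near $\partial\mathcal{X}_{\epsilon}$, and (ii) the quantity $F^{t\bar{t}}$ controlled via the equation. This is handled by exploiting the product structure: the subsolution of Corollary~\ref{cor: intSubSol} satisfies $(\hat{\alpha}_0)_{\bar{t}X} = 0$ and $\nabla_{t}\hat{\alpha}_0 = 0$, so the $t$-direction contribution decouples cleanly, and the bad $\hat{\phi}_{0,t}$ only occurs where $F^{t\bar t}$ is multiplied by a $t$-derivative of $\phi$ that itself can be controlled by the equation together with Lemma~\ref{lem: AngleBasicProps}(3)--(4) bounding $\mu_n$ from below (so $F^{t\bar{t}} \leq C$). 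Since the spatial derivatives of $h$ and of $\alpha$ are uniformly bounded and all ambient curvature terms that mix $X$ and $t$ directions vanish by Lemma~\ref{lem: productMetric}, every constant entering the estimate is uniform in the sense of Definition~5.8, yielding the claimed uniform bound.
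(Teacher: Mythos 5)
Your overall strategy matches the paper's: apply the maximum principle to $\log|\nabla^{X}\phi|^{2}$ plus an auxiliary function of $\phi-\hat\phi_{0}$, exploit the product structure of $(\mathcal{X}_{\epsilon},\hat\omega)$ to kill mixed curvature terms, and split at the maximum point according to the $\mathcal{C}$-subsolution dichotomy of Lemma~\ref{lem: SzCsub}. However, there is a genuine gap in your handling of the second branch of the dichotomy, and it comes from your choice of auxiliary function.

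You take $Q=\log W - A(\phi-\hat\phi_{0})$, i.e.\ an affine function $H(u)=-Au$ with $H''\equiv 0$. The paper instead uses a Phong--Sturm type function $\gamma(\phi)=B\phi-\frac{1}{\phi+C_{4}}$ that is \emph{strictly concave} with $\gamma''=-\frac{2}{(\phi+C_{4})^{3}}$ bounded away from zero (given the uniform $C^{0}$ bound). The extra term $-\gamma''(\phi)\,|\nabla\phi|^{2}_{F}>0$ that this produces is not cosmetic: it is the only coercive term available in the regime where all eigenvalues $\mu_{i}$ of $\hat\omega^{-1}\alpha_{\phi}$ are bounded (your case ``$F^{a\bar a}\geq\kappa_{0}\sum F^{b\bar b}$''). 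After the Phong--Sturm Cauchy--Schwarz cancellation, what survives on the right of $0\geq F^{i\bar j}\nabla_{i}\nabla_{\bar j}Q$ is a cross term of size $\gamma'\cdot C\cdot\frac{|\nabla(\phi-\hat\phi_{0})|_{F}}{|\nabla^{X}\phi|_{\hat g}}$, and the point you cannot get around is that $|\nabla(\phi-\hat\phi_{0})|_{F}$ contains the temporal piece $F^{t\bar t}|\nabla_{t}(\phi-\hat\phi_{0})|^{2}$, which is genuinely of order $\epsilon^{-2}$ in this regime ($F^{t\bar t}\geq(1+R^{2})^{-1}$ is bounded below when $|\mu|\leq R$, and $\nabla_{t}\phi$ is $O(1/\epsilon)$ by Theorem~\ref{thm: gradEst} --- this is sharp, not a technical artifact). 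Your appeal to the product structure and to $\nabla_{t}\hat\alpha_{0}=0$, $(\hat\alpha_{0})_{\bar t X}=0$ does not remove this: those facts control the curvature and the mixed Hessian entries, not the magnitude of the temporal gradient. Nor does ``converting spatial second derivatives into $A^{2}|(\phi-\hat\phi_{0})_{k}|^{2}$'' via $\nabla Q=0$ help --- those second-order terms were already spent cancelling $|\nabla W|^{2}_{F}/W^{2}$ in Cauchy--Schwarz and cannot be re-used. With $H''=0$ you are left with an inequality of the schematic form $0\geq -C - C\cdot\frac{|\nabla(\phi-\hat\phi_{0})|_{F}}{|\nabla^{X}\phi|_{\hat g}}$, which only gives $|\nabla^{X}\phi|\lesssim|\nabla(\phi-\hat\phi_{0})|_{F}=O(1/\epsilon)$ --- not uniform.

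The fix is exactly the paper's: replace the affine $-A(\phi-\hat\phi_{0})$ by $\gamma(\phi-\hat\phi_{0})$ with $\gamma'\in[B,B+1]$ and $\gamma''<0$ uniformly. Then the surviving inequality has the schematic form $\tilde\delta\,|\nabla(\phi-\hat\phi_{0})|^{2}_{F}\leq\tilde A+\tilde B\,\frac{|\nabla(\phi-\hat\phi_{0})|_{F}}{|\nabla^{X}\phi|_{\hat g}}$, and Young's inequality (absorbing half of the right side into the left) yields either $|\nabla^{X}\phi|\leq 2\tilde B/\tilde\delta$ outright or a uniform bound on $|\nabla(\phi-\hat\phi_{0})|_{F}$, which then bounds $|\nabla^{X}\phi|$ via the case-split $\epsilon_{0}^{2}|\nabla^{X}\phi|^{2}\leq|\nabla(\phi-\hat\phi_{0})|^{2}_{F}$. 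Your first branch (the $L(\hat\phi_{0}-\phi)\geq\kappa_{0}\sum F^{a\bar a}$ case) is fine as you state it, but you should also note that the case-split the paper actually uses is on whether $\epsilon_{0}^{2}|\nabla^{X}\phi|^{2}\geq|\nabla(\phi-\hat\phi_{0})|^{2}_{F}$ (which forces some $\mu_{i}>R$ and hence triggers the subsolution coercivity), rather than directly on the Lemma~\ref{lem: SzCsub} dichotomy.
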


\begin{proof}
In order to estimate the spatial gradient we adapt ideas of B\l ocki \cite{Bl, Bl1} and Phong-Sturm \cite{PS, PS1} used to obtain gradient estimates for solutions of the complex Monge-Amp\`ere equation.  Before beginning the proof, we choose an appropriate background form $\alpha$ on $X_{\epsilon}$.  For this section we choose as our background reference form $\alpha = \pi_{X}^{*}\alpha + \ddb \hat{\phi}_i$ for $i=0$ or $1$, where $\hat{\phi}_i$ is one of the subsolutions constructed in Corollary~\ref{cor: intSubSol}.  To ease notation, we will drop the subscript $i$ in the notation.  If we write the solution to~\eqref{eq: mainEquation} as $\pi_{X}^{*}\alpha +\DDb\phi$, then $ \pi^{*}\alpha +\DDb\phi = \alpha +\DDb(\phi -\hat{\phi})$.  Our goal is to prove
\[
|\nabla^{X}(\phi-\hat{\phi})|_{\hat{g}} \leq C
\]
for a uniform constant $C$.  This implies that $|\nabla^{X}\phi|_{\hat{g}} \leq C'$ with $C'$ depending in addition on the spatial $C^1$ norm of $\hat{\phi}$, which by Corollary~\ref{cor: intSubSol} is uniformly controlled in terms of the boundary data.

With this understood, in order to lighten notation we still write the solution to the equation as $\alpha+\DDb\phi$, and estimate $|\nabla^{X}\phi|_{\hat{g}}$.  Following \cite{PS, PS1, Bl, Bl1} (see also \cite{PSS}), we apply the maximum principle to the quantity
\[
Q:= \log\left(|\nabla^{X}\phi|^{2}\right) + \gamma(\phi)
\]
where $\gamma: \mathbb{R} \rightarrow \mathbb{R}$ is a function to be determined.  Fix a point $(p_0,t_0) \in X_{\epsilon}$, and a space-time adapted coordinate system $(w_0,\ldots,w_n)$ centered a $(p_0,t_0)$.  We compute
\[
\begin{aligned}
\nabla_{\bar{j}}|\nabla^{X}\phi|^2 = &\sum_{1\leq k, \ell \leq n} \hat{g}^{k\bar{\ell}}\left(\nabla_{\bar{j}}\nabla_k\phi \nabla_{\bar{\ell}}\phi + \nabla_k\phi \nabla_{\bar{j}}\nabla_{\bar{\ell}}\phi\right)\\
\nabla_{i}\nabla_{\bar{j}}|\nabla^{X}\phi|^2 =&\sum_{1\leq k, \ell \leq n} \hat{g}^{k\bar{\ell}}\left(\nabla_i\nabla_{\bar{j}}\nabla_k\phi \nabla_{\bar{\ell}}\phi + \nabla_{\bar{j}}\nabla_k\phi \nabla_{i}\nabla_{\bar{\ell}}\phi\right) \\
&\sum_{1\leq k, \ell \leq n}\hat{g}^{k\bar{\ell}}\left( \nabla_{i}\nabla_k\phi \nabla_{\bar{j}}\nabla_{\bar{\ell}}\phi +\nabla_k\phi \nabla_{i}\nabla_{\bar{j}}\nabla_{\bar{\ell}}\phi\right)
\end{aligned}
\]
To deal with the first and last terms, we differentiate the equation to get
\[
\begin{aligned}
F^{i\bar{j}}\nabla_{k}\nabla_{i}\nabla_{\bar{j}}\phi &= \nabla_kh -\nabla_{k}\alpha_{\bar{j}i}\\
F^{i\bar{j}}\nabla_{\bar{\ell}}\nabla_{i}\nabla_{\bar{j}}\phi &= \nabla_{\bar{\ell}}h -\nabla_{\bar{\ell}}\alpha_{\bar{j}i}.
\end{aligned}
\]
Commuting derivatives gives
\[
\begin{aligned}
\nabla_{k}\nabla_{i}\nabla_{\bar{j}}\phi &= \nabla_{i}\nabla_{k}\nabla_{\bar{j}}\phi=\nabla_{i}\nabla_{\bar{j}}\nabla_{k}\phi\\
\nabla_{\bar{\ell}}\nabla_{i}\nabla_{\bar{j}}\phi &= -R_{\bar{\ell}i\bar{j}}\,^{\bar{p}}\nabla_{\bar{p}}\phi +\nabla_{i}\nabla_{\bar{j}}\nabla_{\bar{\ell}}\phi
\end{aligned}
\]
Combining these formulae we get
\[
\begin{aligned}
F^{i\bar{j}}\nabla_{i}\nabla_{\bar{j}}|\nabla^{X}\phi|^2 &= \sum_{1\leq k, \ell \leq n} F^{i\bar{j}}\left(\hat{g}^{k\bar{\ell}}\nabla_{\bar{j}}\nabla_k\phi \nabla_{i}\nabla_{\bar{\ell}}\phi +\nabla_{i}\nabla_k\phi \nabla_{\bar{j}}\nabla_{\bar{\ell}}\phi\right)\\
&\quad +2{\rm Re}\left(\sum_{1\leq k,\ell \leq n} \hat{g}^{k\bar{\ell}}(\nabla_kh -F^{i\bar{j}}\nabla_{k}\alpha_{\bar{j}i})\nabla_{\bar{\ell}}\phi \right)\\
&\quad + F^{i\bar{j}}\sum_{\substack{1\leq k, \ell\leq n\\ 0\leq p \leq n}} \hat{g}^{k\bar{\ell}}R_{\bar{\ell}i\bar{j}}\,^{\bar{p}}\nabla_{\bar{p}}\phi \nabla_k\phi
\end{aligned}
\]
By Lemma~\ref{lem: productMetric} we have that $R_{\bar{\ell}i\bar{j}}\,^{\bar{p}} =0$ if any of $i,j,\ell$, or $p$ are $0$, so we can choose a constant $C_1$, depending only on $(X,\omega)$ such  that
\[
F^{i\bar{j}}\sum_{1\leq k, \ell, p \leq n} \hat{g}^{k\bar{\ell}}R_{\bar{\ell}i\bar{j}}\,^{\bar{p}}\nabla_{\bar{p}}\phi \nabla_k\phi \geq -C_1|\nabla^{X}\phi|_{\hat{g}}^2,
\]
where we also used that $F^{i\bar{j}} \leq \hat{g}^{i\bar{j}}$.  Similarly, by Corollary~\ref{cor: intSubSol} we can choose a constant $C_2$ depending only on the boundary data, and the spatial derivative of $h$, so that
\[
2{\rm Re}\left(\sum_{1\leq k,\ell \leq n} \hat{g}^{k\bar{\ell}}(\nabla_kh -F^{i\bar{j}}\nabla_{k}\alpha_{\bar{j}i})\nabla_{\bar{\ell}}\phi \right) \geq -C_2|\nabla^{X}\phi|_{\hat{g}}.
\]
Summarizing we have
\[
\begin{aligned}
F^{i\bar{j}}\nabla_{i}\nabla_{\bar{j}}\log\left(|\nabla^{X}\phi|_{\hat{g}}^2\right) &\geq\frac{1}{|\nabla^{X}\phi|^2} \sum_{1\leq k, \ell \leq n} F^{i\bar{j}}\hat{g}^{k\bar{\ell}}\left(\nabla_{\bar{j}}\nabla_k\phi \nabla_{i}\nabla_{\bar{\ell}}\phi +\nabla_{i}\nabla_k\phi \nabla_{\bar{j}}\nabla_{\bar{\ell}}\phi\right)\\
&\quad- \frac{1}{(|\nabla^{X}\phi|^2_{\hat{g}})^2}F^{i\bar{j}}\nabla_{i}|\nabla^{X}\phi|^2_{\hat{g}} \nabla_{\bar{j}}|\nabla^{X}\phi|^2_{\hat{g}}- C_1- \frac{C_2}{|\nabla^{X}\phi|_{\hat{g}}}
\end{aligned}
\]
for uniform constants $C_1, C_2$.  Abusing notation, define a norm on $1$-forms by $\langle\sigma, \sigma'\rangle_{F} = F^{i\bar{j}}\sigma_i\overline{\sigma'_j}$, and write
\[
 \sum_{1\leq k, \ell \leq n} F^{i\bar{j}}\hat{g}^{k\bar{\ell}}\left(\nabla_{\bar{j}}\nabla_k\phi \nabla_{i}\nabla_{\bar{\ell}}\phi +\nabla_{i}\nabla_k\phi \nabla_{\bar{j}}\nabla_{\bar{\ell}}\phi\right) = |\overline{\nabla}\nabla^{X}\phi|^{2}_{F\otimes \hat{g}} +  |\nabla\nabla^{X}\phi|^{2}_{F\otimes \hat{g}}.
\]
Define $1$-forms
\[
S_i := \langle \nabla_i\nabla^{X}\phi, \nabla^{X}\phi \rangle_{\hat{g}}, \quad T_{j}:= \langle \nabla^{X}\phi, \nabla_{\bar{j}}\nabla^{X}\phi \rangle_{\hat{g}}
\]
so that $\nabla|\nabla^{X}\phi|^2_{\hat{g}} = S+T$.  By the Cauchy-Schwarz inequality we have
\[
|S|^2_{F}+|T|^2_{F} \leq |\nabla^{X}\phi|^2_{\hat{g}}\left(|\overline{\nabla}\nabla^{X}\phi|^{2}_{F\otimes \hat{g}} +  |\nabla\nabla^{X}\phi|^{2}_{F\otimes \hat{g}}\right).
\]
It follows that
\[
|\nabla|\nabla^{X}\phi|^2_{\hat{g}}|^{2}_{F} -2{\rm Re}\left( \left\langle \nabla|\nabla^{X}\phi|^2_{\hat{g}}, T \right\rangle_{F}\right) \leq |\nabla^{X}\phi|^2_{\hat{g}}\left(|\overline{\nabla}\nabla^{X}\phi|^{2}_{F\otimes \hat{g}} +  |\nabla\nabla^{X}\phi|^{2}_{F\otimes \hat{g}}\right).
\]
Plugging this into the estimate yields
\[
F^{i\bar{j}}\nabla_{i}\nabla_{\bar{j}}\log\left(|\nabla^{X}\phi|_{\hat{g}}^2\right) \geq -2{\rm Re}\left( \left\langle \frac{\nabla|\nabla^{X}\phi|^2_{\hat{g}}}{|\nabla^{X}\phi|^2_{\hat{g}}},\frac{T}{|\nabla^{X}\phi|^2_{\hat{g}}} \right\rangle_{F}\right) - C_1- \frac{C_2}{|\nabla^{X}\phi|_{\hat{g}}}.
\]
Let us examine the term $T$.  In our coordinate system we have
\[
T_j=\sum_{1\leq k,\ell \leq n } \nabla_{k}\phi \hat{g}^{k\bar{\ell}}\left((\alpha_{\phi})_{\bar{\ell}j} - \alpha_{\bar{\ell}j}\right).
\]
If we let $\mathcal{E}, \mathcal{E}_{0}$ denote that endomorphisms $\hat{g}^{k\bar{\ell}}(\alpha_{\phi})_{\bar{\ell}j}$, and $\hat{g}^{k\bar{\ell}}\alpha_{\bar{\ell}j}$ respectively,  then we can write $T$ invariantly as 
\[
T= (\mathcal{E}-\mathcal{E}_0)\nabla^{X}\phi.
\]
Suppose $Q$ achieves an interior maximum at the point $(p_0,t_0)$.  At this point we have $\nabla Q=0$, and $F^{i\bar{j}}\nabla_{i}\nabla_{\bar{j}}Q \leq 0$.  Writing out the first of these gives
\[
\frac{\nabla|\nabla^{X}\phi|^2_{\hat{g}}}{|\nabla^{X}\phi|^2_{\hat{g}}} = \gamma'(\phi) \nabla\phi
\]
and so at $(p_0,t_0)$ we get
\[
 {\rm Re}\left( \left\langle \frac{\nabla|\nabla^{X}\phi|^2_{\hat{g}}}{|\nabla^{X}\phi|^2_{\hat{g}}}, \frac{T}{|\nabla^{X}\phi|^2_{\hat{g}}} \right\rangle_{F}\right) =\frac{\gamma'(\phi)}{|\nabla^{X}\phi|^2_{\hat{g}}} {\rm Re}\left(\langle\nabla \phi, (\mathcal{E}-\mathcal{E}_0)\nabla^{X}\phi\rangle_{F}\right)
 \]
 For one moment, let us choose another system of holomorphic normal coordinates for $\hat{g}$, $(z_0, \ldots, z_n)$ centered at $(p_0,t_0)$ so that $\hat{g}$ is the identity, and $\mathcal{E}$ is diagonal, with eigenvalues $\mu_0, \ldots, \mu_n$.  Write $\nabla^{X}\phi = b_i\del_{z_i}$ for some complex numbers $b_i$.  Then
 \[
 \begin{aligned}
\bigg| \langle\nabla \phi, \mathcal{E}\nabla^{X}\phi\rangle_{F}\bigg| = \bigg|\sum_{i}\frac{\mu_i\phi_i \overline{b_i}}{(1+\mu_i^2)}\bigg| &\leq \left(\sum_{i}\frac{\mu_i^2|b_i|^2}{1+\mu_i^2}\right)^{1/2}\left(\sum_j \frac{|\phi_j|^2}{1+\mu_j^2}\right)^{1/2}\\
&\leq |\nabla^{X}\phi|_{\hat{g}}|\nabla \phi|_{F}
\end{aligned}
 \]
 Similarly, if we write $\mathcal{E}_{0}\nabla^{X}\phi = c_i\del_{z_i}$, then we have
 \[
 \begin{aligned}
 \bigg|\langle\nabla \phi, \mathcal{E}_0\nabla^{X}\phi\rangle_{F}\bigg| &= \bigg|\sum_i \frac{\phi_i \overline{c_i}}{1+\mu_i^2}\bigg| \leq  \left(\sum_{i}\frac{|c_i|^2}{1+\mu_i^2}\right)^{1/2}\left(\sum_j \frac{|\phi_j|^2}{1+\mu_j^2}\right)^{1/2}\\
 & \leq |\mathcal{E}_0 \nabla^{X} \phi|_{\hat{g}}|\nabla \phi|_{F}
 \end{aligned}
 \]
By Corollary~\ref{cor: intSubSol} we have $\mathcal{E}_{0}TX \subset TX$.  Since $\nabla^{X}\phi \in TX\subset T\mathcal{X}_{\epsilon}$ we get
 \[
  |\mathcal{E}_0 \nabla^{X} \phi|_{\hat{g}} \leq C_3|\nabla^{X}\phi|_{\hat{g}}
 \]
 for a uniform constant $C_3$ depending on the boundary data.  Putting everything together gives
 \begin{equation}\label{eq: GradMaxPrin}
 0 \geq -C_1 -\frac{C_2}{|\nabla^{X}\phi|_{\hat{g}}}-C_3\gamma'(\phi)\frac{|\nabla \phi|_{F}}{|\nabla^{X}\phi|_{\hat{g}}} - \gamma'(\phi)F^{i\bar{j}}\phi_{\bar{j}i} - \gamma''(\phi)|\nabla \phi|^2_{F}
 \end{equation}
Following Phong-Sturm \cite{PS, PS1, PSS} we choose
\[
\gamma(\phi) = B\phi - \frac{1}{\phi+C_4}
\]
where $C_4 = -\inf_{\mathcal{X}_{\epsilon}}\phi +1$, and $B$ is a large positive constant to be determined.  Note that
\[
B\phi -1 \leq \gamma(\phi) \leq B\phi, \quad B \leq \gamma'(\phi) \leq B+1, \quad \gamma''(\phi) = -2\frac{1}{(\phi+C_4)^3} <0.
\] 
We may assume that $|\nabla^{X} \phi|_{\hat{g}} >1$ at the maximum point of $Q$, for otherwise we're done.  We need to consider several cases.

First, suppose that 
\begin{equation}\label{eq: gradCase1}
\epsilon_0^2|\nabla^{X}\phi|^2 \geq  |\nabla \phi|^2_{F}
\end{equation}
for some $0<\epsilon_0 <1$ to be determined.  Let $R, \kappa_0$ be the constants appearing in Lemma~\ref{lem: SzCsub} for the subsolution $\hat{\phi}$ and recall that they are uniformly controlled (see Remark~\ref{rk: uniConst}). If every eigenvalue of $\mathcal{E}$ is smaller than $R$, then we have
\[
|\nabla \phi|^{2}_{F} \geq \frac{1}{1+R^2}|\nabla \phi|^2_{\hat{g}} \geq \frac{1}{1+R^2}|\nabla^{X}\phi|^2_{\hat{g}}.
\]
If we take $\epsilon_0$ small so that $100 \epsilon_0^2 \leq \frac{1}{1+R^2}$, then under the assumption~\eqref{eq: gradCase1} some eigenvalue of $\mathcal{E}$ at $(p_0,t_0)$ must be larger than $R$.  In particular, by Lemma~\ref{lem: SzCsub} we get
\[
F^{i\bar{j}}\phi_{\bar{j}i}  = F^{i\bar{j}}(\alpha_\phi)_{\bar{j}i} - \alpha_{\bar{j}i} \leq -\kappa_0\sum_{p=0}^{n}\frac{1}{1+\mu_{p}^2} < -\kappa_0\frac{1}{1+C(\eta)^2}
\]
where $C(\eta):= C(\eta_1,\eta_2)$ is the bound for $|\mu_n|$ in Lemma~\ref{lem: AngleBasicProps}.  Thus, assuming~\eqref{eq: gradCase1} implies
\[
0\geq -C_1-C_2-C_3(B+1)\epsilon_0+B\kappa_0\frac{1}{1+C(\eta)^2}+ \frac{2}{\phi+C_4}|\nabla\phi|^2_{F}
\]
We now choose $B$ large depending only on $C_1,C_2, C_3$ so that
\[
B\kappa_0\frac{1}{1+C(\eta)^2} \geq C_1+C_2+C_3+1
\]
and choose $\epsilon_0$ small depending on $B$ so that $\epsilon_0(B+1)\leq 1$.  Clearly $\epsilon_0, B$ can be chosen to be uniform constants.  With these choices we conclude that $Q$ cannot attain an interior maximum at which~\eqref{eq: gradCase1} holds.

We may therefore assume that if $Q$ achieves an interior maximum at $(p_0, t_0)$ then
\begin{equation}\label{eq: gradCase2}
\epsilon_0^2|\nabla^{X}\phi|^2 \leq  |\nabla \phi|^2_{F}
\end{equation}
at $(p_0,t_0)$.  We may also assume that $|\nabla \phi|_{F} \geq 1$ at $(p_0,t_0)$, for otherwise $|\nabla^{X}\phi|^2 \leq \epsilon_0^{-2}$ and we are done.  Rearranging~\eqref{eq: GradMaxPrin} we get
\[
\frac{2}{(1+\osc_{X_{\epsilon}}\phi)^3}|\nabla \phi|^{2}_{F} \leq C_1+C_2+(B+1)(C_3\frac{|\nabla \phi|_{F}}{|\nabla^{X}\phi|_{\hat{g}}} +C_5)
\]
where we used that
\[
F^{i\bar{j}}\phi_{\bar{j}i} = \sum_i \frac{\mu_i -\alpha_{\bar{i}i}}{1+\mu_i^2} \leq C_5
\]
for a uniform constant $C_5$ by Corollary~\ref{cor: intSubSol}.  Let us simplify the notation by writing
\[
\tilde{\delta} = \frac{2}{(1+\osc_{\mathcal{X}_{\epsilon}}\phi)^3}, \quad \tilde{B} = (B+1)C_3, \quad \tilde{A}= C_1+C_2+(B+1)C_5.
\]
so that $\tilde{A}, \tilde{B}, \tilde{\delta}$ are uniform constants and
\begin{equation}\label{eq: gradCase2est}
\tilde{\delta}|\nabla \phi|^{2}_{F} \leq \tilde{A} + \tilde{B}\frac{|\nabla \phi|_{F}}{|\nabla^{X}\phi|_{\hat{g}}}.
\end{equation}
There are now two cases.   First, observe that if $\tilde{\delta} \leq \frac{2\tilde{B}}{|\nabla_{X}\phi|_{\hat{g}}}$ then we are done.  So we may assume that $\tilde{\delta} > \frac{2\tilde{B}}{|\nabla_{X}\phi|_{\hat{g}}}$. Upon rearranging~\eqref{eq: gradCase2est} we obtain
\[
\tilde{A} \geq \left( \tilde{\delta}|\nabla \phi|_{F} - \frac{\tilde{B}}{|\nabla^{X}\phi|_{\hat{g}}}\right)|\nabla \phi|_{F} \geq \frac{\tilde{\delta}}{2}|\nabla \phi|_{F} 
\]
where we used our assumption that $|\nabla \phi|_{F} \geq 1$.  We now use~\eqref{eq: gradCase2} to obtain
\[
|\nabla^{X}\phi|_{\hat{g}} \leq \epsilon_0^{-1}|\nabla \phi|_{F} \leq \frac{2\tilde{A}}{\epsilon_0\tilde{\delta}},
\]
which is the desired estimate.  Since the constants $\epsilon_0, \tilde{\delta}, \tilde{A}, \tilde{B}$ are uniform, we have shown that, if $Q$ attains an interior maximum it is bounded uniformly.  Since $\phi$ is bounded uniformly it follows that $|\nabla^{X}\phi|_{\hat{g}}^2$ is uniformly bounded.  Thus we are finished unless $Q$ attains its maximum on the boundary.  But on the boundary $Q$ is clearly bounded from above by a constant depending only on the boundary data.
\end{proof}

It only remains to estimate the temporal derivative $|\nabla_{t}\phi|_{\hat{g}}$.  The first step is to reduce the estimate for $|\nabla_t \phi|_{\hat{g}}$ to a boundary estimate.  Write the solution to ~\eqref{eq: mainEquation} as $\pi^{*}\alpha + \DDb \phi$, write $t=u+\sqrt{-1}v$ and compute
\[
F^{i\bar{j}}\nabla_i\nabla_{\bar{j}}\nabla_{u}\phi = F^{i\bar{j}}\nabla_{u}\nabla_i\nabla_{\bar{j}}\phi \geq - \hat{C}
\]
where $\hat{C}=-\sup_{\mathcal{X}_{\epsilon}}|\nabla_t h|_{\hat{g}}$.  A similar estimate holds for $v$.   In this computation we've used that the curvature vanishes along any $\del_t$ direction, and that $\nabla_{t}\pi_{X}^{*}\alpha =0$ by Lemma~\ref{lem: productMetric}.  Note that $\hat{C}$ is not a uniform constant.  Consider the quantity 
\[
Q= \del_u\phi + A|t|^2 - B(\phi-\hat{\phi}),
\]
where $\hat{\phi}$ is either one of the subsolutions constructed in Corollary~\ref{cor: intSubSol}.  Suppose $Q$ has an interior maximum at $(p_0, t_0)$.  Then
\[
0 \geq -\hat{C} +AF^{\bar{t}t} +BF^{i\bar{j}}((\alpha_{\hat{\phi}})_{\bar{j}j} -(\alpha_{\phi})_{\bar{j}i})
\]
Suppose that $|\mu| >R$ at the point $(p_0,t_0)$.  Then by Lemma~\ref{lem: SzCsub} we have
\[
0 \geq -\hat{C} + B\kappa_{0}\frac{1}{1+C(\eta)^2} >0
\]
where $C(\eta) = C(\eta_1, \eta_2)$ is the bound for $|\mu_n|$ (see Lemma~\ref{lem: AngleBasicProps}), and we have chosen $B = B'\hat{C}$ for $B'$ large depending only on $\kappa_0, \eta_1,\eta_2$.  If $|\mu| <R$ at the point $(p_0,t_0)$ then $F^{t\bar{t}}\geq \frac{1}{1+R^2}$, and so
\[
\begin{aligned}
0&\geq -\hat{C}+ A\frac{1}{1+R^2} + B \sum_{i}\frac{(\alpha_{\hat{\phi}})_{\bar{i}i} -\mu_i}{1+\mu_i^2}\\
& \geq -\hat{C} + A\frac{1}{1+R^2} - B|\alpha_{\hat{\phi}}|_{\hat{g}} - B(n+1) >0
\end{aligned}
\]
provided we choose $A= A'\hat{C}$ for $A'$ large depending only on $R, n, B'$ and $\alpha_{\hat{\phi}}$.  In particular, by Corollary~\ref{cor: intSubSol} the constant $A'$ can be chosen uniformly.  Similar arguments using $-\del_u \phi, \pm \del_{v}\phi$ prove
\begin{lem}
There exists a uniform constant $C$ so that
\[
|\nabla_t \phi|_{\hat{g}} \leq C(1+\sup_{X_{\epsilon}}|\nabla_{t} h|_{\hat{g}}) + \sup_{\del X_{\epsilon}}|\nabla_{t} \phi|_{\hat{g}}
\]
\end{lem}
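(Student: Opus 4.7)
The calculation preceding the lemma statement has already established the essential maximum principle argument for the auxiliary quantity
\[
Q = \partial_u\phi + A|t|^2 - B(\phi - \hat{\phi}),
\]
where $\hat{\phi}$ is one of the subsolutions constructed in Corollary~\ref{cor: intSubSol}, and $t = u + \sqrt{-1}v$. The plan to complete the proof of the lemma is to run essentially the same argument for the four real derivatives $\pm\partial_u\phi, \pm\partial_v\phi$ and then extract the bound on $|\nabla_t\phi|_{\hat g}$. Because the calculation already in the text handles $+\partial_u\phi$ completely, I will indicate only how to combine the pieces and handle the cases uniformly.

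First I would set $\hat{C} = 1 + \sup_{\mathcal{X}_\epsilon}|\nabla_t h|_{\hat g}$ and pick $B = B'\hat{C}$, $A = A'\hat{C}$, with $B',A'$ chosen as uniform constants according to the two-case analysis already performed: if the eigenvalues of $\mathcal{E}$ at the interior maximum of $Q$ satisfy $|\mu|>R$, the Sz\'ekelyhidi estimate (Lemma~\ref{lem: SzCsub}) together with the fact that $BF^{i\bar j}((\alpha_{\hat\phi})_{\bar j i}-(\alpha_\phi)_{\bar j i}) \geq B\kappa_0(1+C(\eta)^2)^{-1}$ contradicts $0\geq F^{i\bar j}\nabla_i\nabla_{\bar j} Q$; while if $|\mu|\leq R$, then $F^{\bar t t}\geq (1+R^2)^{-1}$, so the term $AF^{\bar t t}$ dominates. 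The constants $R, \kappa_0$ are uniform by Remark~\ref{rk: uniConst}, so both $A',B'$ depend only on $\eta_1,\eta_2$ and the boundary data.

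Consequently $Q$ cannot attain an interior maximum, and hence
\[
\sup_{\mathcal{X}_\epsilon} \partial_u\phi \,\leq\, \sup_{\partial\mathcal{X}_\epsilon}\partial_u\phi + B\,\osc_{\mathcal{X}_\epsilon}(\phi-\hat\phi) + A\sup|t|^2.
\]
By Proposition~\ref{prop: c0bound} and Corollary~\ref{cor: intSubSol}, $\osc(\phi - \hat\phi)$ is controlled uniformly, and $\sup|t|^2 \leq \epsilon^2$ is bounded uniformly (recall the convention that uniform constants are invariant under $t\mapsto \epsilon^{-1}t$, so $A\sup|t|^2$ is a uniform multiple of $\hat C$). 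Running the identical argument with $-\partial_u\phi$, $+\partial_v\phi$, and $-\partial_v\phi$ in place of $\partial_u\phi$ yields the same bound for each, and combining gives
\[
\sup_{\mathcal{X}_\epsilon}|\nabla_t\phi|_{\hat g} \,\leq\, \sup_{\partial\mathcal{X}_\epsilon}|\nabla_t\phi|_{\hat g} + C\hat C,
\]
for a uniform constant $C$, which is the claimed inequality.

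The only point that requires care, and which I expect to be the one substantive step, is verifying that when we commute $\partial_u$ past $F^{i\bar j}\nabla_i\nabla_{\bar j}$ no curvature terms appear --- this is exactly where Lemma~\ref{lem: productMetric} is used, since $\partial_t$ is parallel and the curvature has no component in the $t$-direction. Likewise, the vanishing $\nabla_t\pi_X^*\alpha = 0$ guaranteed by the product structure is crucial, since it prevents a $\nabla_t\alpha$ term (which would not be uniformly bounded with respect to $\hat g$) from entering. With these two ingredients, the differentiated equation reads cleanly $F^{i\bar j}\nabla_i\nabla_{\bar j}\partial_u\phi = \partial_u h$, and the rest is the standard subsolution maximum principle already carried out.
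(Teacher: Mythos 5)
Your proposal follows the paper's argument essentially verbatim: the test function $Q = \partial_u\phi + A|t|^2 - B(\phi-\hat\phi)$, the two-case analysis at an interior maximum via Lemma~\ref{lem: SzCsub} (separating $|\mu|>R$ from $|\mu|\leq R$), the use of the product structure (Lemma~\ref{lem: productMetric}) to kill curvature and $\nabla_t\alpha$ terms when differentiating the equation, and the repetition over $\pm\partial_u\phi$, $\pm\partial_v\phi$ are exactly the steps the paper carries out inline just before stating the lemma. The bookkeeping of the boundary contribution and the uniformity of the constants via Proposition~\ref{prop: c0bound} and Corollary~\ref{cor: intSubSol} is also correct, so the proposal is a faithful reproduction of the paper's proof.
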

 
It remains only to estimate $|\nabla_{t}\phi|$ on the boundary. Since $\phi$ is $S^1$ invariant, it suffices to consider the boundary derivative with respect to the coordinate $r=|t|$.  Consider the boundary $|t|=\epsilon$.  Then $\phi \geq \hat{\phi}_0$ in $\mathcal{X}_{\epsilon}$, and $\phi = \hat{\phi}_0$ on $\del \mathcal{X}_{\epsilon}$ we have
\[
\frac{\del}{\del r}\bigg|_{r=\epsilon}\phi \leq \frac{\del}{\del r}\bigg|_{r=\epsilon} \hat{\phi}_0 \leq \frac{C}{\epsilon}
\]
by Corollary~\ref{cor: intSubSol}.  Similarly,
\[
 \frac{\del}{\del r}\bigg|_{r=e^{-1}\epsilon}\phi \geq  \frac{\del}{\del r}\bigg|_{r=e^{-1}\epsilon} \hat{\phi}_{1} \geq -\frac{C}{\epsilon}.
\]
For the remaining estimates we construct barriers from above.  To estimate near $\{r= \epsilon\}$ consider
\[
\psi_0 = \phi_0- A_0(|t|^2-\epsilon^2) - C_0\log\left(\frac{|t|^2}{\epsilon^2}\right).
\]
Clearly $\psi_0 = \phi_0 = \phi$ on $\{r=\epsilon\}$.  Furthermore we have
\[
\Delta_{\hat{\omega}}\psi_0 = \Delta_{\omega}\phi_0 - A_0
\]
and so we can choose $A_0$ large depending only on $\| \phi_0 \|_{C^{2}(X,\omega)}$ so that $\Delta_{\hat{\omega}}\psi_0 \leq 0$.  We next choose $C_0$ large depending only on $A_0, \|\phi_0-\phi_1\|_{L^{\infty}(X)}$ so that on $\{r=e^{-1}\epsilon\}$ we have
\[
\psi_0\bigg|_{|t|=e^{-1}\epsilon} = \phi_0+A_0(1-e^{-2})\epsilon^2+2C_0 \geq \phi_1.
\]
By Lemma~\ref{lem: AngleBasicProps} we have
\[ 
\Delta_{\hat{\omega}}\psi_0 \leq 0 < \Delta_{\hat{\omega}}\phi
\]
and $\psi_0 \geq \phi$ on the boundary, with equality when $|t|=\epsilon$.  By the maximum principle we obtain
\[
\frac{\del}{\del r}\bigg|_{r=\epsilon}\phi \geq \frac{\del}{\del r}\bigg|_{r=\epsilon}\psi_0 \geq -\frac{C}{\epsilon}
\]
for a uniform constant $C$.  Similar estimates work near $\{r=e^{-1}\epsilon\}$ to prove estimates near $\{r=e^{-1}\epsilon\}$.  We have therefore proved
\begin{thm}\label{thm: gradEst}
Suppose $\phi$ solves~\eqref{eq: mainEquation} for boundary data $\phi_i \in \mathcal{H}$ for $i=0,1$, and with $\phi_0, \phi_1, h$ satisfying~\eqref{eq: strucAss1},~\eqref{eq: strucAss2}.  Then there exists a uniform constant $C$ so that
\[
|\nabla^{X}\phi|_{\hat{g}} \leq C, \qquad |\nabla_{t} \phi|_{\hat{g}} \leq C\left(1+ \sup_{X_{\epsilon}}|\nabla_{t} h|_{\hat{g}})  + \frac{1}{\epsilon}\right)
\]
\end{thm}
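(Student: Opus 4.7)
The plan is to split the estimate into two independent pieces: a genuinely uniform spatial gradient bound $|\nabla^X\phi|_{\hat{g}} \leq C$, and a temporal gradient bound that is allowed to blow up like $1/\epsilon$ (which is forced by the boundary data already, as Corollary~\ref{cor: intSubSol}(viii) shows). Both parts will reduce to a maximum principle argument for a carefully chosen test quantity, with uniform constants obtained by combining the subsolution $\hat{\phi}_i$ from Corollary~\ref{cor: intSubSol} with Sz\'ekelyhidi's dichotomy (Lemma~\ref{lem: SzCsub}) and with the flat-direction curvature vanishing from Lemma~\ref{lem: productMetric}.

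For the spatial gradient I would run a Phong--Sturm/B\l ocki style argument on
\[
Q = \log\bigl(|\nabla^X\phi|^2_{\hat g}\bigr) + \gamma(\phi), \qquad \gamma(\phi) = B\phi - \frac{1}{\phi + C_4},
\]
after replacing the background $\alpha$ by $\pi_X^*\alpha + \ddb\hat\phi$, so that estimating $|\nabla^X(\phi - \hat\phi)|$ suffices. First compute $F^{i\bar j}\nabla_i\nabla_{\bar j}|\nabla^X\phi|^2_{\hat g}$ in space-time adapted coordinates, differentiating~\eqref{eq: mainEquation} once and twice and commuting covariant derivatives; the commutator curvature term is controlled by $-C_1|\nabla^X\phi|^2_{\hat g}$ \emph{uniformly in $\epsilon$} because $R_{\bar\ell i \bar j}{}^{\bar p}$ kills any index equal to $0$ by Lemma~\ref{lem: productMetric}. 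The mixed second-derivative terms are absorbed via the Cauchy--Schwarz bound $|S|^2_F + |T|^2_F \leq |\nabla^X\phi|^2_{\hat g}(|\nabla\nabla^X\phi|^2_{F\otimes \hat g}+|\bar\nabla\nabla^X\phi|^2_{F\otimes \hat g})$, leaving a residual term of the form $-\langle \nabla|\nabla^X\phi|^2/|\nabla^X\phi|^2,\, T/|\nabla^X\phi|^2\rangle_F$. At an interior maximum of $Q$, $\nabla Q = 0$ lets me replace $\nabla|\nabla^X\phi|^2/|\nabla^X\phi|^2$ by $\gamma'(\phi)\nabla\phi$, and I then split into two cases according to whether $\epsilon_0^2|\nabla^X\phi|^2 \gtrless |\nabla\phi|^2_F$. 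In the first case an eigenvalue of $\mathcal{E} = \hat g^{-1}\alpha_\phi$ must exceed the Sz\'ekelyhidi threshold $R$ of the subsolution, giving a negative contribution $-\kappa_0/(1+C(\eta)^2)$ from $F^{i\bar j}\phi_{\bar j i}$ that dominates once $B$ is chosen large; in the second case, $|\nabla^X\phi|^2_{\hat g} \leq \epsilon_0^{-2}|\nabla\phi|^2_F$ together with the rearranged inequality forces the gradient to be bounded by a uniform constant. Here it is crucial that $\mathcal{E}_0 TX \subset TX$ (Corollary~\ref{cor: intSubSol}(vii)) so that $|\mathcal E_0 \nabla^X\phi|_{\hat g} \leq C_3|\nabla^X\phi|_{\hat g}$, with $C_3$ uniform in $\epsilon$.

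For the temporal gradient I would first globalize a boundary bound: differentiating the equation in $u$ or $v$ (where $t=u+\sqrt{-1}v$) yields $F^{i\bar j}\nabla_i\nabla_{\bar j}(\partial_u\phi) \geq -\hat C$ with $\hat C = \sup|\nabla_t h|_{\hat g}$, using again Lemma~\ref{lem: productMetric} to discard curvature and $\nabla_t$-derivatives of $\alpha$. Applying the maximum principle to $Q_{\pm} = \pm\partial_u\phi + A|t|^2 - B(\phi - \hat\phi)$ and splitting on whether the eigenvalues of $\mathcal E$ exceed the Sz\'ekelyhidi radius, the large-eigenvalue case is handled by Lemma~\ref{lem: SzCsub} with $B \sim \hat C$, while the small-eigenvalue case uses $F^{t\bar t} \geq 1/(1+R^2)$ with $A\sim \hat C$; both give the desired interior-from-boundary reduction with $A, B$ uniform multiples of $1 + \hat C$. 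The boundary estimate itself follows from comparing $\phi$ with the subsolution $\hat\phi_i$ on one side (using $\phi \geq \hat\phi_i$ globally plus equality on the boundary) and with an explicit upper barrier of the form $\phi_0 - A_0(|t|^2 - \epsilon^2) - C_0\log(|t|^2/\epsilon^2)$ on the other, chosen superharmonic with respect to $\hat\omega$ and exceeding the opposite boundary value; the explicit formulae give the $C/\epsilon$ normal-derivative bound.

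The main obstacle, and the reason the argument is delicate rather than routine, is maintaining uniformity in $\epsilon$: the manifold is collapsing, $|\DDb\hat\phi|_{\hat g}$ is \emph{not} bounded uniformly on all of $\mathcal X_\epsilon$ (see the remark after Lemma~\ref{lem: subsolConstr}), and naive application of maximum principles produces $\epsilon^{-1}$ errors that would propagate into the spatial estimate. The two structural facts that save us are that curvature of $(\mathcal X_\epsilon, \hat\omega)$ vanishes along $\partial_t$ and that the chosen subsolution $\hat\phi_i$ satisfies $(\hat\alpha_i)_{\bar t X} = 0$, both of which guarantee that no thin-direction factor of $\epsilon^{-1}$ appears in the Phong--Sturm calculation for the spatial gradient. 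The calibration of the two-case split in the spatial maximum principle using a small parameter $\epsilon_0$ (chosen depending only on $R$ and $B$) is the last technical hurdle; once executed, the temporal estimate falls out by the standard barrier method.
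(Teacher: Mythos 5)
Your proposal tracks the paper's own argument faithfully in both structure and every key ingredient: the spatial part is the Phong–Sturm/B\l ocki maximum principle for $\log|\nabla^X\phi|^2 + B\phi - (\phi+C_4)^{-1}$ against the background $\pi_X^*\alpha + \ddb\hat\phi$, using the vanishing of curvature in flat directions, the Cauchy--Schwarz absorption of $|S|^2_F + |T|^2_F$, the fact that $\mathcal E_0$ preserves $TX$, and the two-case split via the small parameter $\epsilon_0$ calibrated against Sz\'ekelyhidi's $R$ and the constant $B$; the temporal part is the reduction to a boundary estimate by the test quantities $Q_\pm = \pm\partial_u\phi + A|t|^2 - B(\phi-\hat\phi)$ with $A, B$ scaled by $\sup|\nabla_t h|$, followed by the one-sided subsolution comparison and the explicit superharmonic barrier $\phi_0 - A_0(|t|^2-\epsilon^2) - C_0\log(|t|^2/\epsilon^2)$ for the normal derivative. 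This is essentially identical to the paper's proof, correctly reproduced including the emphasis on the two structural properties (flat-direction curvature vanishing and $(\hat\alpha_i)_{\bar t X}=0$) that keep the constants uniform as $\epsilon\to 0$.
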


\section{Interior $C^2$ estimates}\label{sec: C2est}

This section comprises the heart of the analysis towards proving the existence of geodesics in the space $\mathcal{H}$.  The goal is to prove the following theorem
\begin{thm}\label{thm: C2est}
Suppose $\phi$ solves~\eqref{eq: mainEquation} for boundary data $\phi_i \in \mathcal{H}$ for $i=0,1$, and with $\phi_0, \phi_1, h$ satisfying~\eqref{eq: strucAss1},~\eqref{eq: strucAss2}.  Then there exists a uniform constant $C$ so that
\begin{equation}\label{eq: thmSpaceC2}
|\nabla^{X}\overline{\nabla^{X}}\phi|_{\hat{\omega}} \leq C
\end{equation}
\begin{equation}
|\nabla^{X}\nabla_{\bar{t}}\phi| \leq C\left(1+ \|h_{\bar{t}t}\|_{L^{\infty}(\mathcal{X}_{\epsilon})} + \|h_{t}\|^2_{L^{\infty}(\mathcal{X}_{\epsilon})} + \sup_{\del \mathcal{X}_{\epsilon}}|\phi_{\bar{t}t}|_{\hat{g}}\right)^{\frac{1}{2}}
\end{equation}
\begin{equation}
|\nabla_{t}\nabla_{\bar{t}}\phi|_{\hat{g}} \leq C\left(1+ \|h_{\bar{t}t}\|_{L^{\infty}(\mathcal{X}_{\epsilon})} + \|h_{t}\|^2_{L^{\infty}(\mathcal{X}_{\epsilon})}+ \sup_{\del \mathcal{X}_{\epsilon}}|\phi_{\bar{t}t}|_{\hat{g}}\right)
\end{equation}
\end{thm}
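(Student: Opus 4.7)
The plan is to prove each of the three estimates by a maximum principle argument with a test function tuned to the product structure of $(\mathcal{X}_\epsilon, \hat{\omega})$. The uniformity in $\epsilon$ of the first estimate hinges on bounding only the spatial Hessian $|\nabla^{X}\overline{\nabla^{X}}\phi|_{\hat{\omega}}$ rather than the full Hessian on $\mathcal{X}_\epsilon$; the two key structural ingredients are the vanishing of curvature along $\del_t$ and the parallelism of $\del_t, \del_{\bar t}$ from Lemma~\ref{lem: productMetric}, together with the concavity of $-\exp(-AF)$ on $\Gamma^{\sigma+\eta_1}$ from Lemma~\ref{lem: AngleBasicProps}(8).

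For the spatial bound~\eqref{eq: thmSpaceC2}, I would apply the maximum principle to
\[
Q_1 = \log \lambda_1 + \psi(|\nabla^{X}\phi|^2_{\hat g}) + B(\phi - \hat\phi_0),
\]
where $\lambda_1$ is the largest eigenvalue of the restricted endomorphism $(\hat g^{-1}\alpha_\phi)|_{TX}$, $\hat\phi_0$ is the subsolution from Corollary~\ref{cor: intSubSol}, and $\psi, B$ are chosen in the style of Sz\'ekelyhidi \cite{Sz}. At an interior maximum $(p_0,t_0)$, pick space-time adapted coordinates, perturb $\lambda_1$ to smooth it, and expand $F^{i\bar j}(\log\lambda_1)_{\bar j i}$. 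Differentiating~\eqref{eq: mainEquation} twice in a spatial direction produces a third-order identity whose potentially bad terms are absorbed by the concavity inequality of Lemma~\ref{lem: AngleBasicProps}(8); the $\psi(|\nabla^{X}\phi|^2)$ term (with $\psi'\sim B$) handles residual gradient-cross terms via a Cauchy-Schwarz calculation mirroring the one in Section~\ref{sec: C1est}; and the $B(\phi - \hat\phi_0)$ term furnishes the coercive contribution $BF^{i\bar j}\bigl((\alpha_{\hat\phi_0})_{\bar j i} - (\alpha_\phi)_{\bar j i}\bigr)$ which, once $\lambda_1 > R$, is bounded below by $B\kappa_0\sum_i F^{i\bar i}$ via Sz\'ekelyhidi's Lemma~\ref{lem: SzCsub}.

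The main obstacle is the coupling with the thin direction. The mixed-index entries $(\alpha_\phi)_{\bar 0 i}$ can be of order $\epsilon^{-1}$, and naively these enter third-order expressions that could obstruct a uniform estimate. The product structure is decisive: by Lemma~\ref{lem: productMetric} every curvature term carrying a $0$-index vanishes and the covariant derivatives along $\del_t, \del_{\bar t}$ commute with all spatial covariant derivatives, so the mixed third derivatives of $\alpha_\phi$ appear only inside the sign-definite quadratic form $F^{i\bar j, k\bar\ell}(\alpha_\phi)_{\bar j i;\, *}(\alpha_\phi)_{\bar\ell k;\, \bar *}$ (where $*$ is the selected spatial differentiation direction), which Lemma~\ref{lem: AngleBasicProps}(8) dominates by the diagonal $F^{i\bar i}|(\alpha_\phi)_{\bar 1 i;\, *}|^2$ produced by expanding $(\log\lambda_1)_{\bar j i}$. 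No $\epsilon^{-1}$ factor survives and a uniform bound on $\lambda_1$ follows.

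For the temporal estimate, I would apply the maximum principle to $Q_2 = \pm\phi_{t\bar t} + A|\nabla^{X}\phi|^2_{\hat g} - B(\phi - \hat\phi_0)$. Differentiating~\eqref{eq: mainEquation} in $t$ and $\bar t$ and using Lemma~\ref{lem: productMetric} to kill all curvature contributions yields
\[
F^{i\bar j}(\phi_{t\bar t})_{\bar j i} = h_{t\bar t} - F^{i\bar j, k\bar\ell}\phi_{\bar j i; t}\phi_{\bar\ell k; \bar t},
\]
with the second term $\ge 0$ by concavity of $F$ on $\Gamma^\sigma$. The $\|h_{t\bar t}\|_{L^\infty}$ norm enters directly, while $\|h_t\|_{L^\infty}^2$ appears via Cauchy-Schwarz when handling the first-order pieces generated by the $A|\nabla^{X}\phi|^2_{\hat g}$ term; the boundary contribution $\sup_{\del \mathcal{X}_\epsilon}|\phi_{t\bar t}|$ enters whenever $Q_2$ attains its maximum on $\del \mathcal{X}_\epsilon$. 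Finally, the mixed estimate is an algebraic consequence of the two already proved: since $\hat g^{-1}\alpha_\phi + C(\eta)I \ge 0$ on $\Gamma^\sigma$ by Lemma~\ref{lem: AngleBasicProps}(4), we have the pointwise inequality $|(\alpha_\phi)_{\bar t i}|^2 \le \bigl((\alpha_\phi)_{\bar t t} + C(\eta)\bigr)\bigl((\alpha_\phi)_{\bar i i} + C(\eta)\bigr)$, and the uniform spatial $C^{2}$ bound from~\eqref{eq: thmSpaceC2} makes the second factor bounded, producing precisely the stated square-root dependence.
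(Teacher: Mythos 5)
Your plan for the spatial Hessian bound~\eqref{eq: thmSpaceC2} does not reproduce the paper's argument and, as stated, runs into exactly the obstructions the paper is at pains to point out. You propose to absorb the third-order cross terms ``by the concavity inequality of Lemma~\ref{lem: AngleBasicProps}(8),'' claiming that the quadratic form $F^{i\bar j, k\bar\ell}(\alpha_\phi)_{\bar j i;V_1}(\alpha_\phi)_{\bar\ell k;\bar V_1}$ becomes sign-definite and dominates the gradient term. This is precisely what does \emph{not} happen, and the paper explains why in the paragraph of three ``significant difficulties'' preceding Proposition~\ref{prop: HessEst}. First, $D^2F$ is not sign-definite on $\Gamma^\sigma$ (the $i=j=n$ entry carries a factor $2\mu_n$ which can be negative), and the concavity of $-e^{-AF}$ only controls the Hessian form against the square of the \emph{diagonal trace} $\bigl|F^{i\bar j}\nabla_{V_1}\alpha_{\bar j i}\bigr|^2$, not against the weighted sum $\sum_iF^{i\bar i}|\nabla_i\alpha_{\bar V_1 V_1}|^2$ that arises from differentiating $\log\lambda_1$; absorbing the $i=j=n$ term by this route throws away all the $i=j\ne n$ diagonal contributions one needs. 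Second, the gradient term in the Hessian expansion scales like $\lambda_1^{-2}(1+\mu_0^2)^{-1}$ while the supposed partner in the Hessian sum scales like $\lambda_1^{-1}\mu_0(1+\mu_0^2)^{-2}$; when $\mu_0\gg\lambda_1$ (which is the generic regime here, since $\mu_0\sim(\alpha_\phi)_{\bar t t}\sim\epsilon^{-2}$ while $\lambda_1$ should be $O(1)$), the partner is far smaller and cannot dominate. Third, your critical-point condition for $Q_1$ brings in $\nabla(\phi-\hat\phi_0)$, whose temporal component is of size $\epsilon^{-1}$, and nothing in your argument exhibits the cancellation that keeps this from propagating. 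The paper's actual mechanism is the ``big''/``small'' eigenvalue splitting ($\mu_j\gtrless\delta_0\lambda_1$), Lemma~\ref{lem: convComb} to exploit that $|V_1^j|^2\mu_j\lesssim\lambda_1$, and the careful term-by-term comparison in Propositions~\ref{prop: HessEst}, \ref{prop: GradEst}, \ref{prop: C2KeyEst}; the concavity of $-e^{-AF}$ plays no role in this part of the argument. Adding the S\'zekelyhidi-style $\psi(|\nabla^X\phi|^2)$ term does not save this: the paper explicitly contrasts its approach with the compact-manifold arguments of \cite{Sz, Guan1}, pointing out that those techniques break in the thin-manifold setting.

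Two smaller points. In your temporal estimate you say the second-order term is nonnegative ``by concavity of $F$ on $\Gamma^\sigma$''; $F$ is not concave there, and the paper applies the argument to $G=-e^{-AF}$. This slip changes where $\|h_t\|_{L^\infty}^2$ comes from: it arises from the first-derivative product $A^2|\nabla_t h|^2$ in the chain rule for $G=-e^{-Ah}$, not from Cauchy--Schwarz against your extra $A|\nabla^X\phi|^2$ term. Your mixed estimate via the $2\times2$ minor inequality for $\alpha_\phi+C(\eta)\hat\omega\ge 0$ is correct and is essentially what the paper does (via a determinant expansion).
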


The uniform estimate for the spatial $C^2$ norm,~\eqref{eq: thmSpaceC2}, is the most difficult of the three estimates.  Let us briefly recall what is known in this direction.  In joint work with Jacob \cite{CJY} the authors proved a $C^2$ estimate for solutions of $F=h$, when $h: X \rightarrow ((n-1)\frac{\pi}{2}, (n+1)\frac{\pi}{2})$ on compact manifolds without boundary, provided a subsolution exists.  In the current setting the same estimate works to prove an interior $C^2$ estimate of the form
\[
|\nabla \overline{\nabla} \phi|_{\hat{g}} \leq C(1+\sup_{\mathcal{X}_{\epsilon}} |\nabla \phi|^{2}_{\hat{g}}) + \sup_{\del \mathcal{X}_{\epsilon}}|\nabla\bar{\nabla}\phi|_{\hat{g}}
\]
Ignoring the troublesome boundary term, Theorem~\ref{thm: gradEst} gives the bound $|\nabla \phi|^{2}_{\hat{g}} \leq \frac{1}{\epsilon^2}$, and this bound is saturated if $\phi_0\ne \phi_1$.  Nevertheless, the above estimate would be good enough to prove the existence of solutions to~\eqref{eq: mainEquation} on $\mathcal{X}_{\epsilon}$, but not good enough to deduce regularity of the rescaled solutions as $\epsilon \rightarrow 0$.

In order to prove a uniform estimate for the spatial $C^2$ norm, we will apply the maximum principle to a quantity involving the largest {\em spatial} eigenvalue.  That is, for each point $(p,t) \in \mathcal{X}_{\epsilon}$ define $\lambda_1(p,t)$ to be the largest eigenvalue of $(\alpha + \DDb \phi)|_{X}$ with respect to $\omega$.  We will bound this quantity from above.  As a first step, we need to compute two derivatives of $\lambda_1$.
\begin{prop}
Suppose $\mathcal{E}$ is a smooth section of ${\rm End}(T\mathcal{X}_{\epsilon})$ which is hermitian with respect to $\hat{g}$.  Define $\chi = \mathcal{E}|_{TX} = \pi \mathcal{E} \pi$, where $\pi$ is the orthogonal projection to $TX \subset T\mathcal{X}_{\epsilon}$ defined by $\hat{g}$.  Suppose $\lambda_1 > \lambda_2 > \cdots > \lambda_n$ are the eigenvalues of $\chi$ at $(p,t) \in X_{\epsilon}$, and let $e_1, \ldots, e_n$ be the corresponding unit length, orthonormal eigenvectors. Let $(z_{i})$ be holomorphic coordinates on $\mathcal{X}_{\epsilon}$ centered at $(p_0,t_0)$.  Then at $(p_0,t_0)$ we have
\[
\nabla_i\lambda_{\alpha} = \langle (\nabla_{i}\E)e_{\alpha}, e_{\alpha}\rangle, \qquad \nabla_{\bar{i}}\lambda_{\alpha}  \langle (\nabla_{\bar{i}}\E)e_\alpha, e_\alpha\rangle
\]
and,
\[
\begin{aligned}
\nabla_{i}\nabla_{\bar{j}} \lambda_{\alpha} &=  \langle (\nabla_i\nabla_{\bar{j}}\E)e_\alpha, e_\alpha\rangle\\
&\quad + \sum_{\beta \ne \alpha} \frac{\langle (\nabla_i \E)e_{\alpha}, e_{\beta}\rangle\overline{\langle(\nabla_j\E)e_{\alpha},e_{\beta}\rangle}}{\lambda_{\alpha}-\lambda_{\beta}}+\frac{\langle (\nabla_{\bar{j}} \E)e_{\alpha}, e_{\beta}\rangle\overline{\langle(\nabla_{\bar{i}}\E)e_{\alpha},e_{\beta}\rangle}}{\lambda_{\alpha}-\lambda_{\beta}}
\end{aligned}
\]
where we view $e_\alpha$ as vectors in $T\mathcal{X}_{\epsilon}$ by the inclusion $TX \hookrightarrow T\mathcal{X}_{\epsilon}$.
\end{prop}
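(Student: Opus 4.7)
The plan is to reduce the identity to the standard second-order perturbation formula for a smoothly varying Hermitian operator on a fixed Hilbert space, and then to transfer the derivatives of $\chi$ back to derivatives of $\mathcal{E}$ by exploiting the product structure of $(\mathcal{X}_\epsilon, \hat g)$.

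First I would note that since the eigenvalues $\lambda_1>\cdots>\lambda_n$ of $\chi$ are simple at $(p_0,t_0)$, the $e_\alpha$ extend to a smooth orthonormal frame of $TX$ diagonalizing $\chi$ in a neighborhood, and after a gauge transformation within each (one-dimensional) eigenspace I may assume $\langle \nabla_i e_\alpha, e_\alpha\rangle = \langle \nabla_{\bar i} e_\alpha, e_\alpha\rangle = 0$ at the center. Differentiating the identity $\chi e_\alpha = \lambda_\alpha e_\alpha$ once and pairing with $e_\alpha$ yields, using Hermiticity of $\chi$, the first-derivative formula $\nabla_i\lambda_\alpha = \langle (\nabla_i\chi)e_\alpha, e_\alpha\rangle$ (and analogously for $\nabla_{\bar i}$). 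Pairing the same identity with $e_\beta$ for $\beta\ne\alpha$ and using $\lambda_\alpha\ne\lambda_\beta$ gives the off-diagonal expressions
\[
\langle \nabla_i e_\alpha, e_\beta\rangle = \frac{\langle (\nabla_i\chi) e_\alpha, e_\beta\rangle}{\lambda_\alpha-\lambda_\beta},\qquad \langle \nabla_{\bar i} e_\alpha, e_\beta\rangle = \frac{\langle (\nabla_{\bar i}\chi) e_\alpha, e_\beta\rangle}{\lambda_\alpha-\lambda_\beta}.
\]

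Next, apply $\nabla_i$ to $\nabla_{\bar j}\lambda_\alpha=\langle (\nabla_{\bar j}\chi)e_\alpha,e_\alpha\rangle$. The Leibniz rule for the Chern-type differentiation of a Hermitian pairing yields three terms; at the base point the terms involving $\nabla_i\lambda_\alpha\cdot\langle e_\alpha,e_\alpha\rangle$-type contributions vanish by the gauge choice, leaving
\[
\nabla_i\nabla_{\bar j}\lambda_\alpha = \langle (\nabla_i\nabla_{\bar j}\chi)e_\alpha, e_\alpha\rangle + \langle (\nabla_{\bar j}\chi)(\nabla_i e_\alpha), e_\alpha\rangle + \langle (\nabla_{\bar j}\chi) e_\alpha, \nabla_{\bar i} e_\alpha\rangle.
\]
Substitute $\nabla_i e_\alpha=\sum_{\beta\ne\alpha}\langle\nabla_i e_\alpha,e_\beta\rangle e_\beta$ and the analogue for $\nabla_{\bar i} e_\alpha$, then use the Hermitian relation obtained by differentiating $\chi=\chi^*$, namely $\langle (\nabla_{\bar j}\chi)e_\beta,e_\alpha\rangle = \overline{\langle (\nabla_j\chi)e_\alpha,e_\beta\rangle}$, to put the two off-diagonal terms into the symmetric form appearing in the statement.

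Finally I would pass from $\chi$ to $\mathcal{E}$. By Lemma~\ref{lem: productMetric} the vector fields $\partial_t,\partial_{\bar t}$ are $\hat g$-parallel, so the orthogonal projection $\pi:T\mathcal{X}_\epsilon\to TX$ is parallel; hence $\nabla\chi = \pi(\nabla\mathcal{E})\pi$ and $\nabla\bar\nabla\chi = \pi(\nabla\bar\nabla\mathcal{E})\pi$. Since each $e_\alpha,e_\beta$ lies in $TX$, the projections are transparent to the pairings: $\langle (\nabla\chi)e_\alpha,e_\beta\rangle = \langle (\nabla\mathcal{E})e_\alpha,e_\beta\rangle$ and similarly for the mixed second derivative. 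Substituting these identifications yields precisely the formulas in the statement.

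The only real subtlety is the Hermiticity bookkeeping needed to convert $\langle(\nabla_{\bar j}\chi)e_\beta,e_\alpha\rangle$ into a conjugated $(1,0)$-derivative pairing; once that is dispatched, the computation is a textbook second-order perturbation calculation, and the parallel-projection remark makes the transition to $\mathcal{E}$ essentially free.
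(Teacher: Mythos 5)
Your proof is correct and in substance the same as the paper's. Both arguments hinge on the same geometric input, namely that $\partial_t$ is $\hat g$-parallel (Lemma~\ref{lem: productMetric}), which you package as ``$\pi$ is parallel, so $\nabla\chi=\pi(\nabla\mathcal{E})\pi$'' while the paper packages it as ``$\nabla_i e_\alpha,\nabla_{\bar i}e_\alpha$ stay in $TX$, so one may expand them in the $e_\beta$ frame and the $(1-\pi)\mathcal{E}e_\alpha$ component drops out of every pairing.'' These are the same fact. Two small remarks: (i) your gauge choice $\langle\nabla_i e_\alpha,e_\alpha\rangle=0$ at the base point is harmless and achievable by a phase rotation of the eigenframe, but it is not actually needed for the first-derivative identity --- the unit-norm relation $\langle e_\alpha,\nabla_{\bar i}e_\alpha\rangle=-\langle\nabla_i e_\alpha,e_\alpha\rangle$ already forces the two unwanted terms to cancel --- whereas it \emph{is} what licenses writing $\nabla_i e_\alpha=\sum_{\beta\ne\alpha}a^\alpha_{i\beta}e_\beta$ with no $e_\alpha$-component, which both you and the paper need; (ii) your sentence about ``$\nabla_i\lambda_\alpha\cdot\langle e_\alpha,e_\alpha\rangle$-type contributions'' vanishing is slightly misleading, since the Leibniz expansion of $\nabla_i\langle(\nabla_{\bar j}\chi)e_\alpha,e_\alpha\rangle$ produces no such terms and nothing there actually needs to be discarded; the three terms you write down are already the full expansion. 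Neither point affects the validity of the argument.
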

\begin{proof}
First note that, by assumption the eigenvalues $\lambda_\alpha$ are smooth functions near $(p_0,t_0)$, and we can find smooth spatial vector fields denoted $e_\alpha$ which are sections of $TX\subset T\mathcal{X}_{\epsilon}$, so that
\[
\chi e_\alpha(p,t) = \lambda_\alpha(p,t) e_\alpha, \qquad \|e_\alpha(p,t)\|_{\hat{g}} =1
\]
and clearly we have $e_\alpha(p_0,t_0) = e_\alpha$.  Now, since $e_\alpha =\pi e_\alpha$, we can write
\begin{equation}\label{eq: EndSpaceTime}
\E e_\alpha = \chi e_\alpha + (1-\pi)\E e_\alpha
\end{equation}
and so $\lambda_\alpha = \langle \E e_\alpha, e_\alpha\rangle$.  We now differentiate this equation to get
\[
\begin{aligned}
\nabla_{i}\lambda_{\alpha} &= \langle (\nabla_{i}\E)e_{\alpha}, e_{\alpha}\rangle + \langle \E (\nabla_{i} e_{\alpha}), e_{\alpha}\rangle + \langle \E  e_{\alpha}, \nabla_{\bar{i}}e_{\alpha}\rangle\\
&=\langle (\nabla_{i}\E)e_{\alpha}, e_{\alpha}\rangle + \langle  (\nabla_{i} e_{\alpha}), \E e_{\alpha}\rangle +\lambda_{\alpha} \langle  e_{\alpha}, \nabla_{\bar{i}}e_{\alpha}\rangle\\
&= \langle (\nabla_{i}\E)e_{\alpha}, e_{\alpha}\rangle +2\lambda_{\alpha} \langle  (\nabla_{i} e_{\alpha}),  e_{\alpha}\rangle \\
&= \langle (\nabla_{i}\E)e_{\alpha}, e_{\alpha}\rangle
\end{aligned}
\]
where we used that $\E$ is hermitian, and $\|e_{\alpha}\| =1$, so that $\langle \nabla_{i}e_{\alpha}, e_{\alpha}\rangle =0$. Similarly we have
\[
\nabla_{\bar{i}}\lambda_\alpha =  \langle (\nabla_{\bar{i}}\E)e_\alpha, e_\alpha\rangle.
\]
Before proceeding, let us remark that since $\E$ is hermitian, for any vectors $V,W \in T\mathcal{X}_{\epsilon}$ there holds
\begin{equation}\label{eq: HermDurr}
\langle (\nabla_{i}\E)V, W\rangle = \langle V, (\nabla_{\bar{i}}\E)W\rangle 
\end{equation}
and similarly for barred indices.  Next we compute
\begin{equation}\label{eq: evSecondDeriv1}
\nabla_{i}\nabla_{\bar{j}} \lambda_{\alpha} =  \langle (\nabla_i\nabla_{\bar{j}}\E)e_\alpha, e_\alpha\rangle +  \langle (\nabla_{\bar{j}}\E)(\nabla_ie_\alpha), e_\alpha\rangle +\langle (\nabla_{\bar{j}}\E)e_\alpha, \nabla_{\bar{i}}e_\alpha\rangle.
\end{equation}
At the point $(p_0,t_0)$ we claim that 
\[
\nabla_{i}e_\alpha = \sum_{\beta\ne \alpha} a^{\alpha}_{i\beta}e_\beta,\qquad \nabla_{\bar{i}}e_\alpha = \sum_{\beta\ne \alpha} a^{\alpha}_{\bar{i}\beta}e_\beta.
\]
Let us explain how to see this for the first expression, with the second expression being treated in the same way.  Let $\del_t$ denote the vector field generated by the time direction.  Since $e_1,\ldots, e_n$ span the orthogonal complement of ${\rm span}\{\del_t\} \subset T\mathcal{X}_{\epsilon}$, it suffices to show that $\langle \nabla_{i}e_{\alpha}, \del_t\rangle =0$.  To do this we differentiate the equation $\langle e_\alpha, \del_t\rangle =0$ to get
\[
0 = \langle \nabla_i e_{\alpha} ,\del_t \rangle + \langle  e_{\alpha} ,\nabla_{\bar{i}}\del_t \rangle
\]
and use that by Lemma~\ref{lem: productMetric}, $\del_t$ is parallel. In fact, in this case one can use that $\del_t$ is a holomorphic vector field, but this does not work to prove the analogous claim for $\nabla_{\bar{i}}e_{\alpha}$.  A similar computation shows that $\langle \nabla_{\bar{i}}e_{\alpha}, \del_t\rangle =0$.  Finally, the fact that $e_i$ does not appear in the sum follows from $\|e_i\|=1$.  Now we can solve for the $a^{\alpha}_{i\beta}$.  We differentiate~\eqref{eq: EndSpaceTime} to get
\[
(\nabla_i \E)e_{\alpha} + \E(\nabla_{i}e_{\alpha}) = \nabla_i\lambda_{\alpha} e_{\alpha} + \lambda_{\alpha}\nabla_i e_{\alpha} + \nabla_i((1-\pi)\E e_{\alpha}).
\]
Taking the inner product with $e_{\beta}$ for $\beta \ne \alpha$, and using that $\E$ is hermitian gives
\[
\begin{aligned}
\langle (\nabla_i \E)e_{\alpha} , e_{\beta}\rangle + \lambda_{\beta}a^{\alpha}_{i\beta} &= \lambda_{\alpha}\langle\nabla_i e_{\alpha}, e_{\beta}\rangle + \langle \nabla_i((1-\pi)\E e_{\alpha}), e_{\beta}\rangle\\
&=\lambda_{\alpha}a^{\alpha}_{i\beta} + \langle ((1-\pi)\E e_{\alpha}), \nabla_{\bar i}e_{\beta}\rangle\\
&=\lambda_{\alpha}a^{\alpha}_{i\beta}
\end{aligned}
\]
where in the last two lines we used that $e_{\beta}, \nabla_{\bar{i}}e_{\beta}$ are orthogonal to ${\rm span}\{\del_t\} \ni (1-\pi)\E e_{\alpha}$.  So
\[
(\lambda_{\alpha}-\lambda_{\beta})a^{\alpha}_{i\beta} = \langle (\nabla_i \E)e_{\alpha} , e_{\beta}\rangle.
\]
 Similarly, we have
\[
(\lambda_{\alpha}-\lambda_{\beta})a^{\alpha}_{\bar{i}\beta} = \langle (\nabla_{\bar{i}} \E)e_{\alpha} , e_{\beta}\rangle.
\]
Plugging this into~\eqref{eq: evSecondDeriv1}, using~\eqref{eq: HermDurr} and doing some algebra we obtain
\[
\begin{aligned}
\nabla_{i}\nabla_{\bar{j}} \lambda_{\alpha} &=  \langle (\nabla_i\nabla_{\bar{j}}\E)e_\alpha, e_\alpha\rangle\\
&\quad + \sum_{\beta \ne \alpha} \frac{\langle (\nabla_i \E)e_{\alpha}, e_{\beta}\rangle\overline{\langle(\nabla_j\E)e_{\alpha},e_{\beta}\rangle}}{\lambda_{\alpha}-\lambda_{\beta}}+\frac{\langle (\nabla_{\bar{j}} \E)e_{\alpha}, e_{\beta}\rangle\overline{\langle(\nabla_{\bar{i}}\E)e_{\alpha},e_{\beta}\rangle}}{\lambda_{\alpha}-\lambda_{\beta}}
\end{aligned}
\]
By noting that $\lambda_1$ is a smooth function provided $\lambda_1 >\lambda_2$ we have
\[
\nabla_{i}\nabla_{\bar{i}} \lambda_{1} =  \langle (\nabla_i\nabla_{\bar{i}}\E)e_1, e_1\rangle + \sum_{\beta \ne 1} \frac{|\langle (\nabla_i \E)e_{1}, e_{\beta}\rangle|^{2}+|\langle (\nabla_{\bar{i}} \E)e_{1}, e_{\beta}\rangle|^{2}}{\lambda_{1}-\lambda_{\beta}}.
\]
\end{proof}

Next we compute the linearized operator applied to the largest spatial eigenvalue $\lambda_1$ of $\E := \hat{\omega}^{-1}(\pi^{*}\alpha+ \DDb\phi)$.  First, we have to perturb this endomorphism to ensure that the largest spatial eigenvalue is smooth.  Fix a point $(p_0,t_0) \in \mathcal{X}_{\epsilon}$, and choose holomorphic normal coordinates $(z_0,\ldots,z_n)$ for $\hat{\omega}$ centered at $(p_0,t_0)$ so that $\E(p_0,t_0)$ is diagonal with eigenvalues $\mu_0\geq \mu_1 \geq \cdots \geq \mu_n$, and we may assume that $\mu_0 > \mu_n$.  Consider a matrix $B=(B^{i}_{j}) = B_{ii}\delta^{i}_{j}$ defined near $(p_0,t_0)$ with the property that $B_{00}=0= B_{nn} <B_{n-1n-1} <\cdots < B_{11}$.  Let $e_\beta$ be an $\hat{\omega}$ orthonormal frame of eigenvectors for $\E|_{TX}$ at $(p_0,t_0)$, with corresponding eigenvalues $\lambda_1 \geq \lambda_2 \geq \cdots \geq \lambda_n$, and assume that $\lambda_1 \gg \lambda_n$. As usual, we regard the $e_i$ as vectors in $(T\mathcal{X}_{\epsilon})_{p_0,t_0}$.  Furthermore, we make the following stipulation; if $\lambda_1 = \mu_0$, then $e_1$ is in the span of the vectors with eigenvalue $\mu_0$.  After possibly rotating our original coordinate system, we choose $e_1=\del_{z_0}$.  We choose the $B_{ii}$ in the following way.  We require that
\[
Be_1=0.
\]
If $e_1 =\del_{z_0}$ this is redundant, otherwise we have gained a single linear equation, and hence we can choose $B\in \mathbb{R}^{n+1}_{\geq 0}$ in the complement of at most $3$ hyperplanes.  On the orthogonal complement of ${\rm span}\{\del_{z_0},\del_{z_n}, e_1\}$ we clearly have that $B$ is positive definite. Extending $B$ to be constant in our local coordinate patch, we can view it as a local holomorphic section of ${\rm End}(T\mathcal{X}_{\epsilon})$.  Consider the endomorphism $\tilde{\E}:= \E-B$.  Clearly the eigenvalues of $\tilde{\E}$ are less or equal the eigenvalues of $\E$. Let $\tilde{\mu_{0}}\geq \tilde{\mu_{1}} \geq\cdots\geq\tilde{\mu_{n}} $ denote the eigenvalues of $\tilde{\E}$ at $(p_0,t_0)$, and $\tilde{\lambda}_{1}\geq \tilde{\lambda}_{2} \geq \cdots \geq \tilde{\lambda}_{n} $ denote the eigenvalues of $\tilde{\E}|_{TX}$.  Then we have
\[
\tilde{\mu_n} = \mu_n,\quad  \tilde{\mu_0} = \mu_0,\quad \tilde{\lambda}_1 = \lambda_1 \qquad \text{ at } (p_0, t_0).
\]
Furthermore, we have $\tilde{\beta}_{\alpha} < \lambda_{\beta}$ for all $\beta \ne 1, n$.  Since $\lambda_n \ll \lambda_1$, by assumption, this implies that $\tilde{\lambda}_{1} > \tilde{\lambda}_{2} \geq \cdots \geq \tilde{\lambda}_n$.  Thus $\tilde{\lambda}_{2}<\tilde{\lambda}_{1} \leq \lambda_1$ near $(p_0,t_0)$, with $\tilde{\lambda}_{1} = \lambda_1$ at $(p_0,t_0)$.  This is the desired perturbation.  We now compute
\begin{equation}\label{eq: evLinOp1}
\begin{aligned}
&F^{i\bar{j}}\nabla_{i}\nabla_{\bar{j}}\tilde{\lambda}_1 = \sum_{i=0}^{n}\frac{1}{1+\mu_i^2}\nabla_i\nabla_{\bar{i}}\tilde{\lambda}_1\\
&= \sum_{i}\frac{1}{1+\mu_i^2}\langle (\nabla_i\nabla_{\bar{i}}\tilde{\E})e_1, e_1\rangle + \sum_{i=0}^{n}\sum_{\beta \ne 1} \frac{|\langle (\nabla_i \tilde{\E})e_{1}, e_{\beta}\rangle|^{2}+|\langle (\nabla_{\bar{i}} \tilde{\E})e_{1}, e_{\beta}\rangle|^{2}}{(\tilde{\lambda}_{1}-\tilde{\lambda}_{\beta})(1+\mu_i^2)}.
\end{aligned}
\end{equation}
Now at $(p_0,t_0)$
\[
\nabla_{i}\tilde{\E}^{k}_{p} = \nabla_i(\alpha_{\phi})_{\bar{k}p},
\]
\[
\nabla_i\nabla_{\bar{j}}\tilde{\E}^{k}_{p} = \nabla_{i}\nabla_{\bar{j}}\left((\hat{g})^{k\bar{\ell}}(\alpha_{\phi})_{\bar{\ell}p} - B^{k}_{p}\right)=(\hat{g})^{k\bar{\ell}}\nabla_i\nabla_{\bar{j}}(\alpha_{\phi})_{\bar{\ell}p},
\]
since $B$ is a local holomorphic section.  At $(p_0,t_0)$ write $e_\beta = V_{\beta}^{j} \del_{z_j}$, for complex numbers $V_{\beta}^{j}$, and $1\leq \beta \leq n$. For simplicity we denote
\[
V_\beta = V_{\beta}^{j} \del_{z_j}
\]
regarded as vector fields defined in an open neighbourhood of $(p_0,t_0)$.  Then at $(p_0,t_0)$ we have
\begin{equation}\label{eq: diffEalpha}
\sum_{i}\frac{1}{1+\mu_i^2}\langle (\nabla_i\nabla_{\bar{i}}\tilde{\E})e_1, e_1\rangle = V_{1}^{p}\overline{V_{1}^{k}}\sum_{i}\frac{1}{1+\mu_i^2}\nabla_i\nabla_{\bar{i}}(\alpha_\phi)_{\bar{k}p}.
\end{equation}
Using that $V_1$ has constant coefficients, and $\alpha_{\phi}$ is closed we compute
\begin{equation}\label{eq: 2diffCommute}
\begin{aligned}
\nabla_{\bar{V_1}}\nabla_{V_1}(\alpha_\phi)_{\bar{j}i} &= \overline{V_1^{k}}V_{1}^{p}\nabla_{\bar{k}}\nabla_p(\alpha_{\phi})_{\bar{j}i}\\
&=\overline{V_1^{k}}V_{1}^{p}\nabla_{\bar{k}}\nabla_i(\alpha_{\phi})_{\bar{j}p}\\
&=\overline{V_1^{k}}V_{1}^{p}\left(\nabla_i\nabla_{\bar{k}}(\alpha_{\phi})_{\bar{j}p} + R_{\bar{k}i}\,^{s}\,_{p}(\alpha_{\phi})_{\bar{j}s} - R_{\bar{k}i\bar{j}}\,^{\bar{s}}(\alpha_{\phi})_{\bar{s}p}\right)\\
&=\overline{V_1^{k}}V_{1}^{p}\left(\nabla_i\nabla_{\bar{j}}(\alpha_{\phi})_{\bar{k}p} + R_{\bar{k}i}\,^{s}\,_{p}(\alpha_{\phi})_{\bar{j}s} - R_{\bar{k}i\bar{j}}\,^{\bar{s}}(\alpha_{\phi})_{\bar{s}p}\right)\\
&=\sum_{k,p=0}^{n}\overline{V_1^{k}}V_{1}^{p}\left(\nabla_i\nabla_{\bar{j}}(\alpha_{\phi})_{\bar{k}p} + R_{\bar{k}i}\,^{j}\,_{p}\mu_j - R_{\bar{k}i\bar{j}}\,^{\bar{p}}\mu_p\right).
\end{aligned}
\end{equation}
We combine equations~\eqref{eq: diffEalpha}~\eqref{eq: 2diffCommute} to obtain at $(p_0,t_0)$
\begin{equation}\label{eq: evLinOp2}
\begin{aligned}
\sum_{i}\frac{1}{1+\mu_i^2}&\langle (\nabla_i\nabla_{\bar{i}}\tilde{\E})e_1, e_1\rangle = \sum_{i=0}^{n}\frac{1}{1+\mu_i^2}\nabla_{\bar{V_1}}\nabla_{V_1}(\alpha_{\phi})_{\bar{i}i}\\
&\quad-\sum_{i,k,p=0}^{n}\frac{1}{1+\mu_i^2}R_{\bar{k}i}\,^{i}\,_p\mu_iV^{p}\overline{V^{k}} + \sum_{i,k,p}\frac{1}{1+\mu_i^2}R_{\bar{k}i\bar{i}}\,^{p}\mu_pV^{p}\overline{V^{k}}.
\end{aligned}
\end{equation}
Differentiate the equation $F(\E)=h$ in the $V_1$ direction to get
\[
\nabla_{V_1}h= F^{i\bar{j}}\nabla_{V_1}(\alpha_{\phi})_{\bar{j}i} = \sum_{i=0}^{n} \frac{1}{1+\mu_i^2}\nabla_{V_1}(\alpha_{\phi})_{\bar{i}i},
\]
\begin{equation}\label{eq:2diffEquation}
\begin{aligned}
\nabla_{\bar{V_1}}\nabla_{V_1}h&= F^{i\bar{j}}\nabla_{\bar{V_1}}\nabla_{V_1}(\alpha_{\phi})_{\bar{j}i} + F^{i\bar{j}, m\bar{\ell}}\nabla_{V_1}(\alpha_{\phi})_{\bar{j}i}\nabla_{\bar{V_1}}(\alpha_{\phi})_{\bar{\ell}m} \\
&=\sum_{i=0}^{n}\frac{1}{1+\mu_i^2}\nabla_{\bar{V_1}}\nabla_{V_1}(\alpha_{\phi})_{\bar{i}i} - \sum_{i,j=0}^{n}\frac{\mu_i+\mu_j}{(1+\mu_i^2)(1+\mu_j^2)}|\nabla_{V_1}(\alpha_{\phi})_{\bar{j}i}|^2.
\end{aligned}
\end{equation}
Before substituting~\eqref{eq: evLinOp2} into~\eqref{eq:2diffEquation}, we note the following easy, but useful lemma. 
\begin{lem}\label{lem: convComb}
In the above notation, at $(p_0,t_0)$ we have
\[
\lambda_1 = \sum_{i=0}^{n}|V_1^{i}|^2\mu_i, \qquad \sum_{i=0}^{n}|V_1^{i}|^2=1
\]
In particular, $\lambda_1$ is a convex combination of the $\mu_i$, $\lambda_1 \leq \mu_0$, with equality if and only if $V_1$ is in the span of the eigenvectors with eigenvalue $\mu_0$.
\end{lem}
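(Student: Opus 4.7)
The lemma is essentially a linear algebra statement about how an eigenvalue of a restricted Hermitian endomorphism relates to the eigenvalues of the unrestricted endomorphism, so my plan is simply to unpack the definitions.

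First I would observe that since $(z_0,\ldots,z_n)$ are holomorphic normal coordinates for $\hat{g}$ centered at $(p_0,t_0)$, the vectors $\partial_{z_0},\ldots,\partial_{z_n}$ form a $\hat{g}$-orthonormal basis of $T_{(p_0,t_0)}\mathcal{X}_\epsilon$. Hence the unit-length condition $\|e_1\|_{\hat{g}}=1$ immediately gives $\sum_i |V_1^i|^2 = 1$, which is the second identity.

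Next I would exploit that $e_1 \in TX$ (viewed inside $T\mathcal{X}_\epsilon$ via the inclusion) to remove the projection $\pi$ from the formula for $\lambda_1$. Since $\pi e_1 = e_1$ and $\pi$ is self-adjoint with respect to $\hat{g}$, we get
\[
\lambda_1 = \langle \chi e_1, e_1\rangle_{\hat{g}} = \langle \pi\E\pi e_1, e_1\rangle_{\hat{g}} = \langle \E e_1, \pi e_1\rangle_{\hat{g}} = \langle \E e_1, e_1\rangle_{\hat{g}}.
\]
Now the diagonality of $\E$ at $(p_0,t_0)$ in the chosen coordinates, namely $\E\partial_{z_i} = \mu_i \partial_{z_i}$, together with orthonormality of $\{\partial_{z_i}\}$, yields
\[
\langle \E e_1, e_1\rangle_{\hat{g}} = \sum_{i,j=0}^{n} V_1^i \mu_i \overline{V_1^j}\langle \partial_{z_i},\partial_{z_j}\rangle_{\hat{g}} = \sum_{i=0}^{n} |V_1^i|^2 \mu_i,
\]
which is the first identity.

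Finally, the convex combination statement follows immediately: since $|V_1^i|^2 \geq 0$ and $\sum_i |V_1^i|^2 = 1$, the value $\lambda_1 = \sum_i |V_1^i|^2 \mu_i$ lies between $\mu_n$ and $\mu_0$, and equals $\mu_0$ precisely when $V_1^i = 0$ for every index $i$ with $\mu_i < \mu_0$, i.e.\ when $V_1$ lies in the $\mu_0$-eigenspace of $\E$. There is no real obstacle here; the only point to be careful about is that the inclusion $TX \hookrightarrow T\mathcal{X}_\epsilon$ is $\hat{g}$-isometric onto its image, so that orthonormality and the projection identity $\pi e_1 = e_1$ hold simultaneously, but this is built into the setup.
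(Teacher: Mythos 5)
Your proof is correct and follows essentially the same route as the paper's (very terse) proof, which simply says to write $\lambda_1=\langle\E e_1,e_1\rangle$ and $\|e_1\|^2=1$ in the chosen coordinates; your version fills in the useful details, in particular using the self-adjointness of $\pi$ to pass from $\langle\chi e_1,e_1\rangle$ to $\langle\E e_1,e_1\rangle$, and recalling that the $\partial_{z_i}$ are $\hat{g}$-orthonormal because the coordinates are holomorphic normal.
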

\begin{proof}
The proof is just a consequence of writing the equations $\lambda_1 = \langle \E e_1, e_1\rangle$, and $\|e_1\|^2=1$ in our local coordinate system, and using the definition of $V_1$.
\end{proof}

We note the following simple corollary
\begin{cor}\label{cor: convComb}
For all $j \ne n$ we have
\[
|V_1^{j}|^2\mu_j \leq \lambda_1 +\max\{0,-\mu_n\} \leq \lambda_1 + C(\eta_1)
\]
\end{cor}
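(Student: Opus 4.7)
The proof should be a short computation using Lemma~\ref{lem: convComb} together with the sign/size information for the eigenvalues $\mu_i$ provided by Lemma~\ref{lem: AngleBasicProps}. The idea is simply to isolate the $j$-th summand in the convex combination $\lambda_1 = \sum_{i=0}^{n}|V_1^{i}|^2\mu_i$ and bound the remaining terms from below.

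Concretely, I would proceed as follows. Fix $j \ne n$. From Lemma~\ref{lem: convComb} we have
\[
|V_1^{j}|^2\mu_j = \lambda_1 - \sum_{i \ne j}|V_1^{i}|^2\mu_i,
\]
so the task reduces to bounding $\sum_{i \ne j}|V_1^{i}|^2\mu_i$ from below. By Lemma~\ref{lem: AngleBasicProps}(1), under the hypothesis $F(M) \geq (n-1)\tfrac{\pi}{2}+\eta_1$ (which holds here since $\phi \in \mathcal{H}$ and $\Theta \geq (n-1)\tfrac{\pi}{2}+\eta_1$) we have $\mu_i > 0$ for every $i \ne n$. Therefore for $i \notin \{j,n\}$ the contribution $|V_1^{i}|^2\mu_i$ is nonnegative, and
\[
\sum_{i \ne j}|V_1^{i}|^2\mu_i \;\geq\; |V_1^{n}|^2\mu_n \;\geq\; -\max\{0,-\mu_n\},
\]
where the second inequality uses $|V_1^{n}|^2 \leq \sum_i |V_1^{i}|^2 = 1$. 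Rearranging gives
\[
|V_1^{j}|^2\mu_j \;\leq\; \lambda_1 + \max\{0,-\mu_n\}.
\]
Finally, the bound $\max\{0,-\mu_n\} \leq C(\eta_1)$ is precisely Lemma~\ref{lem: AngleBasicProps}(4).

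There is no real obstacle here — the corollary is a direct consequence of the convex-combination identity together with the fact that only the smallest eigenvalue $\mu_n$ can be negative, and it is controlled in absolute value by a constant depending only on $\eta_1$. The only point to be careful about is the restriction $j \ne n$, which is exactly what guarantees that in the leftover sum $\sum_{i \ne j}$ the only possibly negative term is the single summand $|V_1^{n}|^2 \mu_n$.
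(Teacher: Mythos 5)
Your proof is correct and follows essentially the same route as the paper's: isolate the $j$-th summand in the convex-combination identity $\lambda_1 = \sum_i |V_1^i|^2\mu_i$ from Lemma~\ref{lem: convComb}, observe via Lemma~\ref{lem: AngleBasicProps}(1) that the only possibly negative term in the remaining sum is $i=n$, bound it using $|V_1^n|^2 \leq 1$, and invoke Lemma~\ref{lem: AngleBasicProps}(4) for the final inequality.
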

\begin{proof}
Just observe that, by Lemma~\ref{lem: AngleBasicProps} the $i=n$ term is the only term is the only possible negative contribution to the sum
\[
\lambda_1=  \sum_{i=0}^{n}|V_1^{i}|^2\mu_i.
\]
Since $\sum_{i=0}^{n}|V_1^{i}|^2=1$ we can rearrange, and apply Lemma~\ref{lem: AngleBasicProps} to conclude.
\end{proof}
Let $C_1>0$ be a two-sided bound for the sectional curvature of $\hat{\omega}$, which by Lemma~\ref{lem: productMetric} depends only on a bound for the sectional curvature of $(X,\omega)$, and is therefore a uniform constant. Then we have
\begin{equation}\label{eq: curvTermBd1}
\begin{aligned}
\sum_{k,p}R_{\bar{k}i\bar{i}}\,^{p}\mu_pV^{p}\overline{V^{k}} \geq -C_1\sum_{p=0}^{n}|\mu_p||V^{p}|^2 &= -C_1\sum_{p=0}^{n}\mu_{p}|V_1^{p}|^2 -C_1(|\mu_n|-\mu_n)|V_1^{n}|^2\\
&=-C_1\lambda_1-C_2
\end{aligned}
\end{equation}
where we used Lemma~\ref{lem: convComb} and that $\mu_{n} > -C(\eta_1)$ by Lemma~\ref{lem: AngleBasicProps}.  For the remaining curvature term we use that $|\frac{\mu_i}{1+\mu_i^2}|\leq 1$ together with $\sum_i |V_{1}^{i}|^2=1$ to get
\begin{equation}\label{eq: curvTermBd2}
\sum_{i,k,p=0}^{n}\frac{1}{1+\mu_i^2}R_{\bar{k}i}\,^{i}\,_p\mu_iV^{p}\overline{V^{k}} \leq C_1(n+1).
\end{equation}
Returning to equation~\eqref{eq: evLinOp2}, substituting the bounds~\eqref{eq: curvTermBd1},\eqref{eq: curvTermBd2}, and equation~\eqref{eq:2diffEquation} we obtain
\[
\sum_{i}\frac{1}{1+\mu_i^2}\langle (\nabla_i\nabla_{\bar{i}}\tilde{\E})e_1, e_1\rangle \geq \sum_{i,j=0}^{n}\frac{\mu_i+\mu_j}{(1+\mu_i^2)(1+\mu_j^2)}|\nabla_{V_1}(\alpha_{\phi})_{\bar{j}i}|^2-C\lambda_1 - C
\]
for a constant $C$ depending only on $n, \eta$, and bound for $\nabla_{\bar{V_1}}\nabla_{V_1}h$ and $(X,\omega)$.  Note that, since $V_1$ is spatial, the constant $C$ is uniform.
Finally, by noting that $\lambda_1=\tilde{\lambda}_1 > \tilde{\lambda}_{\beta}$ for $\beta \ne 1$ we arrive at

\begin{prop}\label{prop: firstBd}
In the above notation, at the point $(p_0,t_0)$ we have
\begin{equation}\label{eq: firstBdProp}
\begin{aligned}
F^{i\bar{j}}\nabla_{i}\nabla_{\bar{j}}\log(\tilde{\lambda}_1) &\geq \frac{1}{\lambda_1}\sum_{i,j=0}^{n}\frac{\mu_i+\mu_j}{(1+\mu_i^2)(1+\mu_j^2)}|\nabla_{V_1}(\alpha_{\phi})_{\bar{j}i}|^2\\
&\quad -\frac{1}{\lambda_1^2} \sum_{i=0}^{n}\frac{1}{1+\mu_i^2}|\nabla_i(\alpha_{\phi})_{\bar{V_1}V_1}|^2 -C
\end{aligned}
\end{equation}
for a uniform constant $C$.
\end{prop}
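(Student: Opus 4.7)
The plan is to simply assemble the ingredients that have already been established in the excerpt. By the standard logarithmic identity,
\[
F^{i\bar j}\nabla_i\nabla_{\bar j}\log\tilde\lambda_1 \;=\; \frac{1}{\tilde\lambda_1}F^{i\bar j}\nabla_i\nabla_{\bar j}\tilde\lambda_1 \;-\; \frac{1}{\tilde\lambda_1^{2}}F^{i\bar j}\nabla_i\tilde\lambda_1\,\nabla_{\bar j}\tilde\lambda_1,
\]
so I would first expand each of the two terms on the right using the formulas from the preceding proposition.

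For the Hessian term, the second-order eigenvalue formula applied in coordinates in which $F^{i\bar j}=\frac{1}{1+\mu_i^2}\delta^{ij}$ gives
\[
F^{i\bar j}\nabla_i\nabla_{\bar j}\tilde\lambda_1 \;=\; \sum_{i}\frac{\langle(\nabla_i\nabla_{\bar i}\tilde\E)e_1,e_1\rangle}{1+\mu_i^{2}} \;+\; \sum_{i,\beta\ne 1}\frac{|\langle(\nabla_i\tilde\E)e_1,e_\beta\rangle|^{2}+|\langle(\nabla_{\bar i}\tilde\E)e_1,e_\beta\rangle|^{2}}{(\tilde\lambda_1-\tilde\lambda_\beta)(1+\mu_i^{2})}.
\]
Because $\tilde\lambda_1$ is the strict maximum among the $\tilde\lambda_\beta$, the second sum is non-negative and I would simply discard it as a lower bound. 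For the first sum, the excerpt has already carried out the commutation of covariant derivatives: combining equation~\eqref{eq: 2diffCommute} with the curvature bounds~\eqref{eq: curvTermBd1} and \eqref{eq: curvTermBd2} (which use Corollary~\ref{cor: convComb} and Lemma~\ref{lem: AngleBasicProps} (4) to control $|\mu_n|$ and $|V_1^p|^{2}\mu_p$) yields
\[
\sum_{i}\frac{1}{1+\mu_i^{2}}\langle(\nabla_i\nabla_{\bar i}\tilde\E)e_1,e_1\rangle \;\ge\; \sum_{i=0}^{n}\frac{\nabla_{\bar V_1}\nabla_{V_1}(\alpha_\phi)_{\bar i i}}{1+\mu_i^{2}} \;-\; C\lambda_1 \;-\; C
\]
for a uniform constant $C$ (uniform because $V_1$ is spatial, so $\nabla_{\bar V_1}\nabla_{V_1} h$ is controlled by spatial derivatives of $h$). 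The computation~\eqref{eq:2diffEquation}, which is just two derivatives of the equation $F(\E)=h$ in the $V_1$ direction, converts this into
\[
\sum_{i}\frac{\nabla_{\bar V_1}\nabla_{V_1}(\alpha_\phi)_{\bar i i}}{1+\mu_i^{2}} \;=\; \nabla_{\bar V_1}\nabla_{V_1} h \;+\; \sum_{i,j}\frac{\mu_i+\mu_j}{(1+\mu_i^{2})(1+\mu_j^{2})}\,|\nabla_{V_1}(\alpha_\phi)_{\bar j i}|^{2},
\]
which is precisely the concavity-type expression appearing in~\eqref{eq: firstBdProp}. The term $\nabla_{\bar V_1}\nabla_{V_1} h$ is uniformly bounded and absorbed into the constant.

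For the gradient term, since $B$ is a local holomorphic section and $V_1$ has constant coefficients at $(p_0,t_0)$, $\nabla_i\tilde\lambda_1 = \langle(\nabla_i\tilde\E)e_1,e_1\rangle = V_1^{p}\overline{V_1^{k}}\nabla_i(\alpha_\phi)_{\bar k p} = \nabla_i(\alpha_\phi)_{\bar V_1 V_1}$, so
\[
\frac{1}{\tilde\lambda_1^{2}}F^{i\bar j}\nabla_i\tilde\lambda_1\,\nabla_{\bar j}\tilde\lambda_1 \;=\; \frac{1}{\lambda_1^{2}}\sum_{i=0}^{n}\frac{|\nabla_i(\alpha_\phi)_{\bar V_1 V_1}|^{2}}{1+\mu_i^{2}}.
\]
Dividing the lower bound for the Hessian term by $\tilde\lambda_1=\lambda_1$, subtracting the gradient term above, and absorbing the error $C+C/\lambda_1$ (which we may assume arises when $\lambda_1\ge 1$, since otherwise the proposition is trivial) into a single uniform constant $C$, yields exactly~\eqref{eq: firstBdProp}.

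The only genuinely delicate point is ensuring that all constants remain \emph{uniform} in the sense of the definition at the end of Section~\ref{sec: AnalPrelim}. This is why the argument must be organized around differentiating in the purely spatial direction $V_1$: the spatial gradient of $h$ is uniformly controlled, while a temporal derivative of $h$ would not be. The curvature contributions are uniform by Lemma~\ref{lem: productMetric}, and the control of $|V_1^{p}|^{2}\mu_p$ via Corollary~\ref{cor: convComb} ensures that the curvature-times-eigenvalue terms produce only a linear $-C\lambda_1$ error. All other steps are routine manipulations of the concavity identity.
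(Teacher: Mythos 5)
Your proposal is correct and follows the paper's own argument essentially step for step: the logarithmic identity, dropping the non-negative sum in the second-derivative eigenvalue formula (equation~\eqref{eq: evLinOp1}), converting the remaining term via the commutation formula~\eqref{eq: 2diffCommute} and the twice-differentiated equation~\eqref{eq:2diffEquation}, bounding the curvature contributions with~\eqref{eq: curvTermBd1} and~\eqref{eq: curvTermBd2}, identifying $\nabla_i\tilde\lambda_1 = \nabla_i(\alpha_\phi)_{\bar{V_1}V_1}$ for the gradient term, and dividing by $\lambda_1=\tilde\lambda_1$. Your remarks about why the constants are uniform (differentiating only in the spatial direction $V_1$, and the $-C\lambda_1$ error from the curvature estimates becoming $-C$ after division) are also in line with the paper's reasoning.
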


In order to apply the maximum principle, we need a lower bound for the quantity appearing on the right hand side of~\eqref{eq: firstBdProp}.  Note that $\nabla_i(\alpha_{\phi})_{V_1\bar{V_1}} = \nabla_{V_1}(\alpha_{\phi})_{\bar{V_1}i}$ since $\alpha_{\phi}$ is closed and $V_1$ has constant coefficients.  Thus, all the negative terms appearing in
\begin{equation}\label{eq: GradTerm}
 \sum_{i=0}^{n}\frac{1}{1+\mu_i^2}|\nabla_i\alpha_{\bar{V_1}V_1}|^2
 \end{equation}
 have partners appearing in the sum
\begin{equation}\label{eq: HessTerm}
\frac{1}{\lambda_1}\sum_{i,j=0}^{n}\frac{\mu_i+\mu_j}{(1+\mu_i^2)(1+\mu_j^2)}|\nabla_{V_1}(\alpha_{\phi})_{\bar{j}i}|^2.
\end{equation}
There are three significant difficulties in attaining the estimate we want. The first is that the $i=j=n$ term appear in~\eqref{eq: HessTerm} will be non-positive when $\mu_n\leq0$, and even when $\mu_n>0$, the contribution can be arbitrarily small.  Of course, by Lemma~\ref{lem: AngleBasicProps}, we know that $-e^{-AF}$ is concave.  However, this only implies that the bad term $i=j=n$ term appearing in~\eqref{eq: HessTerm} can be controlled at the expense of {\em all} the terms with $i=j \ne n$ appearing in~\eqref{eq: HessTerm}.  Unfortunately, terms like $|\nabla_{V_1}\alpha_{i\bar{i}}|^2$ evidently appear in $|\nabla_i\alpha_{V_1\bar{V_1}}|^2$.  Thus, invoking concavity of $-e^{-AF}$ leaves us with no way to control~\eqref{eq: GradTerm}.  This is in stark contrast to the case of concave elliptic operators \cite{Sz, Guan1}, where the terms in~\eqref{eq: HessTerm} for $i\ne j$ are not used.

The second significant difficulty occurs when trying to invoke that the gradient vanishes at the maximum point.  More precisely, a natural approach to controlling~\eqref{eq: GradTerm}, is to apply the fact that the gradient of our test function vanishes at the maximum.  Our test function will be of the form
\[
\tilde{Q} := \log(\tilde{\lambda}_1) + H(\phi)
\]
for a specially chosen function $H$.  Thus, at a maximum we will have
\[
\frac{\nabla_i (\alpha_{\phi})_{\bar{V_1}V_1}}{\lambda_1} =- H'(\phi) \nabla_i\phi.
\]
Such an argument is essentially doomed, since we do not control $|\nabla_i\phi|_{\hat{g}}^2$ uniformly.  In particular, the vector $\del_{z_i}$ could have a component pointing in the time direction, and by Theorem~\ref{thm: gradEst}, such a component would contribute a term of order $\epsilon^{-2}$ to the estimate.  Again, this is in contrast to the case of concave elliptic operators on compact manifolds \cite{Sz, Guan1}, where this argument is used repeatedly to obtain a $C^2$ estimate \cite{Sz, Guan1}.

To see the final significant problem, note that~\eqref{eq: GradTerm} contains a term like $\frac{1}{(1+\mu_0^2)\lambda_1^2}|\nabla_{V_1}\alpha_{\bar{0}0}|^2$, and the corresponding term in~\eqref{eq: HessTerm} is  $\frac{2\mu_0}{(1+\mu_0^2)^2\lambda_1}|\nabla_{V_1}\alpha_{\bar{0}0}|^2$.  Since $\lambda_1$ is the largest {\em spatial} eigenvalue, it can (and indeed must) happen that $\mu_0 \gg \lambda_1$, so that
\[
\frac{2\mu_0}{(1+\mu_0^2)^2\lambda_1}\ll \frac{1}{(1+\mu_0^2)\lambda_1^2}.
\]
Again, when studying concave elliptic operators on compact manifolds \cite{Sz, Guan1} we have the $\mu_0 = \lambda_1$, and the two terms above are easily comparable.

There are two key points in the arguments that follow.  The first is to split the eigenvalues $\mu_i$ according to whether they are large, or small, compared to $\lambda_1$.  Roughly speaking, if $\mu_i$ is small relative to $\lambda_1$ we expect that $\del_{z_i}$ can contain at most a very small component pointing in the temporal direction.  On the other hand, if $\mu_i$ is very large relative to $\lambda_1$ the we must make efficient use of the coefficients $V_1^{i}$ that appear in~\eqref{eq: GradTerm}, together with the following trivial observation from Lemma~\ref{lem: convComb}: if $\mu_i \gg \lambda_1$, then $|V_{1}^{i}|^2 \ll 1$.

Before beginning the estimate, let us fix a suitable background form, and some notation.  Let $\hat{\phi}$ be one of the subsolutions constructed in Corollary~\ref{cor: intSubSol}, and write
\[
\hat{\alpha} = \pi_{X}^{*}\alpha +\DDb\hat{\phi}
\]
Then we can write $\alpha_{\phi} = \hat{\alpha}+\DDb(\phi - \hat{\phi})$.  To simplify the notation, let us denote $\alpha = \alpha_{\phi}$.

As a first step, we are going to address the first issue outlined above by estimating the term $\nabla_{V_1}\alpha_{\bar{n}n}$, assuming $\mu_n<0$.  We first observe that $\alpha_{\bar{n}n} =\mu_n$ at $(p_0,t_0)$, and so by differentiating the equation we have
\[
\frac{1}{1+\mu_n^2}\nabla_{V_1}\alpha_{\bar{n}n} = \nabla_{V_1}h-\sum_{k<n} \frac{1}{1+\mu_k^2}\nabla_{V_1}\alpha_{\bar{k}k}. 
\]
By Cauchy-Schwarz we have
\begin{equation}\label{eq: CSmun}
\frac{|\nabla_{V_1}\alpha_{\bar{n}n}|^2}{(1+\mu_{n}^2)^2} \leq (1+\delta_1^{-1})|\nabla_{V_1}h|^2 + (1+\delta_1)\bigg|\sum_{k<n}\frac{1}{1+\mu_k^2}\nabla_{V_1}\alpha_{\bar{k}k}\bigg|^2
\end{equation}
for a constant $\delta_1$ to be determined.  Fix a constant $\frac{1}{2}>\delta_0 >0$.  This constants will be determined in the course of the proof.  We say that $\mu_{\beta}$ is big if $\mu_{\beta}\geq \delta_0 \lambda_1$, and small if $\mu_{\beta}< \delta_0 \lambda_1$.  Define
\[
B= \{ k \in \{0,1,\ldots, n\} : \mu_{k}\geq \delta_0 \lambda_1\}, \qquad S= \{ k \in \{0,1,\ldots, n\} : \mu_{k}< \delta_0 \lambda_1\}.
\]
Clearly $\mu_{n} \in S$ if $\mu_n <0$ and also if $\lambda_1$ is sufficiently large, depending on $\delta_0$.  We write
\[
\begin{aligned}
\bigg|\sum_{k<n}\frac{1}{1+\mu_k^2}\nabla_{V_1}\alpha_{\bar{k}k}\bigg|^2 &=\bigg|\sum_{k \in B}\frac{\nabla_{V_1}\alpha_{\bar{k}k}}{(1+\mu_k^2)}\bigg|^2 +\bigg|\sum_{k \in S,\, k<n}\frac{\nabla_{V_1}\alpha_{\bar{k}k}}{(1+\mu_k^2)}\bigg|^2\\
&\quad + 2{\rm Re}\left(\sum_{k\in B,\,\ell\in S,\,\ell <n}\frac{\nabla_{V_1}\alpha_{\bar{k}k}\overline{\nabla_{V_1}\alpha_{\bar{\ell}\ell}}}{(1+\mu_k^2)(1+\mu_{\ell}^2)}\right).
\end{aligned}
\]
Each sum needs to be estimated differently.  Starting with the last sum over $S$, using Cauchy-Schwarz we estimate
\[
\bigg|\sum_{k \in S,\,k<n}\frac{\nabla_{V_1}\alpha_{\bar{k}k}}{(1+\mu_k^2)}\bigg|^2 \leq\left(\sum_{k\in S, k<n}\frac{\mu_k |\nabla_{V_1}\alpha_{\bar{k}k}|^{2}}{(1+\mu_{k}^2)^2}\right)\cdot\left(\sum_{j\in S, j<n} \frac{1}{\mu_j}\right)
\]
where we used that $\mu_j>0$ for $j\ne n$ by Lemma~\ref{lem: AngleBasicProps}.  Since $\sum_{i=0}^{n}\arctan(\mu_i) > (n-1)\frac{\pi}{2} +\eta_1$, and $\mu_n <0$ we have
\begin{equation}\label{eq: hessEstSmall1}
\sum_{j\in S, j <n}\frac{1}{\mu_j} + \frac{1}{\mu_n} < -\tan(\eta_1)
\end{equation}
by Lemma~\ref{lem: AngleBasicProps}, together with $\mu_{\ell}>0$ if $\ell \in B$.  This estimate is clearly not accurate if $|\mu_{n}|$ is small.  In that case we instead have the estimate
\begin{equation}\label{eq: hessEstSmall2}
\sum_{j\in S, j <n}\frac{1}{\mu_j} + \frac{1}{\mu_n} < \frac{n+1}{\tan(\eta_1)} + \frac{1}{\mu_n}
\end{equation}
where we used that $\mu_i > \tan(\eta_1)$ for all $i<n$ by Lemma~\ref{lem: AngleBasicProps}.  Estimate~\eqref{eq: hessEstSmall2} is more precise when $|\mu_n|<\frac{\tan(\eta_1)}{(n+1)+(\tan(\eta_1))^2}$, and less precise otherwise.  By considering each case separately it follows that
\[
\mu_n\left(\sum_{j\in S, j<n} \frac{1}{\mu_j}\right) \geq -\frac{(n+1)}{(n+1) + (\tan(\eta_1))^2}.
\]
The final estimate we obtain is
\begin{equation}\label{eq: hessEstSTerms}
\mu_n\bigg|\sum_{k \in S,\, k<n }\frac{\nabla_{V_1}\alpha_{\bar{k}k}}{(1+\mu_k^2)}\bigg|^2 \geq \left( -\frac{(n+1)}{(n+1) + (\tan(\eta_1))^2}\right)\cdot \left(\sum_{k\in S,\, k<n}\frac{\mu_k |\nabla_{V_1}\alpha_{\bar{k}k}|^{2}}{(1+\mu_{k}^2)^2}\right).
\end{equation}
Next we estimate the cross terms.  
\[
\begin{aligned}
&\bigg|{\rm Re}\left(\sum_{k\in B,\,\ell\in S,\, \ell<n}\frac{\nabla_{V_1}\alpha_{\bar{k}k}\overline{\nabla_{V_1}\alpha_{\bar{\ell}\ell}}}{(1+\mu_k^2)(1+\mu_{\ell}^2)}\right)\bigg| \leq\sum_{k\in B,\,\ell\in S,\,\ell <n}\frac{|\nabla_{V_1}\alpha_{\bar{k}k}||\nabla_{V_1}\alpha_{\bar{\ell}\ell}|}{(1+\mu_k^2)(1+\mu_{\ell}^2)}\\
&\leq \sum_{k\in B,\,\ell\in S,\,\ell <n}\frac{(n+1)|\nabla_{V_1}\alpha_{\bar{k}k}|^2}{\epsilon_1 \mu_{\ell}(1+\mu_k^2)^2}  + \sum_{k\in B,\,\ell\in S,\,\ell <n}\frac{\epsilon_1\mu_{\ell}|\nabla_{V_1}\alpha_{\bar{\ell}\ell}|^2}{(n+1)(1+\mu_\ell^2)^2}\\
&\leq \frac{(n+1)^2}{\epsilon_1 \tan(\eta_1)}\sum_{k\in B}\frac{|\nabla_{V_1}\alpha_{\bar{k}k}|^2}{(1+\mu_k^2)^2} + \epsilon_1\sum_{\ell\in S ,\ell <n}\frac{\mu_{\ell}|\nabla_{V_1}\alpha_{\bar{\ell}\ell}|^2}{(1+\mu_\ell^2)^2},
\end{aligned}
\]
where $\epsilon_1 >0$ is a constant to be determined, and we have again used the lower bound $\mu_{\ell}> \tan(\eta_1)>0$ for $\ell<n$.  Finally, we estimate the big terms using the Cauchy-Schwarz inequality  
\[
\bigg|\sum_{k \in B}\frac{\nabla_{V_1}\alpha_{\bar{k}k}}{(1+\mu_k^2)}\bigg|^2 \leq (n+1)\sum_{k\in B}\frac{|\nabla_{V_1}\alpha_{\bar{k}k}|^2}{(1+\mu_k^2)^2}.
\]
We now combine these estimates to estimate~\eqref{eq: HessTerm} from below when $\mu_n <0$.  We have
\[
\begin{aligned}
&\sum_{i,j=0}^{n}\frac{\mu_i+\mu_j}{(1+\mu_i^2)(1+\mu_j^2)}|\nabla_{V_1}\alpha_{\bar{j}i}|^2 \geq \sum_{i,j=0, i\ne j}^{n-1}\frac{\mu_i+\mu_j}{(1+\mu_i^2)(1+\mu_j^2)}|\nabla_{V_1}\alpha_{\bar{j}i}|^2 \\
&+\left(1+(1+\delta_1)\left(\frac{-(n+1)}{(n+1)+(\tan(\eta_1))^2} + 2\mu_n\epsilon_1\right)\right)\sum_{\ell \in S, \ell <n} \frac{2 \mu_\ell}{(1+\mu_{\ell}^2)^2}|\nabla_{V_1}\alpha_{\bar{\ell}\ell}|^2\\
&+\sum_{k \in B}\frac{2|\nabla_{V_1}\alpha_{\bar{k}k}|^2}{(1+\mu_k^2)^2}\left(\mu_{k} +(1+\delta_1)\mu_n\left(\frac{2(n+1)^2}{\epsilon_1 \tan(\eta_1)} + (n+1)\right) \right)\\
&+2(1+\delta_1^{-1})\mu_{n}|\nabla_{V_1}h|^2
\end{aligned}
\]
For ease of notation set
\[
\kappa_1 = \frac{(\tan(\eta_1))^2}{2[(n+1)+(\tan(\eta_1))^2]}.
\]
Examining the coefficient in front of the small terms yields 
\[
1+(1+\delta_1)\left(\frac{-(n+1)}{(n+1)+(\tan(\eta_1))^2} + 2\mu_n\epsilon_1\right) = 2\kappa_1-\delta_1(1-2\kappa_1) + 2(1+\delta_1)\mu_n\epsilon_1.
\]
 Take $\delta_1 = \kappa_1$, then
\[
\begin{aligned}
1+(1+\delta_1)\left(\frac{-(n+1)}{(n+1)+(\tan(\eta_1))^2} + 2\mu_n\epsilon_1\right) &= \kappa_1+2\kappa_1^2 + 2(1+\kappa_1)\mu_n\epsilon_1\\
&\geq \kappa_1
\end{aligned}
\]
provided we take $\epsilon_1 = \frac{\kappa_1^2}{(1+\kappa_1)C(\eta_1)}$, where $\mu_n >-C(\eta_1)$ is a lower bound for $\mu_n$.   Since $\kappa_1, \delta_1, \epsilon_1$ are universal we have
\[
\mu_{k} +(1+\delta_1)\mu_n\left(\frac{2(n+1)^2}{\epsilon_1 \tan(\eta_1)} + (n+1)\right) \geq \mu_k- 2C_1,
\]
\[
2(1+\delta_1^{-1})\mu_{n}|\nabla_{V_1}h|^2 \geq -C_2
\]
for universal constants $C_1, C_2$. We have proved

\begin{prop}\label{prop: HessEst}
We have the following lower bound for~\eqref{eq: HessTerm}.
\[
\begin{aligned}
\sum_{i,j=0}^{n}\frac{\mu_i+\mu_j}{(1+\mu_i^2)(1+\mu_j^2)}|\nabla_{V_1}\alpha_{\bar{j}i}|^2 &\geq-C_2+ \sum_{\substack{0\leq i,j \leq n\\ i\ne j}}\frac{\mu_i+\mu_j}{(1+\mu_i^2)(1+\mu_j^2)}|\nabla_{V_1}\alpha_{\bar{j}i}|^2\\
&+\kappa_1\sum_{\ell \in S, \ell <n} \frac{2 \mu_\ell}{(1+\mu_{\ell}^2)^2}|\nabla_{V_1}\alpha_{\bar{\ell}\ell}|^2\\
&+ \sum_{k \in B}\frac{\left(2\mu_{k} -C_1\right)|\nabla_{V_1}\alpha_{\bar{k}k}|^2}{(1+\mu_k^2)^2}
\end{aligned}
\]
where $C_1=C_2=0$ and $\kappa_1=1$ if $\mu_{n} \geq 0$, and if $\mu_n <0$, then
\[
\kappa_1 = \frac{(\tan(\eta_1))^2}{2[(n+1)+(\tan(\eta_1))^2]}
\]
and  $C_1, C_2 \geq 0$ are uniform constants.
\end{prop}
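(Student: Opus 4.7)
The plan is to split into two cases according to the sign of $\mu_n$. When $\mu_n \geq 0$, Lemma~\ref{lem: AngleBasicProps} guarantees $\mu_i \geq \mu_{n-1} > 0$ for all $i < n$ and $\mu_n \geq 0$, so every term in the sum is non-negative. Keeping the off-diagonal terms unchanged and retaining the diagonal small and big pieces gives the stated inequality with $\kappa_1 = 1$, $C_1 = C_2 = 0$ tautologically. All real work happens when $\mu_n < 0$, where the diagonal contribution $\frac{2\mu_n}{(1+\mu_n^2)^2}|\nabla_{V_1}\alpha_{\bar{n}n}|^2$ is negative and must be controlled by the remaining diagonal terms.

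To eliminate this bad term, I would differentiate the equation $F(\E)=h$ once in the $V_1$ direction to express $\frac{1}{1+\mu_n^2}\nabla_{V_1}\alpha_{\bar{n}n} = \nabla_{V_1}h - \sum_{k<n}\frac{1}{1+\mu_k^2}\nabla_{V_1}\alpha_{\bar{k}k}$, then apply Cauchy-Schwarz with a parameter $\delta_1$ to separate the $\nabla_{V_1}h$ contribution (which is uniformly bounded and accounts for the eventual $-C_2$). This is estimate~\eqref{eq: CSmun}. Next I would split the indices $k < n$ according to size: $B = \{k : \mu_k \geq \delta_0\lambda_1\}$ and $S = \{k : \mu_k < \delta_0\lambda_1\}$, and expand the resulting square into a pure $S$-piece, a pure $B$-piece, and $B$-$S$ cross terms. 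For the small-small piece, a weighted Cauchy-Schwarz $|\sum_{\ell \in S,\,\ell<n} a_\ell|^2 \leq (\sum \mu_\ell^{-1})(\sum \mu_\ell a_\ell^2)$ combined with Lemma~\ref{lem: AngleBasicProps}(6) and the lower bound $\mu_n \geq -C(\eta_1)$ yields, after a short case analysis on whether $|\mu_n|$ is small or large, the crucial inequality $\mu_n \sum_{j \in S,\,j<n}\mu_j^{-1} \geq -(n+1)/((n+1)+\tan(\eta_1)^2)$. Multiplied by $2|\mu_n|$ and compared with the coefficient $1$ already present from the $i=j=n$ term, this leaves slack $2\kappa_1$ in the coefficient of $\frac{\mu_\ell}{(1+\mu_\ell^2)^2}|\nabla_{V_1}\alpha_{\bar{\ell}\ell}|^2$.

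The $B$-$S$ cross terms are then estimated by weighted Cauchy-Schwarz with a parameter $\epsilon_1$, so that the $S$-half is absorbed into the slack $2\kappa_1$ and the $B$-half is charged an $O(1/\epsilon_1)$ constant against $\mu_n$; the pure $B$-$B$ piece is estimated crudely by $(n+1)\sum_B|\nabla_{V_1}\alpha_{\bar{k}k}|^2/(1+\mu_k^2)^2$. To close the argument I would choose $\delta_1 = \kappa_1$ and $\epsilon_1 = \kappa_1^2/((1+\kappa_1)C(\eta_1))$, both depending only on $\eta_1$, so that the coefficient of the diagonal small-index terms remains at least $\kappa_1 > 0$ after cancellations; the entire residual loss on the big-index terms aggregates into the $-C_1$ shift in their coefficient. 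The main obstacle throughout is the delicate bookkeeping: only Lemma~\ref{lem: AngleBasicProps}(6), which specifically requires $\mu_n<0$, provides the $\tan(\eta_1)$ worth of slack needed to upgrade the naive coefficient $1$ to $1+2\kappa_1$, and without this gain the coefficient of the small diagonal terms would collapse to zero and the estimate would fail. All constants produced in this way depend only on $n$, $\eta_1$, and $\sup|\nabla_{V_1}h|$, hence are uniform in the sense of the paper.
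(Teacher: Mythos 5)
Your proposal is correct and follows the paper's proof essentially verbatim: it differentiates the equation once to isolate $\nabla_{V_1}\alpha_{\bar{n}n}$, applies the Cauchy--Schwarz step~\eqref{eq: CSmun} with parameter $\delta_1$, splits the residual sum into $B$, $S$, and cross pieces, invokes Lemma~\ref{lem: AngleBasicProps}(6) (with the small-$|\mu_n|$ case handled separately) to obtain $\mu_n\sum_{j\in S,j<n}\mu_j^{-1}\geq -(n+1)/((n+1)+\tan^2\eta_1)$, and then fixes $\delta_1=\kappa_1$, $\epsilon_1=\kappa_1^2/((1+\kappa_1)C(\eta_1))$. The only blemishes are cosmetic: the factor you describe as ``$2|\mu_n|$'' in the absorption step is really the multiplication of the $S$-$S$ Cauchy--Schwarz bound by $2\mu_n(1+\delta_1)<0$, and the ``slack'' it leaves is $2\kappa_1$ before the cross-term deduction, not an upgrade of the coefficient to $1+2\kappa_1$; neither changes the argument.
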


We now turn our focus to estimating the gradient term~\eqref{eq: GradTerm}.  Fix $0 \leq i \leq n$ and write
\[
\begin{aligned}
|\nabla_{i} \alpha_{\bar{V_1}V_1}|^2 &= \bigg|\sum_{j=0}^{n}V_{1}^{j}\nabla_{V_1}\alpha_{\bar{j}i}\bigg|^2\\
&\leq \sum_{j,\ell \in B}  |V_{1}^{j}||V_1^{\ell}||\nabla_{V_1}\alpha_{\bar{j}i}||\nabla_{V_1}\alpha_{\bar{\ell}i}|\\
&\quad+2\sum_{j\in B, \ell \in S}  |V_{1}^{j}||V_1^{\ell}||\nabla_{V_1}\alpha_{\bar{j}i}||\nabla_{V_1}\alpha_{\bar{\ell}i}|\\
&\quad+\sum_{j,\ell \in S}  |V_{1}^{j}||V_1^{\ell}||\nabla_{V_1}\alpha_{\bar{j}i}||\nabla_{V_1}\alpha_{\bar{\ell}i}|
\end{aligned}
\]
For the big terms, we use Cauchy-Schwarz in the following way
\begin{equation}\label{eq: GradTermEst1}
\begin{aligned}
\sum_{j,\ell \in B}  |V_{1}^{j}||V_1^{\ell}||\nabla_{V_1}\alpha_{\bar{j}i}||\nabla_{V_1}\alpha_{\bar{\ell}i}| &\leq \left(\sum_{j,\ell \in B}  |V_1^{\ell}|^2 \frac{\mu_{\ell}}{\mu_{j}}|\nabla_{V_1}\alpha_{\bar{j}i}|^2\right)^{1/2}\left(\sum_{j,\ell \in B}  |V_1^{j}|^2 \frac{\mu_{j}}{\mu_{\ell}}|\nabla_{V_1}\alpha_{\bar{\ell}i}|^2\right)^{1/2}\\
&=\sum_{j,\ell \in B}  |V_1^{\ell}|^2 \frac{\mu_{\ell}}{\mu_{j}}|\nabla_{V_1}\alpha_{\bar{j}i}|^2\\
&=\left(\lambda_1-\sum_{\ell \in S}|V_1^{\ell}|^2\mu_{\ell}\right)\sum_{j \in B} \frac{1}{\mu_{j}}|\nabla_{V_1}\alpha_{\bar{j}i}|^2
\end{aligned}
\end{equation}
where in the last line we used Lemma~\ref{lem: convComb}.  For the small terms we apply Cauchy-Schwarz to get
\begin{equation}\label{eq: GradTermEst2}
\begin{aligned}
\sum_{j,\ell \in S}  |V_{1}^{j}||V_1^{\ell}||\nabla_{V_1}\alpha_{\bar{j}i}||\nabla_{V_1}\alpha_{\bar{\ell}i}|&\leq \left(\sum_{j,\ell \in S}  |V_1^{\ell}|^2|\nabla_{V_1}\alpha_{\bar{j}i}|^2\right)^{1/2}\left(\sum_{j,\ell \in S}  |V_1^{j}|^2|\nabla_{V_1}\alpha_{\bar{\ell}i}|^2\right)^{1/2}\\
&= \sum_{j,\ell \in S}  |V_1^{\ell}|^2|\nabla_{V_1}\alpha_{\bar{j}i}|^2 \leq \sum_{j\in S} |\nabla_{V_1}\alpha_{\bar{j}i}|^2
\end{aligned}
\end{equation}
where in the last line we used Lemma~\ref{lem: convComb} again.  Finally we estimate the cross terms. 
\begin{equation}\label{eq: GradTermEst3}
\begin{aligned}
2\sum_{j\in B, \ell \in S}  |V_{1}^{j}||V_1^{\ell}||\nabla_{V_1}\alpha_{\bar{j}i}||\nabla_{V_1}\alpha_{\bar{\ell}i}| &\leq \epsilon_0\sum_{j\in B, \ell \in S} |V_1^{j}|^2|V_1^{\ell}|^2|\nabla_{V_1}\alpha_{\bar{j}i}|^2 + \frac{1}{\epsilon_0}\sum_{j\in B, \ell \in S} |\nabla_{V_1}\alpha_{\bar{\ell}i}|^2\\
&\leq \epsilon_0\sum_{j\in B} |V_1^{j}|^2|\nabla_{V_1}\alpha_{\bar{j}i}|^2 +\frac{n+1}{\epsilon_0}\sum_{\ell \in S} |\nabla_{V_1}\alpha_{\bar{\ell}i}|^2\\
&\leq \epsilon_0\left(\lambda_1-\sum_{\ell \in S}|V_{1}^{\ell}|^2\mu_{\ell}\right)\sum_{j\in B}\frac{1}{\mu_{j}}|\nabla_{V_1}\alpha_{\bar{j}i}|^2\\
&\quad +\frac{n+1}{\epsilon_0}\sum_{\ell \in S} |\nabla_{V_1}\alpha_{\bar{\ell}i}|^2
\end{aligned}
\end{equation}
for a constant $\epsilon_0>0$ to be determined.  In the final inequality we used the following estimate.  If $j \in B$, we have $\mu_{j}>0$, so Lemma~\ref{lem: convComb} shows
\[
|V_{1}^{j}|^2\mu_{j} \leq \sum_{k \in B} |V_1^{k}|^2 \mu_k = \lambda_1 - \sum_{\ell \in S}|V_1^{\ell}|^2\mu_{\ell}.
\]
Combining estimates~\eqref{eq: GradTermEst1}~\eqref{eq: GradTermEst2} and ~\eqref{eq: GradTermEst3} we arrive at
\[
\begin{aligned}
|\nabla_{i} \alpha_{\bar{V_1}V_1}|^2 &\leq(1+\epsilon_0)\left(\lambda_1 - \sum_{\ell \in S}|V_1^{\ell}|^2\mu_{\ell}\right)\sum_{j \in B} \frac{1}{\mu_{j}}|\nabla_{V_1}\alpha_{\bar{j}i}|^2\\
&\quad+(1 +\frac{n+1}{\epsilon_0})\sum_{j \in S} |\nabla_{V_1}\alpha_{\bar{j}i}|^2.
\end{aligned}
\]
Every term appearing on the right hand side of this estimate has a partner appearing on the right hand side of the estimate in Proposition~\ref{prop: HessEst} {\em except} the term $i=j=n$.  For this term we return to~\eqref{eq: CSmun} and apply Cauchy-Schwarz to get
\[
|\nabla_{V_1}\alpha_{\bar{n}n}|^{2} \leq 2|\nabla_{V_1}h|^2 + 2n \sum_{\ell <n}\frac{|\nabla_{V_1}\alpha_{\bar{\ell}\ell}|^2}{(1+\mu_{\ell}^2)^2}.
\]
Summarizing we have proved
\begin{prop}\label{prop: GradEst}
For any constant $\epsilon_0 \in(0,1)$ we have the following estimates.
\begin{enumerate}
\item  For $0\leq i <n$
\[
\begin{aligned}
|\nabla_{i} \alpha_{\bar{V_1}V_1}|^2 &\leq(1+\epsilon_0)\left(\lambda_1 - \sum_{\ell \in S}|V_1^{\ell}|^2\mu_{\ell}\right)\sum_{j \in B} \frac{1}{\mu_{j}}|\nabla_{V_1}\alpha_{\bar{j}i}|^2\\
&\quad+(1 +\frac{n+1}{\epsilon_0})\sum_{j \in S} |\nabla_{V_1}\alpha_{\bar{j}i}|^2.
\end{aligned}
\]
\item For $i=n$ we have
\[
\begin{aligned}
|\nabla_{n} \alpha_{\bar{V_1}V_1}|^2 &\leq(1+\epsilon_0)\left(\lambda_1 - \sum_{\ell \in S}|V_1^{\ell}|^2\mu_{\ell}\right)\sum_{j \in B} \frac{1}{\mu_{j}}|\nabla_{V_1}\alpha_{\bar{j}n}|^2\\
&\quad+(1 +\frac{n+1}{\epsilon_0})\sum_{j \in S, j<n} |\nabla_{V_1}\alpha_{\bar{j}n}|^2\\
&+ 2(1 +\frac{n+1}{\epsilon_0})C_3 + 2n(1 +\frac{n+1}{\epsilon_1})\sum_{\ell <n}\frac{|\nabla_{V_1}\alpha_{\bar{\ell}\ell}|^2}{(1+\mu_{\ell}^2)^2}.
\end{aligned}
\]
for a uniform constant $C_3$.
\end{enumerate}
\end{prop}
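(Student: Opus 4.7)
The plan is to expand $|\nabla_i \alpha_{\bar{V_1}V_1}|^2 = |\sum_j V_{1}^{j}\nabla_{V_1}\alpha_{\bar{j}i}|^2$, split the sum over $j$ according to whether $j\in B$ or $j\in S$, and estimate the resulting three pieces (BB, SS, and BS cross) by appropriately weighted Cauchy–Schwarz inequalities. The weights must be chosen so that every error term either matches a positive term from Proposition~\ref{prop: HessEst} or becomes part of the $S$-sum that Proposition~\ref{prop: HessEst} already provides.

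For the BB contribution I would use the asymmetric bound $|V_1^j||V_1^\ell||\nabla_{V_1}\alpha_{\bar j i}||\nabla_{V_1}\alpha_{\bar\ell i}|\leq \frac{1}{2}\bigl(|V_1^\ell|^2\frac{\mu_\ell}{\mu_j}|\nabla_{V_1}\alpha_{\bar j i}|^2 + |V_1^j|^2\frac{\mu_j}{\mu_\ell}|\nabla_{V_1}\alpha_{\bar\ell i}|^2\bigr)$, which is legitimate because $\mu_j,\mu_\ell>0$ on $B$ by definition. Summing and exploiting the symmetry reduces the BB piece to $\bigl(\sum_{\ell\in B}|V_1^\ell|^2\mu_\ell\bigr)\sum_{j\in B}\mu_j^{-1}|\nabla_{V_1}\alpha_{\bar j i}|^2$, and Lemma~\ref{lem: convComb} rewrites the factor in parentheses as $\lambda_1 - \sum_{\ell\in S}|V_1^\ell|^2\mu_\ell$. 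This is already the first line of (1). For the SS piece, $|V_1^j||V_1^\ell|\leq\frac{1}{2}(|V_1^j|^2+|V_1^\ell|^2)$ combined with $\sum_\ell|V_1^\ell|^2\leq 1$ yields $\sum_{j\in S}|\nabla_{V_1}\alpha_{\bar j i}|^2$. For the BS cross terms I would use Young's inequality with parameter $\epsilon_0$, splitting into a $B$-factor (absorbed into the BB-type sum via $|V_1^j|^2\mu_j\leq \lambda_1 - \sum_{\ell\in S}|V_1^\ell|^2\mu_\ell$ from Lemma~\ref{lem: convComb}, giving the factor $(1+\epsilon_0)$) and an $S$-factor contributing $\frac{n+1}{\epsilon_0}\sum_{\ell\in S}|\nabla_{V_1}\alpha_{\bar\ell i}|^2$ after bounding $|B|\leq n+1$. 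Adding the three pieces gives assertion (1).

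For (2) the decomposition is the same, but when $i=n$ the $j=n$ term in the SS sum, namely $|V_1^n|^2|\nabla_{V_1}\alpha_{\bar n n}|^2$, has no natural partner in Proposition~\ref{prop: HessEst} (the corresponding Hessian coefficient $\frac{2\mu_n}{(1+\mu_n^2)^2}$ can be negative). To eliminate it I would isolate $j=n$ from the $S$-sum and use the equation: differentiating $F(\hat\omega^{-1}\alpha)=h$ in the $V_1$-direction gives
\[
\frac{\nabla_{V_1}\alpha_{\bar n n}}{1+\mu_n^2}=\nabla_{V_1}h-\sum_{k<n}\frac{\nabla_{V_1}\alpha_{\bar k k}}{1+\mu_k^2}.
\]
Squaring, applying Cauchy–Schwarz ($(a-b)^2\leq 2a^2+2b^2$ and $|\sum_{k<n}x_k|^2\leq n\sum_{k<n}|x_k|^2$), and invoking the uniform bound $|\mu_n|\leq C(\eta_1,\eta_2)$ from Lemma~\ref{lem: AngleBasicProps}(4)–(5) absorbs $(1+\mu_n^2)^2$ into a uniform constant and produces $C_3|\nabla_{V_1}h|^2 + 2n\sum_{k<n}(1+\mu_k^2)^{-2}|\nabla_{V_1}\alpha_{\bar k k}|^2$. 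Multiplying by the SS prefactor $(1+\frac{n+1}{\epsilon_0})$ yields precisely the extra lines in (2).

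The main obstacle is purely combinatorial bookkeeping rather than any deep estimate: one must ensure that every cross term produced by Cauchy–Schwarz is of the form $\mu_j^{-1}|\nabla_{V_1}\alpha_{\bar j i}|^2$ with $j\in B$ (so it can be paired with the $\frac{2\mu_j}{(1+\mu_j^2)^2}|\nabla_{V_1}\alpha_{\bar j j}|^2$ terms of Proposition~\ref{prop: HessEst} when $i=j$, or with $\frac{\mu_i+\mu_j}{(1+\mu_i^2)(1+\mu_j^2)}|\nabla_{V_1}\alpha_{\bar j i}|^2$ for $i\ne j$), or of the form $|\nabla_{V_1}\alpha_{\bar\ell i}|^2$ with $\ell\in S$ (which is acceptable because the Hessian estimate already has a $\kappa_1\frac{2\mu_\ell}{(1+\mu_\ell^2)^2}|\nabla_{V_1}\alpha_{\bar\ell\ell}|^2$ lower bound). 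The big/small dichotomy, with threshold $\delta_0\lambda_1$, is exactly what makes the weights $\mu_\ell/\mu_j$ arising from the symmetrized Cauchy–Schwarz uniformly controllable, while the equation is what rescues the otherwise unpartnered $\nabla_{V_1}\alpha_{\bar n n}$ term.
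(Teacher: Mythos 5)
Your plan reproduces the paper's argument almost verbatim: the $B/S$ split, symmetrized Cauchy--Schwarz (your Young-weighted AM--GM version is algebraically identical to the paper's Cauchy--Schwarz) on the $BB$ block, $\sum_\ell |V_1^\ell|^2\le 1$ on the $SS$ block, Young with parameter $\epsilon_0$ and $|V_1^j|^2\mu_j\le\lambda_1-\sum_{\ell\in S}|V_1^\ell|^2\mu_\ell$ on the cross terms, and the differentiated equation to eliminate the unpartnered $i=j=n$ diagonal term in case~(2). The one place you are slightly more careful than the paper's exposition is in explicitly invoking the uniform bound on $\mu_n$ to absorb the $(1+\mu_n^2)^2$ factor arising from solving the differentiated equation for $\nabla_{V_1}\alpha_{\bar n n}$; the paper's displayed bound $|\nabla_{V_1}\alpha_{\bar n n}|^2\le 2|\nabla_{V_1}h|^2+2n\sum_{\ell<n}(1+\mu_\ell^2)^{-2}|\nabla_{V_1}\alpha_{\bar\ell\ell}|^2$ silently drops that factor, and your remark that it is absorbed into $C_3$ (and into the coefficient of the $\sum_{\ell<n}$ sum) is the correct reading.
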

We are now ready to combine Propositions~\ref{prop: HessEst} and~\ref{prop: GradEst} to obtain the key estimate towards the interior $C^2$ estimate.  Recall that we need to estimate the following quantity from below;
\begin{equation}\label{eq: needToEst}
\frac{1}{\lambda_1}\sum_{i,j=0}^{n}\frac{\mu_i+\mu_j}{(1+\mu_i^2)(1+\mu_j^2)}|\nabla_{V_1}\alpha_{\bar{j}i}|^2 -\frac{1}{\lambda_1^2} \sum_{i=0}^{n}\frac{1}{1+\mu_i^2}|\nabla_i\alpha_{\bar{V_1}V_1}|^2.
\end{equation}
In order to do this, we will estimate a related quantity.  Let us defined
\[
\Gamma := \sum_{\ell \in S}|V_1^{\ell}|^2\mu_{\ell}
\]
and note that, by definition of $S$ we have $\Gamma \leq \delta_0\lambda_1 \leq \frac{1}{2}\lambda_1$.  Consider
\[
\Upsilon:=\frac{(1-c_0\lambda_1^{-1})}{\lambda_1(\lambda_1-\Gamma)(1+\epsilon_0)}\sum_{i=0}^{n}\frac{1}{1+\mu_i^2}|\nabla_i\alpha_{\bar{V_1}V_1}|^2
\]
Where $c_0$ is a positive constant to be determined, and we assume that 
\begin{equation}\label{eq: const1}
\lambda_1 > c_0.
\end{equation}
From now on, in order to ensure that our constants can be chosen consistently, we will note each constraint as a separate equation.  By Proposition~\ref{prop: GradEst} we have
\[
\begin{aligned}
\Upsilon &\leq \frac{(1-c_0\lambda_1^{-1})}{\lambda_1}\sum_{\substack{0\leq i \leq n\\ j\in B}} \frac{|\nabla_{V_1}\alpha_{\bar{j}i}|^2}{\mu_{j}(1+\mu_i^2)}\\
&\quad + \frac{2(n+1)(1-c_0\lambda_1^{-1})}{\lambda_1(\lambda_1-\Gamma)(1+\epsilon_0)\epsilon_0}\sum_{\substack{0\leq i \leq n, \, j \in S\\ (i,j)\ne (n,n)}}\frac{ |\nabla_{V_1}\alpha_{\bar{j}i}|^2}{1+\mu_i^2}\\
&+ \frac{4n(n+1)(1-c_0\lambda_1^{-1})}{\lambda_1(1+\mu_n^2)(\lambda_1-\Gamma)(1+\epsilon_0)\epsilon_0}\sum_{\ell <n}\frac{|\nabla_{V_1}\alpha_{\bar{\ell}\ell}|^2}{(1+\mu_{\ell}^2)^2}\\
&\quad + \frac{4(n+1)(1-c_0\lambda_1^{-1})}{\lambda_1(1+\mu_n^2)(\lambda_1-\Gamma)(1+\epsilon_0)\epsilon_0}C_3 
\end{aligned}
\]
We are going to estimate
\[
\frac{1}{\lambda_1}\sum_{i,j=0}^{n}\frac{\mu_i+\mu_j}{(1+\mu_i^2)(1+\mu_j^2)}|\nabla_{V_1}\alpha_{\bar{j}i}|^2-\Upsilon
\]
from below, by making use of Proposition~\ref{prop: HessEst}, and comparing the coefficients of $|\nabla_{V_1}\alpha_{\bar{j}i}|^2$ term by term. There are four cases depending on $(i,j)$.
\begin{itemize}
\item[Case 1:] $0 \leq i \leq n, j\in B$, and $i \ne j$.
\item[Case 2:] $ i = j \in B$.
\item[Case 3:] $ 0 \leq i \leq n, j \in S$ and $i\ne j$.
\item[Case 4:] $ 0 \leq i <n$, and $i=j \in S$.
\end{itemize}

\noindent {\bf Case 1}.   We we have to compare $\frac{\mu_i+\mu_j}{\lambda_1(1+\mu_i^2)(1+\mu_{j}^2)}$ and $ \frac{(1-c_0\lambda_1^{-1})}{\lambda_1(1+\mu_i^2)\mu_j}$.  Ignoring the common factors it suffices to estimate
\[
\frac{\mu_i+\mu_j}{(1+\mu_{j}^2)} - \frac{(1-c_0\lambda_1^{-1})}{\mu_j} = \frac{c_0\lambda_1^{-1}\mu_j^2 + \mu_i\mu_j+c_0\lambda_1^{-1}-1}{\mu_j(1+\mu_j^2)}.
\]
We now use that $j\in B$, so $\mu_j \lambda_1^{-1} >\delta_0$.  Since $\mu_i \geq \mu_n \geq -C(\eta_1)$ we get the estimate
\[
\frac{c_0\lambda_1^{-1}\mu_j^2 + \mu_i\mu_j+c_0\lambda_1^{-1}-1}{\mu_j(1+\mu_j^2)} \geq  \frac{(c_0\delta_0 - C(\eta_1))\mu_j -1}{\mu_j(1+\mu_j^2)} >0
\]
as long as we have 
\begin{equation}\label{eq: const2}
c_0 >\delta_0^{-1}( C(\eta_1) +1), \quad \text{ and }\quad \lambda_1> \delta_0^{-1}
\end{equation}
where the latter condition guarantees $\mu_j>1$ since $j \in B$.
\\
\\
{\bf Case 2} In this case we need to estimate
\begin{equation}\label{eq: Case2Est}
\frac{(2\mu_{j}- C_1)}{1+\mu_j^2} -\frac{(1-c_0\lambda_1^{-1})}{\mu_j} -   \frac{4n(n+1)(1-c_0\lambda_1^{-1})}{(1+\mu_n^2)(1+\mu_j^2)(\lambda_1-\Gamma)(1+\epsilon_0)\epsilon_0}
\end{equation}
Observe that, from the definition of $\Gamma$ we have
\[
\lambda_1-\Gamma  = \lambda_1 -\sum_{\ell\in S}|V_{1}^{\ell}|^2\mu_{\ell} \geq (1-\delta_0)\lambda_1 \geq \frac{1}{2}\lambda_1
\]
Therefore we can estimate~\eqref{eq: Case2Est} from below by
\[
 \frac{1}{\mu_j(1+\mu_j^2)}\left(\mu_j^2 - C_1\mu_j -1 - 8n(n+1)\frac{\mu_j}{\lambda_1\epsilon_0}\right) >0
 \]
provided $\mu_j > C_1+1+\frac{8n(n+1)}{\lambda_1\epsilon_0}$.  Since $j\in B$ this is guaranteed as soon as
\begin{equation}\label{eq: const3}
\lambda_1 \geq \delta_0^{-1}\left(C_1+1+\frac{8n(n+1)}{\lambda_1\epsilon_0}\right)
\end{equation}
\\
\\
{\bf Case 3} We have to estimate
\[
\frac{\mu_i+\mu_j}{(1+\mu_j^2)} -\frac{2(n+1)(1-c_0\lambda_1^{-1})}{(\lambda_1-\Gamma)(1+\epsilon_0)\epsilon_0} \geq \frac{\mu_i+\mu_j}{(1+\mu_j^2)} -\frac{4(n+1)}{\lambda_1\epsilon_0}
\]
We now consider three subcases separately, according to whether $\mu_j$ is relatively large or relatively small compared to $\mu_n$, or $j=n$.

{\bf Case 3a}  First suppose $j<n$, and that $\mu_i + \mu_j \geq \frac{3}{4}\mu_j$ (which is always the case if $\mu_n\geq 0$).  Then we have
\[
\begin{aligned}
\mu_i+\mu_j -\frac{4(n+1)(1+\mu_j^2)}{\lambda_1\epsilon_0} &\geq \frac{3\mu_j}{4} - \frac{4(n+1)}{\epsilon_0}\frac{(1+\mu_j^2)}{\lambda_1}\\
&\geq \frac{3\mu_j}{4} - \frac{4(n+1)}{\epsilon_0 \lambda_1} -\frac{ 4(n+1)\delta_0 \mu_j}{\epsilon_1}
\end{aligned}
\]
where we used that  $\frac{1+\mu_j^2}{\lambda_1} < \frac{1}{\lambda_1} + \delta_0 \mu_j$  by the definition of $S$.  Using that $j\ne n$, if we choose 
\begin{equation}\label{eq: const4}
\delta_0 < \frac{\epsilon_0}{16(n+1)}
\end{equation}
 we get
\[
\begin{aligned}
\mu_i+\mu_j -\frac{4(n+1)(1+\mu_j^2)}{\lambda_1\epsilon_1} &\geq \frac{1}{2}\mu_j - \frac{4(n+1)}{\epsilon_0 \lambda_1}\\
&\geq \frac{\tan(\eta_1)}{4} - \frac{4(n+1)}{\epsilon_0 \lambda_1} >0
\end{aligned}
\]
provided 
\begin{equation}\label{eq: const5}
\lambda_1 > \frac{16(n+1)}{\epsilon_0 \tan(\eta_1)}.
\end{equation}

{\bf Case 3b} In this case we still assume $j<n$, but that $\mu_i + \mu_j \leq \frac{3}{4}\mu_j$.  Then necessarily $i=n$,  $\mu_n <0$, and we get that $\mu_j \leq 4C(\eta_1)$ for a constant depending only on $\eta_1$.  Since $\mu_j + \mu_n >\tan(\eta_1)$ by Lemma~\ref{lem: AngleBasicProps} we conclude
\begin{equation}\label{eq: ineqDegenCase}
 \frac{\mu_i+\mu_j}{(1+\mu_j^2)} -\frac{4(n+1)}{\lambda_1\epsilon_0} \geq \frac{\tan(\eta_1)}{1+16C(\eta_1)^2}  -\frac{4(n+1)}{\lambda_1\epsilon_0} >0
 \end{equation}
provided
\begin{equation}\label{eq: const6}
 \lambda_1> \frac{4(n+1)(1+16C(\eta_1)^2)}{\epsilon_0 \tan(\eta_1)}
 \end{equation}
 
{\bf Case 3c} If $j=n$, and $0\leq i <n$, then~\eqref{eq: ineqDegenCase} also holds, and the same estimate works.
\\
\\
{\bf Case 4} Finally, we consider the fourth case.  We need to estimate from below the quantity 
\[
\begin{aligned}
&\kappa_1\frac{2\mu_{j}}{(1+\mu_{j}^2)}  -\frac{4(n+1)(1-c_0\lambda_1^{-1})}{\lambda_1(1+\epsilon_0)\epsilon_0}-\frac{8n(n+1)(1-c_0\lambda_1^{-1})}{(1+\mu_n^2)\lambda_1(1+\epsilon_0)(1+\mu_j^2)\epsilon_0}\\
&\geq \kappa_1\frac{2\mu_{j}}{(1+\mu_{j}^2)} - \frac{c_1}{\lambda_1 \epsilon_0} - \frac{c_1}{\lambda_1(1+\mu_j^2)\epsilon_0}
\end{aligned}
\]
where $c_1>0$ depends only on $n, \eta_1, \eta_2$, and where $\kappa_1$ is the constant appearing in Proposition~\ref{prop: HessEst}.  We therefore need to determine the sign of
\[
2\kappa_1\epsilon_0 \lambda_1 \mu_j - c_1(1+\mu_j^2) - c_1 =  \mu_j(2\kappa_1\epsilon_0\lambda_1-c_1\mu_j) - 2c_1.
\]
Since $j\in S$ we have $\mu_j \leq \delta_0 \lambda_1$ and so
\[
\mu_j(2\kappa_1\epsilon_0\lambda_1-c_1\mu_j) - 2c_1 \geq \mu_j\lambda_1(2\kappa_1\epsilon_0-c_1\delta_0) - 2c_1 >0
\]
provided
\begin{equation}\label{eq: const 7}
\delta_0 <\frac{\kappa_1 \epsilon_0}{c_1} \qquad \lambda_1 > \frac{4c_1}{\tan(\eta_1)\kappa_1\epsilon_0}
\end{equation}

We can now choose the constants $\delta_0, \epsilon_0, c_0$ consistently.  From~\eqref{eq: const4} and~\eqref{eq: const 7} we see that it suffices to choose $\delta_0 < \max\{c\kappa_1\epsilon_0, \frac{1}{2}\}$ for $0<c<1$ a positive, uniform constant depending only on $n, \eta_1, \eta_2$.  Recall that $\kappa_1$ is the uniform constant appearing in Proposition~\ref{prop: HessEst}.  From~\eqref{eq: const2} we choose $c_0 =K_0\epsilon_0^{-1}$ for a positive uniform constant $K_0$ depending only on $n, \eta_1$.  Finally, from~\eqref{eq: const3} we conclude

\begin{prop}\label{prop: C2KeyEst}
For every constant $\epsilon_0>0$, there exists a uniform constant $C_4$ depending only on $n, \eta_1, \eta_2$ so that if $\lambda_1 \geq C_4\epsilon_0^{-2}$ we have
\[
\begin{aligned}
&\frac{1}{\lambda_1}\sum_{i,j=0}^{n}\frac{\mu_i+\mu_j}{(1+\mu_i^2)(1+\mu_j^2)}|\nabla_{V_1}\alpha_{\bar{j}i}|^2 -\frac{1}{\lambda_1^2} \sum_{i=0}^{n}\frac{1}{1+\mu_i^2}|\nabla_i\alpha_{\bar{V_1}V_1}|^2\\
& \geq -\frac{2\epsilon_0}{\lambda_1^2} \sum_{i=0}^{n}\frac{1}{1+\mu_i^2}|\nabla_i\alpha_{\bar{V_1}V_1}|^2 -\frac{C_2}{\lambda_1} - \frac{C_5}{\epsilon_0 \lambda_1^2}
\end{aligned}
\]
For uniform constants $C_2, C_5$.
\end{prop}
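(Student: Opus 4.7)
The plan is to assemble the coefficient-by-coefficient comparison already carried out in Cases~1--4 into a single estimate. Denote
\[
\Theta := \frac{1}{\lambda_1^2}\sum_{i=0}^n\frac{1}{1+\mu_i^2}|\nabla_i\alpha_{\bar{V_1}V_1}|^2,
\]
so that the target estimate is equivalent to
\[
\frac{1}{\lambda_1}\sum_{i,j=0}^{n}\frac{\mu_i+\mu_j}{(1+\mu_i^2)(1+\mu_j^2)}|\nabla_{V_1}\alpha_{\bar{j}i}|^2 \geq (1-2\epsilon_0)\Theta - \frac{C_2}{\lambda_1} - \frac{C_5}{\epsilon_0 \lambda_1^2}.
\]

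\textbf{Step 1} (Comparison via $\Upsilon$). Let $\widetilde\Upsilon$ denote the explicit upper bound for $\Upsilon$ obtained by substituting Proposition~\ref{prop: GradEst} into the definition of $\Upsilon$. Likewise, substitute the lower bound from Proposition~\ref{prop: HessEst} into the left-hand side. After these substitutions, the inequality
\[
\frac{1}{\lambda_1}\sum_{i,j}\frac{\mu_i+\mu_j}{(1+\mu_i^2)(1+\mu_j^2)}|\nabla_{V_1}\alpha_{\bar{j}i}|^2 \;\geq\; \widetilde\Upsilon - \frac{C_2}{\lambda_1} - \frac{C}{\epsilon_0\lambda_1(\lambda_1-\Gamma)}
\]
becomes equivalent, after canceling common factors, to the non-negativity of the coefficient of each $|\nabla_{V_1}\alpha_{\bar{j}i}|^2$ for every pair $(i,j)\in\{0,\dots,n\}^2$, plus the explicit additive constants already tracked. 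The case analysis of Cases~1--4 above is exactly this coefficient verification: Case~1 for $(i\ne j,\; j\in B)$, Case~2 for $(i=j\in B)$, Case~3 for $(i\ne j,\; j\in S)$, and Case~4 for $(i=j\in S,\; j<n)$. The leftover sum $(i,j)=(n,n)$ never appears on the Hessian side; it is absorbed by the $2(1+(n+1)/\epsilon_0)C_3$ term of Proposition~\ref{prop: GradEst}, which contributes the $C_5/(\epsilon_0\lambda_1^2)$ piece after using $\lambda_1-\Gamma \geq \lambda_1/2$ (valid since $\Gamma\leq \delta_0\lambda_1$). Since $\Upsilon\leq \widetilde\Upsilon$ trivially, this yields
\[
\frac{1}{\lambda_1}\sum_{i,j}\frac{\mu_i+\mu_j}{(1+\mu_i^2)(1+\mu_j^2)}|\nabla_{V_1}\alpha_{\bar{j}i}|^2 \;\geq\; \Upsilon - \frac{C_2}{\lambda_1} - \frac{C_5}{\epsilon_0\lambda_1^2}.
\]

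\textbf{Step 2} (Conversion of $\Upsilon$ back to $\Theta$). Rewrite
\[
\Upsilon = \frac{1-c_0\lambda_1^{-1}}{(1-\Gamma/\lambda_1)(1+\epsilon_0)}\,\Theta.
\]
By the definition of $S$ one has $\Gamma \leq \delta_0\lambda_1$, while Lemma~\ref{lem: AngleBasicProps} (4) and $|V_1^n|^2\leq 1$ give $\Gamma \geq -C(\eta_1)$. Choose $\delta_0$ compatible with~\eqref{eq: const4} and~\eqref{eq: const 7} but additionally satisfying $\delta_0 \leq \epsilon_0/4$, and require $\lambda_1 \geq 4C(\eta_1)/\epsilon_0$ and $\lambda_1 \geq 4c_0/\epsilon_0$. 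Then $|{\Gamma}/{\lambda_1}|\leq \epsilon_0/4$ and $c_0\lambda_1^{-1}\leq \epsilon_0/4$, and a direct computation gives $(1-c_0\lambda_1^{-1})/[(1-\Gamma/\lambda_1)(1+\epsilon_0)] \geq 1-2\epsilon_0$ for $\epsilon_0$ sufficiently small. Hence $\Upsilon \geq (1-2\epsilon_0)\Theta$, completing the proof.

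\textbf{Main obstacle.} The delicate point is choosing $\delta_0, \epsilon_0, c_0$ simultaneously compatible with all constraints~\eqref{eq: const1}--\eqref{eq: const 7}. The binding one is Case~4: constraint~\eqref{eq: const 7} forces $\delta_0 \lesssim \kappa_1\epsilon_0$, while Case~2 (constraint~\eqref{eq: const3}) forces $\lambda_1 \gtrsim 1/(\delta_0\epsilon_0)$. Combining, $\lambda_1 \gtrsim 1/\epsilon_0^2$, which is exactly the stated threshold $\lambda_1 \geq C_4 \epsilon_0^{-2}$. All other constraints either impose only a lower bound of the form $\lambda_1 \gtrsim 1/\epsilon_0$ (subsumed) or a uniform lower bound on $\lambda_1$ (also subsumed for $\epsilon_0$ small). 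With $\delta_0, \epsilon_0, c_0$ and $C_4$ thus fixed, both Steps~1 and~2 apply and the proposition follows.
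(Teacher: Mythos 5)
Your proof is correct and follows the same two-step structure as the paper: Step~1 assembles Cases~1--4 to show that the Hessian term dominates $\Upsilon$ up to the uniform error terms $-C_2/\lambda_1 - C_5/(\epsilon_0\lambda_1^2)$, and Step~2 converts the $\Upsilon$-lower bound into a $(1-2\epsilon_0)\Theta$-lower bound. In Step~2 you are a bit more careful than the paper about the possibility $\Gamma<0$ (which can occur when $\mu_n<0$; bounded below by $-C(\eta_1)$ via Lemma~\ref{lem: AngleBasicProps}~(4)): the paper's display implicitly uses $\lambda_1-\Gamma\leq\lambda_1$, while your added requirement $\lambda_1\gtrsim C(\eta_1)/\epsilon_0$ covers the negative-$\Gamma$ case cleanly.

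One inaccuracy, confined to the closing ``Main obstacle'' commentary and not affecting the proof itself: the claim that the $\epsilon_0^{-2}$ threshold arises from combining~\eqref{eq: const3} with~\eqref{eq: const 7} is wrong. Viewing~\eqref{eq: const3} as a quadratic in $\lambda_1$ with $\delta_0\sim\kappa_1\epsilon_0$, its positive root is of order $\epsilon_0^{-1}$, not $\epsilon_0^{-2}$; indeed all of~\eqref{eq: const1}--\eqref{eq: const 7} together only force $\lambda_1\gtrsim\epsilon_0^{-1}$. The $\epsilon_0^{-2}$ threshold is entirely a Step~2 phenomenon: \eqref{eq: const2} forces $c_0\sim\epsilon_0^{-1}$, and Step~2 needs $c_0\lambda_1^{-1}\lesssim\epsilon_0$, hence $\lambda_1\gtrsim c_0/\epsilon_0\sim\epsilon_0^{-2}$. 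This is exactly the hypothesis $\lambda_1\geq 4c_0/\epsilon_0$ you correctly impose in Step~2, so the proof identifies the right source of the threshold; only the explanatory paragraph mislocates it.
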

\begin{proof}
By combining Propositions~\ref{prop: HessEst} and~\ref{prop: GradEst} we have show that
\[
\frac{1}{\lambda_1}\sum_{i,j=0}^{n}\frac{\mu_i+\mu_j}{(1+\mu_i^2)(1+\mu_j^2)}|\nabla_{V_1}\alpha_{\bar{j}i}|^2 - \Upsilon \geq - \frac{C_2}{\lambda_1} -C(n,\eta_1, \eta_2)\frac{(1-c_0\lambda_1^{-1})}{\epsilon_1 \lambda_1^2}C_3.
\]
We only need to compare $\Upsilon$ with the negative term containing $|\nabla_i\alpha_{\bar{V_1}V_1}|^2$.  Choose $C_4 = 100 K_0$, where $K_0$ is the constant defining $c_0$ above. If $\lambda_1>C_4\epsilon_{0}^{-2}$, then $c_0\lambda_1^{-1} < \epsilon_0$ and we have
\[
\begin{aligned}
 \frac{(1-c_0\lambda_1^{-1})}{(\lambda_1-\Gamma)(1+\epsilon_0)} -\frac{1}{\lambda_1} &\geq \frac{1-\epsilon_0}{\lambda_1(1+\epsilon_0)} -\frac{1}{\lambda_1}\\
 &\geq\frac{-2\epsilon_0}{\lambda_1}.
 \end{aligned}
\]
The result follows.
\end{proof}

We can now prove the interior, spatial $C^2$ estimate.  Recall that we have written $\alpha = \alpha_{\phi} = \hat{\alpha}+\DDb\phi$.  Normalize $\phi$ so that $\inf_{X_{\epsilon}}\phi =0$.  Define $H : [0,\sup_{X_{\epsilon}}\phi]$ by
\[
H(t) = -2At + \frac{A \tau}{2} t^2
\]
for constants $A\gg0, \tau>0$ to be determined.  By Proposition~\ref{prop: c0bound}, and Corollary~\ref{cor: intSubSol}  we have a uniform bound for $\|\phi\|_{L^{\infty}}$, and so we can choose $\tau < \min\{\frac{1}{2\|\phi\|_{L^{\infty}}},1\}$ so that
\[
-2A \leq H' \leq -A \qquad H'' = A\tau >0.
\]
We apply the maximum principle to the test function
\[
Q := \log(\lambda_1) + H(\phi).
\]
If this quantity achieves its maximum on $\del X_{\epsilon}$, then it is uniformly bounded in terms of the boundary data, and we obtain
\[
\lambda_1 \leq Ce^{-A\osc_{X_{\epsilon}}\phi},
\]
which is the desired estimate.  Otherwise, suppose $Q$ attains an interior maximum at $(p_0,t_0)$.   Fix holomorphic normal coordinates $(z_0,\ldots,z_n)$ for $\hat{\omega}$ in a neighborhood of $(p_0,t_0)$ so that $\alpha$ is diagonal at $(p_0,t_0)$ with eigenvalues $\mu_0 \geq \mu_1 \geq \cdots \geq \mu_n$.  We perform the perturbation described at the beginning of the section, and consider the quantity
\[
\tilde{Q} =  \log(\tilde{\lambda}_1)+ H(\phi).
\]
By construction we have $\tilde{Q} \leq Q$ near $(p_0,t_0)$, and $\tilde{Q}(p_0,t_0) = Q(p_0,t_0)$.  Thus $\tilde{Q}$ achieves an interior maximum at $(p_0,t_0)$.  Applying the linearized operator at $(p_0,t_0)$ we have
\[
0 \geq F^{i\bar{j}}\nabla_i\nabla_{\bar{j}}\tilde{Q}.
\]
The linearized operator applied to the $H(\phi)$ term gives
\[
\begin{aligned}
F^{i\bar{j}}\nabla_{i}\nabla_{\bar{j}}H(\phi) &= H'(\phi)\sum_{i}\frac{1}{1+\mu_{i}^2} \nabla_i\nabla_{\bar{i}}\phi + H''(\phi) \sum_{i}\frac{1}{1+\mu_i^2}|\nabla_i \phi|^2\\
&=- H'(\phi)\sum_{i}\frac{ \hat{\alpha}_{\bar{i}i}-\mu_i }{1+\mu_i^2} + A\tau\sum_{i}\frac{1}{1+\mu_i^2}|\nabla_i \phi|^2
\end{aligned}
\]
Thus, at $(p_0, t_0)$ we have
\[
\begin{aligned}
0 \geq F^{i\bar{j}}\nabla_i\nabla_{\bar{j}}\tilde{Q} &\geq \frac{1}{\lambda_1}\sum_{i,j=0}^{n}\frac{\mu_i+\mu_j}{(1+\mu_i^2)(1+\mu_j^2)}|\nabla_{V_1}\alpha_{\bar{j}i}|^2\\
&\quad -\frac{1}{\lambda_1^2} \sum_{i=0}^{n}\frac{1}{1+\mu_i^2}|\nabla_i\alpha_{\bar{V_1}V_1}|^2 -C\\
&- H'(\phi)\sum_{i}\frac{ \hat{\alpha}_{\bar{i}i}-\mu_i }{1+\mu_i^2} + A\tau\sum_{i}\frac{1}{1+\mu_i^2}|\nabla_i \phi|^2
\end{aligned}
\]

Combining Proposition~\ref{prop: firstBd} and Proposition~\ref{prop: C2KeyEst} there is a uniform constant $C_4$ so that if $\lambda_1 \geq C_4\epsilon_0^{-2}$, then we have 
\[
\begin{aligned}
0&\geq  -\frac{2\epsilon_0}{\lambda_1^2}\sum_{i=0}^{n} \frac{1}{1+\mu_i^2}|\nabla_i\alpha_{\bar{V_1}V_1}|^2- C-\frac{C_2}{\lambda_1} - \frac{C_5}{\epsilon_0 \lambda_1^2} \\
&- H'(\phi)\sum_{i}\frac{ \hat{\alpha}_{\bar{i}i}-\mu_i }{1+\mu_i^2} + A\tau\sum_{i}\frac{1}{1+\mu_i^2}|\nabla_i \phi|^2
\end{aligned}
\]
for uniform constants $C,C_2, C_5$.  At the maximum of $\tilde{Q}$ we have $\nabla_i\tilde{Q}=0$, which gives
\[
\frac{\nabla_i\alpha_{\bar{V_1}V_1}}{\lambda_1} = -H' \nabla_i\phi 
\]
So
\[
\frac{1}{\lambda_1^2}\sum_{i=0}^{n} \frac{1}{1+\mu_i^2}|\nabla_i\alpha_{\bar{V_1}V_1}|^2 =(H')^2\sum_{i}\frac{1}{1+\mu_i^2}|\nabla_i\phi|^2
\]
Choose $\epsilon_0=\max\{\frac{\tau}{10A}, 1\}$, then $2\epsilon_0(H')^2 \leq 8\epsilon_0A^2 < \tau A$ and so we get
\[
\begin{aligned}
0&\geq -C -\frac{C_2}{\lambda_1} - \frac{10AC_5}{\tau \lambda_1^2}\\
&+A\sum_{i}\frac{ \hat{\alpha}_{\bar{i}i}-\mu_i }{1+\mu_i^2}
\end{aligned}
\]
We may assume that $\lambda_1 > R$, where $R$ is the uniform constant appearing in Lemma~\ref{lem: SzCsub} for the subsolution $\hat{\alpha}$ (see Remark~\ref{rk: uniConst}).  Since $\mu_0 \geq \lambda_1$ we can apply Lemma~\ref{lem: SzCsub}.  Since $|\mu_n|$ is bounded by Lemma~\ref{lem: AngleBasicProps} we can always assume we are in the first case of Lemma~\ref{lem: SzCsub} and so we get
\[
\sum_{i}\frac{ \hat{\alpha}_{\bar{i}i}-\mu_i }{1+\mu_i^2} > \kappa_0\sum_{i=1}^{n}\frac{1}{1+\mu_i^2} > \tilde{\kappa}_0
\]
for $\tilde{\kappa}_0$ a uniform constant.  Finally, choose $A$ so that $A\tilde{\kappa}_0 =C+C_2+1$, which is a uniform constant. Then we obtain
\[
0 \geq 1- \frac{10(C+C_2+1)}{\tau\tilde{\kappa_0} \lambda_1^2}.
\]
In particular, $\lambda_1 < C_6$ for a uniform constant $C_6$.  This implies
\[
\lambda_1 \leq C_6e^{A(\phi-\inf_{\mathcal{X}_{\epsilon}}\phi)}
\] 
finishing the proof of~\eqref{eq: thmSpaceC2}.

We next move on to estimating the size of $\alpha_{\bar{t}t}$.  This is easily done by the maximum principle.  Recall that by Lemma~\ref{lem: AngleBasicProps} there is a constant $A$ depending only on $\eta_1$ so that $G=-e^{-AF}$ is concave.   Consider the quantity
\[
Q= \alpha_{\bar{t}t} - C \phi,
\]
and recall that $\alpha = \hat{\alpha} + \DDb\phi$.  Differentiating the equation $G= -e^{-Ah}$ twice, using that $G$ is concave, $\alpha$ is closed, and the curvature vanishes in any temporal direction yields
\[
G^{i\bar{j}}\nabla_{i}\nabla_{\bar{j}}(\alpha_{\phi})_{\bar{t}t} \geq A(\nabla_{t}\nabla_{\bar{t}}h + A|\nabla_{t}h|^2)e^{-Ah}
\]
Choose $C = \frac{A\|h_{\bar{t}t}\|_{L^{\infty}(X_{\epsilon})} +A^2\|h_{t}\|^2_{L^{\infty}(X_{\epsilon})}+1 }{\tilde{\kappa}_0}$ where $\tilde{\kappa}_0= \frac{\kappa_0}{1+C(\eta)^2}$, and $\kappa_0$ is the uniform constant appearing in Lemma~\ref{lem: SzCsub} for $\hat{\alpha}$, while $C(\eta)$ is a bound for $|\mu_n|$ (see Lemma~\ref{lem: AngleBasicProps}).  Suppose that $Q$ achieves an interior maximum on $\mathcal{X}_{\epsilon}$ at the point $(p_0, t_0)$.  Choose holomorphic normal coordinates near $(p_0,t_0)$ such that $\alpha$ is diagonal with eigenvalues $\mu_0, \ldots, \mu_n$.  We get
\[
0 \geq  A(\nabla_{t}\nabla_{\bar{t}}h + A|\nabla_{t}h|^2)e^{-Ah} +C \sum_{i} \frac{\hat{\alpha}_{\bar{i}i}-\mu_i}{1+\mu_i^2}.
\]
If $\alpha_{\bar{t}t} > R$, then by Lemma~\ref{lem: SzCsub} and our choice of $C$ we obtain
\[
0 \geq   A(\nabla_{t}\nabla_{\bar{t}}h + A|\nabla_{t}h|^2)e^{-Ah} +A(\|h_{\bar{t}t}\|_{L^{\infty}(X_{\epsilon})} +A\|h_{t}\|^2_{L^{\infty}(X_{\epsilon})}+1)
\]
a contradiction. Thus, $(\alpha_{\phi})_{\bar{t}t} \leq R$, and so
\[
Q \leq Q(p_0,t_0) \leq R +\frac{A\|h_{\bar{t}t}\|_{L^{\infty}(X_{\epsilon})} +A\|h_{t}\|^2_{L^{\infty}(X_{\epsilon})}+1 }{\tilde{\kappa}_0}\|\phi\|_{L^{\infty}(X_{\epsilon})}
\]
unless $Q$ attains its maximum on the boundary.  Summarizing we have
\begin{prop}
There is a uniform constant $C$ so that
\[
|(\alpha_{\phi})_{\bar{t}t}|_{\hat{g}} \leq C\left(1 + \|h_{\bar{t}t}\|_{L^{\infty}(\mathcal{X}_{\epsilon})} + \|h_{t}\|^2_{L^{\infty}(\mathcal{X}_{\epsilon})}\right) + \sup_{\del \mathcal{X}_{\epsilon}}|(\alpha_\phi)_{\bar{t}t}|_{\hat{g}}
\]
\end{prop}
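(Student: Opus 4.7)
The plan is to apply the maximum principle to an auxiliary quantity of the form $Q := \alpha_{\bar t t} - C \phi$ on $\mathcal{X}_\epsilon$. Because $\alpha_{\bar t t}$ is a single scalar component of $\alpha_\phi$ rather than an eigenvalue, the proof will avoid the perturbation of endomorphisms, the split into big/small eigenvalues, and the eigenvector calculus that were required for~\eqref{eq: thmSpaceC2}; it should be substantially cleaner.

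The first step is to differentiate the equation $F((\hat\omega)^{-1}\alpha_\phi) = h$ twice in the temporal direction. Direct differentiation of $F$ produces the concavity term $F^{i\bar j, m\bar\ell}\nabla_t \alpha_{\bar j i}\overline{\nabla_t \alpha_{\bar\ell m}}$, whose sign blocks the maximum principle. To cure this I invoke Lemma~\ref{lem: AngleBasicProps}(8): in the hypercritical phase there is a uniform $A>0$ for which $G := -e^{-AF}$ is concave on $\Gamma^{\sigma+\eta_1}$. Differentiating the equivalent equation $G = -e^{-Ah}$ twice and discarding the favorable-sign concavity term, together with the identities $\nabla_t \alpha_{\bar j i} = \nabla_i \alpha_{\bar j t}$ (since $\alpha_\phi$ is closed) and the vanishing $R(\partial_t,\cdot,\cdot,\cdot)=0$ from Lemma~\ref{lem: productMetric} (which eliminates all curvature corrections in commutation of a spatial and a temporal covariant derivative), should yield the clean inequality
\[
G^{i\bar j}\nabla_i \nabla_{\bar j} \alpha_{\bar t t} \;\geq\; A\bigl(h_{\bar t t} + A\,|h_t|^2\bigr) e^{-Ah}.
\]

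Next I take $\hat\phi$ to be one of the subsolutions constructed in Corollary~\ref{cor: intSubSol}, and choose
\[
C \;:=\; \tilde\kappa_0^{-1}\bigl(A\|h_{\bar t t}\|_{L^\infty} + A^2\|h_t\|_{L^\infty}^2 + 1\bigr),
\]
where $\tilde\kappa_0 := \kappa_0/(1+C(\eta)^2)$, $\kappa_0$ is the Szekelyhidi constant from Lemma~\ref{lem: SzCsub} applied to $\hat\alpha$ (uniform by Remark~\ref{rk: uniConst}), and $C(\eta)$ is the uniform $|\mu_n|$-bound from Lemma~\ref{lem: AngleBasicProps}. If $Q$ attains an interior maximum at $(p_0,t_0)$, diagonalize $\alpha_\phi$ in holomorphic normal coordinates for $\hat\omega$ there with eigenvalues $\mu_0 \ge \cdots \ge \mu_n$, so that
\[
0 \;\geq\; F^{i\bar j}\nabla_i \nabla_{\bar j} Q \;\geq\; A\bigl(h_{\bar t t}+A|h_t|^2\bigr)e^{-Ah} \;+\; C\sum_i \frac{\hat\alpha_{\bar i i}-\mu_i}{1+\mu_i^2}.
\]
Assume $\alpha_{\bar t t}(p_0,t_0) > R$, with $R$ the uniform Szekelyhidi radius for $\hat\alpha$. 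Since then some $\mu_i$ exceeds $R$, Lemma~\ref{lem: SzCsub} (together with the uniform two-sided bound on $\mu_n$) forces the final sum to exceed $\tilde\kappa_0$, and the choice of $C$ produces a contradiction. Hence $\alpha_{\bar t t}(p_0,t_0) \leq R$, so $Q \leq R + C\|\phi\|_{L^\infty}$ at its interior maximum; Proposition~\ref{prop: c0bound} makes $\|\phi\|_{L^\infty}$ uniform, giving the stated bound. If instead $Q$ attains its maximum on $\partial \mathcal{X}_\epsilon$, the boundary term in the estimate absorbs it.

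The main potential obstacle is the uniformity of constants in $\epsilon$. The second-order inequality must survive without picking up curvature or connection terms that couple to the $\hat\omega$-anisotropy in the temporal direction; this is precisely why Lemma~\ref{lem: productMetric} is indispensable, since a non-vanishing $R(\partial_t,\cdot,\cdot,\cdot)$ would contract with $\alpha_{\bar j i}$-components whose size is at best $O(\epsilon^{-2})$ and produce $\epsilon$-dependent errors. Aside from this, the argument is a textbook Szekelyhidi-type maximum principle and should be routine. A secondary caveat is that the estimate is only interior: the boundary term $\sup_{\partial \mathcal{X}_\epsilon}|(\alpha_\phi)_{\bar t t}|_{\hat g}$ is not controlled here and will be handled separately in Section~\ref{sec: bndryEst}.
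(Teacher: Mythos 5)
Your proof matches the paper's argument essentially verbatim: same test function $Q=\alpha_{\bar t t}-C\phi$, same use of Lemma~\ref{lem: AngleBasicProps}(8) to pass to the concave operator $G=-e^{-AF}$, same exploitation of closedness of $\alpha$ and vanishing temporal curvature, same choice of $C$, and the same invocation of Lemma~\ref{lem: SzCsub} with the uniform radius $R$ to force $(\alpha_\phi)_{\bar t t}\leq R$ at an interior maximum. The only cosmetic point is that the maximum-principle inequality should be stated with $G^{i\bar j}$ rather than $F^{i\bar j}$, but since $G^{i\bar j}=Ae^{-AF}F^{i\bar j}$ is a uniformly bounded positive multiple this does not affect the argument.
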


To estimate the off diagonal terms we use that $\alpha_{\phi} \geq - C(\eta_1) \hat{\omega}$.  Pick any point $(p_0,t_0) \in \mathcal{X}_{\epsilon}$, and choose space-time adapted coordinates so that $(\alpha_{\phi})|_{TX}$ is diagonal with entries $\lambda_1, \ldots, \lambda_n$.  At $p_0$ we have
\[
\alpha_\phi + (C(\eta_1)+1)\hat{\omega} = 
\begin{pmatrix} (\alpha_{\phi})_{\bar{t}t} + C(\eta_1)+1 & (\alpha_{\phi})_{\bar{1}t} & \cdots &(\alpha_{\phi})_{\bar{n}t}\\
(\alpha_{\phi})_{\bar{t}1} & \lambda_1 + C(\eta_1) +1& \cdots &0\\
\vdots & \vdots &\ddots &\vdots\\
(\alpha_{\phi})_{\bar{n}t} & 0 &\cdots &\lambda_n + C(\eta_1) +1
\end{pmatrix}
\]
Let $D$ be the $n\times n$ matrix with $D_{\bar{j}i} = (\lambda_1 +C(\eta)+1)\delta_{\bar{j}i}$. Since $\lambda_i + C(\eta_1) +1 >1$ for all $i$ we can compute the determinant as
\[
0 \leq \det(\alpha_\phi + (C(\eta_1)+1)\hat{\omega}) = \det(D) \left((\alpha_\phi)_{\bar{t}t} + C(\eta_1)+1 - \sum_{i}\frac{|(\alpha_{\phi})_{\bar{t}i}|^2}{\lambda_i+C(\eta_1)+1}\right).
\]
Now since $1 \leq \lambda_i+C(\eta_1)+1 \leq C$ for a uniform constant $C$ by the spatial $C^2$ estimate we obtain
\[
|(\alpha_{\phi})_{\bar{t}i}| \leq C\left(|(\alpha_\phi)_{\bar{t}t}| + C(\eta_1)+1\right)^{1/2}
\]
which finishes the proof of Theorem~\ref{thm: C2est}.

\begin{rk}
Note that the spatial $C^2$ estimate is independent of the estimate for the spatial gradient.  In particular, Proposition~\ref{prop: spaceGradEst} can be obtained directly from the $C^2$ estimate by applying the elliptic theory along the fibers of $\mathcal{X}_{\epsilon}\rightarrow \mathcal{A}_{\epsilon}$.  Nevertheless, we have decided to include the estimate as it may have applications to the existence of geodesic rays in $\mathcal{H}$; see for example \cite{PSS, PS, PS1}.
\end{rk}

\section{Boundary $C^2$ estimates and existence of solutions to the degenerate Lagrangian phase equation}\label{sec: bndryEst}

It remains only to prove the boundary estimates.  The boundary estimates are based on estimates of Guan \cite{Guan1, Guan2} which are in turn inspired by estimates of Trudinger \cite{Tr}.  Similar ideas were used by the first author, Picard and Wu to solve the Dirichlet problem for the Lagrangian phase operator~\cite{CPW}.   In fact, the proof here is much simpler than the boundary estimates for the Lagrangian phase operator established in \cite{CPW} due to the special structure of the boundary.

Without loss of generality, we work near $\{|t|=\epsilon\}$.  Consider the function
\[
v= (\phi-\hat{\phi}_0) + c_0(\epsilon - |t|) - N(\epsilon- |t|)^2
\]
for constants $N, c_0 >0$ to be determined.  For simplicity, let us write $\hat{\phi}_0 = \hat{\phi}$.  Our goal is to choose uniform constants $c_0, N$  so that $F^{i\bar{j}}\nabla_i\nabla_{\bar{j}} < -\epsilon_0$ near $|t|=\epsilon$ for a uniform constant $\epsilon_0$, and so that $v>0$ on a neighboourhood of $\{r=\epsilon\}$.  We compute at a point $(p_0,t_0)$ in coordinates where $\hat{\omega}$ is the identity and $\alpha_{\phi}$ is diagonal.
\[
F^{i\bar{j}}\nabla_i\nabla_{\bar{j}} v = \sum_{i} \frac{\mu_i - \hat{\alpha}_{\bar{i}i}}{1+\mu_i^2} - c_0F^{t\bar{t}}\frac{1}{4|t|} - \frac{N}{2}F^{t\bar{t}} + N\frac{(\epsilon-|t|)}{2|t|}F^{t\bar{t}}.
\]
Suppose $|\mu|\leq R$, where $R$ is the constant in Lemma~\ref{lem: SzCsub} for $\hat{\phi}_0$.  Then we have $1 \geq F^{t\bar{t}} > (1+R^2)^{-1}$.  Also, $\hat{\alpha}_{\bar{i}i} > -C(\eta_1)$.  Combining these estimates gives
\begin{equation}
F^{i\bar{j}}\nabla_i\nabla_{\bar{j}} v \leq(n+1)+C(\eta_1) - c_0 \frac{F^{t\bar{t}}}{4|t|} - \frac{N}{2(1+R^2)} + N\frac{(\epsilon-|t|)}{2|t|}.
\end{equation}
If instead $|\mu|>R$ the we apply Lemma~\ref{lem: SzCsub} and conclude
\begin{equation}
F^{i\bar{j}}\nabla_i\nabla_{\bar{j}} v <-\kappa_0- c_0 \frac{F^{t\bar{t}}}{4|t|} - \frac{N}{2}F^{t\bar{t}} + N\frac{(\epsilon-|t|)}{2|t|},
\end{equation}
for a uniform constant $\kappa_0$.  We consider $v$ on the domain
\[
\Omega_{\delta} := X \times \{ (1-\delta)\epsilon \leq |t|\leq \epsilon\}
\]
where $0<\delta <1-e^{-1}$ is to be determined.  On $\Omega_{\delta}$ we have
\[
0\leq \frac{(\epsilon-|t|)}{|t|} \leq \frac{\delta}{1-\delta} <1.
\]
Choose $N$ so that
\[
\frac{N}{2(1+R^2)} = n+2 + C(\eta_1),
\]
and note that $N$ is uniform.  Then when $|\mu|<R$ we have
\[
F^{i\bar{j}}\nabla_i\nabla_{\bar{j}} v \leq -1 + N\frac{\delta}{(1-\delta)} < -\frac{1}{2}
\]
provided $\delta < \frac{1}{2N+1}$.  When $|\mu|>R$ we have
\[
F^{i\bar{j}}\nabla_i\nabla_{\bar{j}} v \leq - \kappa_0 + N\frac{\delta}{(1-\delta)} < -\frac{\kappa_0}{2}
\]
provided $\delta < \frac{\kappa_0}{2N+\kappa_0}$.  We choose $\delta = \min\{\frac{1}{2N+1}, \frac{\kappa_0}{2N+\kappa_0}\}$, which is a uniform constant.  It remains only to determine $c_0$.  On the $|t|=\epsilon$ component of $\del \Omega_{\delta}$ we have $v=0$, while on the $|t|=(1-\delta)\epsilon$ component we have
\[
v \geq \delta\epsilon - N(\delta\epsilon)^2 \geq \delta \epsilon \left(c_0 - \frac{1}{2}\epsilon\right)>0
\]
provided we take $c_0=1$.  Summarizing we have
\begin{lem}\label{lem: vsubSol}
There exist uniform constants $\delta, N$ so that the function
\[
v:= (\phi-\hat{\phi}_0) + (\epsilon - |t|) - N(\epsilon- |t|)^2
\]
satisfies 
\[
F^{i\bar{j}}\nabla_i\nabla_{\bar{j}}v \leq -\frac{\kappa_0}{2}, \qquad v \geq 0
\]
on $\Omega_{\delta} := X \times \{ (1-\delta)\epsilon \leq |t|\leq \epsilon\}$.
\end{lem}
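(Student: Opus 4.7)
The plan is to verify the differential inequality pointwise on $\Omega_\delta$ and then read off the non-negativity of $v$ directly. First I would fix a point $(p_0,t_0)\in\Omega_\delta$ and choose space-time adapted coordinates in which $\hat\omega$ is the identity and $\alpha_\phi$ is diagonal with eigenvalues $\mu_0,\ldots,\mu_n$, so that $F^{i\bar j}$ is diagonal with entries $(1+\mu_i^2)^{-1}$. Using the elementary identities $\partial_t\partial_{\bar t}|t|=(4|t|)^{-1}$ and $\partial_t\partial_{\bar t}(\epsilon-|t|)^2=1-\epsilon/(2|t|)$, a direct computation yields
\[
F^{i\bar j}\nabla_i\nabla_{\bar j}v=\sum_i\frac{\mu_i-\hat\alpha_{\bar i i}}{1+\mu_i^2}-\frac{F^{t\bar t}}{4|t|}-\frac{N}{2}F^{t\bar t}+\frac{N(\epsilon-|t|)}{2|t|}F^{t\bar t}.
\]

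Next I would perform the standard dichotomy from Lemma~\ref{lem: SzCsub} applied to the subsolution $\hat\phi_0$, whose associated constants $R$ and $\kappa_0$ are uniform by Remark~\ref{rk: uniConst} and Corollary~\ref{cor: intSubSol}. In the regime $|\mu(\alpha_\phi)|\leq R$, one has $F^{t\bar t}\geq(1+R^2)^{-1}$ and the leading sum is bounded by $(n+1)+C(\eta_1)$ via Lemma~\ref{lem: AngleBasicProps}; choosing $N$ uniformly so that $N/(2(1+R^2))\geq n+2+C(\eta_1)$ yields $F^{i\bar j}\nabla_i\nabla_{\bar j}v\leq -1+N\delta/(1-\delta)$, using $(\epsilon-|t|)/|t|\leq\delta/(1-\delta)$ on $\Omega_\delta$. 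In the regime $|\mu(\alpha_\phi)|>R$, Lemma~\ref{lem: SzCsub} contributes $-\kappa_0$ to the leading sum after absorbing the $\hat\alpha$-terms, giving $F^{i\bar j}\nabla_i\nabla_{\bar j}v\leq-\kappa_0+N\delta/(1-\delta)$. Taking $\delta=\min\{1/(2N+1),\,\kappa_0/(2N+\kappa_0)\}$, which is again uniform, forces $F^{i\bar j}\nabla_i\nabla_{\bar j}v\leq-\kappa_0/2$ throughout $\Omega_\delta$.

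For the non-negativity of $v$, the comparison principle of Proposition~\ref{prop: c0bound} yields $\phi-\hat\phi_0\geq 0$ everywhere on $\mathcal{X}_\epsilon$, while on $\Omega_\delta$ one has $0\leq\epsilon-|t|\leq\delta\epsilon$, so
\[
(\epsilon-|t|)-N(\epsilon-|t|)^2=(\epsilon-|t|)\bigl(1-N(\epsilon-|t|)\bigr)\geq 0
\]
provided $N\delta\epsilon\leq 1$; since $N$ and $\delta$ are uniform, this holds once $\epsilon$ is below a uniform threshold. The main obstacle, really the only subtle point, is the bookkeeping: every constant $R,\kappa_0,N,\delta$ must depend only on $\eta_1,\eta_2$, the boundary data, and $\hat\phi_0$, which is exactly what the uniformity clauses of Corollary~\ref{cor: intSubSol} and Remark~\ref{rk: uniConst} guarantee. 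No new ingredient beyond the Guan--Sz\'ekelyhidi machinery already developed is required.
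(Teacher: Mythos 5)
Your proposal is correct and follows essentially the same route as the paper: compute $F^{i\bar j}\nabla_i\nabla_{\bar j}v$ in coordinates diagonalizing $\alpha_\phi$, split according to whether $|\mu(\alpha_\phi)|\le R$ or $>R$ via Lemma~\ref{lem: SzCsub} applied to the subsolution $\hat\phi_0$, and then fix $N$ and $\delta$ exactly as you do. The one genuine difference is the non-negativity of $v$. The paper verifies $v=0$ on $\{|t|=\epsilon\}$ and $v>0$ on $\{|t|=(1-\delta)\epsilon\}$, and then relies (implicitly) on the maximum principle together with the already-established inequality $F^{i\bar j}\nabla_i\nabla_{\bar j}v<0$ to propagate $v\ge 0$ to the interior of $\Omega_\delta$. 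You instead observe that $\phi-\hat\phi_0\ge 0$ on all of $\mathcal X_\epsilon$ by the comparison principle, and that $(\epsilon-|t|)\bigl(1-N(\epsilon-|t|)\bigr)\ge 0$ pointwise on $\Omega_\delta$ because $0\le\epsilon-|t|\le\delta\epsilon$ and $N\delta<\tfrac12$ by the very choice $\delta\le (2N+1)^{-1}$. This decouples the non-negativity of $v$ from the differential inequality, which is a slight simplification; note also that the "uniform threshold on $\epsilon$" you mention is in fact automatic ($N\delta\epsilon<\epsilon/2<1$ once $\epsilon<2$), matching the paper's condition $c_0-\tfrac12\epsilon>0$. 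Everything else — the identities $\partial_t\partial_{\bar t}|t|=(4|t|)^{-1}$ and $\partial_t\partial_{\bar t}(\epsilon-|t|)^2=1-\epsilon/(2|t|)$, the bounds $F^{t\bar t}\in[(1+R^2)^{-1},1]$ in the small-eigenvalue regime, and the uniformity of $R,\kappa_0$ from Remark~\ref{rk: uniConst} — lines up with the paper's argument.
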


We are going to estimate the tangent-normal derivatives of $\phi$ near the boundary $\{|t|=\epsilon\}$.  Fix a point $(p_0, t_0)\in \del X_{\epsilon}$, and space-time adapted coordinates $(w_0,\ldots,w_n) = (w_0,w')$.  For $1\leq \ell\leq n$ we compute
\[
F^{i\bar{j}}\nabla_{i}\nabla_{\bar{j}}\nabla_{\ell}(\phi - \hat{\phi}) = \nabla_{\ell}h - \sum_{1\leq i,j\leq n}F^{i\bar{j}}\nabla_{\ell}\hat{\phi}
\]
where in the last line we used that $\nabla_{t}\nabla_{\ell}\hat{\phi} = \nabla_{\bar{t}}\nabla_{\ell}\hat{\phi}=0$ (see Corollary~\ref{cor: intSubSol}).  Next we compute
\[
F^{i\bar{j}}\nabla_{i}\nabla_{\bar{j}}\nabla_{\bar{\ell}}(\phi - \hat{\phi})  = \nabla_{\bar{\ell}}h - F^{i\bar{j}}\sum_{1\leq k \leq n}R_{\bar{\ell}i\bar{j}}\,^{\bar{k}} \nabla_{\bar{k}}\phi - \sum_{1\leq i,j \leq n} F^{i\bar{j}}\nabla_i\nabla_{\bar{j}}\nabla_{\bar{\ell}}\hat{\phi}
\]
where we used that $R_{\bar{\ell}i\bar{j}k} =0$ if $k=0$.  Write $w_\ell = x_{\ell} + \sqrt{-1}y_{\ell}$.  Then combining the spatial gradient bound from Theorem~\ref{thm: gradEst} with Corollary~\ref{cor: intSubSol} we have
\[
-C \leq F^{i\bar{j}}\nabla_{i}\nabla_{\bar{j}}\nabla_{\del_{x_{\ell}}}(\phi - \hat{\phi}) \leq C
\]
for a uniform constant $C$.  Clearly the same estimate holds for $\del_{y_{\ell}}$.  We now consider the quantity
\[
Q_{\pm} := Av \pm \del_{x_{\ell}}(\phi- \hat{\phi}) + B\sum_{i=i}^{n}|w_i|^2 - \frac{D}{2}\log\left(\frac{|t|^2}{\epsilon^2}\right)
\]
for positive constants $A,B,D$ to be determined, which is defined on the set
\[
\hat{\Omega}_{\delta} := \left\{ \sum_{i=1}^{n}|w_i|^2 < (\delta')^2 \right\} \times \{(1-\delta)\epsilon \leq |t| \leq \epsilon\}
\]
where $\delta'$ is a constant depending only on $(X,\omega)$ (namely, the size of the coordinate chart on which holomorphic normal coordinates are defined).  On $|t|=\epsilon$ we have $Q_{\pm} \geq 0$ with equality at $w'=0$.  On $w'= \delta'$ we have
\[
Q_{\pm} \geq -\sup_{\mathcal{X}_{\epsilon}}|\nabla^{X}\phi|_{\hat{g}} + B(\delta')^2.
\]
By Proposition~\ref{prop: spaceGradEst} we can therefore choose a uniform constant $B$ large enough so that $Q_{\pm} \geq 0$ on $|w'|=\delta'$.  Finally, when $|t|=(1-\delta)\epsilon$ we have
\[
 Q_{\pm} \geq -\sup_{X_{\epsilon}}|\nabla^{X}\phi|_{\hat{g}} + \frac{D}{2}\log\left(\frac{1}{(1-\delta)^2}\right)
\]
Since $\delta>0$ is uniform, by Proposition~\ref{prop: spaceGradEst} we can choose $D$ large and uniform so that $Q_{\pm} \geq 0$ on $|t|=(1-\delta)\epsilon$.  Finally, we compute
\[
F^{\bar{i}{j}} \leq -A\frac{\kappa_0}{2} + \sup_{\mathcal{X}_{\epsilon}}|\nabla^{X}\phi|_{\hat{g}} + B\sum_{1\leq i \leq n} F^{i\bar{i}} \leq  -A\frac{\kappa_0}{2} + \sup_{X_{\epsilon}}|\nabla^{X}\phi|_{\hat{g}} + Bn
\]
Therefore, another application of Proposition~\ref{prop: spaceGradEst} shows that we can choose $A$ uniform, sufficiently large  so that $F^{i\bar{j}}Q_{\pm} \leq 0$.  Since $Q_{\pm}\geq 0$ on $\del \hat{\Omega}_{\delta}$ we have $Q_{\pm} \geq 0= Q_{\pm}(p_0,t_0)$.  We conclude that $\del_r Q_{\pm} \leq 0$. Now at $(p_0,t_0)$ we have $\del_r \del_{x_{\ell}}\hat{\phi} =0$ and $\del_r v = \del_r\phi - \del_r\hat{\phi} -1$.  Thus by Theorem~\ref{thm: gradEst} we obtain
\[
|\del_r \del_{x_{\ell}}\phi| \leq \frac{C}{\epsilon}
\]
Repeating the argument with $y_{\ell}$ yields
\begin{prop}\label{prop: TNbdEst}
There is a uniform constant $C$ so that
\[
\sup_{\del X_{\epsilon}}\left(|\nabla_{t} \overline{\nabla^{X}}\phi|_{\hat{g}} + |\nabla_{\bar{t}} \nabla^{X}\phi|_{\hat{g}}\right) \leq \frac{C}{\epsilon}
\]
\end{prop}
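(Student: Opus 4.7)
The plan is to run a boundary barrier argument of Guan--Trudinger type, using the linearized equation at a boundary point together with the Hopf lemma. By $S^1$-invariance of $\phi$, it suffices to bound the radial derivative of spatial derivatives $\partial_{x_\ell}\phi$ and $\partial_{y_\ell}\phi$ at any point of $\{|t|=\epsilon\}$ (the argument at $\{|t|=e^{-1}\epsilon\}$ is identical up to signs), and these in turn control the mixed complex derivatives $\nabla_t \overline{\nabla^X}\phi$ and $\nabla_{\bar t}\nabla^X\phi$ up to uniform constants.

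First I would record that the barrier $v$ constructed in Lemma~\ref{lem: vsubSol} is already strictly subsolution for the linearized operator on a uniform one-sided neighborhood $\Omega_\delta$ of $\{|t|=\epsilon\}$, with $v\geq 0$ and $v(p_0,t_0)=0$. Next, for a boundary point $(p_0,t_0)$ and any spatial real direction $\partial_{x_\ell}$, I would differentiate the equation $F((\hat\omega)^{-1}\alpha_\phi)=h$ once in $\partial_{x_\ell}$ to show that $F^{i\bar j}\nabla_i\nabla_{\bar j}[\pm\partial_{x_\ell}(\phi-\hat\phi)]$ is uniformly bounded. Here I use three inputs: Lemma~\ref{lem: productMetric} (the curvature has no $\partial_t$ component, so commutators stay spatial), Corollary~\ref{cor: intSubSol} (the subsolution $\hat\phi$ has $\nabla_t\hat\alpha=\nabla_{\bar t}\hat\alpha=0$ and uniform spatial $C^3$ data), and Theorem~\ref{thm: gradEst} (the spatial gradient is uniformly bounded).

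The test function is the one indicated before the proposition,
\[
Q_\pm := Av \pm \partial_{x_\ell}(\phi-\hat\phi) + B\sum_{i=1}^n|w_i|^2 - \tfrac{D}{2}\log(|t|^2/\epsilon^2),
\]
on $\hat\Omega_\delta=\{|w'|<\delta'\}\times\{(1-\delta)\epsilon\leq|t|\leq\epsilon\}$. I would then check the sign of $Q_\pm$ on each face of $\partial\hat\Omega_\delta$: on $\{|w'|=\delta'\}$ the quadratic term $B|w'|^2$ dominates $|\partial_{x_\ell}(\phi-\hat\phi)|$ once $B$ is chosen uniformly large via Theorem~\ref{thm: gradEst}; on $\{|t|=(1-\delta)\epsilon\}$ the logarithmic term produces a fixed positive contribution (here $\delta$ is uniform from Lemma~\ref{lem: vsubSol}), so $D$ can be fixed uniformly; on $\{|t|=\epsilon\}$ the barrier $v$ vanishes and the quadratic is non-negative, giving $Q_\pm\geq 0$ with equality at the reference point. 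Meanwhile $F^{i\bar j}\nabla_i\nabla_{\bar j}Q_\pm \leq -A\kappa_0/2 + (\text{uniform})$, so choosing $A$ uniform and large makes $Q_\pm$ a supersolution. Hence $Q_\pm\geq 0 = Q_\pm(p_0,t_0)$ on $\hat\Omega_\delta$, and the Hopf-type comparison $\partial_r Q_\pm(p_0,t_0)\leq 0$ applies.

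Finally I would evaluate $\partial_r Q_\pm$ at $(p_0,t_0)$: the quadratic term contributes $0$, the log term contributes a uniform $-D/\epsilon$, $\partial_r\hat\phi$ is $O(1/\epsilon)$ by Corollary~\ref{cor: intSubSol}(viii), $\partial_r\partial_{x_\ell}\hat\phi=0$ by the product structure of $\hat\phi$, and $\partial_r v = \partial_r(\phi-\hat\phi)-1$ which is $O(1/\epsilon)$ by Theorem~\ref{thm: gradEst}. Rearranging gives $|\partial_r\partial_{x_\ell}\phi|\leq C/\epsilon$ with $C$ uniform, and repeating with $\partial_{y_\ell}$ and over all $1\leq\ell\leq n$ produces the claim. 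The main obstacle throughout is quantitative uniformity: every constant---in particular $\delta$, $N$ (in the barrier), $\kappa_0$ (from Lemma~\ref{lem: SzCsub} applied to $\hat\phi_0$), and $A,B,D$---must be traceable back to the boundary data and $\eta_1,\eta_2$ rather than to norms that degenerate as $\epsilon\to 0$; this is exactly where the construction of $\hat\phi_0$ in Corollary~\ref{cor: intSubSol} with its uniform $\mathcal{C}$-subsolution constants is essential, and it is the reason one cannot simply quote the boundary estimate of \cite{CPW}.
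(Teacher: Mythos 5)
Your argument reproduces the paper's proof almost line for line: the same barrier $v$ from Lemma~\ref{lem: vsubSol}, the same test function $Q_\pm$ on $\hat\Omega_\delta$, the same boundary-face checks and Hopf-type conclusion, with the uniformity of $A,B,D,\delta,\kappa_0$ traced correctly back to Corollary~\ref{cor: intSubSol} and Theorem~\ref{thm: gradEst}. Nothing to add; this is correct and is the paper's approach.
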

Finally, we estimate $\nabla_{t}\nabla_{\bar{t}}\phi$ on the boundary.  In fact the estimate we need follows from a lemma of Caffarelli-Nirenberg-Spruck \cite{CNS3}, which we now recall

\begin{lem}\label{lem: CNS}
Consider the $n\times n$ hermitian matrix
\[
M:=\begin{pmatrix}
a &b_1 & b_2 & \cdots & b_n\\
\overline{b_1} & d_1 &0&\cdots&0\\
\overline{b_2}&0&d_2&\cdots&0\\
\vdots&\vdots&\cdots&\ddots&\vdots\\
\overline{b_n}&0&0&\cdots&d_n
\end{pmatrix}
\]
where we assume $a\geq 1$.  Let $\mu_0,\mu_1,\ldots,\mu_n$ be the eigenvalues of $M$.  For all $0<\epsilon_0 \ll 1$, there exists $\delta(\epsilon_0)>0$ depending only on $d_1,\ldots, d_n, \epsilon$ such that, if $\sum_{i=1}^{n} \frac{|b_i|^2}{a}<\delta(\epsilon_0)$ then
\[
|\mu_i - d_i| < \epsilon_0.
\]
for $i=1, \ldots, n$.  Furthermore, we have
\[
\mu_0 = a\left(1+ O\left(\sum_{i=1}^{n} \frac{|b_i|^2}{a}\right)\right)
\]
with implied constants depending only on $d_1,\ldots, d_n$.
\end{lem}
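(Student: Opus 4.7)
The plan is to reduce the eigenvalue problem for $M$ to a scalar secular equation via the Schur complement, and then locate the $n+1$ roots by monotonicity and sign analysis at prescribed test points. Writing $M-\mu I$ in $2\times 2$ block form with $(a-\mu)$ in the upper-left corner, $b=(b_1,\ldots,b_n)$ in the first row, its conjugate transpose in the first column, and $\mathrm{diag}(d_1-\mu,\ldots,d_n-\mu)$ in the lower-right block, Schur's determinant formula gives
\begin{equation*}
\det(M-\mu I)=\prod_{j=1}^n (d_j-\mu)\cdot\phi(\mu),\qquad \phi(\mu):=(a-\mu)-\sum_{j=1}^n\frac{|b_j|^2}{d_j-\mu}.
\end{equation*}
Since $\phi'(\mu)=-1-\sum_j |b_j|^2/(\mu-d_j)^2<0$, the function $\phi$ is strictly decreasing on each component of its analytic domain, and a sign analysis at the poles and at $\pm\infty$ produces exactly $n+1$ roots, one in each connected component of $\mathbb{R}\setminus\{d_1,\ldots,d_n\}$.

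For the small eigenvalues $\mu_i$ paired with $d_i$, isolating the singular term in $\phi(\mu_i)=0$ yields
\begin{equation*}
d_i-\mu_i=\frac{|b_i|^2}{(a-\mu_i)-\sum_{j\ne i}|b_j|^2/(d_j-\mu_i)}.
\end{equation*}
I would fix $\epsilon_0$ smaller than half the minimum spacing between distinct $d_j$'s (permissible since $\delta$ is allowed to depend on the $d_j$'s). Assuming tentatively $|\mu_i-d_i|<\epsilon_0$, the ``regular'' sum over $j\ne i$ is bounded by $C(d)\sum_j|b_j|^2$, and the denominator is bounded below by $a-\max_j|d_j|-C(d)\sum|b_j|^2\geq a/2$ once $\sum|b_j|^2/a<\delta$ is chosen sufficiently small and $a$ exceeds a constant depending only on the $d_j$'s (the case of bounded $a$ being covered directly by Weyl's inequality, since then $\sum|b_j|^2<\delta a$ is small in absolute terms). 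Consequently $|d_i-\mu_i|\leq 2|b_i|^2/a\leq 2\delta$, and a standard continuity/bootstrap starting from $b=0$ upgrades the tentative hypothesis to the unconditional bound $|\mu_i-d_i|<\epsilon_0$.

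For the large eigenvalue $\mu_0$, lying in the unbounded component $(\max_j d_j,+\infty)$ once $a$ dominates the $d_j$'s, evaluating $\phi(a)=\sum_j|b_j|^2/(a-d_j)>0$ and invoking monotonicity gives $\mu_0>a$. Rearranging the secular equation produces
\begin{equation*}
0\leq \mu_0-a=\sum_j\frac{|b_j|^2}{\mu_0-d_j}\leq \frac{2}{a}\sum_j|b_j|^2,
\end{equation*}
which is precisely the claimed asymptotic $\mu_0=a(1+O(\sum|b_j|^2/a))$, with implied constants depending only on $d_1,\ldots,d_n$.

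The main obstacle in this plan is the bookkeeping needed to keep all estimates uniform in $a\geq 1$: the secular-equation argument naturally wants $a$ to dominate $\max_j|d_j|$, while the bounded-$a$ regime must be handled separately by Weyl's inequality, and the "tentative" hypothesis in the small-eigenvalue argument requires a careful continuity argument to discharge. Since the lemma permits $\delta$ to depend on the $d_j$'s, however, these technicalities amount to absorption into the constants rather than genuine analytic difficulty.
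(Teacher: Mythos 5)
Your proposal takes a genuinely different route from the paper's. The paper's proof divides the first row by $a$ and expands the determinant, obtaining a polynomial $P(t_0,\ldots,t_n,\mu)$ in the parameters $t_0 = 1/a$, $t_j = |b_j|^2/a$; it then invokes continuity of the roots of a polynomial in its coefficients to locate the eigenvalues near $d_1,\ldots,d_n$, and applies the implicit function theorem at the simple root $\gamma=1$ of the rescaled equation (with $\gamma = \mu/a$) for the asymptotics of $\mu_0$. You instead use the Schur complement to factor $\det(M-\mu I)=\prod_j(d_j-\mu)\,\phi(\mu)$ and carry out a direct analysis of the secular function $\phi$. Your approach is more explicit: it produces quantitative bounds $|d_i-\mu_i|\leq 2|b_i|^2/a$ and $0\leq\mu_0-a\leq 2a^{-1}\sum_j|b_j|^2$, which are in fact sharper than the lemma's stated conclusion. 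Both routes are legitimate.

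There is one genuine gap you should close. The interval-counting step (``one root in each connected component of $\mathbb{R}\setminus\{d_1,\ldots,d_n\}$'') and the lower bound on $|d_j-\mu_i|$ in the denominator estimate both presuppose that the $d_j$ are pairwise distinct, which the lemma does not assume. This is fixable by a standard reduction: if $d_j=d_k$, a unitary rotation in the $(e_j,e_k)$-plane transforms $(b_j,b_k)$ into $(\sqrt{|b_j|^2+|b_k|^2},\,0)$, exhibiting $d_j$ as an exact eigenvalue of $M$ and reducing the problem to a smaller arrowhead matrix with one fewer diagonal entry. But as written this case is not covered, whereas the paper's soft continuity-of-roots argument absorbs repeated $d_j$'s automatically. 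Two smaller points: the ``tentative hypothesis then bootstrap'' discharge and the splitting between the regimes $a$ large versus $a$ bounded would need to be written out carefully (e.g.\ the labeling of ``$\mu_0$'' is ambiguous when $a$ lies among the $d_j$), and note that Weyl's inequality in the bounded-$a$ regime only gives $|\mu_0-a|\lesssim(\sum_j|b_j|^2)^{1/2}$, so the secular-equation estimate is still needed there to recover the linear dependence $\mu_0-a=O(a^{-1}\sum_j|b_j|^2)$.
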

\begin{proof}

We give the proof, since the statement in \cite{CNS3} is not exactly what we need.  Just as in \cite{CNS3}, the eigenvalues are given by the zeroes of
\[
\det\begin{pmatrix}
1-\frac{\mu}{a} &\frac{b_1}{a} & \frac{b_2}{a} & \cdots & \frac{b_n}{a}\\
\overline{b_1} & d_1-\mu &0&\cdots&0\\
\overline{b_2}&0&d_2-\mu&\cdots&0\\
\vdots&\vdots&\cdots&\ddots&\vdots\\
\overline{b_n}&0&0&\cdots&d_n-\mu
\end{pmatrix} =0.
\]
Expanding the determinant gives
\[
0= (1-\frac{\mu}{a})\left(\prod_{i=1}^{n} (d_i-\mu)\right) +\sum_{j=1}^{n}(-1)^{j}\frac{|b_j|^2}{a}\prod_{i\ne j}(d_i-\mu).
\]
Introduce parameters $t_0 = \frac{1}{a}$ and $t_{i} = \frac{|b_i|^2}{a}$, and write this equation as 
\[
P(t_0, t_1\ldots, t_n, \lambda)=0
\]
for a polynomial $P$. When $(t_1,\ldots, t_n)=0$ we have that $\mu=d_i$ is a zero.  Since the roots of $P(t_0,\ldots,t_n)$ depend continuously on the coefficients we get that, for all $\epsilon_0 >0$, there exists $\delta>0$ depending only on $d_1,\ldots, d_n, \epsilon_0$ such that if $|t|<\delta$, then
\[
|\mu_i - d_i| < \epsilon_0.
\]
For the last eigenvalue we set $\mu = a\gamma$ and consider
\[
\det\begin{pmatrix}
1-\gamma &\frac{b_1}{a} & \frac{b_2}{a} & \cdots & \frac{b_n}{a}\\
\frac{\overline{b_1}}{a} & \frac{d_1}{a}-\gamma &0&\cdots&0\\
\frac{\overline{b_2}}{a}&0&\frac{d_2}{a}-\gamma&\cdots&0\\
\vdots&\vdots&\cdots&\ddots&\vdots\\
\frac{\overline{b_n}}{a}&0&0&\cdots&\frac{d_n}{a}-\gamma
\end{pmatrix} =0
\]
which we write as
\[
0 = (1-\gamma)\left(\prod_{i=1}^{n}(t_0d_1-\gamma)\right)+ \sum_{j=1}^{n}(-1)^{j}t_0t_j\prod_{i\ne j}(t_0d_i-\gamma).
\]
When $t=0, \gamma=1$ is a simple zero, and hence the implicit function theorem gives that for $|t|<\delta$, there is a constant $C$ depending on $d_1,\ldots,d_n$ such that 
\[
|1-\gamma(t)| \leq C|t|
\]
whence
\[
\mu_0(t) = a(1+O(|t|))
\]
with constants depending only on $d_1,\ldots,d_n$.
\end{proof}
This lemma, together with the boundary tangent-normal estimates in Proposition~\ref{prop: TNbdEst} immediately implies the normal-normal estimate.  Suppose there is a point $(p_0,t_0)\in \del X_{\epsilon}$ where $(\alpha_{\phi})_{\bar{t}t} \geq \frac{K}{\epsilon^2}$ for some constant $K$ to be determined (note that the lower bound is automatic from Lemma~\ref{lem: AngleBasicProps}).  Fix space-time adapted coordinates at $(p_0,t_0)$, and let $\mu_i$ $0\leq i \leq n$ be the eigenvalues of $\alpha_{\phi}$. By the tangent-normal estimates we have
\[
\sum_{i} \frac{|(\alpha_{\phi})_{\bar{t}i}|^2}{(\alpha_{\phi})_{\bar{t}t}} \leq \frac{C}{K}
\]
where $C$ is a uniform constant.  Fixing $\epsilon_0>0$, if $C/K$ is sufficiently small depending only on the bound for the spatial $C^2$ norm and $\epsilon_0$, then we have
\[
|\mu_0 - \alpha_{\bar{t}t}| < \epsilon_0, \qquad |\alpha_{\bar{i}i} - \mu_i| \leq \epsilon_0.
\]
It follows that
\[
\bigg|\sum_{i=0}^{n} \arctan(\mu_i) - \left(\sum_{i=1}^{n} \arctan((\alpha_{\phi})_{\bar{i}i}) + \arctan((\alpha_{\phi})_{\bar{t}t})\right)\bigg| \leq (n+1)\epsilon_0,
\]
since the derivative of arctan is bounded by $1$.  On the boundary we have $\phi = \underline{\phi}$ and $\underline{\phi}$ is a subsolution, satisfying
\[
\sum_{i=0}^{n} \arctan(\underline{\mu}_i) \geq h+\frac{\eta_1}{2}
\]
where $\underline{\mu}_i$ are the eigenvalues of $\underline{\alpha}:=\pi_{X}^{*}\alpha+ \DDb \underline{\phi}$.  By the Schur-Horn theorem \cite{H1} and the convexity of the super-level sets $\{F \geq \sigma\}$ for $\sigma \geq (n-1)\frac{\pi}{2}$ we have
\[
\sum_{i=1}^{n} \arctan(\underline{\alpha}_{\bar{i}i}) \geq \sum_{i=0}^{n} \arctan(\underline{\mu}_i)-\arctan(\underline{\alpha}_{\bar{t}t}).
\]
So, we have
\[
\begin{aligned}
h(x)+\frac{\eta_1}{2} &\leq \sum_{i=0}^{n} \arctan(\underline{\mu}_i) \leq  \sum_{i=1}^{n} \arctan(\underline{\alpha}_{\bar{i}i}) +\arctan(\underline{\alpha}_{\bar{t}t})\\
& \leq (n+1)\epsilon_0+ \sum_{i=0}^{n}\arctan(\mu_i) -\arctan((\alpha_{\phi})_{\bar{t}t}) + \arctan(\underline{\alpha}_{\bar{t}t})\\
&= (n+1)\epsilon_0 + h(x) -\arctan((\alpha_{\phi})_{\bar{t}t}) + \arctan(\underline{\alpha}_{\bar{t}t}).
\end{aligned}
\]
This implies
\[
\frac{\eta_1}{2} +\arctan((\alpha_{\phi})_{\bar{t}t})\leq (n+1)\epsilon_0 + \arctan(\underline{\alpha}_{\bar{t}t}).
\]
By Lemma~\ref{lem: subsolConstr} we have
\[
\underline{\alpha}_{\bar{t}t} \leq \frac{C_1}{\epsilon^2}
\]
for a uniform constant $C_1$.  Choose $\epsilon_0$ sufficiently small so that $(n+1)\epsilon_0 \leq \frac{\eta_1}{2}$, we conclude that $K\leq C_1$. Thus we conclude 
\begin{prop}
There exists a uniform constant $C$ so that
\[
\sup_{\del\mathcal{X}_{\epsilon}} |\nabla_t\nabla_{\bar{t}} \phi| \leq \frac{C}{\epsilon^2}
\]
\end{prop}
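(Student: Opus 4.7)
The plan is to run the contradiction argument that was set up in the paragraphs preceding the proposition, using the Caffarelli--Nirenberg--Spruck eigenvalue lemma together with the boundary tangent--normal estimate already established in Proposition~\ref{prop: TNbdEst}, the convexity of the superlevel sets $\Gamma^{\sigma}$ given by Lemma~\ref{lem: AngleBasicProps}(7), and the Schur--Horn inequality for the subsolution $\underline{\phi}$.

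First, by Lemma~\ref{lem: AngleBasicProps}(4) applied on $\mathcal{X}_{\epsilon}$, the lower bound $(\alpha_{\phi})_{\bar t t} \geq -C(\eta_1)$ is automatic, so the only issue is the upper bound. Fix an arbitrary $(p_0, t_0) \in \del \mathcal{X}_{\epsilon}$ and choose space-time adapted coordinates there, so that $(\alpha_{\phi})|_{TX}$ is diagonal with entries $\alpha_{\bar i i}$ that are uniformly controlled thanks to Theorem~\ref{thm: C2est}. I would then argue by contradiction: suppose $(\alpha_{\phi})_{\bar t t} \geq K/\epsilon^2$ for some large constant $K$ to be fixed. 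Apply Lemma~\ref{lem: CNS} to the Hermitian matrix $\hat{\omega}^{-1}\alpha_{\phi}$ at $(p_0,t_0)$ with $a = (\alpha_{\phi})_{\bar t t}$, $b_i = (\alpha_{\phi})_{\bar t i}$, and $d_i = (\alpha_{\phi})_{\bar i i}$. The key input is Proposition~\ref{prop: TNbdEst}, which gives $|b_i|^2 \leq C/\epsilon^{2}$ for a uniform $C$; combined with $a \geq K/\epsilon^2$, this yields
\[
\sum_{i=1}^{n} \frac{|b_i|^2}{a} \leq \frac{C}{K},
\]
which can be made as small as we please by choosing $K$ large.

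Next, for any preassigned $\epsilon_0 > 0$, the Lemma provides $\delta(\epsilon_0)$ depending only on the uniform $C^2$ bound on the spatial directions such that the eigenvalues $\mu_0, \ldots, \mu_n$ of $\hat{\omega}^{-1}\alpha_{\phi}$ at $(p_0,t_0)$ satisfy $|\mu_i - (\alpha_{\phi})_{\bar i i}| \leq \epsilon_0$ for $i \geq 1$ and $|\mu_0 - (\alpha_{\phi})_{\bar t t}|$ small. Summing arctangents and using that $\arctan$ is $1$-Lipschitz gives
\[
\bigg| F(\hat\omega^{-1}\alpha_{\phi}) - \arctan((\alpha_{\phi})_{\bar t t}) - \sum_{i=1}^{n}\arctan((\alpha_{\phi})_{\bar i i})\bigg| \leq (n+1)\epsilon_0.
\]
On the boundary $\phi = \underline{\phi}$, and the Schur--Horn theorem together with the convexity of $\Gamma^{\sigma}$ (Lemma~\ref{lem: AngleBasicProps}(7)) implies
\[
\sum_{i=1}^{n} \arctan(\underline{\alpha}_{\bar i i}) \geq F(\underline{\alpha}) - \arctan(\underline{\alpha}_{\bar t t}) \geq h + \tfrac{\eta_1}{2} - \arctan(\underline{\alpha}_{\bar t t}).
\]
Since $\phi = \underline\phi$ on the boundary, the spatial Hessians agree, so one can combine these with $F(\hat\omega^{-1}\alpha_{\phi}) = h$ at $(p_0,t_0)$ to deduce
\[
\arctan((\alpha_{\phi})_{\bar t t}) \leq \arctan(\underline{\alpha}_{\bar t t}) + (n+1)\epsilon_0 - \tfrac{\eta_1}{2}.
\]

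Fix $\epsilon_0 = \eta_1/(4(n+1))$ once and for all; this is a uniform choice. Then the inequality above reads $\arctan((\alpha_{\phi})_{\bar t t}) \leq \arctan(\underline{\alpha}_{\bar t t}) - \eta_1/4$. Since $\underline{\alpha}_{\bar t t} \leq C_1/\epsilon^2$ for a uniform $C_1$ (Lemma~\ref{lem: subsolConstr}(iv) / Corollary~\ref{cor: intSubSol}(viii)), and arctan is monotone and bounded by $\pi/2$, this forces $(\alpha_{\phi})_{\bar t t} \leq C_2/\epsilon^2$ for a uniform $C_2$, completing the contradiction provided $K > C_2$. The main obstacle is ensuring the constants in the CNS lemma are independent of $\epsilon$; this is guaranteed because the $d_i = (\alpha_{\phi})_{\bar i i}$ are uniformly bounded by the spatial $C^2$ estimate of Theorem~\ref{thm: C2est}, so $\delta(\epsilon_0)$ is uniform. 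Running the identical argument at the other boundary component $\{|t| = e^{-1}\epsilon\}$ with the subsolution $\hat\phi_1$ finishes the proof.
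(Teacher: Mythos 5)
Your proof is correct and follows the same line of reasoning as the paper: a contradiction argument using the CNS eigenvalue perturbation lemma together with the tangent-normal boundary estimates, then the Schur--Horn theorem and convexity of $\Gamma^{\sigma}$ to compare with the subsolution's spatial diagonal, which agrees with that of $\phi$ on the boundary. The only cosmetic difference is that you phrase the conclusion as an explicit contradiction by fixing $\epsilon_0$ up front, whereas the paper deduces $K \leq C_1$ after choosing $\epsilon_0$ small; both land on the same uniform bound.
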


Combining this with Theorem~\ref{thm: C2est}, and Theorem~\ref{thm: gradEst} we conclude
\begin{thm}\label{thm: estSummary}
Suppose $\phi(x,t)$ is a smooth $S^1$ invariant function on $(\mathcal{X}_{\epsilon}, \hat{\omega})$ with $\alpha_{\phi} := \alpha +\DDb\phi(x,t)$ solving the Lagrangian phase equation
\[
F(\hat{\omega}^{-1}\alpha_{\phi}) = h(x,|t|).
\]
with $\phi(x,\epsilon) = \phi_0$ and $\phi(x,\epsilon e^{-1}) = \phi_1$, and $\phi_i \in \mathcal{H}$.  Suppose in addition that $\phi_0, \phi_1, h$ satisfy the structural conditions $(C1), (C2)$ with constants $\eta_1, \eta_2$.  The following estimates hold
\[
\osc_{\mathcal{X}_{\epsilon}}\phi + |\nabla^{X}\phi|_{\hat{\omega}} + |\nabla^{X} \overline{\nabla^{X}}\phi| \leq C
\]
\[
|\nabla_{t} \phi|_{\hat{\omega}} \leq C\left(1+\sup_{\mathcal{X}_{\epsilon}}|\nabla_t h|_{\hat{\omega}} + \frac{1}{\epsilon}\right)
\]
\[
|\nabla_{\bar{t}}\nabla^{X}\phi| \leq C\left(1+\frac{1}{\epsilon} + \sqrt{\sup_{\mathcal{X}_{\epsilon}}|\nabla_t\nabla_{\bar{t}} h|_{\hat{g}}} + \sqrt{\sup_{\mathcal{X}_{\epsilon}}|\nabla_t h|^2_{\hat{g}}}\right)
\]
\[
|\nabla_{t}\nabla_{\bar{t}}\phi| \leq C\left(1+\frac{1}{\epsilon^2} +\sup_{\mathcal{X}_{\epsilon}}|\nabla_t\nabla_{\bar{t}} h|_{\hat{g}}+ \sup_{\mathcal{X}_{\epsilon}}|\nabla_t h|^2_{\hat{g}}\right)
\]
where $C$ is a uniform constant depending only on $\phi_0, \phi_1, \nabla^{X}\overline{\nabla^{X}} h, (X,\omega)$ and the structural constants $\eta_1, \eta_2$.
\end{thm}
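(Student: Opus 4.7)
The statement is an assembly of the estimates established in Sections~\ref{sec: C1est}, \ref{sec: C2est}, and \ref{sec: bndryEst}, so my plan is to dispatch each of the three lines in turn, relying on the preceding results.

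For the first line: the oscillation bound is Proposition~\ref{prop: c0bound}, where $\underline{\phi}$ from Lemma~\ref{lem: subsolConstr} serves as the lower barrier and subharmonicity of $\phi + C|t|^{2}$ (via Lemma~\ref{lem: AngleBasicProps}(3)) gives the upper bound. The spatial gradient bound $|\nabla^{X}\phi|_{\hat{\omega}} \leq C$ is Proposition~\ref{prop: spaceGradEst} (alternatively, it could be recovered a posteriori from the spatial $C^{2}$ bound by fiberwise elliptic theory along $\mathcal{X}_{\epsilon} \to \mathcal{A}_{\epsilon}$). The uniform spatial $C^{2}$ bound $|\nabla^{X}\overline{\nabla^{X}}\phi|_{\hat{\omega}} \leq C$ is the heart of Theorem~\ref{thm: C2est}; it is proved by applying the maximum principle to $\log(\tilde{\lambda}_{1}) + H(\phi)$, where $\tilde{\lambda}_{1}$ is the perturbed largest spatial eigenvalue and $H$ is the concave auxiliary function. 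The decisive step there is to split the eigenvalues of $\hat{\omega}^{-1}\alpha_{\phi}$ into a ``big'' set $B = \{\mu_{i} \geq \delta_{0}\lambda_{1}\}$ and a ``small'' set $S$, then use Cauchy--Schwarz separately on each block to balance the Hessian-of-equation terms against the quadratic-gradient terms, with the subsolution feeding in through Lemma~\ref{lem: SzCsub}.

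For the second line: both $|\nabla_{t}\phi|_{\hat{\omega}} \leq C/\epsilon$ and $|\nabla_{\bar{t}}\nabla^{X}\phi|_{\hat{\omega}} \leq C/\epsilon$ reduce by standard maximum-principle arguments (see Theorem~\ref{thm: gradEst} and Theorem~\ref{thm: C2est}) to boundary estimates. The boundary $|\nabla_{t}\phi|$ bound follows by comparing $\phi$ against the subsolutions $\hat{\phi}_{i}$ from below and explicit logarithmic barriers $\psi_{i} = \phi_{i} \pm A_{i}(|t|^{2} - |t|_{i}^{2}) \mp C_{i}\log(|t|^{2}/|t|_{i}^{2})$ from above. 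The boundary $|\nabla_{t}\overline{\nabla^{X}}\phi|$ bound is Proposition~\ref{prop: TNbdEst}: I run the maximum principle on the Guan--Trudinger test function $Q_{\pm} = Av \pm \partial_{x_{\ell}}(\phi-\hat{\phi}) + B|w'|^{2} - (D/2)\log(|t|^{2}/\epsilon^{2})$ inside a small coordinate box at a boundary point, where $v$ is the barrier of Lemma~\ref{lem: vsubSol} satisfying $F^{i\bar{j}}\nabla_{i}\nabla_{\bar{j}}v \leq -\kappa_{0}/2$, and use $\partial_{r} Q_{\pm}(p_{0},t_{0}) \leq 0$ to extract the normal derivative estimate.

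The main obstacle is the boundary normal-normal estimate $|\nabla_{t}\nabla_{\bar{t}}\phi|_{\hat{\omega}} \leq C/\epsilon^{2}$ at $\partial\mathcal{X}_{\epsilon}$. My approach is a contradiction argument driven by Lemma~\ref{lem: CNS} (Caffarelli--Nirenberg--Spruck). Suppose $(\alpha_{\phi})_{\bar{t}t}(p_{0},t_{0}) \geq K/\epsilon^{2}$ for some $K$ to be chosen. In space-time adapted coordinates the already-established tangent-normal bound gives $\sum_{i}|(\alpha_{\phi})_{\bar{t}i}|^{2}/(\alpha_{\phi})_{\bar{t}t} = O(1/K)$, so Lemma~\ref{lem: CNS} says the eigenvalues of $\alpha_{\phi}$ are $\epsilon_{0}$-close to $((\alpha_{\phi})_{\bar{t}t}, (\alpha_{\phi})_{\bar{1}1}, \ldots, (\alpha_{\phi})_{\bar{n}n})$. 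Since on the boundary $\phi = \underline{\phi}$, applying the equation $F(\alpha_{\phi}) = h$, the subsolution inequality $F(\underline{\alpha}) \geq h + \eta_{1}/2$, the Schur--Horn theorem, and convexity of $\{F \geq \sigma\}$, one obtains $\arctan((\alpha_{\phi})_{\bar{t}t}) \leq \arctan(\underline{\alpha}_{\bar{t}t}) + (n+1)\epsilon_{0} - \eta_{1}/2$. The explicit subsolution of Lemma~\ref{lem: subsolConstr} has $\underline{\alpha}_{\bar{t}t} \leq C_{1}/\epsilon^{2}$, and choosing $\epsilon_{0} \leq \eta_{1}/(2(n+1))$ forces $K \leq C_{1}$, which is the desired contradiction. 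Feeding this boundary bound back into Theorem~\ref{thm: C2est} then yields the interior estimate $|\nabla_{t}\nabla_{\bar{t}}\phi|_{\hat{\omega}} \leq C/\epsilon^{2}$ and completes the proof. The subtle point throughout is keeping the constants uniform in $\epsilon$, which is why the subsolution must be constructed so that precisely these $\epsilon^{-1}$ and $\epsilon^{-2}$ scalings are built in from the start.
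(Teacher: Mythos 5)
Your proposal is correct and matches the paper's own proof: Theorem~\ref{thm: estSummary} is an assembly of Theorem~\ref{thm: gradEst}, Theorem~\ref{thm: C2est}, and the boundary arguments in Section~\ref{sec: bndryEst}, with the boundary normal-normal estimate supplied by the Caffarelli--Nirenberg--Spruck lemma exactly as you describe, and the $1/\epsilon$ and $1/\epsilon^{2}$ scalings entering through the boundary bound on $(\alpha_{\phi})_{\bar{t}t}$ fed back into the interior estimates of Theorem~\ref{thm: C2est}.
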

First we apply these estimates to solve the Dirichlet problem for the Lagrangian phase operator on $\mathcal{X}_{\epsilon}$.

\begin{thm}\label{thm: DirProbXe}
Suppose $\phi_0, \phi_1 \in \mathcal{H}$, and $h(x,|t|) : \mathcal{X}_{\epsilon} \rightarrow \mathbb{R}$ is a smooth function satisfying structural constraints $(C1), (C2)$.  Then, on 
$\mathcal{X}_{\epsilon}$, there exists a smooth, $S^1$ invariant solution of the equation
\[
F(\hat{\omega}^{-1}\alpha_{\phi}) = h(x,|t|).
\]
with boundary values $\phi_0, \phi_1$.  In particular, for any $\phi_0, \phi_1 \in \mathcal{H}$ there is a unique, smooth $\epsilon$ geodesic joining $\phi_0, \phi_1$.
\end{thm}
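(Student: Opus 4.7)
For fixed $\epsilon>0$ the metric $\hat{\omega}$ is non-degenerate on $\mathcal{X}_{\epsilon}$, so the natural strategy is the method of continuity combined with the a priori estimates compiled in Theorem~\ref{thm: estSummary}. Let $\underline{\phi}$ be the subsolution produced by Lemma~\ref{lem: subsolConstr}, so $\underline{\phi}|_{\partial\mathcal{X}_{\epsilon}}$ agrees with $(\phi_0,\phi_1)$ and $F(\hat{\omega}^{-1}\alpha_{\underline{\phi}})\geq h+\tfrac{\eta_1}{2}$. For $s\in[0,1]$ consider the family
\[
F(\hat{\omega}^{-1}\alpha_{\phi_s}) \;=\; h_s \;:=\; s\,h+(1-s)\,F(\hat{\omega}^{-1}\alpha_{\underline{\phi}}), \qquad \phi_s|_{\partial\mathcal{X}_{\epsilon}}=\underline{\phi}|_{\partial\mathcal{X}_{\epsilon}}.
\]
At $s=0$ the function $\underline{\phi}$ is a solution, and at $s=1$ we recover the desired Dirichlet problem. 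Because $F(\underline{\phi})-h_s=s(F(\underline{\phi})-h)\geq \tfrac{s\eta_1}{2}$, the function $\underline{\phi}$ is a subsolution for every $s\in(0,1]$, and the structural conditions (C1), (C2) hold uniformly for $h_s$ with constants independent of $s$ away from $s=0$; near $s=0$ we simply perturb the initial seed to $\underline{\phi}+\delta(|t|^2-\epsilon^2)$ to have a strict subsolution at $s=0$ as well. Let $S\subset[0,1]$ be the set of $s$ for which a smooth solution $\phi_s$ exists with $\alpha_{\phi_s}\in\Gamma$; $S$ is non-empty.

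Openness of $S$ follows from the implicit function theorem. The linearization at a solution $\phi_s$ is $L\psi=F^{i\bar j}(\hat{\omega}^{-1}\alpha_{\phi_s})\nabla_i\nabla_{\bar j}\psi$, an elliptic operator with smooth coefficients, strict ellipticity inherited from the hypercritical phase, and no zeroth order term; together with homogeneous Dirichlet data it defines an isomorphism $C^{2,\alpha}_0(\mathcal{X}_{\epsilon})\to C^{0,\alpha}(\mathcal{X}_{\epsilon})$ by standard Schauder theory on a compact manifold with boundary. The IFT applied in these Banach spaces yields a smooth family $\phi_s$ nearby.

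Closedness of $S$ is where the analytic work pays off. For any sequence $s_k\to s_\infty$ in $S$, Theorem~\ref{thm: estSummary} (combined with Propositions~\ref{prop: spaceGradEst}, \ref{prop: TNbdEst} and the boundary $\nabla_t\nabla_{\bar t}$ estimate obtained from Lemma~\ref{lem: CNS}) gives $C^2$ bounds that are uniform in $s$ and depend on $\epsilon$ but not on $k$. Lemma~\ref{lem: AngleBasicProps}(8) says the reformulated operator $-e^{-AF}$ is concave on the super-level set $\Gamma^{\sigma+\eta_1}$ containing our solutions; hence the interior Evans–Krylov estimate and Caffarelli's boundary $C^{2,\alpha}$ estimate upgrade the $C^2$ bound to a uniform $C^{2,\alpha}$ bound, and linear Schauder theory bootstraps this to uniform $C^{k,\alpha}$ bounds for every $k$. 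Arzelà–Ascoli then produces a smooth limit $\phi_{s_\infty}$ which solves the equation at $s_\infty$; in particular $s_\infty\in S$. Thus $S=[0,1]$ and a smooth solution exists at $s=1$. Uniqueness is immediate from the comparison principle (or, infinitesimally, from the invertibility of $L$), so the $S^1$-symmetry of $\hat{\omega}$, $\alpha$, $h$, and the boundary data is automatically inherited by $\phi$.

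The most delicate ingredient to get right is the last step of the a priori chain: applying Evans–Krylov and its boundary counterpart requires uniform concavity of $-e^{-AF}$ along the continuity path, together with uniform strict ellipticity; these rely on keeping $h_s$ bounded away from $(n-1)\tfrac{\pi}{2}$ and $(n+1)\tfrac{\pi}{2}$ and on maintaining uniform control of the constants $R,\kappa_0$ from Lemma~\ref{lem: SzCsub}, both of which we have arranged by choice of the continuity path. Every other step is standard once the $C^2$ estimates of Sections~\ref{sec: C1est}-\ref{sec: bndryEst} are in hand; the deduction of the existence of smooth $\epsilon$-geodesics joining $\phi_0,\phi_1$ then follows by specializing to $h\equiv\hat{\theta}$, which trivially satisfies (C1)-(C2) when $\phi_0,\phi_1\in\mathcal{H}$.
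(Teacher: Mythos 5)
Your proof follows exactly the paper's method of continuity along the path $h_s = sh + (1-s)F(\hat\omega^{-1}\alpha_{\underline{\phi}})$, using the implicit function theorem for openness and Theorem~\ref{thm: estSummary} together with Evans--Krylov for closedness, so the approach coincides with the paper's. One minor remark: your perturbation $\underline{\phi}+\delta(|t|^2-\epsilon^2)$ near $s=0$ alters the boundary value on $\{|t|=e^{-1}\epsilon\}$ and is unnecessary---since $\underline{\phi}$ is an exact solution at $s=0$ with the prescribed boundary data, $0\in S$ is immediate, and the degeneration of the subsolution gap $s\eta_1/2$ as $s\to 0^+$ is harmless because closedness at $s_\infty=0$ is vacuous.
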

\begin{proof}
The corollary follows easily from Theorem~\ref{thm: estSummary}.  Let $\underline{h}(x) = F(\alpha_{\underline{\phi}})$ where $\underline{\phi}$ is the function constructed in Lemma~\ref{lem: subsolConstr} and consider the equation 
\[
F(\hat{\omega}^{-1}\alpha_{\phi_{u}})= (1-u)\underline{h}+u h
\]
where $u \in [0,1]$, and $\phi_u$ has boundary values $\phi_0, \phi_1$.  Note that structural constraints $(C1), (C2)$ hold uniform for the functions $(1-u)\underline{h}+u h$.   Let $I$ be the set of $u\in [0,1]$ for which this equation admits a solution. By the implicit function theorem $I$ is open.  Suppose $I \ni u_i \rightarrow u_{*}$  Combining the estimates in Theorem~\ref{thm: estSummary} with the Evans-Krylov theorem, and arguing as in~\cite{CPW} we conclude that $I$ is closed.
\end{proof}

Since we have obtained estimates that scale appropriately we can pass to the limit as $\epsilon \rightarrow 0$ to get weak solutions to the space-time lifted degenerate Lagrangian phase equation, see Definition~\ref{def: sldlp}.  Before explaining how this is done, let us explain how to make sense of the limiting equation weakly.  First note that if $\phi: \mathcal{X}\rightarrow \mathbb{R}$ is bounded and satisfies $\DDb\phi \geq - C\hat{\omega}$, then we can define
\[
(\pi_{X}^{*}\alpha + \DDb\phi)^{k}
\]
as a $(k,k)$ current for all $k$ using the Bedford-Taylor theory~\cite{BT}.  In particular, for such functions $\phi$
\[
(\pi_{X}^{*}\omega + \sqrt{-1}(\pi_{X}^{*}\alpha+\DDb\phi))^{n+1}
\]
defines a complex measure on $\mathcal{X}$ and hence the equation
\begin{equation}\label{eq: measureEquation}
{\rm Im}\left(e^{-\sqrt{-1}h(x,|t|)}(\pi_{X}^{*}\omega + \sqrt{-1}(\pi_{X}^{*}\alpha+\DDb\phi))^{n+1}\right)=0
\end{equation}
can be interpreted as an equality of measures.  With this in mind we have

\begin{thm}\label{thm: geoExistence}
Suppose $\phi_0, \phi_1 \in \mathcal{H}$, and $h(x,|t|) : \mathcal{X} \rightarrow \mathbb{R}$ is a smooth function satisfying structural constraints $(C1), (C2)$.  Then there exists an $S^1$ invariant function $\phi$ so that $\phi \in C^{1,\alpha}$, for all $\alpha \in (0,1)$, $\DDb\phi \in L^{\infty}(\mathcal{X}_{\epsilon},\hat{\omega})$, and $\phi$ solves
\begin{equation}\label{eq: genDegPhase}
{\rm Im}\left(e^{-\sqrt{-1}h(x,|t|)}(\omega+\sqrt{-1}(\alpha+\DDb\phi))^{n+1}\right) =0.
\end{equation}
pointwise a.e., and in the sense of pluripotential theory, with boundary values $\phi_0, \phi_1$. Equivalently, $\phi$ solves
\[
\widetilde{\Theta}_{\omega}(\hat{\omega}^{-1}\alpha_{\phi}) = h(x,|t|)
\]
in the sense of Harvey-Lawson's Dirichlet Duality.  In particular, for any $\phi_0, \phi_1 \in \mathcal{H}$ there is a unique weak geodesic joining $\phi_0, \phi_1$.
\end{thm}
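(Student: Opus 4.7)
The plan is to produce the weak geodesic as a limit of the smooth $\epsilon$-geodesics provided by Theorem~\ref{thm: DirProbXe}, using the scale-invariant estimates of Theorem~\ref{thm: estSummary} to guarantee uniform $C^{1,1}$ control in the limit. For each $\epsilon > 0$, set $h_\epsilon(x,t) := h(x, |t|/\epsilon)$ on $\mathcal{X}_\epsilon$, and use Theorem~\ref{thm: DirProbXe} to obtain a smooth, $S^1$-invariant solution $\phi_\epsilon$ of $F(\hat{\omega}^{-1}\alpha_{\phi_\epsilon}) = h_\epsilon$ on $(\mathcal{X}_\epsilon,\hat{\omega})$ with boundary data $\phi_0,\phi_1$. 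The rescaling $\tilde{\phi}_\epsilon(x,t) := \phi_\epsilon(x,\epsilon t)$ then lives on the fixed domain $\mathcal{X}$, is $S^1$-invariant with the correct boundary data, and solves the $\epsilon$-regularized equation on $(\mathcal{X},\hat{\omega}_\epsilon)$ with right-hand side $h$.

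I would next apply Theorem~\ref{thm: estSummary} to the $\phi_\epsilon$. By construction, the spatial derivatives of $h_\epsilon$ are bounded uniformly in $\epsilon$, while the temporal derivatives scale as $\epsilon^{-1}$ and $\epsilon^{-2}$, matching exactly the scale factors appearing in Theorem~\ref{thm: estSummary}. Transporting the resulting estimates through the change of variable $t\mapsto \epsilon t$, one obtains
\[
\|\tilde{\phi}_\epsilon\|_{L^\infty(\mathcal{X})} + \|\nabla \tilde{\phi}_\epsilon\|_{L^\infty(\mathcal{X},\hat{\omega})} + \|\DDb\tilde{\phi}_\epsilon\|_{L^\infty(\mathcal{X},\hat{\omega})} \le C
\]
independent of $\epsilon$. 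In particular $\{\tilde{\phi}_\epsilon\}$ is bounded in $W^{2,\infty}(\mathcal{X},\hat{\omega})$, and Arzelà--Ascoli extracts a subsequence $\tilde{\phi}_{\epsilon_k} \to \phi$ in $C^{1,\alpha}(\mathcal{X})$ for every $\alpha \in (0,1)$, with $\phi$ $S^1$-invariant, having the prescribed boundary values, $\DDb\phi \in L^\infty$, and $\DDb\tilde{\phi}_{\epsilon_k} \to \DDb\phi$ weakly-$*$ in $L^\infty$.

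To verify that $\phi$ solves~\eqref{eq: genDegPhase} as an equation of measures, I would rewrite the $\epsilon$-regularized equation on $\mathcal{X}$ in the form
\[
\mathrm{Im}\bigl(e^{-\sqrt{-1}h}\bigl(\pi_X^*\omega + \epsilon_k^2\sqrt{-1}dt\wedge d\bar{t} + \sqrt{-1}\alpha_{\tilde{\phi}_{\epsilon_k}}\bigr)^{n+1}\bigr)=0,
\]
then invoke Bedford--Taylor stability of Monge--Amp\`ere type products on locally bounded plurisubharmonic potentials, which applies after adding a large multiple of $\hat{\omega}$ to make the forms positive. The uniform bound on $\DDb\tilde{\phi}_{\epsilon_k}$ combined with $C^{1,\alpha}$ convergence yields weak convergence of the $(n+1,n+1)$ currents, while the $\epsilon_k^2$ correction drops out in the limit. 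Since $\DDb\phi\in L^\infty$ entails second-order differentiability of $\phi$ almost everywhere in the Alexandrov sense, the same identity also holds pointwise a.e. The Harvey--Lawson viscosity interpretation $\widetilde{\Theta}_\omega(\alpha_\phi)=h$ follows by the standard fact that uniform limits of classical solutions of uniformly elliptic equations that degenerate to a Dirichlet-dual degenerate operator are viscosity (dual) solutions of the limit, as exploited in the work of Rubinstein--Solomon~\cite{RuSol} and Jacob~\cite{JacPr}.

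Uniqueness, which also upgrades the subsequential convergence to convergence of the full family, follows from the Harvey--Lawson comparison principle applied to continuous sub- and supersolutions of the degenerate operator $\widetilde{\Theta}_\omega$; alternatively, one may argue from the strict convexity of $\mathcal{J}$ along smooth $\epsilon$-geodesics established in Proposition~\ref{prop: convexityOfFuncs}, together with uniqueness of the Dirichlet problem at positive $\epsilon$. The decisive obstacle in this plan is the production of the scale-invariant bounds themselves; once Theorem~\ref{thm: estSummary} is in hand, the extraction of the limit and the identification of the three notions of weak solution are, by comparison, routine.
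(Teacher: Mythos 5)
Your proposal follows essentially the same path as the paper's proof: rescale $h$ to obtain $h_\epsilon$ on $\mathcal{X}_\epsilon$, solve the smooth Dirichlet problem via Theorem~\ref{thm: DirProbXe}, transport back to $\mathcal{X}$, apply the uniform estimates of Theorem~\ref{thm: estSummary}, extract a $C^{1,\alpha}$ limit, and pass to the limit in the measure equation via Bedford--Taylor stability. The one place where you gloss over a genuine subtlety is the identification of $\phi$ as a Harvey--Lawson viscosity solution: you invoke this as a ``standard fact,'' but the paper explicitly flags that the background metric $\hat{\omega}_\epsilon$ is neither constant nor $\epsilon$-independent, so the naive version of \cite[Prop.~2.9]{CafCab} does not directly apply; instead one must perturb the test function by $\eta|z|^2$, track where the touching point drifts as $\epsilon,\eta\to 0$, and use the quantitative Lipschitz continuity of $\hat{\omega}_\epsilon^{-1}$ together with the upper semicontinuity of $\widetilde{\Theta}$ from \cite{RuSol}. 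You should either carry out that argument or at minimum acknowledge that the variable-coefficient metric prevents a direct appeal to the constant-coefficient stability theorem for viscosity solutions.
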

\begin{proof}
Given $h(x,|t|) : \mathcal{X} \rightarrow \mathbb{R}$, we consider
\[
h_{\epsilon}(x,t) = h\left(x,\frac{t}{\epsilon}\right) : \mathcal{X}_{\epsilon} \rightarrow \mathbb{R}.
\]
Clearly we have
\[
\nabla^{X}\nabla_{\bar{t}} h_{\epsilon} = \frac{1}{\epsilon} \nabla^{X}\nabla_{\bar{t}} h \qquad \nabla_{t}\nabla_{\bar{t}} h_{\epsilon}= \frac{1}{\epsilon^2} \nabla_{t}\nabla_{\bar{t}} h.
\]
By Theorem~\ref{thm: DirProbXe} we have functions $\psi_{\epsilon}:\mathcal{X}_{\epsilon} \rightarrow \mathbb{R}$ solving
\[
F(\hat{\omega}^{-1}\alpha_{\psi_{\epsilon}})= h_{\epsilon}.
\]
Let $\phi_{\epsilon}  = \psi_{\epsilon}(x,\epsilon t) : \mathcal{X}\rightarrow \mathbb{R}$. Then $\phi_{\epsilon}$ solve the Lagrangian phase equation $F((\hat{\omega}_{\epsilon})^{-1}\alpha_{\phi_{\epsilon}}) = h$, 
or equivalently
\[
{\rm Im}\left(e^{-\sqrt{-1}h}\left(\pi_{X}^{*}\omega +\epsilon^{1}\sqrt{-1}dt\wedge d\bar{t} +\sqrt{-1}\left(\pi_{X}^{*}\alpha + \DDb\phi\right)\right)^{n+1}\right)=0
\]
on $\mathcal{X}$. By Theorem~\ref{thm: estSummary}, $\phi_{\epsilon}$ satisfies 
\[
\|\phi_{\epsilon} \|_{L^{\infty}} +\sup_{\mathcal{X}} |\nabla \phi_{\epsilon}|_{\hat{\omega}} + \| \DDb\phi_{\epsilon}\|_{L^{\infty}(\mathcal{X}, \hat{\omega})} \leq C
\]
for a uniform constant $C$ independent of $\epsilon$.  We can therefore take a limit as $\epsilon \rightarrow 0$ and get $\phi_{\epsilon} \rightarrow \phi$ where the convergence is uniform in $C^{1,\alpha}$.  Clearly
\[
C\hat{\omega} \geq \pi_{X}^{*}\alpha +\DDb\phi \geq - C\hat{\omega}
\]
and so by the continuity of the Monge-Amp\`ere operator along uniformly convergent sequences \cite{BT} we conclude that $\phi$ is a weak solution of~\eqref{eq: genDegPhase}.  Next we argue that $\phi$ is also a solution of the space-time lifted degenerate Lagrangian phase equation in the sense of Harvey-Lawson.  We refer the reader to the work of Rubinstein-Solomon \cite{RuSol} for the construction of the degenerate Lagrangian phase operator.  The two key properties we need are
\[
F((\hat{\omega}_{\epsilon})^{-1}\alpha_{\psi}) \rightarrow \widetilde{\Theta}(\alpha_{\psi}) \quad \text{ as } \epsilon \rightarrow 0
\]
by \cite[Theorem A.3]{RuSol}, and that $\widetilde{\Theta}(\cdot)$ is upper-semi continuous on the space of hermitian matrices \cite{RuSol}.  By the Harvey-Lawson theory, and \cite[Theorem 5.1]{RuSol} we need to show that if $u$ is a $C^{2}$ function defined on a ball $B\subset \mathcal{X}$, with $u \geq \phi$, and $u(p) = \phi(p)$, then
\begin{equation}\label{eq: TouchAbove}
\widetilde{\Theta}(\alpha + \DDb u)(p) \geq h(p)
\end{equation}
and similarly, that if $u$ touches $\phi$ from below, then 
\begin{equation}\label{eq: TouchBelow}
\widetilde{\Theta}\left(-\left(\alpha + \DDb u\right)\right)(p) \geq -h(p)
\end{equation}
Our proof of this is based an elementary result on the viscosity theory \cite[Proposition 2.9]{CafCab}, with the added complication that the background metric used to construct the elliptic operator is not constant. 

Everything is local, so we may assume that we're working in $B_1 \subset \mathbb{C}^{n}$, and that $p=0$.  Let $B_{r}$ denote a ball of radius $r$ centered at $0$. Suppose $u\geq \phi$ on $B_1$.  Fix $\delta>0$.  Since $\phi_\epsilon \rightarrow \phi$ uniformly on $B_1$, for any $\eta>0$ we can choose $\epsilon < \epsilon_0(\eta,\delta)$ sufficiently small so that
\[
u_{\eta}:= u+\eta |z|^2  \geq \phi_\epsilon \quad \text{ on } \del B_{\delta}.
\]
Since $u(p) = \phi(p)$, it follows that for $\epsilon$ sufficiently small depending on $\eta, \delta$, $u_{\eta}-\phi_\epsilon$ has an interior minimum at some point $p_{(\epsilon,\eta)} \in B_{\delta}(p)$.  For now, let us suppress the dependence on $\eta$, and write $p_{(\epsilon,\eta)}=  p_{\epsilon}$.  At this point we have
\begin{equation}\label{eq: uphiComp}
\alpha_{u}(p_\epsilon) + \eta \sum_{i=0}^{n}\sqrt{-1}dz_i \wedge d\bar{z}_i \geq \alpha_{\phi_{\epsilon}}(p_\epsilon) 
\end{equation}
Up to taking a subsequence we can assume that $p_{\epsilon} \rightarrow p_* \in \overline{B_{\delta}(p)} \subset B_{2\delta}(p)$ as $\epsilon \rightarrow 0$.  For $\epsilon$ sufficiently small we have
\[
\alpha_{u}(p_*)+ 2\eta \sum_{i=0}^{n}\sqrt{-1}dz_i \wedge d\bar{z}_i \geq \alpha_{u}(p_\epsilon)  + \eta \sum_{i=0}^{n}\sqrt{-1}dz_i \wedge d\bar{z}_i
\]
as $(1,1)$ forms on $\mathbb{C}^{n}$.  Now, since $\hat{\omega}_{\epsilon} = \pi_{X}^{*}\omega+ \epsilon^{2}\sqrt{-1}dt\wedge d\bar{t}$ is a product, there is a constant $C$ independent of $\epsilon$ so that
\[
-C |x-y| \leq \hat{\omega}_{\epsilon}(x)^{-1} - \hat{\omega}_{\epsilon}(y)^{-1} \leq C|x-y|, \qquad \hat{\omega}^{-1}_{\epsilon}(x) > C^{-1}\sum_{i=0}^{n} \sqrt{-1}dz_i \wedge d\bar{z}_i
\]
for any points $x,y \in B_1 \subset \mathbb{C}^{n+1}$, where $\hat{\omega}_{\epsilon}$ is regarded as a K\"ahler metric on $\mathbb{C}^{n+1}$.  Thus, for $\epsilon\ll \eta$ sufficiently small we have
\[
\begin{aligned}
& \hat{\omega}_{\epsilon}(p_*)^{-1}\left(\alpha_{u}(p_*)+ \left(4\eta \sum_{i=0}^{n}\sqrt{-1}dz_i \wedge d\bar{z}_i\right) \right)\\
&\geq  \hat{\omega}_{\epsilon}(p_{\epsilon})^{-1}\left(\alpha_{u}(p_*)+ 2\eta \sum_{i=0}^{n}\sqrt{-1}dz_i \wedge d\bar{z}_i \right)\\
 &\geq  \hat{\omega}_{\epsilon}(p_{\epsilon})^{-1}\left( \alpha_{u}(p_\epsilon)  + \eta \sum_{i=0}^{n}\sqrt{-1}dz_i \wedge d\bar{z}_i\right)
 \end{aligned}
 \]
Combining this inequality with~\eqref{eq: uphiComp} and applying the elliptic operator $F$ gives
\[
F\left((\hat{\omega}_{\epsilon})^{-1}\left(\alpha_{u}(p_*) + 4\eta \sum_{i=0}^{n}\sqrt{-1}dz_i \wedge d\bar{z}_i\right)\right) \geq F(\hat{\omega}_{\epsilon}^{-1}\alpha_{\phi_{\epsilon}})(p_{\epsilon}) =h(p_{\epsilon}),
\]
for all $\epsilon$ sufficiently small.  We now take a limit as $\epsilon \rightarrow 0$ to get
\begin{equation}\label{eq: ineqAteta}
\widetilde{\Theta}\left(\left(\alpha_{u}(p_*) + 4\eta \sum_{i=0}^{n}\sqrt{-1}dz_i \wedge d\bar{z}_i\right)\right) \geq h(p_{*})
\end{equation}
We now reinstate the dependence on $\eta$, and write $p_{*}= p_{\eta}$.  Since~\eqref{eq: ineqAteta} holds for all $\eta>0$, we can take a limit as $\eta \rightarrow 0$.  Up to taking a subsequence we can assume $p_{\eta} \rightarrow p_{\infty} \in \overline{B_{\delta}(p)}$.  By the upper-semi continuity of $\widetilde{\Theta}$ we get
\[
\widetilde{\Theta}\left(\alpha_{u}(p_\infty)\right) \geq h(p_{\infty}).
\]
This holds for all $\delta >0$, and so we may finally take a limit as $\delta \rightarrow 0$, applying the upper semi-continuity again to conclude
\[
\widetilde{\Theta}\left(\alpha_{u}(p)\right) \geq h(p).
\]
The same argument works to prove~\eqref{eq: TouchBelow}.
\end{proof}

The next corollary is essential for infinite dimensional GIT.

\begin{cor}
Let $\phi_0, \phi_1 \in \mathcal{H}$, and let $\phi(x,s)$ be a weak geodesic with $\phi(x,0) = \phi_0$, $\phi(x,1) = \phi_1$.  Then the functional $CY_{\mathbb{C}}$ is well-defined along the curve $\phi(x,s)$.  Furthermore $\mathcal{C}$ is affine, $\mathcal{J}$ is convex, and ${\rm Re}(Z), {\rm Im}(Z)$ are concave.
\end{cor}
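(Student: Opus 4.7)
The plan is to exploit the fact that the weak geodesic $\phi$ produced by Theorem~\ref{thm: geoExistence} arises as a $C^{1,\alpha}$-limit of smooth $\epsilon$-geodesics $\phi_{\epsilon}$ with uniform bounds $\|\phi_{\epsilon}\|_{C^{1,\alpha}(\mathcal{X})}+\|\DDb\phi_{\epsilon}\|_{L^{\infty}(\mathcal{X},\hat{\omega})}\leq C$, and then to pass to the limit in the convexity, affineness, and concavity statements established for smooth $\epsilon$-geodesics in Proposition~\ref{prop: convexityOfFuncs} and Corollary~\ref{cor: Zconcave}.

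First, I would check that $CY_{\mathbb{C}}$ is well-defined along $\phi$. Using the closed form of Proposition~\ref{prop: AYwellDefn} (after shifting so that a chosen base point lies in $\mathcal{H}$), each value $CY_{\mathbb{C}}(\phi(\cdot,s))$ is a finite sum of integrals of the continuous function $\phi(\cdot,s)$ against wedge products of $\omega+\sqrt{-1}\alpha_{\phi(\cdot,s)}$ and $\omega+\sqrt{-1}\alpha$. Since $\phi(\cdot,s)\in C^{1,\alpha}(X)$ and $\alpha+\ddb\phi(\cdot,s)\in L^{\infty}(X,\omega)$ by Theorem~\ref{thm: geoExistence}, the Bedford--Taylor theory ensures that each summand is a well-defined complex number. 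In particular $\mathcal{J}$, $\mathcal{C}$, ${\rm Re}(Z)$ and ${\rm Im}(Z)$ are finite along the weak geodesic.

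Second, for the convexity/concavity properties I would argue by approximation. By Proposition~\ref{prop: convexityOfFuncs} the map $s\mapsto\mathcal{J}(\phi_{\epsilon}(s))$ is convex and $s\mapsto\mathcal{C}(\phi_{\epsilon}(s))$ is affine, while by Corollary~\ref{cor: Zconcave} the maps $s\mapsto{\rm Re}(Z(\phi_{\epsilon}(s)))$ and $s\mapsto{\rm Im}(Z(\phi_{\epsilon}(s)))$ are concave. Because $\phi_{\epsilon}\to\phi$ uniformly in $C^{1,\alpha}$ with uniform $L^{\infty}$ control on $\DDb\phi_{\epsilon}$, the continuity of the complex Monge--Amp\`ere operator along such sequences~\cite{BT} yields the pointwise convergence
\[
\mathcal{J}(\phi_{\epsilon}(s))\to \mathcal{J}(\phi(s)),\qquad \mathcal{C}(\phi_{\epsilon}(s))\to \mathcal{C}(\phi(s)),\qquad Z(\phi_{\epsilon}(s))\to Z(\phi(s))
\]
for each $s\in[0,1]$. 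Pointwise limits of convex, affine, or concave functions of a single real variable retain the corresponding property, which gives the desired statements along $\phi$.

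The one technical point that will require some care — and that I expect to be the main (mild) obstacle — is justifying the passage to the limit in the Bedford--Taylor pairings defining $CY_{\mathbb{C}}(\phi_{\epsilon}(s))$. An alternative, perhaps cleaner, formulation is to observe that the second-derivative identities~\eqref{eq: secDerJ}--\eqref{eq: secDerC} pass to the limit as distributional identities on the annulus $\mathcal{A}$: the $(n+1,n+1)$-currents $\bigl(\pi_{X}^{*}\omega+\sqrt{-1}(\pi_{X}^{*}\alpha+\DDb\phi_{\epsilon})\bigr)^{n+1}$ converge weakly on $\mathcal{X}$ to the corresponding current for $\phi$, and the sign of their real/imaginary parts is preserved under weak convergence. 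This gives the distributional inequalities
\[
\sqrt{-1}\del_{t}\del_{\bar{t}}\mathcal{J}(\phi(\cdot,s))\geq 0,\qquad \sqrt{-1}\del_{t}\del_{\bar{t}}\mathcal{C}(\phi(\cdot,s))=0,
\]
\[
\sqrt{-1}\del_{t}\del_{\bar{t}}{\rm Re}(Z(\phi(\cdot,s)))\leq 0,\qquad \sqrt{-1}\del_{t}\del_{\bar{t}}{\rm Im}(Z(\phi(\cdot,s)))\leq 0,
\]
on $\mathcal{A}$, which — combined with the $S^{1}$ invariance and the continuity of the functionals along $\phi$ established in the previous step — are equivalent to the claimed convexity, affineness, and concavity with respect to the geodesic parameter $s=-\log|t|$.
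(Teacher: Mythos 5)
Your proposal follows essentially the same route as the paper: well-definedness of $CY_{\mathbb{C}}$ via the Bedford--Taylor theory together with the uniform $L^{\infty}$ bounds on $\phi$ and $\ddb\phi$ from Theorem~\ref{thm: geoExistence}, then approximation of the weak geodesic by smooth $\epsilon$-geodesics, continuity of the pairings under that convergence (again Bedford--Taylor), and passage to the limit of the convexity/affineness/concavity statements of Proposition~\ref{prop: convexityOfFuncs} and Corollary~\ref{cor: Zconcave}. Your remark about the alternative distributional-second-derivative formulation is a correct and slightly more detailed rephrasing, but it does not change the underlying argument.
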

\begin{proof}
That $CY_{\mathbb{C}}$ is well defined follows from the Bedford-Taylor theory \cite{BT}, together with the bounds  $\|\phi(x,s)\|_{L^{\infty}(X)} \leq C$, and $-C\omega \leq \ddb\phi$, as discussed above.  Next, let $\phi_{\epsilon}(x,s)$ be $\epsilon$-geodesics joining $\phi_0, \phi_1$.  Since $\phi_{\epsilon} \rightarrow \phi$ the Bedford-Taylor theory \cite{BT} implies that
\[
CY_{\mathbb{C}}(\phi_{\epsilon}) \rightarrow CY_{\mathbb{C}}(\phi)
\]
as $\epsilon \rightarrow 0$.  The properties of $\mathcal{C}, \mathcal{J}, {\rm Re}(Z), {\rm Im}(Z)$ along $\phi(x,s)$ follow from the corresponding properties along $\phi_{\epsilon}(x,s)$; see Proposition~\ref{prop: convexityOfFuncs}, and Corollary~\ref{cor: Zconcave} 
\end{proof} 

\subsection{Applications to Homogeneous Monge-Amp\`ere}
Our techniques can be used to give a simplified proof of the existence of geodesics in the space of K\"ahler metrics \cite{Chen}, in particular avoiding B\l ocki's gradient estimate \cite{Bl, Bl1}.  We briefly describe how this is done.

  Let $(X,\omega)$ be a K\"ahler manifold, and 
  \[
  \mathcal{H} = \{ \phi \in C^{\infty}(X,\mathbb{R}) : \omega_{\phi} = \omega+ \ddb \phi >0\}
  \]
   be the space of K\"ahler metrics.  A geodesic in this space with respect to the Donaldson-Mabuchi-Semmes metric is equivalent to a solution of the homogeneous complex Monge-Amp\`ere equation on $\mathcal{X} = X\times \mathcal{A}$.  That is, a solution of
\[
(\pi_{X}^{*}\omega+ \DDb\phi)^{n+1}=0 \quad \text{ on } \mathcal{X}
\]
with boundary values $\phi_0, \phi_1 \in \mathcal{H}$.  As above, we approximate this equation by the degenerating Monge-Amp\`ere equations
\[
(\pi_{X}^{*}\omega + \epsilon^{2}\sqrt{-1}dt\wedge d\bar{t}+ \DDb\phi)^{n+1}=(n+1)\epsilon^{2}\sqrt{-1}dt\wedge d\bar{t} \wedge \pi_{X}^{*}\omega^{n}.
\]
Rescaling $t\mapsto \epsilon t$, we can view this as the non-degenerate Monge-Amp\`ere equation
\[
(\hat{\omega}+ \DDb\phi)^{n+1}=\hat{\omega}^{n+1}
\]
on $\mathcal{X}_{\epsilon}$ with boundary values $\phi_0, \phi_1 \in \mathcal{H}$.  Here, as before 
\[
\hat{\omega} = \pi_{X}^{*}\omega +\sqrt{-1}dt\wedge d\bar{t}
\]
is a product metric on $\mathcal{X}_{\epsilon}$.   The $C^0$ estimate follows from the maximum principle, comparing with sub and supersolutions as in Section~\ref{sec: AnalPrelim}.  For the spatial $C^{2}$ estimate, we argue as in Section~\ref{sec: C2est}, applying the maximum principle to $\lambda_1$, the largest eigenvalue of
\[
\left(\hat{\omega}+ \DDb\phi\right)\big|_{TX}
\]
measured with respect to $\hat{\omega}$.  Let $F(M)= \log(\det(M))$.  Fix a point $(p_0, t_0)$ and local holomorphic normal coordinates $(z_0,\ldots,z_n)$ so that $\hat{\omega}_{\bar{j}i} = \delta_{\bar{j}i}$ and $\omega_{\phi} := (\hat{\omega}+ \DDb\phi)$ is diagonal with entries $\mu_0 \leq \cdots \leq \mu_n$.  Let $V_1 \in T_{p_0}X$ be the unit spatial eigenvector achieving $\lambda_1$   Let $F^{i\bar{j}}$ be the linearized operator of $F$.  Then following the computation in Section~\ref{sec: C2est} we compute
\[
F^{i\bar{j}}\nabla_{i}\nabla_{\bar{j}}\log(\lambda_1) \geq -C +\frac{1}{\lambda_1}\sum_{i,j=0}^{n} \frac{1}{\mu_i\mu_j}|\nabla_{V_1}(\omega_{\phi})_{\bar{j}i}|^{2} - \frac{1}{\lambda_1^2}\sum_{i=0}^{n}\frac{1}{\mu_i}|\nabla_{i}(\omega_{\phi})_{\bar{V_1}V_1}|^{2}.
\]
for a uniform constant $C$.  Using Cauchy-Schwarz we now estimate
\[
\begin{aligned}
\frac{1}{\mu_i}|\nabla_{i}(\omega_{\phi})_{\bar{V_1}V_1}|^{2} &= \bigg|\sum_{j} \overline{V_{1}^{j}}\nabla_{V_1}(\omega_{\phi})_{\bar{j}i}\bigg|^2\\
&= \sum_{0\leq j, \ell \leq n} V_{1}^{\ell}\overline{V_{1}^{j}}\nabla_{V_1}(\omega_{\phi})_{\bar{j}i}\overline{\nabla_{V_1}(\omega_{\phi})_{\bar{\ell}i}}\\
&\leq \left(\sum_{0\leq j, \ell \leq n}| V_{1}^{\ell}|^2\frac{\mu_{\ell}}{\mu_{j}}|\nabla_{V_1}(\omega_{\phi})_{\bar{j}i}|^{2}\right) \\
&= \lambda_1\sum_{0\leq j \leq n}\frac{1}{\mu_j} |\nabla_{V_1}(\omega_{\phi})_{\bar{j}i}|^{2}.
\end{aligned}
\]
Thus we have that $F^{i\bar{j}}\nabla_{i}\nabla_{\bar{j}}\log(\lambda_1) \geq -C$, and arguing as in the second author's proof of the Calabi conjecture \cite{Y} we get
\[
{\rm Tr}_{\hat{\omega}}\left(\hat{\omega}+ \DDb\phi\right)\big|_{TX} \leq Ce^{-C(\phi-\inf_{\mathcal{X}}\phi)}.
\]
In other words, for every $t\in \mathcal{A}$ we have an $L^{\infty}$ bound for $\Tr_{\hat{\omega}}\ddb\phi(t)$ on $X$.  By the elliptic theory applied on $(X,\omega)$ we conclude that $|\nabla_{X}\phi(t)|_{\omega}$ is uniformly bounded.  The estimate for $\nabla_{t}\phi$ is easily obtained from the maximum principle, as in Section~\ref{sec: C1est}.  The remainder of the argument is the same, applying the boundary estimates for complex Monge-Amp\`ere \cite{Chen, Guan}.

\section{Applications to algebraic obstructions}\label{sec: AlgObstr}

The goal of this section is to use the existence of sufficiently regular geodesic segments to find algebraic obstructions to the existence of solutions to dHYM.  Let $\Delta = \{ |t|<1\}$ be the annulus, $\mathcal{X} = X\times \Delta$, and $\pi_{\Delta}$ be the projection to the disk.  Inspired by Mumford \cite{Mum}, and Ross-Thomas \cite{RoTh}, we recall the notion of a flag ideal $\mathfrak{I} \subset \mathcal{X}\otimes \mathbb{C}[t]$.  Fix ideals $\mathfrak{J}_{0}\subset \mathfrak{J}_{1} \subset \cdots \subset \mathfrak{J}_{r-1} \subset \mathcal{O}_{X}$.  The we define a {\em flag ideal} $\mathfrak{I}$ by
\[
\mathfrak{I} = \mathfrak{J}_{0} + t\mathfrak{J}_{1} + \cdots + t^{r-1}\mathfrak{J}_{r-1} + (t^r) \subset \mathcal{O}_{X}.
\]
The ideal $\mathfrak{I}$ defines a subscheme of $X\times \Delta$ which is supported in $\pi_{\Delta}^{-1}(0)$.  We are going to use this data to define an infinite ray $\phi(s) \in \mathcal{H}$ for $s\in [0,+\infty)$.  We need the following lemma
\begin{lem}[Demailly-Paun, \cite{DP}]
There is an $S^1$ invariant function $\psi: X\times \Delta \rightarrow \mathbb{R}$ satisfying the following properties
\begin{itemize}
\item $\DDb\psi \geq - A \hat{\omega}$ for some $A>0$.
\item $\psi$  is smooth on $\mathcal{X} \backslash {\rm supp}(\mathfrak{I})$, and 
\item Near ${\rm Supp}(\mathfrak{I})$ we have
\[
\psi =\frac{1}{2\pi} \log\left(\sum_{\ell=0}^{r}|t|^{2\ell} \sum_{k=1}^{N_{\ell}}|f_{\ell, k}|^2\right) + C^{\infty}
\]
where, for each $k$,  $(f_{\ell, k})_{k =1}^{N_{\ell}}$ are local generators for $\mathfrak{J}_{\ell}$.
\end{itemize}
\end{lem}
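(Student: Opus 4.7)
The plan is to construct $\psi$ via the canonical section of the exceptional divisor on the principalization of $\mathfrak{I}$. Let $\mu:\tilde{\mathcal{X}}\to \mathcal{X}$ be the blow-up of $\mathfrak{I}$, so that $\mu^{-1}\mathfrak{I}\cdot\mathcal{O}_{\tilde{\mathcal{X}}}=\mathcal{O}_{\tilde{\mathcal{X}}}(-E)$ for an effective Cartier divisor $E$ supported on $\pi_{\Delta}^{-1}(0)$. Let $s_E\in H^0(\tilde{\mathcal{X}},\mathcal{O}(E))$ be the canonical section vanishing along $E$. Because the flag ideal $\mathfrak{I}=\mathfrak{J}_0+t\mathfrak{J}_1+\cdots+t^{r-1}\mathfrak{J}_{r-1}+(t^r)$ is invariant under the $S^1$-action $t\mapsto e^{\sqrt{-1}\theta}t$, the blowup $\tilde{\mathcal{X}}$ inherits an $S^1$-action and $\mathcal{O}(E)$ an equivariant structure. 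Choose a smooth Hermitian metric $h_E$ on $\mathcal{O}(E)$, and replace it by its average over $S^1$ to make it invariant. Define
\[
\tilde{\psi}:=\frac{1}{2\pi}\log |s_E|^2_{h_E}\in L^1_{\mathrm{loc}}(\tilde{\mathcal{X}}),\qquad \psi:=\tilde{\psi}\circ\mu^{-1}\quad\text{on }\mathcal{X}\setminus V(\mathfrak{I}),
\]
extending $\psi$ by $-\infty$ along $V(\mathfrak{I})$; equivalently, $\psi$ is the pushforward $\mu_{*}\tilde{\psi}$, which makes sense since $\mu$ is proper and an isomorphism off $E$.

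On $\tilde{\mathcal{X}}$ the Poincaré–Lelong formula gives $\sqrt{-1}\partial\bar{\partial}\tilde{\psi}=[E]-\Theta(\mathcal{O}(E),h_E)$. Off $E$ the current $[E]$ vanishes, so $\sqrt{-1}\partial\bar\partial\tilde{\psi}$ is a smooth $(1,1)$-form uniformly bounded, in particular $\sqrt{-1}\partial\bar\partial\tilde{\psi}\ge -A'\mu^{*}\hat{\omega}$ on $\tilde{\mathcal{X}}$ for some $A'>0$. Pushing this forward and using that $\mu_{*}\mu^{*}\hat{\omega}=\hat{\omega}$ as currents yields the global bound $\DDb\psi\ge -A\hat{\omega}$ on $\mathcal{X}$. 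The $S^1$-invariance of $\psi$ follows from the invariance of $h_E$ and $s_E$, and smoothness on $\mathcal{X}\setminus V(\mathfrak{I})$ follows from smoothness of $\tilde{\psi}$ off $E$ together with the fact that $\mu$ is a biholomorphism there.

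It remains to verify the local expression near ${\rm supp}(\mathfrak{I})$. Choose a coordinate chart $U\subset X$ on which each $\mathfrak{J}_\ell$ admits local generators $(f_{\ell,k})_{k=1}^{N_\ell}$; then on $U\times \Delta$ the ideal $\mathfrak{I}$ is generated by $\{t^{\ell}f_{\ell,k}\}\cup\{t^r\}$. By the universal property of the blow-up, on $\mu^{-1}(U\times \Delta)$ each of these generators equals a local generator $\sigma$ of $\mathcal{O}(-E)$ multiplied by a holomorphic function, and these holomorphic functions have no common zero. Consequently
\[
\sum_{\ell=0}^{r}|t|^{2\ell}\sum_{k}|f_{\ell,k}|^{2}+|t|^{2r}=|\sigma|^{2}\cdot u
\]
for a strictly positive smooth function $u$. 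Taking $\log$, pulling back $h_E$ to read $|s_E|^{2}_{h_E}=|\sigma|^{2}e^{-\varphi}$ for a smooth $\varphi$, and absorbing the $|t|^{2r}$ term into the smooth part (it is dominated by $|t|^{2\ell}|f_{\ell,k}|^2$ for any generator living in $\mathfrak{J}_\ell$ with $\ell<r$; elsewhere along $\mathrm{supp}(\mathfrak{I})$ it is comparable), one obtains the required identity
\[
\psi=\frac{1}{2\pi}\log\Bigl(\sum_{\ell=0}^{r}|t|^{2\ell}\sum_{k=1}^{N_\ell}|f_{\ell,k}|^{2}\Bigr)+C^{\infty}.
\]
The main technical obstacle will be Step 2, namely ensuring the descent of the quasi-PSH bound from $\tilde{\mathcal{X}}$ to $\mathcal{X}$ in a way that is compatible with the singular profile in Step 3—one must handle the fact that $\mu^{*}\hat{\omega}$ is only semi-positive on $\tilde{\mathcal{X}}$ rather than strictly positive, which is why we take $A$ strictly larger than the naive curvature bound for $h_E$.
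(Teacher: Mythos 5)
Your approach is genuinely different from the paper's. The paper gives a direct, local construction: cover $X$ by balls $B_i$ on which the $\mathfrak{J}_\ell$ have local generators, pick a partition of unity $\theta_j$ subordinate to it and a cutoff $\theta(|t|^2)$ supported near $t=0$, and set
\[
\psi = \frac{1}{2\pi}\log\Bigl(\theta(|t|^2)\sum_j\theta_j\sum_{\ell=0}^{r-1}|t|^{2\ell}\sum_k|f_{\ell,k}|^2 + |t|^{2r}\Bigr),
\]
citing the elementary local calculations of Demailly--P\u{a}un for the quasi-psh bound (which reduce to the standard fact that $\log\sum|g_i|^2$ is psh for holomorphic $g_i$). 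You instead route through the blow-up of $\mathfrak{I}$, the canonical section of $\mathcal{O}(E)$, and Poincar\'e--Lelong. That route can be made to work, but as written it has a real gap.

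The gap is in your second step. You assert that since $\sqrt{-1}\del\dbar\tilde\psi = [E]-\Theta(\mathcal{O}(E),h_E)$ is a bounded smooth form off $E$, it follows that $\sqrt{-1}\del\dbar\tilde\psi\geq -A'\mu^*\hat\omega$. This is a non-sequitur: $\mu^*\hat\omega$ is only semipositive and vanishes on $TE$, while $\Theta(\mathcal{O}(E),h_E)$ has a nontrivial tangential component along $E$ (indeed $\mathcal{O}(E)|_E$ is $\mu$-antiample, so $\Theta|_{TE}$ is not identically zero). No multiple of $\mu^*\hat\omega$ dominates such a form, so your final remark that one should just ``take $A$ strictly larger'' does not repair this. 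What one actually needs is that $A\mu^*[\hat\omega]-[E]$ contains a non-negative smooth representative for $A$ large; this is a genuine positivity statement (it is essentially the Demailly--P\u{a}un lemma used later in the paper, that $\mu^*[\hat\omega]-\epsilon[E]$ is K\"ahler for small $\epsilon$), and it should be invoked explicitly. Alternatively --- and more efficiently --- you could establish the local expression in your Step 3 \emph{first}, and then the quasi-psh bound follows immediately and elementarily, since $\log\sum|g_i|^2$ is psh and the $C^\infty$ correction has bounded $\sqrt{-1}\del\dbar$.

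A secondary point in Step 3: the $C^\infty$ error in the lemma is a smooth function on $\mathcal{X}$, near ${\rm Supp}(\mathfrak{I})$. For an arbitrary $S^1$-invariant $h_E$, the difference $\tilde\psi-\frac{1}{2\pi}\mu^*\log\sum|g_i|^2$ is smooth on $\tilde{\mathcal{X}}$ but not obviously the pullback of a function smooth on $\mathcal{X}$ near $V(\mathfrak{I})$. To ensure this, you need to build $h_E$ from the local generators $\{t^\ell f_{\ell,k}\}$ patched by a partition of unity on $\mathcal{X}$ --- which is in effect the paper's construction, so the blow-up step does not simplify anything here. Also, there is no need to ``absorb the $|t|^{2r}$ term'': with the convention $\mathfrak{J}_r=\mathcal{O}_X$ it is exactly the $\ell=r$ term of the sum in the statement, and the domination you claim ($|t|^{2r}\lesssim|t|^{2\ell}|f_{\ell,k}|^2$) is false at common zeros of the $f_{\ell,k}$.
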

Since the statement we need is not exactly in Demailly-Paun, we quickly sketch the necessary ingredients.
\begin{proof}
Fix a cover of $X$ by open balls $B_i$ so that on each $B_i$ the ideal sheaves $\mathfrak{J}_{\ell}$ have local generators $(f_{\ell, k})_{k =1}^{N_{\ell}}$.  Choose a partition of unity $\theta_j$ subordinate to $B_j$, and let $\theta(x)$ be a smooth function positive, non-increasing function on $\mathbb{R}$ with $\theta(x) = 1 $ for $x \in [-1/2, 1/2]$ and $\theta(x)=0$ for $|x|>\frac{3}{4}$.  Now consider
\[
\psi = \frac{1}{2\pi}\log \left( \theta(|t|^2)\cdot\sum_{j} \theta_j \sum_{\ell=0}^{r-1}|t|^{2\ell} \sum_{k=1}^{N_{\ell}}|f_{\ell, k}|^2 + |t|^{2r}\right).
\]
Then $\psi$ is clearly $S^1$ invariant, and by the calculations in \cite{DP} the first two conditions are satisfied also.
\end{proof}

Note that from the construction we have
\[
C\log(\epsilon) \leq \psi\bigg|_{\{|t|=\epsilon\}} \leq C
\]
for some constant $C$ independent of $\epsilon$.

To construct an infinite ray in $\mathcal{H}$ we choose a function $\phi_0 \in \mathcal{H}$, and consider
\begin{equation}\label{eq: modelCurve}
\Phi(t) = \phi_0 + \delta\psi(t).
\end{equation}
We choose $\delta >0$ sufficiently small as follows. If $\omega^{-1}\alpha_{\phi_0}$ has eigenvalues $\lambda_1, \ldots, \lambda_n$ satisfying
\[
\sum_{i=1}^{n}\arctan(\lambda_i) > \hat{\theta} -\frac{\pi}{2}+\eta_1
\]
then choose $\delta$ small so that 
\[
n\frac{\pi}{2} >  \Theta_{\omega}(\alpha_{\phi(t)}) \geq \sum_{i=1}^{n}\arctan(\lambda_i-\delta A) > \hat{\theta} -\frac{\pi}{2}.
\]
Then for all $t$ the function $\phi_0 + \delta \psi(t) \in \mathcal{H}$.  Setting $s=-\log|t|$ gives an infinite ray $\Phi(s) \in\mathcal{H}$. 
\begin{defn}
The curve $\Phi(s)$ constructed above will be called a {\em model curve} for the ideal $\mathfrak{I}$ emanating from $\phi_0$.
\end{defn}

\begin{rk}
We have constructed the model curve $\Phi$ by hand, and hence the number $\delta >0$ depends on analytic data.  However, we expect that there is an algebraic characterization of $\delta$ analogous to the Seshadri constant for an ample line bundle.  This  will be discussed in Section~\ref{sec: StabCond}.
\end{rk}

The next proposition shows that the limit slope of complexified Calabi-Yau functional along the curve $\phi(s)$ exists.

\begin{prop}\label{prop: limSlopeAY}
Consider the curve $\Phi(s)$ constructed above.  Let $\mu: \tilde{\mathcal{X}} \rightarrow \mathcal{X}$ be a log-resolution of singularities of the ideal sheaf $\mathfrak{I}$, so that $\mu^{-1}\mathfrak{I} = \mathcal{O}_{\tilde{\mathcal{X}}}(-E)$ for a simple normal crossings divisor $E$.  Then we have
\begin{equation}\label{eq: limSlopeModel}
\lim_{s\rightarrow \infty} \frac{d}{ds}CY_{\mathbb{C}}(\Phi(s)) = -\frac{\delta}{\pi} E.\left[\left(\mu^{*}[\omega] + \sqrt{-1}\left(\mu^{*}[\alpha] -\delta E\right)\right)^{n}\right].
\end{equation}
In particular, the limit exists, and the quantity on the right hand side is independent of the choice of log-resolution.
\end{prop}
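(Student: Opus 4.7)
The strategy is to convert the limit slope into an integral of $(\pi_{X}^{*}\omega + \sqrt{-1}\alpha_{\Phi})^{n+1}$ on $\mathcal{X}$ via the second-derivative formula of Proposition~\ref{prop: convexityOfFuncs}, and then to evaluate this integral cohomologically by pulling back to the log resolution $\mu: \tilde{\mathcal{X}} \to \mathcal{X}$.

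First, set $f(s) = CY_{\mathbb{C}}(\Phi(s))$ and use equation~\eqref{eq: secDerAY}, which expresses $\sqrt{-1}\partial\bar{\partial}CY_{\mathbb{C}}(\Phi)$ as the pushforward $-\sqrt{-1}(\pi_{\mathcal{A}})_{*}(\pi_{X}^{*}\omega + \sqrt{-1}\alpha_{\Phi})^{n+1}$ on $\mathcal{A}$. Integrating over an annulus $\Delta_{\epsilon_{1}, \epsilon_{2}}$ and applying Stokes' theorem on the LHS, the $S^{1}$-invariance of $\Phi$ reduces the resulting boundary integrals on each circle $\{|t|=\epsilon_{i}\}$ (up to a universal constant) to $f'(s_{i})$ with $s_{i} = -\log\epsilon_{i}$. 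Letting $\epsilon_{1}\to 0$ so that $s_{1}\to\infty$, and noting that the initial slope $f'(s_{2})$ at fixed finite $s_{2}$ is explicit via Proposition~\ref{prop: AYwellDefn} in terms of a spatial fibre integral over $X$, the limit slope is thereby identified with the distributional integral of $(\pi_{X}^{*}\omega + \sqrt{-1}\alpha_{\Phi})^{n+1}$ over $\mathcal{X}$, up to an additive constant that depends only on algebraic data.

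Next, on the log resolution, the Poincar\'e-Lelong formula applied to $\mu^{*}\psi = \frac{1}{2\pi}\log|s_{E}|^{2} + u$ (with $u$ smooth on $\tilde{\mathcal{X}}$) gives $\mu^{*}\DDb\psi = [E] + \DDb u$, and therefore
\[
\mu^{*}\left(\pi_{X}^{*}\omega + \sqrt{-1}\alpha_{\Phi}\right) = \tilde{\beta} + \sqrt{-1}\delta[E],
\]
where $\tilde{\beta}$ is a smooth $(1,1)$-form on $\tilde{\mathcal{X}}$ lying in the cohomology class $\mu^{*}[\pi_{X}^{*}\omega] + \sqrt{-1}\mu^{*}[\pi_{X}^{*}\alpha]$ pulled back from $X$. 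The binomial expansion of $(\tilde{\beta} + \sqrt{-1}\delta[E])^{n+1}$ has two structural features I will exploit: (i) the $k=0$ piece $\tilde{\beta}^{n+1}$ is cohomologically trivial because $\dim X = n$, so its contribution can be converted via Stokes into a boundary term that combines with $f'(s_{2})$; (ii) the $k\ge 1$ pieces are supported on $E$ and its SNC self-intersections, and the adjunction identity $[E]|_{E} = c_{1}(N_{E/\tilde{\mathcal{X}}})$ lets one rewrite the self-intersection tower as an $n$-th power of a single class $\mu^{*}[\alpha] - \delta E$ on $E$; it is precisely here that the sign flip from $+\delta E$ to $-\delta E$ arises.

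The main obstacle will be the combinatorial bookkeeping collapsing all three contributions—the initial slope $f'(s_{2})$, the Stokes-theorem boundary term from $\tilde{\beta}^{n+1}$, and the $E$-supported wedge-power contributions—into a single intersection number of the form stated. As a sanity check, in the special case $\mathfrak{I} = (t^{r})$ the resolution $\mu$ is the identity, $E = r\cdot (X\times\{0\})$, and $[E]|_{E} = 0$ since $N_{X\times\{0\}/\mathcal{X}}$ is trivial; a direct computation then yields $f'(s) \equiv -\frac{r\delta}{\pi}(\omega + \sqrt{-1}\alpha)^{n}.[X]$, matching the proposition. Finally, the independence from the choice of log resolution follows automatically, since the LHS is manifestly invariant and the RHS is a birational invariant via push-pull of intersection numbers.
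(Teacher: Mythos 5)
Your strategy---converting the limit slope to a bulk integral of $(\pi_{X}^{*}\omega + \sqrt{-1}\alpha_{\Phi})^{n+1}$ and evaluating it on the log resolution---has the right flavor, but two of your key claims break down, and the paper's proof takes a fundamentally simpler route that never needs a top wedge power of a singular current.

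The first gap is in your step (ii): the binomial expansion of $(\tilde{\beta} + \sqrt{-1}\delta[E])^{n+1}$ is not a legitimate operation, because $[E]^{k}$ for $k\geq 2$ does not make sense as a current. Wedge products of closed positive $(1,1)$-currents exist under potential-boundedness or analytic-singularity hypotheses (Bedford--Taylor, Demailly), but $\pi_{X}^{*}\omega + \sqrt{-1}\alpha_{\Phi}$ is complex-valued, not positive, and in any case the term-by-term expansion you propose hits the ill-defined powers $[E]^{k}$ directly; there is no adjunction identity that collapses a non-existent tower. Your sanity check $\mathfrak{I}=(t^{r})$ works precisely because the normal bundle of $X\times\{0\}$ is trivial, so the problematic higher powers vanish; in general they do not. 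The second gap is in step (i): you assert $\tilde{\beta}$ lies in the class $\mu^{*}[\pi_{X}^{*}\omega] + \sqrt{-1}\mu^{*}[\pi_{X}^{*}\alpha]$ pulled back from $X$, so that $\tilde{\beta}^{n+1}$ is cohomologically trivial. This is false: Poincar\'e--Lelong gives $\mu^{*}\DDb\psi = [E] - \gamma_{E} + \DDb u$ with $\gamma_{E}$ a \emph{smooth} representative of $c_{1}(\mathcal{O}(E))$, so the smooth remainder $\tilde{\beta}$ sits in the class $\mu^{*}\pi_{X}^{*}[\omega] + \sqrt{-1}\left(\mu^{*}\pi_{X}^{*}[\alpha] - \delta[E]\right)$. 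The $-\delta E$ in the proposition is already carried by $\tilde{\beta}$'s class, not produced by adjunction on $E$-supported pieces.

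The paper's proof works with the \emph{first} derivative, keeping all wedge powers at $n$. It writes $\frac{d}{ds}CY_{\mathbb{C}}(\Phi(s)) = \int_{X}\frac{\partial\Phi}{\partial s}(\omega + \sqrt{-1}\alpha_{\Phi})^{n}$, converts this into a boundary integral over $Y_{\epsilon} = X\times\{|t|=\epsilon\}$ where $\Phi$ is smooth, pulls back to $\tilde{Y}_{\epsilon}$ (disjoint from $E$), shows by integration by parts in the $X$-direction that the spatial part of $\overline{D}\Phi$ contributes $O(\epsilon\log\epsilon)$, and then applies Stokes over $\mu^{-1}(X\times\{|t|\leq\epsilon\})$ using that the smooth form $(\omega + \sqrt{-1}(\alpha - \delta\gamma_{E}))^{n}$ is closed. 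The result is a single pairing of the current $\delta([E]-\gamma_{E})$ against the $n$-th power of a smooth form; no higher power of $[E]$ ever appears. To rescue your route you would need a rigorous complex Monge--Amp\`ere theory for $\pi_{X}^{*}\omega + \sqrt{-1}\alpha_{\Phi}$ with analytic log singularities, including a quantitative account of the mass concentrating over $E$ as the annulus shrinks---this is a genuinely missing piece, not ``combinatorial bookkeeping.''
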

\begin{proof}
In order to avoid carrying around extra minus signs, we will evaluate
\[
-\lim_{s\rightarrow \infty} \int_{X} \frac{\del \Phi}{\del s}  (\omega+ \sqrt{-1}\alpha_{\Phi})^{n}.
\]
where $s=-\log|t|$, and $\Phi$ is the curve~\eqref{eq: modelCurve}.  To do this we change variables, using $t$ in place of $s$.  We note that if we write $t=re^{i\theta}$, then
\[
\frac{\del \Phi}{\del s} \bigg|_{s=-\log\epsilon} = -r \frac{\del \Phi}{\del r}|_{r=\epsilon} = -\epsilon \frac{\del \Phi}{\del r}|_{r=\epsilon} 
\]
At the same time, since $\Phi$ is $S^{1}$ invariant we have $\del_{\bar{t}}\Phi = \frac{1}{2}e^{i\theta}\frac{\del \Phi}{\del r}$, and so
\[
\del_{\bar{t}} \Phi d\bar{t} = \frac{1}{2} \frac{\del \Phi}{\del r} (dr -\sqrt{-1} rd\theta)
\]
and so
\[
-\frac{\del}{\del s} \Phi\bigg|_{s=-\log\epsilon} d\theta = 2\sqrt{-1}\del_{\bar{t}} \Phi d\bar{t}\,\bigg|_{r=\epsilon}.
\]
Let $Y_{\epsilon} = X \times \{|t|=\epsilon\} \cong X \times S^1$, and  let $\iota: Y_{\epsilon} \rightarrow \mathcal{X}$ be the inclusion, and $\pi_{X}: Y_{\epsilon}\rightarrow X$ be the projection.  Let $(x_1,\ldots,x_n)$ be coordinates on $X$.  Then for any complex $(1,1)$ form $\beta$ on $\mathcal{X}$ we can write
\[
\beta = \beta_{\bar{X}X} + \beta_{\bar{X}t} + \beta_{\bar{t}X} + \beta_{\bar{t}t}
\]
where 
\[
\begin{aligned}
\beta_{\bar{X}X} &= \sum_{1\leq i,j \leq n} \beta_{\bar{j}i}dx_{i} \wedge d\bar{x}_{j} &\beta_{\bar{t}t} = \beta_{\bar{t}t}dt\wedge d\bar{t}\\
\beta_{\bar{X}t} &= \sum_{1\leq j \leq n} \beta_{\bar{j}t}dt\wedge d\bar{x}_{j}  &\beta_{\bar{t}X} = \sum_{1\leq j \leq n} \beta_{\bar{t}j}dx_j\wedge d\bar{t}.
\end{aligned}
\]
In this notation we have
\[
\iota^{*}\beta = \beta_{\bar{X}X} + \sum_{1\leq j \leq n} \beta_{\bar{j}t}\sqrt{-1} \epsilon e^{\sqrt{-1}\theta}d\theta\wedge d\bar{x}_{j} - \sum_{1\leq j \leq n} \beta_{\bar{t}j}\sqrt{-1}\epsilon e^{-\sqrt{-1}\theta}dx_j\wedge d\theta
\]
where we view $\beta_{\bar{X}X}$ as a form on $Y_{\epsilon}$ in the obvious way using $\pi_{X}$.  Furthermore, note that $\dbar_{X}$ makes sense as an operator on forms on $Y_{\epsilon}$; indeed, this is just the $\dbar$ operator of the natural CR structure on $Y_{\epsilon}$.  However, even if $\beta_{\bar{X}X}$ is closed on the fibers of $\mathcal{X}\rightarrow \Delta$, and $\beta$ is closed on $\mathcal{X}$, it need not be the case that $\dbar_{X}\iota^{*}\beta =0$ on $Y_{\epsilon}$ due to contributions from the $\beta_{\bar{X},t}, \beta_{\bar{t}X}$ components.  In any event, we have
\[
\begin{aligned}
&-\int_{X}\frac{\del \Phi}{\del s} \left(\omega+ \sqrt{-1}\alpha_{\Phi})^{n}\right) \bigg|_{s=-\log \epsilon}\\
&  = \frac{\sqrt{-1}}{\pi} \int_{Y_{\epsilon}}\iota^{*}(\del_{\bar{t}} \Phi d\bar{t})\wedge \pi_{X}^{*}(\omega+ \sqrt{-1}\alpha_{\Phi})^{n}\\
&= \frac{\sqrt{-1}}{\pi} \int_{Y_{\epsilon}}\iota^{*}\left[\del_{\bar{t}} \Phi d\bar{t}\wedge \left(\pi_{X}^{*}\omega+ \sqrt{-1}(\pi_{X}^{*}\alpha + \DDb\Phi)^{n}\right)\right]
\end{aligned}
\]
Let $\mu:\tilde{\mathcal{X}}\rightarrow \mathcal{X}$ be a log resolution of the ideal $\mathfrak{I}$ with the property that $\mu$ is an isomorphism away from $t=0$.  Then
\[
\mu^{-1}\mathfrak{I} = \mathcal{O}_{\tilde{\mathcal{X}}}(-E)
\]
 for a simple normal crossings divisor $E$ on $\tilde{\mathcal{X}}$ supported over $t=0$.  By the Poincar\'e-Lelong formula \cite{DemBook} we have
 \[
 \mu^{*}\left(\pi_{X}^{*}\omega+ \sqrt{-1}\left(\pi_{X}^{*}\alpha + \DDb\Phi\right)\right)= \left(\mu^{*}\pi_{X}^{*}\omega+ \sqrt{-1}\left(\mu^{*}\pi_{X}^{*}\alpha +\delta[E] - \delta \gamma_{E}\right)\right)
 \]
 where $[E]$ denotes the current of integration over $E$, and $\gamma_{E} \in c_{1}(E)$ is an $S^{1}$-invariant smooth $(1,1)$ form on $\tilde{\mathcal{X}}$.   Let $\tilde{Y}_{\epsilon} = \mu^{-1}(Y_{\epsilon})$ which is a smooth submanifold of $\tilde{\mathcal{X}}$, and $\mu: \tilde{Y}\rightarrow Y$ is an isomorphism.  Let $\tilde{\iota} : \tilde{Y} \rightarrow \tilde{\mathcal{X}}$.
 \[
 \begin{aligned}
 &-\int_{X}\frac{\del \Phi}{\del s}(\omega+ \sqrt{-1}\alpha_{\Phi})^{n}\ \bigg|_{s=-\log \epsilon}\\
 &=  \frac{\sqrt{-1}}{\pi} \int_{\tilde{Y}_{\epsilon}}\mu^{*}\iota^{*}\left(\del_{\bar{t}} \Phi d\bar{t}\wedge \left(\pi_{X}^{*}\omega+ \sqrt{-1}(\pi_{X}^{*}\alpha +\DDb\Phi)\right)^{n}\right)\\
 &=\frac{\sqrt{-1}}{\pi} \int_{\tilde{Y}_{\epsilon}}\tilde{\iota}^{*}\left(\mu^{*}(\del_{\bar{t}} \Phi d\bar{t})\wedge \left(\mu^{*}\pi_{X}^{*}\omega+ \sqrt{-1}(\mu^{*}\pi_{X}^{*}\alpha +\delta[E] -\delta\gamma_{E})\right)^{n}\right)\\
 &= \frac{\sqrt{-1}}{\pi} \int_{\tilde{Y}_{\epsilon}}\tilde{\iota}^{*}\left(\mu^{*}(\del_{\bar{t}} \Phi d\bar{t})\wedge\left(\mu^{*}\pi_{X}^{*}\omega+ \sqrt{-1}(\mu^{*}\pi_{X}^{*}\alpha -\delta\gamma_{E})\right)^{n}\right),
 \end{aligned}
 \]
 where in the last line we used that  the support of $[E]$ is over $0$, and hence disjoint from $\tilde{Y}_{\epsilon}$.
 
Let us digress briefly to  consider the following integral
 \[
 \frac{\sqrt{-1}}{\pi} \int_{\tilde{Y}_{\epsilon}}\tilde{\iota}^{*}\mu^{*}\overline{D}(\Phi)\wedge \left(\mu^{*}\pi_{X}^{*}\omega+ \sqrt{-1}(\mu^{*}\pi_{X}^{*}\alpha -\delta \gamma_{E})\right)^n.
 \]
 In order to lighten notation, we will suppress the pull-backs, and take them as understood.  Fix a K\"ahler metric on $\tilde{\mathcal{X}}$, which we can take to be of the form $\tilde{\omega} = \mu^{*}(\omega+ \sqrt{-1}dt\wedge d\bar{t}) -\hat{\delta} \beta_E$ for $\beta_E$ some smooth representative of $c_1(E)$, and $\hat{\delta}>0$ sufficiently small \cite{DP}.  Since $\gamma_{E}$ is a smooth form on $\tilde{X}$ we can fix a constant $C_0$ so that
\begin{equation}
\label{eq: gammaEest}
\sup_{\tilde{\mathcal{X}}}|\bar{\nabla}\gamma_{E}|_{\tilde{\omega}} <C_0.
\end{equation}
By construction it is clear that $\tilde{\mathcal{X}}$ admits a fibration over $\Delta$ with fibers $\tilde{\mathcal{X}}_{t} = X$ for $t\ne 0$.  As above, we fix coordinates $(x_1,\ldots,x_n)$ on $X$, and $t$ on $\Delta$.  Since $\mu$ is an isomorphism away from $t=0$, pulling back by $\mu$ allows us to view $(x_1,\ldots,x_n)$ and $t$ is a local holomorphic coordinate system in a neighbourhood of (any point) of $\tilde{Y}_{\epsilon}$.  In these coordinates we write
\[
\overline{D}(\Phi) = \del_{\bar{t}}\Phi d\bar{t}+ \dbar_{X}\Phi
\]
Since $\tilde{Y}_{\epsilon} = X\times S^{1}$ is a product, we can integrate by parts over $X$ to get
\[
\begin{aligned}
& \frac{\sqrt{-1}}{\pi} \int_{\tilde{Y}_{\epsilon}}\dbar_{X}\Phi\wedge (\omega+ \sqrt{-1}(\alpha -\delta \gamma_{E}))^{n}\\
 &=  -\frac{\sqrt{-1}}{\pi} \int_{\tilde{Y}_{\epsilon}}\Phi\wedge \dbar_{X}(\omega+ \sqrt{-1}(\alpha -\delta \gamma_{E}))^{n}.\\
 \end{aligned}
 \]
 Since $\alpha, \omega$ are pulled back from $X$, and closed along the fibers we have
 \[
 \dbar_{X}\omega = 0 = \dbar_{X}\alpha.
 \]
 However, $\dbar_{X}\gamma_{E} \ne 0$ in general.  Instead, we use the estimate~\eqref{eq: gammaEest} to bound
 \[
| \dbar_{X}\left(\omega+ \sqrt{-1}(\alpha -\delta \gamma_{E})\right)^{n}| \leq C d{\rm Vol}_{(\tilde{Y}_{\epsilon},\tilde{\omega})}
\]
for a constant $C$ independent of $\epsilon$.  Here $d{\rm Vol}_{(\tilde{Y}_{\epsilon},\tilde{\omega})}$ denotes the volume form on $\tilde{Y}_{\epsilon}$ induced by the K\"ahler metric $\tilde{\omega}$.  By construction we have that
\[
C'\log(\epsilon) \leq \Phi\bigg|_{Y_{\epsilon}} \leq C'
\]
for a uniform constant $C'$.  Combining these estimates gives
\[
\begin{aligned}
& \frac{\sqrt{-1}}{\pi} \int_{\tilde{Y}_{\epsilon}}\overline{D}(\Phi)\wedge \left(\omega+ \sqrt{-1}(\alpha -\delta \gamma_{E})\right)^{n}\\
&= \frac{\sqrt{-1}}{\pi} \int_{\tilde{Y}_{\epsilon}}(\del_{\bar{t}}\Phi d\bar{t}\wedge \left(\omega+ \sqrt{-1}(\alpha -\delta \gamma_{E})\right)^{n} + O(\epsilon\log(\epsilon)).
\end{aligned}
 \]
 In particular, we have that
 \[
 \begin{aligned}
 &-\lim_{\epsilon \rightarrow 0} \int_{X}\frac{\del \Phi}{\del s} (\omega+ \sqrt{-1}\alpha_{\Phi})^{n} \bigg|_{s=-\log \epsilon}\\
&  = \lim_{\epsilon \rightarrow 0}\frac{\sqrt{-1}}{\pi} \int_{\tilde{Y}_{\epsilon}}\overline{D}(\Phi)\wedge \left(\omega+ \sqrt{-1}(\alpha -\delta \gamma_{E})\right)^n
\end{aligned}
\]
The latter integral can be evaluated using integration by parts and the Poincar\'e-Lelong formula. Integration by parts gives
\[
\begin{aligned}
&\frac{1}{\pi}\int_{\mu^{-1}(X\times \{|t|\leq \epsilon\})}\DDb\Phi \wedge \left(\omega+ \sqrt{-1}(\alpha -\delta \gamma_{E})\right)^{n}\\
&= \frac{\sqrt{-1}}{\pi} \int_{\tilde{Y}_{\epsilon}}\overline{D}(\Phi)\wedge \left(\omega+ \sqrt{-1}(\alpha -\delta \gamma_{E})\right)^{n}
\end{aligned}
\]
since  $\left(\omega+ \sqrt{-1}(\alpha -\delta \gamma_{E})\right)^n$ is closed on $\tilde{\mathcal{X}}$.  By the Poincar\'e-Lelong formula (recall that $\Phi$ is pulled back to $\tilde{\mathcal{X}}$) we also have
\[
\begin{aligned}
&\frac{1}{\pi}\int_{\mu^{-1}(X\times \{|t|\leq \epsilon\})}\DDb\Phi \wedge \left(\omega+ \sqrt{-1}(\alpha -\delta \gamma_{E})\right)^n\\
&= \frac{1}{\pi}\int_{\mu^{-1}(X\times \{|t|\leq \epsilon\})}(\delta[E]-\delta \gamma_{E}) \wedge \left(\omega+ \sqrt{-1}(\alpha -\delta \gamma_{E})\right)^n\\
&= \frac{\delta}{\pi} \int_{E}\left(\omega+ \sqrt{-1}(\alpha -\delta \gamma_{E})\right)^n + O(\epsilon)
 \end{aligned}
 \]
 where in the last line we used that $\alpha, \omega, \gamma_E$ are smooth on $\tilde{\mathcal{X}}$.  Taking the limit as $\epsilon\rightarrow 0$ we obtain that the limit slope of $CY_{\mathbb{C}}$ along the curve~\eqref{eq: modelCurve} is computed in terms of intersection numbers as
 \[
 \lim_{\epsilon \rightarrow 0} \int_{X}\frac{\del \Phi}{\del s} \left((\omega+ \sqrt{-1}\alpha_{\Phi})^{n}\right) \bigg|_{s=-\log \epsilon}= -\frac{1}{\pi}\delta E. \left([\omega]+ \sqrt{-1}([\alpha] -\delta E)\right)^{n}. 
 \]
 To see that this intersection number is independent of the choice of log resolution, one only needs to observe that it is equal to the limit slope of $CY_{\mathbb{C}}$ along the curve~\eqref{eq: modelCurve}.
\end{proof}

In certain cases we can simplify the leading term (in $\delta$) of the expression.

\begin{lem}\label{lem: Vsubvar}
Suppose $V\subset X$ is an irreducible analytic subvariety of pure dimension $0<p<n$.  Consider the model curve associated to $\mathfrak{I}  = \mathfrak{J}_{V} + (t)$, where $\mathfrak{J}_{V}$ is the ideal sheaf of $V$.  As above, let $\mu$ be a log resolution of $\mathfrak{I}$.  Then we have
\[
\begin{aligned}
 &\frac{\delta}{\pi} E.\left(\mu^{*}[\omega] + \sqrt{-1}\left(\mu^{*}[\alpha] -\delta E\right)\right)^{n}\\
 & = \delta^{n-p}\binom{n}{n-p}e^{\sqrt{-1}(n-p)\frac{\pi}{2}}\int_{V}(\omega+\sqrt{-1}\alpha)^{p} + O(\delta^{n-p+1})
 \end{aligned}
 \]
 \end{lem}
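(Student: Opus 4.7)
The plan is to expand the top intersection into a polynomial in $\delta$ and identify the smallest power that appears. Writing $\beta := \omega + \sqrt{-1}\alpha$ and applying the binomial theorem gives
\begin{equation*}
E \cdot \bigl(\mu^{*}\beta - \sqrt{-1}\delta E\bigr)^{n} = \sum_{k=0}^{n}\binom{n}{k}(-\sqrt{-1}\delta)^{k}\, E^{k+1}\cdot \mu^{*}\beta^{\,n-k},
\end{equation*}
and by the projection formula each summand equals $(-\sqrt{-1}\delta)^{k}\binom{n}{k}\mu_{*}(E^{k+1})\cdot \beta^{n-k}$. This reduces the question to computing push-forward cycles on $X\times\Delta$.

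The first step is a dimension vanishing argument. Since $\mathfrak{I}=\mathfrak{J}_{V}+(t)$ cuts out $V\times\{0\}$ set-theoretically, the map $\mu$ is an isomorphism off $V\times\{0\}$, and the divisor $E$ is supported in $\mu^{-1}(V\times\{0\})$. Consequently $\mu_{*}(E^{k+1})$ is a cycle of complex dimension $n-k$ supported on a set of complex dimension $p$, so it must vanish whenever $n-k>p$, i.e.\ for $k<n-p$. This already forces the remainder to be $O(\delta^{n-p+1})$ after multiplication by the prefactor, and reduces the problem to the single critical index $k=n-p$.

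The main step is to show that $\mu_{*}(E^{n-p+1}) = (-1)^{n-p}[V\times\{0\}]$. The pushed-forward cycle is necessarily of the form $c\,[V\times\{0\}]$ for some $c\in\mathbb{Z}$ (since $V$ is irreducible of dimension $p$), and by Proposition~\ref{prop: limSlopeAY} the intersection number is independent of the chosen log-resolution. I therefore compute $c$ for a convenient model: take $\mu$ to factor through the blow-up of $V\times\{0\}$ (resolving further only as needed to achieve normal crossings). At a point where $V$ is smooth, $V\times\{0\}\hookrightarrow X\times\Delta$ is a regular embedding of codimension $r := n-p+1$ with exceptional divisor the projectivized normal bundle $E_{0}=\mathbb{P}(N_{V\times\{0\}/X\times\Delta})$. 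The adjunction identity $\mathcal{O}_{E_{0}}(E_{0}) \cong \mathcal{O}(-1)$ together with the standard projective-bundle push-forward $\pi_{*}(c_{1}(\mathcal{O}(1))^{r-1}\cap[E_{0}]) = [V\times\{0\}]$ yields
\begin{equation*}
\mu_{*}(E_{0}^{\,r}) = (-1)^{r-1}\pi_{*}\bigl(c_{1}(\mathcal{O}(1))^{r-1}\cap [E_{0}]\bigr) = (-1)^{n-p}[V\times\{0\}].
\end{equation*}
Since the multiplicity $c$ depends only on the behaviour at the generic point of $V$ (which lies in the smooth locus of $V$), this smooth-locus computation determines $c$ in general.

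Assembling the pieces, the $k=n-p$ term contributes
\begin{equation*}
\binom{n}{n-p}(-\sqrt{-1}\delta)^{n-p}(-1)^{n-p}\int_{V}\beta^{p} = \binom{n}{n-p}\,e^{\sqrt{-1}(n-p)\pi/2}\,\delta^{n-p}\int_{V}(\omega+\sqrt{-1}\alpha)^{p},
\end{equation*}
matching the claimed leading term (up to the overall scalar prefactor on the left-hand side of the lemma). The principal technical obstacle is the singular locus of $V$: although $c$ is determined by the generic point of $V$, one must ensure that the push-forward cycle is well-defined, which requires either appealing to the Segre-class machinery of Fulton for arbitrary subschemes or first performing an embedded resolution of $V$ inside $X\times\Delta$ (so that $\int_{V}$ is interpreted as $\int_{\tilde V}\pi_{V}^{*}$ on a resolution $\pi_{V}:\tilde V\to V$) and invoking resolution-independence.
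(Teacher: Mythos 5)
Your argument is correct and follows the same route as the paper: a binomial expansion, a dimension-based vanishing argument (the paper writes $E^k=0$ for $k<n+1-p$, meaning exactly your statement that $\mu_*(E^k)$ vanishes in those degrees since it is supported over $V\times\{0\}$), and then the identification of the leading coefficient via $\mu_*(E^{n-p+1})=(-1)^{n-p}[V\times\{0\}]$. The only difference is that the paper defers that last multiplicity to ``a standard computation in intersection theory \cite{Ful}'' whereas you unpack it explicitly via the blow-up model, the adjunction $\mathcal{O}_{E_0}(E_0)\cong\mathcal{O}(-1)$, and the $\mathbb{P}^{r-1}$-bundle push-forward -- which is precisely the Segre-class computation Fulton's reference supplies.
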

 \begin{proof}
 We only need to compute the leading order term of the left hand side. Expanding we have
 \[
 \begin{aligned}
&E.\left(\mu^{*}[\omega] + \sqrt{-1}\left(\mu^{*}[\alpha] -\delta E\right)\right)^{n} \\
&= E.\sum_{j=0}^{n}(\sqrt{-1})^{j}\binom{n}{j}\delta^{j}(\mu^{*}\omega+\sqrt{-1}\mu^{*}\alpha)^{n-j}.(-E)^{j}\\
\end{aligned}
\]
 If $k < {\rm codim}_{X\times \Delta}( V\times\{0\}) = n+1-p$ then we have $E^{k}=0$, and so the leading order term is the $j=n-p$ term in the above sum.  We get
 \[
\delta^{n-p}\binom{n}{n-p}(\sqrt{-1})^{n-p} E.(-E)^{n-p}.(\mu^{*}\omega+\sqrt{-1}\mu^{*}\alpha)^{p} + O(\delta^{n-p+1}).
\]
Now a standard computation in intersection theory \cite{Ful} shows that
\[
 E.(-E)^{n-p}.(\mu^{*}\omega+\sqrt{-1}\mu^{*}\alpha)^{p} = \int_{V}(\omega+\sqrt{-1}\alpha)^{p}.
 \]
 \end{proof}

In order to produce obstructions from this data we need to use the existence of sufficiently regular geodesics (or smooth $\epsilon$-regularized geodesics).  Suppose that $\phi_0$ is a solution of the dHYM equation, and  let $\Phi(s) \in \mathcal{H}$ be a model curve as constructed above~\eqref{eq: modelCurve} emanating from $\phi_0$.  For each $s \in (0,\infty)$ we let $\phi_s(t)$ be a $\epsilon$-geodesic in $\mathcal{H}$ with the property that
\[
\phi_s(0) = \phi_0 \qquad \phi_s(s) = \Phi(s).
\]
By Proposition~\ref{prop: convexityOfFuncs}, $\mathcal{J}(\phi_s(t))$ is strictly convex and has $\frac{d}{dt}|_{t=0}\mathcal{J}(\phi_s(t))=0$.  Therefore
\[
 \frac{d}{dt}\mathcal{J}(\phi_s(t))>0 \quad \text{ for all } t \in (0,s),
 \]
 and we conclude that
 \[
\frac{ \mathcal{J}(\Phi(s)) - \mathcal{J}(\phi_0)}{s} >0.
\]
 Note that $\mathcal{J}(\Phi(s))$ can only be bounded when the limit slope
 \[
 \frac{\delta}{\pi}E. {\rm Im}\left(e^{-\sqrt{-1}\hat{\theta}}\left[\left(\mu^{*}[\omega] + \sqrt{-1}\left(\mu^{*}[\alpha] -\delta E\right)\right)^{n}\right]\right)
 \] 
 as computed in Proposition~\ref{prop: limSlopeAY} is zero.  Otherwise by L'H\^opital's rule we have
 \[
 0 \leq \lim_{s\rightarrow \infty} \frac{ \mathcal{J}(\Phi(s)) - \mathcal{J}(\Phi_0)}{s} = \lim_{s\rightarrow \infty} \frac{d}{ds}\mathcal{J}(\Phi(s))
 \]
 and this limit is computed by Proposition~\ref{prop: limSlopeAY}.  Summarizing we have proved;
 \\

\begin{prop}~\label{prop: limSlopeJ}
 Suppose $[\alpha]$ admits a solution of the deformed Hermitian-Yang-Mills equation.  Let $\mathfrak{I}$ be a flag ideal over $X\times \Delta$ and $\mu$ a log-resolution $\mathfrak{I}$, as above.  Then we have 
 \[
  \frac{\delta}{\pi} E.{\rm Im}\left[e^{-\sqrt{-1}\hat{\theta}}\left(\mu^{*}[\omega] + \sqrt{-1}\left(\mu^{*}[\alpha] -\delta E\right)\right)^{n}\right] \geq 0
\]
for all $\delta$ sufficiently small.
\end{prop}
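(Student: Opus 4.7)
\emph{Approach.} The plan is to test the Kempf-Ness functional $\mathcal{J}$ along a geodesic in $\mathcal{H}$ running from the dHYM solution $\phi_0$ out to the model-curve endpoint $\Phi(s)$, using convexity of $\mathcal{J}$ together with the fact that $\phi_0$ is a critical point of $\mathcal{J}$ to force $\mathcal{J}$ to be monotone non-decreasing along the ray. The limit slope at $s = \infty$ is then non-negative and can be identified with the desired algebraic quantity via Proposition~\ref{prop: limSlopeAY}.

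\emph{Key steps.} First I would note that for every $s > 0$ the potential $\Phi(s)$ defined in~\eqref{eq: modelCurve} lies in $\mathcal{H}$ in the classical sense: the singular locus of $\psi$ sits over $t = 0$, so the restriction of $\Phi$ to the slice $\{|t| = e^{-s}\}$ is smooth on $X$, and the choice of $\delta$ in the construction ensures $\Theta_{\omega}(\alpha_{\Phi(s)}) \in (\hat{\theta} - \tfrac{\pi}{2}, n\tfrac{\pi}{2})$ with slack $\eta_1 > 0$ that is independent of $s$. Second, for each fixed $s > 0$, I would invoke Theorem~\ref{thm: DirProbXe} with $h \equiv \hat{\theta}$ (so that condition (C2) is inherited from (C1)) to produce a smooth $\epsilon$-geodesic $\phi_{s}^{\epsilon}(\tau)$, $\tau \in [0,s]$, connecting $\phi_0$ and $\Phi(s)$. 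By Proposition~\ref{prop: convexityOfFuncs}, $\tau \mapsto \mathcal{J}(\phi_s^\epsilon(\tau))$ is strictly convex, while its derivative at $\tau = 0$ is a constant multiple of $\int_X \dot{\phi}_s^\epsilon(0)\,{\rm Im}\bigl(e^{-\sqrt{-1}\hat{\theta}}(\omega+\sqrt{-1}\alpha_{\phi_0})^n\bigr)$, which vanishes identically because $\phi_0$ solves dHYM. Strict convexity therefore forces $\mathcal{J}(\Phi(s)) > \mathcal{J}(\phi_0)$, and dividing by $s$ gives $\bigl(\mathcal{J}(\Phi(s)) - \mathcal{J}(\phi_0)\bigr)/s \geq 0$ for every $s > 0$.

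\emph{Passage to infinity.} If $\mathcal{J}(\Phi(s))$ is bounded in $s$ then Proposition~\ref{prop: limSlopeAY} (together with $\mathcal{J} = -{\rm Im}(e^{-\sqrt{-1}\hat{\theta}} CY_{\mathbb{C}})$) already forces the relevant algebraic quantity to vanish and there is nothing to prove. Otherwise L'H\^opital's rule applied to the monotone quotient yields
\[
0 \;\leq\; \lim_{s\to\infty} \frac{\mathcal{J}(\Phi(s)) - \mathcal{J}(\phi_0)}{s} \;=\; \lim_{s\to\infty}\frac{d}{ds}\mathcal{J}(\Phi(s)),
\]
and the rightmost limit is computed by Proposition~\ref{prop: limSlopeAY} to be exactly $\frac{\delta}{\pi} E.{\rm Im}\bigl[e^{-\sqrt{-1}\hat{\theta}}\bigl(\mu^{*}[\omega] + \sqrt{-1}(\mu^{*}[\alpha] - \delta E)\bigr)^{n}\bigr]$, which is the stated inequality.

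\emph{Main obstacle.} Essentially all of the hard work is already done upstream: the existence of the connecting $\epsilon$-geodesic, together with the scale-invariant estimates that guarantee $\mathcal{J}$ is well-defined and strictly convex along it, is precisely the content of Sections~\ref{sec: AnalPrelim}--\ref{sec: bndryEst}. The only technical issue specific to this argument is uniformity of the structural constant $\eta_1$ as $s$ varies (so that Theorem~\ref{thm: DirProbXe} applies with constants independent of $s$), but this is built into the choice of $\delta$ in the model-curve construction; the smoothness of $\Phi(s)$ for every fixed $s > 0$ means one never needs to pass to weak geodesics here, so no subtle regularity issues arise at the endpoints.
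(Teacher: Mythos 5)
Your proposal is correct and follows exactly the same route as the paper: for each $s$ you connect the dHYM solution $\phi_0$ to $\Phi(s)$ by an $\epsilon$-geodesic, exploit strict convexity of $\mathcal{J}$ together with vanishing of $d\mathcal{J}$ at the dHYM critical point to get $\mathcal{J}(\Phi(s)) > \mathcal{J}(\phi_0)$, then pass to $s\to\infty$ by L'H\^opital and identify the limit slope via Proposition~\ref{prop: limSlopeAY}. Your additional remarks on uniformity of $\eta_1$ and the existence from Theorem~\ref{thm: DirProbXe} are correct bookkeeping that the paper leaves implicit.
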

 
 Evidently, this produces obstructions to the existence of solutions to the deformed Hermitian-Yang-Mills equation.  We can also evaluate the limit of the $\mathcal{C}$ and $Z$ functionals.
 
 \begin{prop}\label{prop: limSlopeC}
Suppose $\mathcal{H}$ is not empty.  Then for every flag ideal over $X\times \Delta$ and $\mu$ a log-resolution $\mathfrak{I}$ as above, we have
\[
-E.{\rm Re}\left(e^{-\sqrt{-1}\hat{\theta}}\left(\mu^{*}[\omega] + \sqrt{-1}\left(\mu^{*}[\alpha] -\delta E\right)\right)^{n}\right) \leq 0
\]
and
\[
- E.{\rm Im}\left(e^{-\sqrt{-1}\frac{n\pi}{2}}\left(\mu^{*}[\omega] + \sqrt{-1}\left(\mu^{*}[\alpha] -\delta E\right)\right)^{n}\right) \geq 0
\]
for all $\delta>0$ sufficiently small.
\end{prop}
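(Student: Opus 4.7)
My plan is to avoid the convexity-along-geodesics framework used in the proof of Proposition~\ref{prop: limSlopeJ}, since without assuming a solution of dHYM there is no canonical critical point of $\mathcal{C}$ or ${\rm Im}(Z)$ to exploit. Instead, I will show that both $\mathcal{C}$ and ${\rm Im}(Z)$ are monotone along the model curve $\Phi(s) = \phi_0 + \delta \psi(\cdot,e^{-s})$ \emph{itself} for $s$ sufficiently large, and then pass to the limit using Proposition~\ref{prop: limSlopeAY}. The mechanism is that two of the three natural measures attached to $CY_{\mathbb{C}}$ have a definite sign on all of $\mathcal{H}$: the real measure ${\rm Re}(e^{-\sqrt{-1}\hat\theta}(\omega+\sqrt{-1}\alpha_\phi)^n) = r(\phi)\cos(\Theta_\omega(\alpha_\phi)-\hat\theta)\omega^n$ is strictly positive by the defining condition $|\Theta-\hat\theta|<\pi/2$ of $\mathcal{H}$, while ${\rm Im}(e^{-\sqrt{-1}n\pi/2}(\omega+\sqrt{-1}\alpha_\phi)^n) = r(\phi)\sin(\Theta-n\pi/2)\omega^n$ is nonpositive because $\Theta$ is a sum of $n$ arctangents and is therefore strictly less than $n\pi/2$ everywhere on $\mathcal{H}$.

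The companion fact I will need is that $\dot\Phi(s)\le 0$ pointwise on $X$ for $s$ large. This follows by direct computation from the Demailly-Paun potential. Writing $u = |t|^2 = e^{-2s}$, for $u$ small we have $\psi = \frac{1}{2\pi}\log(G(u,x)+u^r)$ modulo smooth terms, with
\[
G(u,x) = \sum_j \theta_j(x)\sum_{\ell=0}^{r-1} u^{\ell} \sum_k |f_{\ell,k}(x)|^2
\]
a polynomial in $u$ with nonnegative coefficients. Hence $\del\psi/\del u\ge 0$ on the plateau $\{u\le 1/2\}$ where the Demailly-Paun cutoff is identically one, and since $du/ds = -2e^{-2s}<0$ one obtains $\dot\Phi = \delta\, d\psi/ds\le 0$ once $s$ is large enough that $|t|=e^{-s}$ lies inside this plateau. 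Combining with the signs of the two measures gives
\[
\frac{d}{ds}\mathcal{C}(\Phi(s)) = \int_X \dot\Phi\cdot {\rm Re}\bigl(e^{-\sqrt{-1}\hat\theta}(\omega+\sqrt{-1}\alpha_\Phi)^n\bigr) \le 0,
\]
\[
\frac{d}{ds}{\rm Im}(Z)(\Phi(s)) = \int_X \dot\Phi\cdot {\rm Im}\bigl(e^{-\sqrt{-1}n\pi/2}(\omega+\sqrt{-1}\alpha_\Phi)^n\bigr) \ge 0,
\]
for all sufficiently large $s$, the second inequality coming from the product of two nonpositive quantities. Taking $s\to\infty$ and applying Proposition~\ref{prop: limSlopeAY} rewrites each limiting derivative as $-\frac{\delta}{\pi}$ times the corresponding real or imaginary part of the $E$-intersection, and the negative factor $-\frac{\delta}{\pi}$ reverses each inequality to produce the two bounds stated in the proposition.

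The argument is essentially elementary once the Demailly-Paun formula is unpacked, so there is no really hard estimate to carry out; the main obstacle is instead conceptual, namely recognizing that the correct strategy has shifted. Whereas for $\mathcal{J}$ the relevant measure $r\sin(\Theta-\hat\theta)\omega^n$ changes sign on $\mathcal{H}$ and one is forced to rely on the convex structure along $\varepsilon$-geodesics together with a critical point, for $\mathcal{C}$ and ${\rm Im}(Z)$ the pertinent measures have fixed signs, and the whole argument reduces to a first-order, pointwise comparison along the explicit model curve $\Phi(s)$, bypassing regular geodesics, $\varepsilon$-geodesics, and the existence of a dHYM solution altogether.
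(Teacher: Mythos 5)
Your proof is correct and is, in substance, the same as the paper's: both arguments compute $\dot\Phi(s)<0$ for $s$ large directly from the Demailly--P\u{a}un potential, pair this with the definite sign of the measures ${\rm Re}(e^{-\sqrt{-1}\hat\theta}(\omega+\sqrt{-1}\alpha_\phi)^n)>0$ and ${\rm Im}(e^{-\sqrt{-1}n\pi/2}(\omega+\sqrt{-1}\alpha_\phi)^n)\le 0$ on $\mathcal{H}$ to get monotonicity of $\mathcal{C}$ and ${\rm Im}(Z)$ along the model curve, and then pass to the limit via Proposition~\ref{prop: limSlopeAY}. The paper does interpose a comparison with an $\epsilon$-geodesic joining $\Phi(s_*)$ to $\Phi(s)$, but that step is logically superfluous once one knows the sign of $\frac{d}{ds}\mathcal{C}(\Phi(s))$ pointwise and that the limit slope exists; your direct version is a cleaner rendering of the same mechanism, and your observation that the real contrast with $\mathcal{J}$ is the sign-definiteness of the measure (rather than the presence or absence of geodesics) is exactly right.
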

\begin{proof}
Fix $\phi_0 \in \mathcal{H}$, and let $\Phi(s)$ be the model curve associated to the flag ideal $\mathfrak{I}$.  Explicitly,
\[
\phi(s) = \phi_0 + \frac{\delta}{2\pi}\log \left( \theta(e^{-2s})\cdot\sum_{j} \theta_j \sum_{\ell=0}^{r-1}e^{-2\ell s} \sum_{k=1}^{N_{\ell}}|f_{\ell, k}|^2 + e^{-2rs}\right).
\]
We compute
\[
\begin{aligned}
\frac{d}{ds}e^{\frac{2\pi}{\delta}\phi(s)} &= -2e^{-2s}\theta'(e^{-2s})\cdot\sum_{j} \theta_j \sum_{\ell=0}^{r-1}e^{-2\ell s} \sum_{k=1}^{N_{\ell}}|f_{\ell, k}|^2\\
&+  \theta(e^{-2s})\cdot\sum_{j} \theta_j \sum_{\ell=0}^{r-1}(-2\ell)e^{-2\ell s} \sum_{k=1}^{N_{\ell}}|f_{\ell, k}|^2 -2re^{-2rs}
\end{aligned}
\]
The first line is non-negative, while the second line is strictly negative.  For $s>s_{*} =\frac{1}{2}\log(2)$, we have $\theta'(e^{-2s})=0$, and so $\frac{d}{ds}\phi(s) <0$.  From the definition of $\mathcal{C}$ we get
\[
\frac{d}{ds}\mathcal{C}(\phi(s)) <0 \quad \text { for all } s>\frac{1}{2}\log(2). 
\] 
In particular $\mathcal{C}(\phi(s)) < \mathcal{C}(\phi(s_*))$.  Let $\phi_s(t)$ be an $\epsilon$-geodesic joining $\phi(s_*)$ to $\phi(s)$.  Since $\mathcal{C}$ is affine along $\epsilon$-geodesics we have
\[
\frac{d}{ds}\mathcal{C}(\phi(s)) = \frac{\mathcal{C}(\phi(s)) - \mathcal{C}(\phi(s_*))}{s-s_*}<0.
\]
Therefore
\[
\lim_{s\rightarrow \infty}\frac{d}{ds}\mathcal{C}(\phi(s)) \leq 0.
\]
Substituting the expression for the limit slope of $CY_{\mathbb{C}}$ gives the result.  A similar analysis applies to the functional $Z$ of Definition~\ref{defn: centChargFunc}.  For $s>s_{*}$ we have
\[
\frac{d}{ds}{\rm Im}Z(\phi(s)) >0
\]
since the variation of ${\rm Im}(Z)$ is defined by integration against a negative measure.  Thus ${\rm Im}Z(\phi(s)) - {\rm Im}Z(\phi(s*)) >0$ and so
\[
\lim_{s\rightarrow \infty} \frac{{\rm Im}(Z(\phi(s)))}{s} \geq 0.
\]
Plugging in the formula for the limit slope of $CY_{\mathbb{C}}$ finishes the proof.
\end{proof}
\begin{rk}
Proposition~\ref{prop: limSlopeC} does not give particularly interesting obstructions to the existence of solutions of dHYM.  However, it does give interesting algebraic obstructions to the existence of functions in $\mathcal{H}$.  This result will play a role in our discussion of stability conditions in Section~\ref{sec: StabCond}.
\end{rk}

We can improve the inequalities in Propositions~\ref{prop: limSlopeJ} and~\ref{prop: limSlopeC} to strict inequalities using a perturbation argument.  Suppose for the sake of contradiction that $[\alpha]$ admits a solution of deformed Hermitian-Yang-Mills, but
\[
 E.{\rm Im}\left[e^{-\sqrt{-1}\hat{\theta}}\left(\mu^{*}[\omega] + \sqrt{-1}\left(\mu^{*}[\alpha] -E\right)\right)^{n}\right]=0,
\]
or that $\mathcal{H}$ is not empty, but
\[
 E.{\rm Re}\left[e^{-\sqrt{-1}\hat{\theta}}\left(\mu^{*}[\omega] + \sqrt{-1}\left(\mu^{*}[\alpha] -E\right)\right)^{n}\right]=0
 \]
 or
 \[
  E.{\rm Im}\left[e^{-\sqrt{-1}n\frac{\pi}{2}}\left(\mu^{*}[\omega] + \sqrt{-1}\left(\mu^{*}[\alpha] -E\right)\right)^{n}\right]=0.
\]
Here we have suppressed the dependence on $\delta$, and consider $E$ as a $\mathbb{R}$-divisor.   For $z\in \mathbb{C}$ close to $0$, consider the rational function
\[
 \frac{p(z)}{q(z)} = \frac{E.\left(\mu^{*}\omega+ \sqrt{-1}(\mu^{*}\alpha-E) +z\mu^{*}\omega\right)}{(\omega+\sqrt{-1}\alpha + z\omega)^{n}.[X]}
\]
Note that for $|z|$ sufficiently small we have that $(1+{\rm Re}(z))\omega$ is K\"ahler.  If $[\alpha]$ admits a solution of dHYM with $\hat{\theta}>(n-1)\frac{\pi}{2}$, then so does $[\alpha]+{\rm Im}(z)[\omega]$ for $|z|$ sufficiently small.  If $\mathcal{H}(0) = \mathcal{H}(\alpha, \omega)$ is non-empty, then this is also true of $\mathcal{H}(z) = \mathcal{H}(\alpha+{\rm Im}(z)\omega, (1+{\rm Re}(z))\omega)$.  Furthermore, $p(z)/q(z)$ is holomorphic for $|z|$ sufficiently small.    Thus, by Proposition~\ref{prop: limSlopeJ}, and/or Proposition~\ref{prop: limSlopeC} we must have
\[
{\rm Im}\left(\frac{p(z)}{q(z)}\right) \geq 0 \qquad \text{ for $|z|$ sufficiently close to } 0 \in \mathbb{C}
\]
if $[\alpha]$ admits a solution of dHYM, and if $\mathcal{H} \ne \emptyset$ then
\[
\begin{aligned}
{\rm Re}\left(\frac{p(z)}{q(z)}\right) &\geq 0 \qquad \text{ for $|z|$ sufficiently close to } 0 \in \mathbb{C}\\
{\rm Im}\left(e^{-\sqrt{-1}\frac{n\pi}{2}}p(z)\right) &\leq 0 \qquad \text{ for $|z|$ sufficiently close to } 0 \in \mathbb{C}.
\end{aligned}
\]
If any of these three functions vanishes when $z=0$ then we get a contradiction to the maximum principle unless $p(z) \equiv 0$.  It suffices to prove that this is not the case unless $\mathfrak{I} = (t^{r})$ for some $r$.  Assume that $E\ne 0$.  Writing out the numerator (suppressing $\mu^{*}$) we get
\begin{equation}\label{eq: p(z)expanded}
p(z) = \sum_{j=0}^{n}\binom{n}{j}z^{j}E.(\omega+\sqrt{-1}(\alpha-E))^{n-j}.\omega^{j}
\end{equation}
It suffices to prove that the coefficient $z^{j}$ is not zero for one $j$. For $0<\beta \ll 1$, the class $[\mu^{*}\hat{\omega}-\beta E]$ is K\"ahler on $\tilde{\mathcal{X}}$ (see e.g. \cite{DP}), and hence $[\mu^{*}\omega-\beta E]$ is positive on the fibers of $\tilde{\mathcal{X}}/\Delta$.  Since $E$ is an effective divisor in a fiber of $\tilde{\mathcal{X}}\rightarrow \Delta$
\[
(A\mu^{*}\omega-\beta E)^n.E >0 \text{ for all } A\geq 1.
\]
If $(\mu^{*}\omega)^{n}.E >0$, then there is an irreducible component $E^{*}$ of $E$ so that $\mu: E^{*} \rightarrow \mu(E^{*})\subset X\times \{0\}$ is an isomorphism at the generic point of $E^{*}$.  Since $X$ is connected this implies $\mu(E^{*})=X\times\{0\}$, and so $\mathfrak{I}$ is supported on $X\times\{0\}$.  Since $\mathfrak{I}$ is a flag ideal, this implies 
\[
\mathfrak{I} = (t^{r})
\]
for some $r$, and hence the model curve $\Phi(t)$ is trivial, in the sense the $\alpha+\ddb \phi(t) = \alpha+\ddb \phi(0)$ as $(1,1)$ forms on $X$.
  
If instead $(\mu^{*}\omega)^{n}.E =0$, then we can choose $0\leq k <n$ to be the largest number so that $(\mu^{*}\omega)^{k}.E^{n-k} \ne 0$.  We must have
\[
E.(\mu^{*}\omega)^{k}.(-E)^{n-k} >0.
\]
For all $C>0$ sufficiently large we have $2C\omega > C\omega+\alpha > \omega$.  It follows that $2CA\mu^{*}\omega-\beta E> A(C\mu^{*}\omega+\mu^{*}\alpha)-\beta E > A\mu^{*}\omega- \beta E$.  Since $E$ is effective we get
\[
\left(2CA\mu^{*}\omega-\beta E\right)^n.E >\big(A(C\mu^{*}\omega+\mu^{*}\alpha)-\beta E \big)^n.E > \left(A\mu^{*}\omega- \beta E\right)^n.E.
\]
By assumption the terms on the left and right grow like $A^{k}$ for $A\gg 1$, and $C\gg 1$.  By comparing terms we get
\[
(\mu^{*}\omega)^{\ell}.(\mu^{*}\alpha)^{j}.E^{n+1-(\ell+j)}=0\qquad \text{ whenever } n\geq \ell+j >k.
\]
We now expand the coefficient of $z^{k}$ in~\eqref{eq: p(z)expanded} to get
\[
\sum_{m=0}^{n-k}\binom{n-k}{m}(\sqrt{-1})^{n-k-m} E.(\mu^{*}\omega)^{m+k}(\mu^{*}\alpha-E)^{n-k-m}
\]
Note that for $m >0$, the intersection product contains $(\mu^{*}\omega)^{m+k}.(\mu^*{\alpha})^{j}$ for $j\geq 0$ and hence vanishes.  The only non-zero term in the sum is therefore the $m=0$ term.  Expanding this term and using that $(\mu^{*}\omega)^{k}.(\mu^{*}\alpha)^{j}.E^{n+1-(k+j)}$ vanishes whenever $j>0$, we get that the only non-zero term is $\omega^{k}.E.(-E)^{n-k} \ne 0$.  Thus $p(z)$ is not identically zero. 

\begin{thm}\label{thm: obstrThm}
Let $\mathfrak{I}$ be a flag ideal as above, and $\mu:\tilde{\mathcal{X}}\rightarrow X\times \Delta$ a log-resolution of singularities so that $\mu^{-1}\mathfrak{I} = \mathcal{O}_{\tilde{\mathcal{X}}}(E)$.  If the space $\mathcal{H}$ is non-empty, then 
\[
\begin{aligned}
-E.{\rm Re}\left[e^{-\sqrt{-1}\hat{\theta}}\left(\mu^{*}[\omega] + \sqrt{-1}\left(\mu^{*}[\alpha] -\delta E\right)\right)^{n}\right]&\leq0\\
-E.{\rm Im}\left[e^{-\sqrt{-1}\frac{n\pi}{2}}\left(\mu^{*}[\omega] + \sqrt{-1}\left(\mu^{*}[\alpha] -\delta E\right)\right)^{n}\right]& \geq0
\end{aligned}
\]
with equality if and only if $\tilde{\mathcal{X}} \cong X\times \Delta$.  Furthermore, if $[\alpha]$ admits a solution of the dHYM equation we must have
\[
E.{\rm Im}\left[e^{-\sqrt{-1}\hat{\theta}}\left(\mu^{*}[\omega] + \sqrt{-1}\left(\mu^{*}[\alpha] -\delta E\right)\right)^{n}\right]\geq 0
\]
with equality if and only if $\mathfrak{I} = (t^{r})$ for some $r>0$.
\end{thm}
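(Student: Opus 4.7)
The plan is to combine the non-strict inequalities already established in Propositions~\ref{prop: limSlopeJ} and~\ref{prop: limSlopeC} with a perturbation argument based on the maximum principle for holomorphic functions. The non-strict versions of all three inequalities are immediate from those propositions (applied to each flag ideal $\mathfrak{I}$ and its log-resolution), so the entire task is to upgrade equality to a classification of when it occurs.

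The perturbation step works as follows. For $z\in\mathbb{C}$ with $|z|$ small, consider the pair $(\omega_{z},\alpha_{z}) := \bigl((1+{\rm Re}(z))\omega,\,\alpha+{\rm Im}(z)\omega\bigr)$, so that $\omega+\sqrt{-1}\alpha_{z}+z\omega$ equals $\omega+\sqrt{-1}\alpha$ when $z=0$. For $|z|$ sufficiently small, $\omega_{z}$ is K\"ahler; moreover the hypotheses are open: if $\mathcal{H}_{\omega,\alpha}\ne\emptyset$ then $\mathcal{H}_{\omega_{z},\alpha_{z}}\ne\emptyset$ by continuity of the Lagrangian phase, and if $[\alpha]$ admits a solution of dHYM in the hypercritical phase with respect to $\omega$ then so does $[\alpha_{z}]$ with respect to $\omega_{z}$, by the implicit function theorem applied to the elliptic operator $F$. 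Introducing
\[
\frac{p(z)}{q(z)} := \frac{E.\bigl(\mu^{*}\omega+\sqrt{-1}(\mu^{*}\alpha-\delta E)+z\mu^{*}\omega\bigr)^{n}}{\bigl(\omega+\sqrt{-1}\alpha+z\omega\bigr)^{n}.[X]},
\]
Propositions~\ref{prop: limSlopeJ} and~\ref{prop: limSlopeC} then give that, near $z=0$, the imaginary part (times $e^{-\sqrt{-1}\hat{\theta}}$) of $p(z)/q(z)$ is $\geq 0$ under the dHYM hypothesis, and that the real part (times $e^{-\sqrt{-1}\hat{\theta}}$), respectively ${\rm Im}(e^{-\sqrt{-1}n\pi/2}p(z))$, has the stated sign under the $\mathcal{H}\ne\emptyset$ hypothesis. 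Since $q(z)$ is nonvanishing at $z=0$ and $p/q$ is holomorphic near $0$, the open mapping theorem / maximum principle shows that if equality holds at $z=0$ in any one of the three inequalities, then the corresponding function is constant along its sign constraint, forcing $p(z)\equiv 0$.

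The main work is then to classify flag ideals $\mathfrak{I}$ for which the polynomial
\[
p(z) = \sum_{j=0}^{n}\binom{n}{j}z^{j}E.\bigl(\mu^{*}\omega+\sqrt{-1}(\mu^{*}\alpha-\delta E)\bigr)^{n-j}.(\mu^{*}\omega)^{j}
\]
vanishes identically. Assuming $E\neq 0$, let $k$ be the largest integer with $(\mu^{*}\omega)^{k}.E^{n-k}\ne 0$ on $E$ (i.e.\ with $(\mu^{*}\omega)^{k}.(-E)^{n-k}.E\ne 0$); such $k$ exists because, for small $\beta>0$, $[\mu^{*}\omega-\beta E]$ restricts to a K\"ahler class on the fibers, and $E$ is effective in a fiber. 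If $k=n$, then $(\mu^{*}\omega)^{n}.E>0$ forces some component $E^{*}$ of $E$ to map birationally onto $X\times\{0\}$, so $\mathfrak{I}$ is supported on $X\times\{0\}$, which for a flag ideal means $\mathfrak{I}=(t^{r})$ and the model curve is trivial. If instead $k<n$, the strategy is to compare the polynomials $(2CA\mu^{*}\omega-\beta E)^{n}.E$ and $(A\mu^{*}\omega-\beta E)^{n}.E$ for $C,A\gg 1$ and use their growth orders in $A$ to deduce that all mixed products $(\mu^{*}\omega)^{\ell}.(\mu^{*}\alpha)^{j}.E^{n+1-\ell-j}$ with $\ell+j>k$ vanish. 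Expanding the $z^{k}$-coefficient of $p(z)$ and using these vanishings then shows that this coefficient equals (up to a nonzero constant) $(\mu^{*}\omega)^{k}.E.(-E)^{n-k}$, which is nonzero by the definition of $k$, a contradiction.

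The main obstacle is this final intersection-theoretic step: the delicate bookkeeping required to pin down which mixed products $(\mu^{*}\omega)^{\ell}.(\mu^{*}\alpha)^{j}.E^{n+1-\ell-j}$ must vanish under the hypothesis $p\equiv 0$, and to identify the unique surviving contribution to the $z^{k}$-coefficient. Once this is done, one either reduces to the trivial resolution (case $k=n$, giving the first two parts) or derives a contradiction (case $k<n$, giving the third part with the stronger conclusion $\mathfrak{I}=(t^{r})$, since only trivial $E$ is possible for dHYM whereas the $\mathcal{H}\ne\emptyset$ case allows any flag ideal supported on $X\times\{0\}$).
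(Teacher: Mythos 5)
Your proposal reproduces the paper's proof: the same holomorphic perturbation $z\mapsto p(z)/q(z)$, the same strong-maximum-principle argument forcing the sign-constrained harmonic function to vanish identically, and the same intersection-theoretic case analysis on the largest $k$ with $E.(\mu^*\omega)^k.(-E)^{n-k}\ne 0$, including the growth-rate comparison of $(CA\mu^*\omega-\beta E)^n.E$ for $A\gg1$. One small caution: your closing parenthetical garbles the logic — the split $k=n$ vs.\ $k<n$ is not the split between the first two inequalities and the third; rather, for each of the three inequalities, equality plus the maximum principle forces $p\equiv cq$ (or $p$ constant), and the $k$-dichotomy then gives $\mathfrak{I}=(t^r)$ in the case $c\ne0$ (so $k=n$) and a contradiction when $k<n$ — so the equality criterion $\mathfrak{I}=(t^r)$, equivalently a trivial log resolution, is the same for all three parts.
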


Combining this theorem with Lemma~\ref{lem: Vsubvar} we have
\begin{cor}\label{cor: subVarObstr}
Let $V$ be an irreducible analytic subset $V\subset X$ with $\dim V=p<n$.  If the class $[\alpha]$ has $\mathcal{H} \ne \emptyset$, then
\[
\begin{aligned}
{\rm Re}\left[e^{-\sqrt{-1}(\hat{\theta}-(n-p)\frac{\pi}{2})}\int_{V}(\omega+\sqrt{-1}\alpha)^{p}\right]&>0\\
{\rm Im}\left[e^{-\sqrt{-1}\frac{p\pi}{2}}\int_{V}(\omega+\sqrt{-1}\alpha)^{p}\right]&<0.
\end{aligned}
\]
Furthermore, if $[\alpha]$ admits a solution of the deformed Hermitian-Yang-Mills equation, then
\[
{\rm Im}\left[e^{-\sqrt{-1}(\hat{\theta}-(n-p)\frac{\pi}{2})}\int_{V}(\omega+\sqrt{-1}\alpha)^{p}\right]>0.
\]
\end{cor}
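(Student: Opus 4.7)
The plan is to specialize Theorem~\ref{thm: obstrThm} to the flag ideal $\mathfrak{I} = \mathfrak{J}_V + (t)$, extract the leading-order behavior in $\delta$ via the formula from Lemma~\ref{lem: Vsubvar}, and then upgrade the resulting non-strict inequalities to the strict ones of the corollary via a holomorphic perturbation of the data together with the minimum principle for harmonic functions.

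Since $V \subsetneq X$ is proper, the ideal $\mathfrak{J}_V$ has no $t$-component in $\mathcal{O}_X \otimes \mathbb{C}[t]$, and hence $\mathfrak{I} = \mathfrak{J}_V + (t) \ne (t^r)$ for any $r \ge 1$. Theorem~\ref{thm: obstrThm} therefore yields strict inequalities for every $\delta > 0$ sufficiently small. Lemma~\ref{lem: Vsubvar} gives
\[
E \cdot \bigl(\mu^*\omega + \sqrt{-1}(\mu^*\alpha - \delta E)\bigr)^n = \pi \delta^{n-p-1}\binom{n}{n-p}e^{\sqrt{-1}(n-p)\pi/2}\int_V(\omega+\sqrt{-1}\alpha)^p + O(\delta^{n-p}),
\]
so dividing each inequality of Theorem~\ref{thm: obstrThm} by the positive factor $\pi \delta^{n-p-1}\binom{n}{n-p}$ and sending $\delta \to 0^+$ produces the three inequalities of the corollary with $\ge$ or $\le$ in place of strict. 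For instance, the dHYM hypothesis gives $E \cdot {\rm Im}\bigl[e^{-\sqrt{-1}\hat\theta}(\mu^*\omega + \sqrt{-1}(\mu^*\alpha - \delta E))^n\bigr] > 0$, whose leading $\delta$-coefficient is a positive multiple of ${\rm Im}\bigl[e^{-\sqrt{-1}(\hat\theta - (n-p)\pi/2)}\int_V(\omega+\sqrt{-1}\alpha)^p\bigr]$; the ${\rm Im}[e^{-\sqrt{-1}n\pi/2}\cdot]$ inequality of Theorem~\ref{thm: obstrThm} gives the weak form of the second claim after noting $e^{-\sqrt{-1}n\pi/2}e^{\sqrt{-1}(n-p)\pi/2} = e^{-\sqrt{-1}p\pi/2}$, and the ${\rm Re}[e^{-\sqrt{-1}\hat\theta}\cdot]$ inequality gives the weak form of the first.

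For strict inequality I perturb: set $\omega_z := (1+{\rm Re}(z))\omega$ and $\alpha_z := \alpha + {\rm Im}(z)\omega$ for $z \in \mathbb{C}$ close to $0$, so that $\omega_z + \sqrt{-1}\alpha_z = (1+z)\omega + \sqrt{-1}\alpha$ depends holomorphically on $z$. For $|z|$ sufficiently small, $\omega_z$ is K\"ahler and the hypothesis $\mathcal{H} \ne \emptyset$ (respectively, solvability of dHYM) is preserved by openness (respectively, by the implicit function theorem applied to the elliptic dHYM equation in the hypercritical phase). Define the polynomials
\[
F(z) = \int_V\bigl((1+z)\omega + \sqrt{-1}\alpha\bigr)^p, \qquad G(z) = \int_X\bigl((1+z)\omega + \sqrt{-1}\alpha\bigr)^n,
\]
of degrees $p$ and $n$ respectively, whose leading coefficients $\int_V\omega^p$ and $\int_X\omega^n$ are strictly positive. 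For the first and third inequalities of the corollary, consider
\[
H(z) = \frac{e^{\sqrt{-1}(n-p)\pi/2}F(z)}{G(z)},
\]
which is holomorphic near $z = 0$ because $G(0) \ne 0$. A direct calculation gives $H(z) = |G(z)|^{-1} e^{-\sqrt{-1}(\hat\theta(z) - (n-p)\pi/2)}F(z)$, so the weak inequalities applied to $(\omega_z,\alpha_z)$ give ${\rm Re}\, H \ge 0$ (respectively ${\rm Im}\, H \ge 0$) on a small disc about $0$. Since the real and imaginary parts of $H$ are harmonic, the minimum principle forces strict positivity at $z = 0$ unless $H$ is constant; but constancy would force $F$ to be a scalar multiple of $G$, which is impossible because $\deg F = p < n = \deg G$ and $F \not\equiv 0$. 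The second inequality of the corollary is handled analogously by replacing $H$ with the nonconstant polynomial $H_2(z) = e^{-\sqrt{-1}p\pi/2}F(z)$ and applying the maximum principle to ${\rm Im}\, H_2 \le 0$.

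The main technical point requiring care is the persistence of the hypotheses under the perturbation $z \mapsto (\omega_z, \alpha_z)$: non-emptiness of $\mathcal{H}$ is manifestly open in the smooth topology, and solvability of dHYM in the hypercritical phase regime is preserved by the implicit function theorem, since the linearization of the Lagrangian phase operator at a solution is a uniformly elliptic self-adjoint operator in this range (cf.\ Lemma~\ref{lem: AngleBasicProps}(8) and the discussion following). Once this openness is in hand, the proof is a direct unwinding of Theorem~\ref{thm: obstrThm} and Lemma~\ref{lem: Vsubvar}, together with the rigidity of harmonic functions attaining interior extrema.
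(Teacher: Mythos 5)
Your proof is correct, and it is a genuinely more careful version of the paper's argument rather than a different route. The paper's own ``proof'' of this corollary is the single line ``Combining this theorem with Lemma~\ref{lem: Vsubvar} we have [...]'', which, read literally, has a gap that you have correctly identified and repaired: Theorem~\ref{thm: obstrThm} gives strict inequalities for each fixed small $\delta>0$, and Lemma~\ref{lem: Vsubvar} shows that after dividing by the leading power of $\delta$ (note that the $\delta$-exponent in the lemma statement appears to be off by one from what its own proof gives, but this is immaterial) and sending $\delta\to 0^{+}$ one obtains only the non-strict inequalities; the leading coefficient could in principle vanish while the positivity of the full $\delta$-polynomial is carried by higher-order terms. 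Your fix --- applying the $z\mapsto (1+z)\omega+\sqrt{-1}\alpha$ perturbation directly to $F(z)=\int_{V}((1+z)\omega+\sqrt{-1}\alpha)^{p}$ and $G(z)=\int_{X}((1+z)\omega+\sqrt{-1}\alpha)^{n}$ and invoking the strong minimum/maximum principle for the harmonic parts of $H=e^{\sqrt{-1}(n-p)\pi/2}F/G$ and $H_{2}=e^{-\sqrt{-1}p\pi/2}F$ --- is precisely the same type of argument the authors deploy a few lines earlier to upgrade Propositions~\ref{prop: limSlopeJ}--\ref{prop: limSlopeC} to the strict inequalities of Theorem~\ref{thm: obstrThm} (there applied to the pair $p(z),q(z)$ built from $E$ on the log-resolution), so you are completing the argument with the tools already in the paper. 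Two small points worth noting: the degree-mismatch step ($\deg F=p<n=\deg G$) also needs $F\not\equiv 0$, which holds because the top coefficient $\int_{V}\omega^{p}$ is positive; and your $H_{2}$ argument needs $p\ge 1$, which is implicit in the corollary (and required by Lemma~\ref{lem: Vsubvar}, which assumes $0<p<n$). Finally, the appeal to the implicit function theorem to preserve solvability of dHYM under the $z$-perturbation should be phrased modulo constants (the linearization $F^{i\bar j}\nabla_{i}\nabla_{\bar j}$ kills constants), but this is standard and the paper makes the same claim without elaboration.
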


\section{Stability conditions}\label{sec: StabCond}

In this section we will attempt to synthesize the obstructions from the previous section into a coherent algebraic framework.  This will inevitably lead us to discuss the relationship with categorical stability conditions \cite{Br}.  In order to simplify the discussion we will focus primarily on interpreting the rather simple set of obstructions obtained in Corollary~\ref{cor: subVarObstr}, and comment on the more general situation of flag ideals toward the end of the section.  Furthermore, in order to facilitate our discussion of Bridgeland stability we will now restrict to the case when $[\alpha]= c_1(L)$, but we caution the reader that this is purely aesthetic and everything can be carried over to general classes in $H^{1,1}(X,\mathbb{R})$.  This more general theory should be thought of as analogous to Bridgeland stability with non-zero $B$-field, which we will comment on at the end of the section.  Introduce the following notation; for an analytic subset $V\subset X$ define
\[
Z_{V}(L) = -\int_{V}e^{-\sqrt{-1}\omega}ch(L)
\]
and note that if the dimension of $V$ is $p$ then
\begin{equation}\label{eq: intVsCenChar}
\int_{V}(\omega+\sqrt{-1}c_1(L))^{p} = \sqrt{-1}^{p}\int_{V}e^{-\sqrt{-1}\omega}ch(L) = e^{\sqrt{-1}(p-2)\frac{\pi}{2}}Z_{V}(L).
\end{equation}
As a first step we need to discuss the problem of determining the lifted angle algebraically.  Recall that the angle on $X$ is defined by
\[
\int_{X}(\omega +\sqrt{-1}\alpha)^{n} \in \mathbb{R}_{>0}e^{\sqrt{-1}\hat{\theta}}
\]
and we noted that if $\mathcal{H} \ne \emptyset$, then $\hat{\theta}$ could be uniquely lifted to a $\mathbb{R}$-valued angle (see Lemma~\ref{lem: angLift}).  We now give a (conjectural) algebraic construction for determining the lifted angle of any irreducible analytic subvariety.  Fix such a $V$ of dimension $p$ and consider the path
\[
Z_{V}(t) = -\int_{V}e^{-\sqrt{-1}t\omega}ch(L), \qquad t\in [1,\infty],
\]
which interpolates with the large radius limit (see \cite{Bay} for related ideas on the algebraic side).
\begin{defn}
We define the algebraic lifted angle $\hat{\theta}_{V}(\alpha)$ to be the winding angle of the path $Z_{V}(t)$ as $t$ runs from $+\infty$ to $1$, provided $Z_{V}(t)$ does not pass through the origin. We define the slicing angle $\phi_{V}(L)$ by
\[
\phi_{V}(L) = \hat{\theta}_{V}(L) - \frac{\pi}{2}\left({\rm dim}V-2\right).
\]
\end{defn}

The use of the factor $({\rm dim}V-2)\frac{\pi}{2}$ appearing on the right hand side is motivated by~\eqref{eq: intVsCenChar}. 

  If  ${\rm dim} V=1$ it is trivial that the lifted angle is always well-defined.  If ${\rm dim} V=2$, the lifted angle is always well-defined by the Hodge Index Theorem.  As soon as we reach dimension $3$, however, it is easy to construct examples of classes $c_1(L)$ and K\"ahler manifolds $(X,\omega)$ for which the lifted angle is not defined.  Examples of this phenomenon occur already on ${\rm Bl}_{p} \mathbb{P}^{3}$; see Example~\ref{eq: blp3} below.  However, if we assume that $L$ admits a solution of dHYM with phase $\hat{\theta} \in (\frac{\pi}{2}, \frac{3\pi}{2})$, then the lifted angle is well-defined as follows from the  Chern number inequality of the authors and Xie.  

\begin{prop}[Collins-Xie-Yau, \cite{CXY}]\label{prop: chernNum}
Suppose $(X,\omega)$ is a K\"ahler 3-fold, and $L$ admits a metric $h$ solving the deformed Hermitian-Yang-Mills equation with $\hat{\theta} \in (\frac{\pi}{2}, \frac{3\pi}{2})$.  Then we have
\begin{equation}\label{eq: chernNum}
\left(\int_{X}\omega^{3}\right)\left(\int_{X} \frac{c_{1}(L)^{3}}{6}\right) < 3\left(\int_{X}\frac{c_1(L)^2\wedge \omega}{2}\right)\left(\int_{X}c_1(L)\wedge \omega^{2}\right).
\end{equation}
Furthermore, the algebraic lifted angle equals the lifted angle.
\end{prop}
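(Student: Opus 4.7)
The plan is to rewrite \eqref{eq: chernNum} as an inequality between integrals of elementary symmetric polynomials in the eigenvalues of $\omega^{-1}\alpha_\phi$, establish it via a case split on the phase, and then read off the angle equality from the trajectory of $t \mapsto Z_X(t)$. Setting $A = \int_X\omega^3$, $B = \int_X\omega^2\wedge c_1(L)$, $C = \int_X\omega\wedge c_1(L)^2$, $D = \int_X c_1(L)^3$, the inequality \eqref{eq: chernNum} reads $AD < 9BC$. Letting $\lambda = (\lambda_1,\lambda_2,\lambda_3)$ denote the pointwise eigenvalues, $dV = \omega^3/6$, and $V_k := \int_X\sigma_k(\lambda)\,dV$, the identities $\omega^3 = 6\,dV$, $\omega^2\wedge\alpha_\phi = 2\sigma_1\,dV$, $\omega\wedge\alpha_\phi^2 = 2\sigma_2\,dV$, $\alpha_\phi^3 = 6\sigma_3\,dV$ give $A = 6V_0$, $B = 2V_1$, $C = 2V_2$, $D = 6V_3$, reducing \eqref{eq: chernNum} to $V_0 V_3 < V_1 V_2$. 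From the pointwise dHYM identity $\prod_j(1 + \sqrt{-1}\lambda_j) = r(\lambda)e^{\sqrt{-1}\hat\theta}$ with $r = \prod_j\sqrt{1+\lambda_j^2} > 0$, I extract
\[
1 - \sigma_2 = r\cos\hat\theta, \qquad \sigma_1 - \sigma_3 = r\sin\hat\theta.
\]
Since $\cos\hat\theta < 0$ on $(\pi/2, 3\pi/2)$ this forces $\sigma_2 > 1$ pointwise, hence $V_2 > V_0$; moreover $\sigma_1 > 0$ by Lemma~\ref{lem: AngleBasicProps}(3), yielding $V_1 > 0$.

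In the supercritical non-hypercritical range $\hat\theta \in (\pi/2, \pi]$ the bound $\sin\hat\theta \geq 0$ gives $\sigma_1 \geq \sigma_3$ pointwise, hence $V_1 \geq V_3$; combining with $V_2 > V_0$ yields $V_1 V_2 > V_1 V_0 \geq V_3 V_0$. In the hypercritical range $\hat\theta \in (\pi, 3\pi/2)$, the bound $\sum_j\arctan\lambda_j > \pi$ together with $\arctan\lambda_j < \pi/2$ forces every $\lambda_j > 0$ pointwise, so $\alpha_\phi$ is a K\"ahler representative of $c_1(L)$. The Khovanskii--Teissier inequality applied to the K\"ahler pair $([\omega], [c_1(L)])$ then gives $B^2 \geq AC$ and $C^2 \geq BD$; multiplying and using $BC > 0$ yields $AD \leq BC < 9BC$.

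For the angle equality I analyze $Z_X(t) = -\int_X e^{-\sqrt{-1}t\omega}\operatorname{ch}(L)$ on $t \in [1, \infty]$; expansion yields
\[
\operatorname{Re} Z_X(t) = \tfrac{1}{2}Bt^2 - \tfrac{1}{6}D, \qquad \operatorname{Im} Z_X(t) = \tfrac{t}{6}(3C - At^2).
\]
The hypothesis $\hat\theta > \pi/2$ forces $A < 3C$ (from $\operatorname{Re}\int_X(\omega+\sqrt{-1}c_1(L))^3 < 0$), so $\operatorname{Im} Z_X$ vanishes on $[1, \infty)$ exactly at $t_0 := \sqrt{3C/A} > 1$; the just-established Chern inequality then gives $\operatorname{Re} Z_X(t_0) = (9BC - AD)/(6A) > 0$, so $Z_X$ crosses the positive real axis once at $t_0$ and avoids the origin on $[1, \infty)$. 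Since $Z_X(t)$ lies in the closed lower half-plane for $t \geq t_0$ and in the closed upper half-plane for $t \in [1, t_0]$, the continuously lifted argument rises from $-\pi/2$ at $t = \infty$, through $0$ at $t_0$, to $\hat\theta - \pi/2$ at $t = 1$, producing total winding $\hat\theta$, which is precisely the analytic lifted angle.

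The main obstacle is the hypercritical case of \eqref{eq: chernNum}: the soft pointwise argument breaks down once $\sin\hat\theta$ changes sign, and one fundamentally relies on the Khovanskii--Teissier inequality, which requires the K\"ahlerness of $\alpha_\phi$ guaranteed only above the hypercritical threshold $\hat\theta > \pi$. A secondary subtle point---that the winding in the last step is precisely $\hat\theta$ rather than $\hat\theta \pm 2\pi$---is secured by the explicit half-plane description above, which prevents the trajectory from making an extra full loop around the origin.
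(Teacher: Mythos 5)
The paper states this proposition as an imported result of Collins--Xie--Yau and does not reproduce a proof, so there is no in-paper argument to compare against; I am evaluating your proposal on its own.

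Your proof is correct. The reduction $AD<9BC\Longleftrightarrow V_0V_3<V_1V_2$, and the pointwise identities $1-\sigma_2=r\cos\hat\theta$, $\sigma_1-\sigma_3=r\sin\hat\theta$, are exact. In the range $\hat\theta\in(\pi/2,\pi]$ you correctly get $\sigma_1>0$ pointwise (this is Lemma~\ref{lem: AngleBasicProps}(3) applied with three eigenvalues, i.e.\ formally $n=2$ there, so the threshold is indeed $(n-1)\pi/2=\pi/2$), $V_2>V_0$ strictly because $\cos\hat\theta<0$, and $V_1\geq V_3$ because $\sin\hat\theta\geq 0$, giving the strict chain $V_1V_2>V_1V_0\geq V_3V_0$. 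In the hypercritical range $\hat\theta\in(\pi,3\pi/2)$ the pointwise positivity $\lambda_i>0$ makes $c_1(L)$ K\"ahler, Khovanskii--Teissier gives $B^2\geq AC$ and $C^2\geq BD$, hence $BC\geq AD$, and the desired bound $AD<9BC$ follows with slack $8BC>0$. The winding computation is also correct: the zero of $\operatorname{Im}Z_X$ on $[1,\infty)$ is $t_0=\sqrt{3C/A}>1$ (using $A<3C$, i.e.\ $\cos\hat\theta<0$), the Chern inequality gives $\operatorname{Re}Z_X(t_0)=(9BC-AD)/(6A)>0$, and confinement to closed half-planes joined at the positive real crossing $t_0$ rules out extra loops, so the total change of argument from $-\pi/2$ at $t=\infty$ to $\hat\theta-\pi/2$ at $t=1$ is exactly $\hat\theta$, consistent with the paper's formula $Z_X(L)=-e^{-\sqrt{-1}n\pi/2}e^{\sqrt{-1}\hat\theta}R_X$.

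One remark on economy: the appeal to the global KT inequality can be replaced by a purely pointwise step that also unifies the two cases. Integrating the pointwise identities and setting $R=\int_X r\,dV>0$ gives $V_1-V_3=R\sin\hat\theta$ and $V_0-V_2=R\cos\hat\theta$, whence
\begin{equation*}
V_1V_2 - V_0V_3 \;=\; R\bigl(V_0\sin\hat\theta - V_1\cos\hat\theta\bigr).
\end{equation*}
For $\hat\theta\in(\pi/2,\pi]$ both summands on the right are nonnegative with the second strictly positive. For $\hat\theta\in(\pi,3\pi/2)$, positivity is equivalent to $V_1>V_0\tan\hat\theta$, which follows by integrating the pointwise inequality $\sigma_1>\tan\hat\theta$; the latter reduces to $\sigma_1\sigma_2>\sigma_3$, which is the Newton inequality $\sigma_1\sigma_2\geq 9\sigma_3$ for the positive numbers $\lambda_i$. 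This is only the pointwise shadow of the Khovanskii--Teissier argument, so it is not a fundamentally different idea, but it shows that the integral-level KT/Hodge-theoretic input can be traded for a one-line algebraic inequality.

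Two small cautions, neither a gap. First, the paper's phrase ``winding angle'' is not pinned down; your reading (total change of argument along the path) is the one compatible with the paper's later statement $\phi_X(L)=\hat\theta_X(L)-\frac{(n-2)\pi}{2}$ together with $\arg Z_X(L)=\hat\theta-\pi/2$. Second, the paper has a convention in which $\alpha=c_1(L)$ corresponds to dHYM on $L^{-1}$; since the inequality is odd under $L\mapsto L^{-1}$ this could in principle flip a sign, but the paper's own formula for $Z_X(L)$ fixes the convention to the one you use, so there is no inconsistency.
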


Chern number inequalities of this type, involving the higher Chern classes $ch_3$ have played an important role in the study of Bridgeland stability conditions \cite{BMT}.

Returning to our discussion, and taking $V=X$ gives
\[
Z_{X}(L) = -e^{-\sqrt{-1}\frac{n\pi}{2}}e^{\sqrt{-1}\hat{\theta}}R_{X}
\]
where $0<R_{X} = |\int_{X}(\omega+\sqrt{-1}c_1(L))^n|$.  Since $\hat{\theta} \in ((n-1)\frac{\pi}{2}, n\frac{\pi}{2})$ we have that
\[
{\rm Re}(Z_{X}(L)) <0,\qquad Z_{X}(L) \in \mathbb{H} = \{z \in \mathbb{C} : {\rm Im}(z)>0\}
\]
and hence $Z_{X}(L)$ lies in the upper half-plane, with negative real component.

With this observation it is not difficult (though it does get slightly unwieldy) to write down the expected Chern number inequalities in any dimension.   Unfortunately, the techniques used to prove Proposition~\ref{prop: chernNum} do not easily carry over to the higher dimensional case.  For example, in dimension $4$ suppose that $L$ has $\hat{\theta}\in (\frac{3\pi}{2}, 2\pi)$.  The path $Z_{X}(t)$ is given by
\[
-\left(t^{4}\omega^{4}-6t^2\omega^{2}.c_1(L)^2 + c_{1}(L)^4\right) -4t\sqrt{-1}\left(t^2c_{1}(L).\omega^{3}-c_1(L)^3. \omega\right) 
\]
where we have written $a.b = \int_{X}a\wedge b$.  For $t \approx +\infty$, $Z_{X}(t)$ lies near the negative real axis.  Since $Z_{X}(1) \in \mathbb{H}$, we must have ${\rm Im}(Z_{X}(T_*) )=0$ for some $T_{*} \in (1,\infty)$, so
\[
\frac{c_{1}(L)^3. \omega}{c_1(L). \omega^{3}} >1.
\]
Furthermore, at $T_{*}$ we must have that ${\rm Re}(Z_{X}(T_{*})) >0$.  Solving for $T_{*}$ and plugging in yields
\begin{conj}
Suppose $L$ admits a solution of dHYM on the K\"ahler $4$-fold $(X,\omega)$, with angle $\hat{\theta} \in (\frac{3 \pi}{2}, 2\pi)$.  Then the following Chern number inequalities hold
\[
\frac{c_{1}(L)^3.\omega}{c_1(L). \omega^{3}} >1.
\]
and
\[
\frac{\left(c_{1}(L)^3. \omega\right)\left(\omega^{4}\right)}{c_1(L).\omega^{3}} - 6\left(c_{1}(L)^2.\omega^2\right) + \frac{\left(c_1(L).\omega^{3}\right)\left(c_1(L)^4\right)}{c_{1}(L)^3. \omega}<0
\]
\end{conj}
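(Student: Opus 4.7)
The plan is to make rigorous the winding argument that motivates the conjecture. The key analytic input I will need is the four-dimensional analogue of Proposition~\ref{prop: chernNum}: \emph{if $L$ admits a dHYM solution on $(X,\omega)$ with $\hat{\theta}\in(\tfrac{3\pi}{2},2\pi)$, then the algebraic lifted angle of the path $Z_X(t) = -\int_X e^{-\sqrt{-1}t\omega}\mathrm{ch}(L)$ equals the geometric lifted angle $\hat{\theta}$.} This is the main obstacle, and I will comment on it at the end.

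Granting this, as noted in the text preceding the conjecture one has
\[
\mathrm{Re}(Z_X(t)) \propto -\bigl(t^4\omega^4 - 6t^2\omega^2 c_1(L)^2 + c_1(L)^4\bigr), \qquad \mathrm{Im}(Z_X(t)) \propto t\bigl(c_1(L)^3\omega - t^2 c_1(L)\omega^3\bigr).
\]
Since $\hat{\theta}>(n-1)\tfrac{\pi}{2}=\tfrac{3\pi}{2}$, Lemma~\ref{lem: AngleBasicProps}(3) gives $\sum_i\lambda_i>0$ pointwise, hence $c_1(L).\omega^3>0$. Moreover $Z_X(1)=-\tfrac{R_X}{n!}e^{\sqrt{-1}\hat{\theta}}$ lies in the second quadrant for $\hat{\theta}\in(\tfrac{3\pi}{2},2\pi)$, so $\mathrm{Im}(Z_X(1))>0$, which translates to $c_1(L)^3\omega>c_1(L)\omega^3>0$. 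Setting $T_*^2:=c_1(L)^3\omega/c_1(L)\omega^3$, this establishes $T_*>1$, the first inequality of the conjecture; and $T_*$ is the unique positive zero of the cubic $t\mapsto\mathrm{Im}(Z_X(t))$.

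Let $\Theta(t)$ be the continuous lift of $\arg Z_X(t)$ normalized by $\Theta(+\infty)=\pi$. By the assumed equality of algebraic and geometric angle, $\Theta(1)=\pi+\hat{\theta}\in(\tfrac{5\pi}{2},3\pi)$. Since $\Theta(t)\in\pi\mathbb{Z}$ precisely when $\mathrm{Im}(Z_X(t))=0$—which, within $(1,+\infty)$, occurs only at $t=T_*$—continuity forces $\Theta(T_*)=2\pi$, so $Z_X(T_*)$ lies on the positive real axis and $\mathrm{Re}(Z_X(T_*))>0$. Substituting $T_*^2=c_1(L)^3\omega/c_1(L)\omega^3$ into the formula for $\mathrm{Re}(Z_X)$ and clearing by $-c_1(L)\omega^3/c_1(L)^3\omega<0$ yields exactly the second inequality of the conjecture.

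The hard part is the input step. In dimension three, Collins-Xie-Yau~\cite{CXY} prove the algebraic-equals-geometric statement by rescaling $\omega\mapsto t\omega$ and combining the maximum principle with a careful phase-tracking argument that rules out any overshooting of $\Theta(t)$ on its way from $\pi$ to $\pi+\hat{\theta}$. I expect an analogous approach to work in dimension four—deform through $t\omega$ and track the Lagrangian phase of the induced dHYM solutions—but the algebra of the quartic $Z_X(t)$ is substantially more intricate, and hypercritical phase in dimension four only yields $\sum_i\lambda_i>0$ pointwise rather than pointwise positivity of $\alpha_\phi$, so the maximum-principle bound on the phase appears to demand genuinely new estimates. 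This is the step where the conjecture remains open.
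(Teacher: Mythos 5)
The statement you were asked to establish is stated as a \emph{Conjecture}, and the paper does not prove it; the paragraph immediately preceding it (where $Z_X(t)$ is expanded, $T_*$ is introduced, and ``solving for $T_*$ and plugging in yields'' the inequalities) is exactly the heuristic derivation you have reconstructed, and the authors explicitly remark that ``the techniques used to prove Proposition~\ref{prop: chernNum} do not easily carry over to the higher dimensional case.'' Your rendering is faithful and algebraically correct: the first inequality is unconditional given $\mathcal{H}\neq\emptyset$ and $\hat\theta\in(\tfrac{3\pi}{2},2\pi)$, since $c_1(L).\omega^3>0$ follows from Lemma~\ref{lem: AngleBasicProps}(3) and ${\rm Im}(Z_X(1))>0$ from $Z_X(L)=-R_Xe^{\sqrt{-1}(\hat\theta-2\pi)}$ lying in the upper half-plane; the second inequality is precisely ${\rm Re}(Z_X(T_*))>0$ after substituting $T_*^2=c_1(L)^3.\omega/c_1(L).\omega^3$, which your winding argument (unique zero of ${\rm Im}\,Z_X$ in $(1,\infty)$, $\Theta(+\infty)=\pi$, $\Theta(1)=\pi+\hat\theta$ forcing $\Theta(T_*)=2\pi$) delivers.

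You have also correctly located the gap. The input $\Theta(1)=\pi+\hat\theta$ is the four-dimensional analogue of Proposition~\ref{prop: chernNum}, i.e., that the algebraic lifted angle of $Z_X(t)$ agrees with the geometric $\hat\theta$ of the dHYM solution, and no proof of this is available; the paper poses the statement as a conjecture for exactly this reason. One caveat on your closing remark: the pointwise consequence of hypercritical phase (one possibly negative eigenvalue, with $\sum_i\lambda_i>0$ and a bound $\lambda_{\min}\geq -C(\eta_1)$) is qualitatively the same in dimensions three and four, so that alone does not explain why the three-dimensional argument fails to transport; the genuine obstruction is that the winding/overshoot control for the now-quartic $Z_X(t)$ has not been carried out. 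In short, your proposal is a correct conditional argument that matches the paper's intent, and you and the authors agree on which step remains open.
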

If $L$ has $\mathcal{H} \ne \emptyset$, and $\hat{\theta} \in ((n-1)\frac{\pi}{2}, n\frac{\pi}{2})$, Corollary~\ref{cor: subVarObstr} implies that $Z_{V}(L) \in \mathbb{H}$ for every irreducible analytic subset $V\subset X$. If, in addition, $L$ admits a hermitian metric solving the  dHYM equation, then we write
\[
\frac{Z_{V}(L)}{Z_{X}(L)} = \frac{1}{R_{X}} e^{-\sqrt{-1}(\hat{\theta} - (n-p)\frac{\pi}{2})}\int_{V}(\omega+\sqrt{-1}c_1(L))^{p},
\]
and so by Corollary~\ref{cor: subVarObstr} we must have
\[
0<{\rm Im}\left(\frac{Z_{V}(L)}{Z_{X}(L)}\right) =\frac{R_{V}}{R_{X}} \sin\left({\rm Arg}(Z_{V}(L)) - {\rm Arg}(Z_{X}(L))\right).
\]
We illustrate this situation in Figure~\ref{fig: stabPic}.

\begin{figure}
\begin{center}
\psset{unit=.007in}
\begin{pspicture}(-250,-260)(250,50)
	\newgray{llgray}{.90}
	\psset{linecolor=llgray}
	\pspolygon*(0,-120)(-200,-20)(-200,-120)
	\psset{linecolor=black}
	\psbezier[linewidth=0.3pt](60,-210)(100,-100)(0,-20)(-100,-70)
	\psline[linewidth=0.3pt, linestyle=dashed, dash=3pt 2pt](60,-210)(49, -240) 
	\psline(-200,-120)(200,-120)
	\psline(0,0)(0,-240)
	\psdot(-100,-70)
	\uput[110](-100,-50){$Z_{X}(L)$}
	\uput[90](80,-100){$Z_{X}(t)$}
	\psset{linestyle=dashed,dash=3pt 2pt}
	\psline(0,-120)(-200,-20)
\end{pspicture}
\caption{The path $Z_{X}(t)$, and its endpoint $Z_{X}(L)$.  If $L$ is stable, then $Z_{V}(L)$ must lie in the gray region for every irreducible analytic set $V\subset X$.} \label{fig: stabPic}
\end{center}
\end{figure}
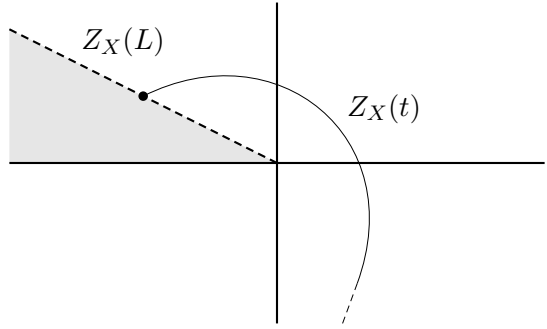

Assume now that we are in dimension $3$.  By assumption, we have $\hat{\theta}_{X}(L) \in(\pi, \frac{3\pi}{2})$, and so $\phi_{X}(L)\in (\frac{\pi}{2}, \pi)$.  If $V$ has dimension $1$, then
\[
Z_{V}(t) = -\int_{V}c_1(L) + \sqrt{-1}t\int_{V}\omega.
\]
Since $Z_{V}(1)$ must lie in the shaded region in Figure~\ref{fig: stabPic}, we see that the lifted angle $\hat{\theta}_{V}(L) \in (\phi_{V}(L)-\frac{\pi}{2},\frac{\pi}{2})$, and so
\[
\phi_{V}(L) \in(\phi_{X}(L), \pi).
\]
If $V$ has dimension $2$ then we have
\[
2Z_{V}(t) = \int_{V}t^2\omega^2-c_{1}(L)^2 + \sqrt{-1}2t\int_{V}c_{1}(L)\wedge \omega
\]
In this case ${\rm Im}(Z_{V}(1)) >0$, and so $Z_{V}(t)$ must lie in $\mathbb{H}$ for all $t\in[1,+\infty)$.  It follows that the lifted angle must satisfy $\hat{\theta}_{V}(L) \in (\phi_{X}(L), \pi)$.  Since ${\rm dim}V =2$ we get
\[
\phi_{V}(L) \in (\phi_{X}(L), \pi).
\]
We summarize this in the following proposition,
\begin{prop}\label{prop: BridgStabInt}
Suppose that $(X,\omega)$ is a K\"ahler 3-fold and $L\rightarrow X$ is a holomorphic line bundle.  If $L$ admits a solution of dHYM with lifted angle $\hat{\theta}\in(\pi, \frac{3\pi}{2})$.  Then
\begin{itemize}
\item[(i)] The Chern number inequality~\eqref{eq: chernNum} holds, and so the lifted angle is well-defined.
\item[(ii)] $Z_{X}(L) \in \mathbb{H}$, and the slicing angle $\phi_{X}(L) \in (\frac{\pi}{2}, \pi)$.
\item[(iii)]  For every irreducible analytic subset $V\subset X$, $Z_{V}(L) \in \mathbb{H}$.
\item[(iv)] The slicing angle $\phi_{V}(L)$ satisfies
\begin{equation}\label{eq: BridStabIneq}
\phi_{V}(L) > \phi_{X}(L).
\end{equation}
\end{itemize}
\end{prop}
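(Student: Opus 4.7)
Part (i) is immediate from Proposition~\ref{prop: chernNum}: the hypothesis $\hat{\theta}\in(\pi,\tfrac{3\pi}{2})$ is exactly the hypercritical phase assumption on a threefold, so the Collins--Xie--Yau Chern number inequality~\eqref{eq: chernNum} holds, and they also show that the algebraic lifted angle agrees with the lifted angle in this regime.

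For (ii) I will compute $Z_X(L)$ directly using~\eqref{eq: intVsCenChar}. Since $L$ solves dHYM, $\int_X(\omega+\sqrt{-1}c_1(L))^3 = R_X e^{\sqrt{-1}\hat\theta}$ with $R_X>0$, so
\[
Z_X(L) \;=\; e^{-\sqrt{-1}(3-2)\pi/2}\int_X(\omega+\sqrt{-1}c_1(L))^3 \;=\; R_X\, e^{\sqrt{-1}(\hat\theta-\pi/2)}.
\]
Because $\hat\theta-\pi/2\in(\pi/2,\pi)$, $Z_X(L)$ lies in $\mathbb{H}$ with negative real part, and $\phi_X(L)=\arg Z_X(L)=\hat\theta-\pi/2\in(\pi/2,\pi)$, as claimed.

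For (iii) I will invoke Corollary~\ref{cor: subVarObstr}, which applies since $\mathcal{H}\ne\emptyset$. For any irreducible $V\subset X$ with $p=\dim V<3$, the corollary gives ${\rm Im}[e^{-\sqrt{-1}p\pi/2}\int_V(\omega+\sqrt{-1}c_1(L))^p]<0$; multiplying by the unit $e^{\sqrt{-1}(\pi-p\pi/2)}$ and using~\eqref{eq: intVsCenChar} translates this to ${\rm Im}\,Z_V(L)>0$, i.e.\ $Z_V(L)\in\mathbb{H}$. (For $p=1$ this is automatic from positivity of $\omega$, namely $\operatorname{Im} Z_V(L)=\omega.V>0$; the content of the corollary enters nontrivially when $p=2$, where it forces $c_1(L).\omega.V>0$.)

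For (iv) I will track the continuous path $Z_V(t)$ for $t\in[1,+\infty)$ used to define the algebraic lifted angle $\hat\theta_V(L)$. In both relevant dimensions this path lies entirely in $\mathbb{H}$, so there is no winding ambiguity and $\hat\theta_V(L)$ is simply the principal argument of $Z_V(1)$. When $p=1$, $Z_V(t)=-c_1(L).V+\sqrt{-1}t\,\omega.V$ is a vertical line in $\mathbb{H}$; using $Z_X(L)$ from (ii) and the ``shaded cone'' inequality ${\rm Im}(Z_V(L)/Z_X(L))>0$ from Corollary~\ref{cor: subVarObstr} (applied to the ratio), one reads off that $\arg Z_V(1)>\phi_X(L)-\pi/2$, giving $\phi_V(L)=\hat\theta_V(L)+\pi/2>\phi_X(L)$. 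When $p=2$, $Z_V(t)=\tfrac12(t^2\omega^2-c_1(L)^2).V+\sqrt{-1}t\,\omega.c_1(L).V$ has positive imaginary part on all of $[1,\infty)$ by (iii), so again $\hat\theta_V(L)=\arg Z_V(1)\in(0,\pi)$, and the same ratio inequality from Corollary~\ref{cor: subVarObstr} yields $\phi_V(L)=\hat\theta_V(L)>\phi_X(L)$. The main bookkeeping obstacle is matching the lifted/slicing angle conventions across different dimensions, but because each $Z_V(t)$ stays in the upper half-plane the lifts are unambiguous, and the inequality $\phi_V(L)>\phi_X(L)$ becomes an immediate reformulation of the Corollary~\ref{cor: subVarObstr} obstruction illustrated in Figure~\ref{fig: stabPic}.
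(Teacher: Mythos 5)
Your proof follows the paper's route closely and the conclusions are correct: part (i) is quoted from Proposition~\ref{prop: chernNum}, part (ii) is a direct calculation of $Z_X(L)$, part (iii) is Corollary~\ref{cor: subVarObstr}, and part (iv) combines the winding computation with the ratio inequality ${\rm Im}(Z_V(L)/Z_X(L))>0$ from the same corollary. The observations that for $p=1$ the condition $Z_V(L)\in\mathbb{H}$ is automatic from positivity of $\omega$, and that for $p=2$ the path $Z_V(t)$ stays in $\mathbb{H}$ because ${\rm Im}\,Z_V(t)=t\,c_1(L).\omega.V$ has the same sign as ${\rm Im}\,Z_V(1)$, are both right and are exactly what the paper uses.

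There is one bookkeeping slip in the $p=1$ case of part (iv), where two $\pi/2$ errors happen to cancel. The winding angle $\hat{\theta}_V(L)$ is the \emph{change} in argument of $Z_V(t)$ from $t=+\infty$ (where the asymptotic argument is $\pi - p\pi/2 = \pi/2$ for a curve) to $t=1$, so for $\dim V = 1$ one has $\hat{\theta}_V(L) = \arg Z_V(1) - \pi/2$ and hence $\phi_V(L) = \hat{\theta}_V(L) + \pi/2 = \arg Z_V(1)$, not $\arg Z_V(1) + \pi/2$ as your chain implies. Correspondingly, the ratio inequality gives $\arg Z_V(1) > \phi_X(L)$ outright, not merely $\arg Z_V(1) > \phi_X(L) - \pi/2$. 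As written, you understate both the inequality and the winding normalization by $\pi/2$, and the offsets cancel to give $\phi_V(L) > \phi_X(L)$; the final answer is right, but the intermediate statements should be corrected so that $\phi_V(L)=\arg Z_V(1)$ and the strict inequality comes straight from ${\rm Im}(Z_V/Z_X)>0$ and $Z_V,Z_X\in\mathbb{H}$. The $p=2$ case of (iv) and parts (i)--(iii) are bookkept correctly.
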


The following conjecture updates a conjecture from \cite{CJY}.
\begin{conj}
The converse of Proposition~\ref{prop: BridgStabInt} holds.
\end{conj}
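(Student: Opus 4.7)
The plan is to prove the conjecture via a variational approach modeled on the Berman--Boucksom--Jonsson proof of Yau--Tian--Donaldson for Fano manifolds. The strategy rests on the infinite-dimensional GIT picture developed in this paper: $\mathcal{J}$ is the Kempf--Ness functional, solutions of dHYM are critical points, and $\mathcal{J}$ is convex along the $C^{1,\alpha}$ weak geodesics constructed in Section~\ref{sec: bndryEst}. The key conceptual point is that conditions (iii)--(iv) should be exactly the algebraic translation of properness of $\mathcal{J}$ on $\mathcal{H}$ (modulo the $\mathbb{R}$-action of adding constants), just as $K$-stability is the translation of properness of the Mabuchi functional.

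First, I would establish a properness statement: conditions (iii) and (iv) should be equivalent to the existence of constants $\delta_0, C > 0$ such that
\[
\mathcal{J}(\phi) \geq \delta_0 \, d(\phi, 0) - C
\]
where $d$ is a suitable metric on $\mathcal{H}$ built from the Riemannian structure. This requires two ingredients. On the one hand, one needs a non-Archimedean framework: extend the flag-ideal construction of Section~\ref{sec: AlgObstr} to more general valuations or test configurations so that every infinite weak geodesic ray in $\mathcal{H}$ is captured, at least up to approximation, by an algebraic model curve of the type studied here, and so that the limit slope formula of Proposition~\ref{prop: limSlopeAY} extends. On the other hand, one needs a uniform strictification of Theorem~\ref{thm: obstrThm}, producing a strictly positive lower bound for the limit slope of $\mathcal{J}$ along rays modeled on subvarieties satisfying (iii)--(iv); Corollary~\ref{cor: subVarObstr} together with the Lemma~\ref{lem: Vsubvar} leading-order calculation suggests that $\sin(\phi_V(L) - \phi_X(L))$ controls this slope to leading order, which is where (iv) enters quantitatively.

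Second, given properness, I would argue that a minimizing sequence $\phi_k$ for $\mathcal{J}$, normalized by $\int_X \phi_k \operatorname{Re}(e^{-\sqrt{-1}\hat\theta}(\omega+\sqrt{-1}\alpha)^n)=0$, is precompact in an appropriate weak topology on $\overline{\mathcal{H}}$, using the $C^{1,\alpha}$ regularity, Theorem~\ref{thm: geoExistence}, and convexity. The limit $\phi_\infty$ would be a critical point of $\mathcal{J}$ and hence a weak solution of dHYM. Smoothness of $\phi_\infty$ should then follow by running a continuity method in $t \in [0,1]$ connecting the class $[\alpha] + T[\omega]$ for $T$ large (where a smooth solution is granted by Jacob--Yau's large-radius existence) to $[\alpha]$, using the a priori estimates of Sections~\ref{sec: C1est}--\ref{sec: C2est} and openness from the implicit function theorem along the lines of Theorem~\ref{thm: DirProbXe}.

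The main obstacle will be the non-Archimedean extension, together with the fact that---unlike the K\"ahler--Einstein setting---the functional $\mathcal{J}$ is entangled with the $\mathbb{C}$-valued central charge $Z$ through the phase-lifting problem: the Chern number inequality (i) and the slicing inequality (iv) must be compatible across all subvarieties simultaneously, and there is no obvious direct calculus on test configurations that sees this coupling. I expect that closing the argument in full generality will require either supplementing the variational approach with techniques from the theory of stability functions on triangulated subcategories generated by structure sheaves of subvarieties, or alternatively a direct construction of an analytic subsolution from (iii)--(iv) using a Demailly-type regularized-maximum gluing of local model potentials, which would reduce the conjecture to the pointwise Collins--Jacob--Yau criterion recalled after Lemma~\ref{lem: BPS}.
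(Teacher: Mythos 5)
The statement you are trying to prove is labeled as a \emph{conjecture} in the paper, not a theorem: the paper does not contain a proof, and indeed the surrounding discussion makes clear that proving it is an open problem. There is therefore no proof of record to compare your argument against. Your proposal is a strategy sketch rather than a proof, and you yourself acknowledge this in the final paragraph, where you identify the non-Archimedean extension and the phase/Chern-number coupling as unresolved. That self-assessment is accurate. The variational template you lay out (properness of $\mathcal{J}$, precompactness of minimizing sequences, continuity method for regularity) is consistent with the direction the authors themselves gesture at in the introduction when citing Berman--Boucksom--Jonsson, and the paper explicitly flags that ``for the dHYM equation there are significant new difficulties related to the phase lifting problem, and related Chern number inequalities.'' So you have correctly diagnosed where the difficulty lives, but you have not removed it.

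A few concrete gaps beyond what you already list. First, your proposal frames (iii)--(iv) as ``exactly the algebraic translation of properness,'' but Proposition~\ref{prop: BridgStabInt} has four conditions: you will need to explain how (i) (the Chern number inequality, equivalently that the lifted angle is defined) and (ii) enter the properness statement, since without (i) the slicing angles $\phi_V(L)$ in (iv) are not even well-defined and the $Z$-functional argument collapses. Second, your continuity path ``connecting the class $[\alpha]+T[\omega]$ for $T$ large'' changes the cohomology class, and so does not directly provide openness/closedness in a fixed class; the natural large-radius path is $t\omega$ with $t\to\infty$ for fixed $[\alpha]$, and closedness along it is precisely the a priori estimate problem that Collins--Jacob--Yau's existence theorem trades for the analytic subsolution condition~\eqref{eq: solvThmCSub}. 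The estimates of Sections~\ref{sec: C1est}--\ref{sec: bndryEst} are for the space-time lifted geodesic equation on $\mathcal{X}_\epsilon$, not for the dHYM equation on $X$, so they do not directly yield closedness for such a continuity method. Third, the precompactness step presupposes a metric completion $\overline{\mathcal{H}}$ with enough structure to extract limits and identify critical points; the paper constructs a metric structure on $\mathcal{H}$ from geodesics but does not develop the pluripotential-theoretic completion and Hartogs-type compactness that underlie the $L^1$ Darvas-metric machinery used in the K\"ahler--Einstein case. Your closing alternative --- constructing an explicit $\mathcal{C}$-subsolution from (iii)--(iv) by regularized-maximum gluing --- is in some ways the more promising route, because it bypasses the missing non-Archimedean calculus entirely, but you have not indicated how the finitely many numerical conditions over subvarieties would be converted into a pointwise inequality on eigenvalues everywhere on $X$, which is a genuine analytic-to-algebraic bridge that is not yet built.
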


Note that the ``small radius limit" of this conjecture for ample line bundles was proven by the first author and Sz\'ekelyhidi \cite{CoSz}; see \cite{CXY} for a discussion.  One can formulate analogous conjectures in higher dimensions.  However, we expect that in dimension $4$, for example, one also needs to impose Chern number inequalities on $3$ dimensional subvarieties $V$ ensuring that the phase $\phi_{V}(L)$ is well-defined. We also note

\begin{cor}\label{cor: numChar}
Suppose $L$ is a holomorphic line bundle over $(X,\omega)$ with $Z_{X}(L) \in \mathbb{H}$.  If there exists an irreducible analytic subvariety $V \subset X$ with $Z_{V}(L) \notin \mathbb{H}$, then $\mathcal{H} = \emptyset$.
\end{cor}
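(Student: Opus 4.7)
The plan is to prove the contrapositive: assuming $\mathcal{H} \neq \emptyset$ (together with the unchanged hypothesis that $Z_X(L) \in \mathbb{H}$), I will show that $Z_V(L) \in \mathbb{H}$ for every irreducible analytic subvariety $V \subset X$, which directly rules out the existence of the ``destabilizing'' subvariety in the statement. The case $V = X$ is covered by hypothesis, so the content is the case of a proper subvariety.

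For an irreducible analytic subvariety $V \subsetneq X$ with $\dim V = p < n$, the hypothesis $\mathcal{H} \neq \emptyset$ permits a direct application of Corollary~\ref{cor: subVarObstr}; the relevant inequality is
\[
{\rm Im}\left[e^{-\sqrt{-1}\frac{p\pi}{2}}\int_{V}(\omega+\sqrt{-1}\alpha)^{p}\right] < 0.
\]
To turn this into a statement about $Z_V(L)$, I would invoke equation~\eqref{eq: intVsCenChar}, which identifies
\[
\int_{V}(\omega+\sqrt{-1}c_1(L))^{p} = e^{\sqrt{-1}(p-2)\frac{\pi}{2}}Z_{V}(L),
\]
and compute the combined phase factor $e^{-\sqrt{-1}p\pi/2}\cdot e^{\sqrt{-1}(p-2)\pi/2} = e^{-\sqrt{-1}\pi} = -1$. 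Thus the inequality reduces to ${\rm Im}(-Z_V(L)) < 0$, i.e. ${\rm Im}(Z_V(L)) > 0$, giving $Z_V(L) \in \mathbb{H}$ as desired.

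There is no serious obstacle: Corollary~\ref{cor: subVarObstr} is the main input, and it already encapsulates the hard analytic work (existence of $C^{1,\alpha}$ weak geodesics, convexity of $\mathcal{J}$ along them, and the computation of the limit slope in Proposition~\ref{prop: limSlopeAY}). The only thing to be careful about is the phase bookkeeping between the normalization used in $Z_V(L) = -\int_V e^{-\sqrt{-1}\omega}ch(L)$ and the normalization of Corollary~\ref{cor: subVarObstr}; this is handled by the single identity~\eqref{eq: intVsCenChar}. One should also note that the corollary's hypothesis $Z_X(L) \in \mathbb{H}$ is automatically consistent with (though weaker than) the hypercritical-phase assumption $\hat{\theta} \in ((n-1)\pi/2, n\pi/2)$ under which Corollary~\ref{cor: subVarObstr} was established, so its use is justified whenever $\mathcal{H} \neq \emptyset$ in the regime of the paper.
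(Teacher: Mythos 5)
Your proof is correct and follows essentially the same route as the paper: the paper states the contrapositive explicitly in the sentence immediately preceding the corollary (``If $L$ has $\mathcal{H} \ne \emptyset$, and $\hat{\theta} \in ((n-1)\frac{\pi}{2}, n\frac{\pi}{2})$, Corollary~\ref{cor: subVarObstr} implies that $Z_{V}(L) \in \mathbb{H}$ for every irreducible analytic subset $V\subset X$''), and you have simply unwound the phase bookkeeping via~\eqref{eq: intVsCenChar} to make it explicit. Your remark that $Z_X(L) \in \mathbb{H}$ is a weaker condition than the standing hypercritical-phase hypothesis (and is automatic under it) is a fair observation about how the corollary should be read in context.
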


This corollary should be compared with the Nakai-Moishezon criterion of ampleness, and Demailly-P\u{a}un's \cite{DP} numerical criterion for the existence of K\"ahler metrics in the class $[\alpha]$. We conjecture a converse to Corollary~\ref{cor: numChar}

\begin{conj}
A line bundle $L$ (or more generally a real $(1,1)$ class $[\alpha]$) with $Z_{X}(L) \in \mathbb{H}$ has $\mathcal{H}\ne \emptyset$ if and only if $Z_{V}(L) \in \mathbb{H}$ for all $V\subset X$.
\end{conj}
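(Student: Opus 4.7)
The forward direction is a direct consequence of the obstruction theory already developed. For an irreducible analytic subset $V\subset X$ of dimension $p<n$, the identity $\int_{V}(\omega+\sqrt{-1}c_{1}(L))^{p} = e^{\sqrt{-1}(p-2)\frac{\pi}{2}}Z_{V}(L)$ combined with the second inequality of Corollary~\ref{cor: subVarObstr} yields
\[
-\mathrm{Im}(Z_{V}(L)) = \mathrm{Im}\left(e^{-\sqrt{-1}\frac{p\pi}{2}}\int_{V}(\omega+\sqrt{-1}\alpha)^{p}\right) < 0,
\]
so $Z_{V}(L)\in\mathbb{H}$. For $V=X$, the hypothesis $\hat{\theta}\in((n-1)\frac{\pi}{2},n\frac{\pi}{2})$ guarantees $Z_{X}(L)\in \mathbb{H}$ directly, since $\int_{X}(\omega+\sqrt{-1}\alpha)^{n} = R_{X}e^{\sqrt{-1}\hat{\theta}}$ lies in the open sector $e^{\sqrt{-1}(n-1)\pi/2}\cdot \mathbb{H}$.

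For the converse --- which is the genuinely conjectural content --- I would mimic the Demailly--P\u{a}un numerical characterization of the K\"ahler cone. Introduce
\[
\mathcal{K}_{\omega} := \{[\alpha]\in H^{1,1}(X,\mathbb{R}) : \mathcal{H}\ne \emptyset\},\qquad \mathcal{N}_{\omega} := \{[\alpha] : Z_{V}(L)\in \mathbb{H}\ \text{for every irreducible }V\subset X\},
\]
so the forward direction gives $\mathcal{K}_{\omega}\subset \mathcal{N}_{\omega}$. Fix a reference class $[\alpha_{0}]\in \mathcal{K}_{\omega}$ (e.g.\ a small K\"ahler class, where existence of dHYM solutions in the small-radius limit is known by \cite{CoSz}) and run a continuity method along the segment $[\alpha_{t}] := t[\alpha]+(1-t)[\alpha_{0}]$, which stays in the interior of $\mathcal{N}_{\omega}$ by convexity of each constraint $\mathrm{Im}(Z_{V})>0$ in $[\alpha]$. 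Openness of the solvability set in $[0,1]$ follows from the implicit function theorem applied to the Lagrangian phase operator linearized at a solution: the linearization is second order, self-adjoint, and elliptic with positive symbol by Lemma~\ref{lem: AngleBasicProps}(7), hence invertible modulo constants.

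The closedness step is the heart of the matter. Suppose $t_{i}\nearrow T$ with $[\alpha_{t_{i}}]\in \mathcal{K}_{\omega}$ and $[\alpha_{T}]\in \mathcal{N}_{\omega}\setminus \mathcal{K}_{\omega}$. If the normalized solutions $\phi_{i}$ remain in a compact family of smooth potentials, Arzel\`a--Ascoli plus the elliptic regularity established in Sections~\ref{sec: C2est}--\ref{sec: bndryEst} closes the argument. Otherwise the $\phi_{i}$ must degenerate, and the plan is to extract an algebraic obstruction contradicting the numerical positivity at $[\alpha_{T}]$. By compactness of the space of closed positive $(1,1)$-currents of bounded mass (applied after the shift $\alpha+\ddb\phi_{i}+C\omega>0$ coming from Lemma~\ref{lem: AngleBasicProps}(1)), a subsequence of $\alpha_{\phi_{i}}$ converges to a closed current $T_{\infty}$ whose singular part, by Siu's decomposition, is supported on a countable union of analytic subvarieties. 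Choosing an irreducible component $V$ carrying positive Lelong mass and blowing up along it produces, via a suitable rescaling, a model curve of the type constructed in Section~\ref{sec: AlgObstr} associated to a flag ideal $\mathfrak{I}$ supported on $V\times \{0\}$. Convexity of $\mathcal{J}$ along the $C^{1,\alpha}$ weak geodesics of Theorem~\ref{thm: geoExistence} (relative to a smooth reference solution $\phi_{0}$ at $t=0$, which still exists) forces the limit slope to be non-negative, while Theorem~\ref{thm: obstrThm} combined with $[\alpha_{T}]\in \mathcal{N}_{\omega}$ forces strict negativity, yielding the contradiction.

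The principal obstacle is that the Lagrangian phase operator is \emph{not} globally concave in $\DDb\phi$ --- concavity holds only on the super-level sets $\Gamma^{\sigma+\eta_{1}}$ via the transform $-e^{-AF}$ of Lemma~\ref{lem: AngleBasicProps}(8). Consequently the classical pluripotential/weak compactness machinery is not directly available; one must develop a substitute theory of ``$\Theta$-psh'' currents adapted to $\Gamma^{\sigma}$, analogous to Bedford--Taylor, and verify stability of the Lelong number stratification under weak limits in this setting. A secondary and equally serious difficulty is that Theorem~\ref{thm: obstrThm} covers only \emph{algebraic} flag ideals, whereas the analytic limit could be encoded by transcendental data; bridging this gap would require extending the model curves and their slope formula to non-Archimedean/transcendental test configurations in the spirit of \cite{BBJ}. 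Finally, one should anticipate that in dimensions $\ge 4$ the conjecture as stated is likely insufficient: supplementary Chern-number inequalities (generalizing Proposition~\ref{prop: chernNum}) appear necessary to ensure that the lifted angle $\hat{\theta}([\alpha_{t}])$ stays in $((n-1)\frac{\pi}{2}, n\frac{\pi}{2})$ along the continuity path, and the conjecture should probably be reformulated as requiring $Z_{V}\in \mathbb{H}$ \emph{together with} the correct slicing inequality $\phi_{V}>\phi_{X}$ from Proposition~\ref{prop: BridgStabInt}.
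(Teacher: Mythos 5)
The statement you are attempting to prove is explicitly labeled a \emph{conjecture} in the paper; the authors supply no proof, and none exists at the time of writing. To your credit, your proposal recognizes this and correctly partitions the content: the direction $\mathcal{H}\neq\emptyset \Rightarrow Z_{V}(L)\in\mathbb{H}$ does indeed follow from the second inequality of Corollary~\ref{cor: subVarObstr} via the phase bookkeeping you wrote out, while the converse is the genuinely open part, which the paper itself flags by invoking the Demailly--P\u{a}un analogy. Your proposed continuity-method attack, your identification of the compactness/pluripotential obstruction (no global concavity of the Lagrangian phase operator, hence no off-the-shelf Bedford--Taylor theory on $\Gamma^{\sigma}$), your worry about bridging transcendental limit currents to the algebraic flag-ideal test configurations, and your anticipation of extra Chern-number constraints in dimension $\geq 4$ all align with the paper's own commentary following the conjecture (``In dimensions greater than $3$ it seems possible that imposing some further Chern number inequalities may also be necessary'').

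Two smaller points deserve correction. First, the final suggestion that the conjecture ``should probably be reformulated as requiring \ldots the slicing inequality $\phi_{V}>\phi_{X}$'' conflates this conjecture with the adjacent one: the slicing inequality in Proposition~\ref{prop: BridgStabInt}(iv) is the proposed criterion for existence of a dHYM \emph{solution}, whereas the present conjecture concerns only non-emptiness of $\mathcal{H}$, for which Corollary~\ref{cor: subVarObstr} produces precisely the upper-half-plane constraint and nothing sharper. Second, the hypothesis $Z_{X}(L)\in\mathbb{H}$ pins $\hat{\theta}$ only to $\left((n-2)\frac{\pi}{2}, n\frac{\pi}{2}\right)$ modulo $2\pi$, which is strictly weaker than the hypercritical range $\left((n-1)\frac{\pi}{2}, n\frac{\pi}{2}\right)$ to which all the analytic machinery (geodesics, $\epsilon$-geodesics, Corollary~\ref{cor: subVarObstr}) is presently restricted; even the ``established'' forward direction is only established in the hypercritical window, so the conjecture as literally stated reaches somewhat past the regime the paper's tools cover. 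Beyond these, the proposal is an honest and reasonable sketch of a plausible strategy for an open problem, not a proof, and should not be presented as one.
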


In dimensions greater than $3$ it seems possible that imposing some further Chern number inequalities may also be necessary.  These extra conditions should be thought of as analogous to the conditions appearing in \cite[Theorem 4.3]{DP}.  A solution to this conjecture, or more generally an algebraic characterization of the non-emptiness of $\mathcal{H}$, would also give an algebraic characterization of parameter $\delta$ appearing in the construction of the model curves in Section~\ref{sec: AlgObstr} as a non-linear Seshadri constant.

It is instructive to consider a simple example.

\begin{ex}\label{eq: blp3}
Consider $X = {\rm Bl}_{p}\mathbb{P}^3$.  Let $H$ be the pull-back of hyperplane, and $E$ be the exceptional divisor of the blow-up, and note that $H^3=E^3=1$ and $E.H^2=H.E^2=0$.  We take $\omega= 2H-E$, and consider $L_{a,b}= aH-bE$.  Consider the paths
\[
\begin{aligned}
Z_{X}(t) &=( b^3 + 12at^3-3bt^2-a^3) + \sqrt{-1}(6at^2-7t^3-3bt^2)\\
Z_E(t) &= (t^2-b^2) + \sqrt{-1}(2bt)
\end{aligned}
\]
We are interested in studying when $L$ admits a solution of dHYM with phase $\hat{\theta} \in (\pi, \frac{3\pi}{2})$.  In this family, four different phenomena occur
\begin{itemize}
\item When $a=5$, $b=3$, then $Z_{X}(L_{5,3}), Z_{E}(L_{5,3}) \in \mathbb{H}$, the lifted angles are well-defined, and $\phi_{X}(L)<\phi_{X}(E)$. 
\item When $a=5$, $b=-2$ the lifted angles are well-defined, $Z_{X}(L_{5,-2}) \in \mathbb{H}$, but $Z_{E}(L_{5,-2}) \notin \mathbb{H}$, and so by Corollary~\ref{cor: subVarObstr} the space $\mathcal{H}(L_{5,-2})$ is empty.
\item When $a=5, b=1$ $Z_{X}(L_{5,1}), Z_{E}(L_{5,1}) \in \mathbb{H}$, the lifted angles are well-defined, but $\phi_{E}(L_{5,1}) < \phi_{X}(L_{5,1})$ and so by Proposition~\ref{prop: BridgStabInt} no solution of dHYM exists.
\item When $b=-3$, then there exists $a\in(0.3, 0.6)$ so that the path $Z_{X}(t)$ passes through the origin.  The lifted angled is not defined and by Proposition~\ref{prop: chernNum} no solution of dHYM exists.
\end{itemize}
\end{ex}

We now recall the definition of a Bridgeland stability condition, focusing specifically on the case of interest to the $B$-model of mirror symmetry, so that the triangulated category is $D^{b}Coh(X)$.
\begin{defn}\label{defn: slicing}
A {\em slicing} $\cP$ of $D^{b}Coh(X)$ is a collection of subcategories $\cP(\varphi) \subset D^{b}Coh(X)$ for all $\varphi \in \mathbb{R}$ such that
\begin{enumerate}
\item $\cP(\varphi)[1] = \cP(\varphi+1)$ where $[1]$ denotes the ``shift" functor,
\item if $\varphi_1 > \varphi_2$ and $A\in \cP(\varphi_1)$, $B \in \cP(\varphi_2)$, then ${\rm Hom}(A,B) =0$,
\item every $E\in D^{b}Coh(X)$ admits a Harder-Narasimhan filtration by objects in $\cP(\phi_i)$ for some $1 \leq i \leq m$.
\end{enumerate}
\end{defn}

We refer to \cite{Br} for a precise definition of the Harder-Narasimhan property.  A Bridgeland stability condition on $D^{b}Coh(X)$ consists of a slicing together with a {\em central charge} (see below).  For BPS $D$-branes in the B-model, the relevant central charge was first proposed by Douglas (see, for example, \cite{Doug, DFR, BMT, AB}).  We take
\[
D^{b}Coh(X) \ni E \longmapsto Z_{D}(E):= -\int_{X}e^{-\sqrt{-1}\omega}ch(E).
\]

\begin{defn}\label{defn: BrStab}
A Bridgeland stability condition on $D^{b}Coh(X)$ with central charge $Z_{D}$ is a slicing $\cP$ satisfying the following properties
\begin{enumerate}
\item For any non-zero $E\in \cP(\varphi)$ we have
\[
Z_{D}(E) \in \mathbb{R}_{>0} e^{\sqrt{-1}\varphi},
\]
\item
\[
C := \inf \left\{ \frac{|Z_{D}(E)|}{\|ch(E)\|} : 0 \ne E \in \cP(\varphi), \varphi \in \mathbb{R} \right\} >0
\]
where $\| \cdot \|$ is any norm on the finite dimensional vector space $H^{even}(X, \mathbb{R})$.
\end{enumerate}
\end{defn}

Given a Bridgeland stability condition we define the {\em heart} to be $\mathcal{A} := \cP((0,\pi])$.  An object $A \in \mathcal{A}$ is semistable (resp. stable) if, for every surjection $A\twoheadrightarrow B$, $B\in\mathcal{A}$ we have
\[
 \varphi(A) \leq (\text { resp.} <)\,\,\varphi(B).  
\]

It seems readily apparent that the algebraic structures which predict the existence or non-existence of solutions to dHYM are closely related to Bridgeland stability.  For example, in dimension $3$, if $V$ is an irreducible analytic subvariety and $\mathcal{O}_{V}$ is the skyscraper sheaf supported on $V$, then the ideal dictionary would be
\[
\begin{aligned}
\text{ Proposition~\ref{prop: BridgStabInt} (i)-(iii) } &\Longleftrightarrow L, L\otimes \mathcal{O}_{V} \in \mathcal{A}\\
\text{ Proposition~\ref{prop: BridgStabInt} (iv) } &\Longleftrightarrow L \text{ is not destabilized by } L\twoheadrightarrow L\otimes \mathcal{O}_{V}
\end{aligned}
\]
Note that $Z_{V}(L)\ne Z_{D}(L\otimes \mathcal{O}_{V})$ in general.  Nevertheless, solutions of dHYM share two further important properties with Bridgeland stable objects.  First, recall that by the result of Jacob-Yau \cite{JY} (see Lemma~\ref{lem: BPS}), line bundles admitting solutions of dHYM have property (2) of Definition~\ref{defn: BrStab}.  Secondly, we note the following lemma, which should be compared with Definition~\ref{defn: slicing}, (2).

\begin{lem}
Suppose $L, M$ are two line bundles on $(X,\omega)$ admitting solutions of the deformed Hermitian-Yang-Mills equation with
\[ 
0<\phi_{X}(M)<\phi_{X}(L) < \pi.
\]
Then $\Hom(L,M) = 0$.
\end{lem}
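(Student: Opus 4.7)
The plan is to argue by contradiction: given a nonzero morphism $s\colon L\to M$, I will produce a point $x_0\in X$ at which the dHYM curvatures satisfy $\alpha_M(x_0)\geq \alpha_L(x_0)$ in the Hermitian order, and then use strict monotonicity of the Lagrangian phase operator to force $\hat\theta_M\geq \hat\theta_L$, contradicting $\phi_X(M)<\phi_X(L)$.

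First I would identify a nonzero homomorphism $L\to M$ with a nonzero global section $s\in H^0(X,N)$ of $N:=M\otimes L^{-1}$, and set $D:=\mathrm{div}(s)\geq 0$. Equipping $L$ and $M$ with their dHYM metrics $h_L,h_M$ whose Chern curvatures are $\alpha_L\in c_1(L)$ and $\alpha_M\in c_1(M)$, the induced metric $h_N:=h_M\otimes h_L^{-1}$ on $N$ has curvature $\alpha_M-\alpha_L$, and the Poincar\'e--Lelong formula gives
\[
\ddb u \;=\; [D] - (\alpha_M-\alpha_L)
\]
for $u:=\log|s|^2_{h_N}$. Since $u$ is smooth on $X\setminus D$, tends to $-\infty$ along $D$, and is not identically $-\infty$, by compactness of $X$ it attains its supremum at some $x_0\in X\setminus D$, where $\ddb u(x_0)\leq 0$ as a Hermitian $(1,1)$-form and $[D]$ vanishes in a neighborhood; reading off this inequality gives $\alpha_M(x_0)\geq\alpha_L(x_0)$ in the L\"owner order.

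To conclude, I would invoke Weyl's monotonicity of eigenvalues under the L\"owner order to obtain $\lambda_i(\omega^{-1}\alpha_M(x_0))\geq\lambda_i(\omega^{-1}\alpha_L(x_0))$ for each $i$, then apply the strictly increasing function $\arctan$ termwise and sum to deduce $\Theta_\omega(\alpha_M)(x_0)\geq\Theta_\omega(\alpha_L)(x_0)$ for the principal-branch Lagrangian phase. The hypercritical setting places both $\hat\theta_L$ and $\hat\theta_M$ in the branch $((n-1)\pi/2,n\pi/2)\subset(-n\pi/2,n\pi/2)$, where the principal value of $\sum\arctan(\lambda_i)$ coincides with the lifted angle, so the dHYM equations $\Theta_\omega(\alpha_L)\equiv \hat\theta_L$ and $\Theta_\omega(\alpha_M)\equiv\hat\theta_M$ upgrade the pointwise inequality to $\hat\theta_M\geq\hat\theta_L$, contradicting $\phi_X(M)<\phi_X(L)$. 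The only delicate point will be tracking the sign in Poincar\'e--Lelong consistently with the paper's normalization of $\alpha_L,\alpha_M$, but this is a routine verification.
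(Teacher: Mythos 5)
Your proposal is correct and is essentially the paper's own argument: pass to a nonzero section $\sigma$ of $M\otimes L^{-1}$, apply Poincar\'e--Lelong and the maximum principle to $\log|\sigma|^2_h$ at a point off the divisor to get $\alpha_L\leq\alpha_M$ there, and then use monotonicity of $\Theta_\omega$ to conclude $\hat\theta(L)\leq\hat\theta(M)$, contradicting $\phi_X(M)<\phi_X(L)$. The only difference is cosmetic: you spell out the Weyl eigenvalue monotonicity and the branch bookkeeping, which the paper leaves implicit.
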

\begin{proof}
Let $\alpha_L  \in c_1(L)$, $\alpha_M \in c_1(M)$ be the solutions of the deformed Hermitian-Yang-Mills equation.  Suppose for the sake of contradiction that there exists a non-zero section $\sigma \in H^{0}(X, M\otimes L^{\vee})$.  We fix a Hermitian metric $h$ on $M \otimes L^{\vee}$ with
\[
-\ddb \log h = \alpha_M- \alpha_L.
\]
The $(1,1)$ current $T:=\alpha_{M} + \ddb \log |\sigma|^{2}_{h}$ satisfies $T = \alpha_{L}$ on $X \backslash \{\sigma=0\}$.  The function $\log |\sigma|^2_h$ achieves its maximum at some point $x^{*}\in X\backslash\{\sigma=0\}$, and at $x^{*}$ we have
\[
\alpha_L(x^{*}) = T(x^{*}) \leq \alpha_{M}(x^{*})
\]
which implies that
\[
\hat{\theta}(L) = \Theta_{\omega}(\alpha_{L}(x^{*})) \leq \Theta_{\omega}(\alpha_{M}(x^*))  = \hat{\theta}(M).
\]
Since $\hat{\theta}(L) = \phi_{X}(L)-(n-2)\frac{\pi}{2}$, and similarly for $\phi_{X}(M)$, this is a contradiction.
\end{proof}

For a general flag ideal we can write the obstructions in Theorem~\ref{thm: obstrThm} in terms of the numbers
\[
Z_{E}(L) :=  \int_{E}e^{-\sqrt{-1}\mu^{*}\omega}ch(\mu^{*}L-\delta E).
\]
These invariants are more difficult to interpret in terms of Bridgeland stability in that they are computed on a birational model of $X\times \Delta$.  For example, it's unclear how one show interpret the flag ideal $\mathfrak{I} \subset \mathcal{O}_{X}\times \mathbb{C}[t]$ as an object in $D^{b}Coh(X)$.  It would be particularly enlightening to express $Z_E(L)$ in terms of data on $X$, for example the Chern class of the ideals $\mathfrak{J}_{k}$.  This seems difficult however, as the Grothendieck-Riemann-Roch Theorem includes the Todd class of the relative tangent bundle of the log-resolution $\mu$.   For example, for applications to Bridgeland stability we would hope for a positive answer to the following question
\begin{que}\label{que: gradeObj}
Does the quantity appearing on the right hand side of~\eqref{eq: limSlopeModel} depend only on the quasi-isomorphism class of
\[
\mathfrak{J}_{0}\hookrightarrow \mathfrak{J}_{1} \hookrightarrow \cdots \hookrightarrow \mathfrak{J}_{r-1} \hookrightarrow \mathcal{O}_{X}
\]
in $D^{b}Coh(X)$?
\end{que}

Finally, note that our results here apply to {\em any} $(1,1)$ class $[\alpha] \in H^{1,1}(X,\mathbb{R})$.  The relation with Bridgeland stability is via the $B$-field.  In mirror symmetry it is natural to consider not just K\"ahler forms, but complex forms $\omega+ \sqrt{-1}\beta$, where $\beta \in H^{1,1}(X,\mathbb{R})$.  In fact, it is usually assumed that $\beta \in H^{1,1}(X,\mathbb{R})/H^{1,1}(X,\mathbb{Z})$.  If $[\alpha] \in H^{1,1}(X,\mathbb{R})$, then we write it as $[\alpha] = [L] + [\beta]$, where $[L] \in H^{1,1}(X,\mathbb{Z})$, and $[\beta] \in H^{1,1}(X,\mathbb{R})/H^{1,1}(X,\mathbb{Z})$.  Then the stability of $[\alpha]$ in the sense discussed above is equivalent to the stability of $[L]$ with respect to the complexified K\"ahler form $\omega + \sqrt{-1}\beta$.  In each equation in this section one just replaces $\omega \mapsto \omega +\sqrt{-1}\beta$.  When $[L]=0$, we are asking that the structure sheaf $\mathcal{O}_{X}$ be stable with respect to the complexified form $\omega+\sqrt{-1}\beta$.  Under mirror symmetry this is related to the fact that the zero section of the SYZ fibration should be a special Lagrangian; see \cite{Leung, Gr} and the references therein for a discussion.

Note that in restricting our attention to line bundles, and more generally classes $[\alpha]\in H^{1,1}(X,\mathbb{R})$, we do need to understand the existence of a stability condition on all of $D^{b}Coh(X)$.  In particular, this allows us to avoid addressing the stability of higher rank bundles, and the existence of Harder-Narasimhan filtrations, two issues which are at the heart of constructing Bridgeland stability conditions.  It would be interesting to understand, even in examples, whether Harder-Narasimhan filtrations for unstable line bundles appear analytically. For example, one could study singular solutions of the dHYM equation, limits of the flow proposed by Jacob-Yau \cite{JY}, or the gradient flow of the $\mathcal{J}$ functional with respect to the Riemannian structure on $\mathcal{H}$, in analogy with the work of the first author with Hisamoto and Takahashi in the setting of K\"ahler-Einstein metrics \cite{CHT}.

\subsection{Higher Rank, Lower Phase}
We conclude this section  with some remarks about what one might expect in the case of line bundles with lower phase, and general vector bundles.  In the case of line bundles with lower phase, the foremost analytic difficulty is to prove the existence of regular geodesics, or even $\epsilon$-regularized geodesics, when the phase $|\hat{\theta}|\leq  (n-1)\frac{\pi}{2}$.  In this lower phase range, the Lagrangian phase operator on the product manifold $\mathcal{X}_{\epsilon}$ fails to be concave, or even have concave level sets.  Even in the local case in $\mathbb{R}^{n}$ there are examples of viscosity solutions to the constant Lagrangian phase equation which fail to have even $C^{1,1}$ regularity \cite{NV, WY1}.

One could nevertheless optimistically assume sufficiently regular $\epsilon$-geodesics exist and proceed to study the algebro-geometric consequences as in Section~\ref{sec: AlgObstr}.  However, even here new phenomena appear.  Recall that the space $\mathcal{H}$ is defined to be the space of smooth potentials $\phi$ so that
\[
|\Theta_{\omega}(\alpha_{\phi}) - \hat{\theta}| <\frac{\pi}{2}.
\]
When $\hat{\theta} > (n-1)\frac{\pi}{2}$ this reduces to the one-sided bound $\hat{\theta}-\frac{\pi}{2} < \Theta_{\omega}(\alpha_{\phi})$.  This fact was used crucially in the construction of the model curve~\eqref{eq: modelCurve}.  On the other hand, when $\hat{\theta}< (n-1)\frac{\pi}{2}$, the construction of a model curve becomes more subtle.  For example, when $|\hat{\theta}|<(n-1)\frac{\pi}{2}$ there is no model curve in $\mathcal{H}$ for the ideal $\mathfrak{I} = \mathfrak{J}_{p} + (t)$, where $\mathfrak{J}_p$ is the ideal sheaf of a point.  In particular, any attempt to understand the obstructions to existence of solutions to dHYM in lower phase must come to terms with understanding the model curves in $\mathcal{H}$.  

In higher rank one can construct a similar theory, but with formidable new difficulties.  The first caveat is that, for holomorphic bundles with non-abelian gauge group the analogue of the deformed Hermitian-Yang-Mills equation is not known.  Mathematically, the dHYM equation for vector bundles should arise from the Fourier-Mukai transform of a special Lagrangian multi-section \cite{LYZ}.  However, new difficulties arise due to the presence of holomorphic disks.  On the physics side, the dHYM equations have not been derived as the correct formulation of the non-abelian Dirac-Born-Infeld theory is not completely solved \cite{MMMS}.  Nevertheless, there is a natural guess \cite{MMMS, GM, MT} for the higher rank dHYM equation.  Fix a holomorphic bundle $E\rightarrow (X,\omega)$.  We say that a hermitian metric $H$ on $E$ solves the deformed Hermitian-Yang-Mills equation if the associated Chern connection of $H$ satisfies 
\[
{\rm Im}\left(e^{-\sqrt{-1}\hat{\theta}}(\omega\otimes I_{E} - F)^{n}\right)=0
\]
where 
\[
\int_{X}{\rm Tr}(\omega\otimes I_{E} - F)^{n} \in \mathbb{R}_{>0}e^{\sqrt{-1}\hat{\theta}}
\]
and the imaginary part is defined using the metric $H$.

Then we can consider
\[
\mathcal{H} : =\{ H: {\rm Re}\left({\rm Tr}\left((\omega\otimes I_{E} - F(H))^{n}\right)\right) >0 \}
\]
The tangent space to $\mathcal{H}$ at a point $H$ is just the space of smooth $H$-hermitian bilinear forms on $E$.  Given two sections $\psi_1,\psi_2$ we have a natural $L^2$ inner-product
\[
\langle\langle \psi_1, \psi_2\rangle\rangle_{H} = \int_{X} \langle \psi_1, \psi_2\rangle_{H} {\rm Re}\left({\rm Tr}\left((\omega\otimes I_{E} - F(H))^{n}\right)\right).
\]
One can then proceed as before, computing the geodesic equation, introducing the analogue of the $CY_{\mathbb{C}}$ functional and so on.  The resulting equations are fully nonlinear systems, and the analytic difficulties in addressing them are formidable, to say the least.  Nevertheless, one can study the consequences of existence, including higher rank versions of the Chern number inequality~\eqref{eq: chernNum}; see for example \cite{Ping} where a vector bundle version of the Monge-Amp\`ere equation is studied.

\section{The A-model}\label{sec: Amod}

Let us discuss the mirror picture to our results.  We are going to focus on the setting in which the $B$-model is a toric K\"ahler manifold.  Mirror symmetry in this setting has been extensively studied; see for example \cite{Gr1, Sei3, AKO, Abou, Abou1, FLTZ, FLTZ1, FLTZ2, FLTZ3, KChan, ChOh, HoVa} and the references there in.  In the toric setting we can apply the semi-flat SYZ proposal and the real Fourier-Mukai transform \cite{LYZ} to transform results about dHYM into results about special Lagrangians in the $A$-model.

 Suppose $(X,\omega)$ is toric, $\omega$ is an $(S^1)^n$-invariant symplectic form.  Then $X$ contains a dense open orbit of $(\mathbb{C}^{*})^n$, and $X\backslash (\mathbb{C}^{*})^n = D=  \bigcup_{i}D_i$ is a simple normal crossings divisor with $D \in -K_{X}$.  Each $D_i$ is itself a toric manifold of dimension $n-1$.  Fix coordinates $(z_1,\ldots, z_n)$ on $(\mathbb{C}^{*})^n$.  Since $D$ is an anticanonical divisor $X_0 = X\backslash D$ is Calabi-Yau with holomorphic volume form
\[
\Omega = \frac{dz_1}{z_1}\wedge \ldots \wedge \frac{dz_n}{z_n}.
\]
We are therefore in a position to discuss mirror symmetry.  The K\"ahler form $\omega|_{X_0} = \ddb \phi$ for a function $\phi$, and since $\omega$ is $(S^1)^n$-invariant we have
\[
\phi = \phi(|z_1|,\ldots,|z_n|).
\]
Let $(x_1,\ldots, x_n)$ be coordinates on $\mathbb{R}^{n}$ and let $(\theta_1,\ldots, \theta_n)$ be coordinates on $(S^{1})^n$, and define $w_i = x_i +\sqrt{-1}\theta_i$ to be holomorphic coordinates on $\mathbb{R}^{n}\times (S^1)^{n}$, which we identify with $(\mathbb{C}^{*})^{n}$ by the map $w_i\mapsto e^{w_i} = z_i$.  In particular, we can view $X\backslash D =  (\mathbb{C}^{*})^{n}$ as a Lagrangian torus fibration over $\mathbb{R}^{n}$.  Writing the K\"ahler metric out in these coordinates gives
\[
g = \sum_{i,j}\phi_{ij} (dx^i\otimes dx^j + d\theta_i \otimes d\theta_j)
\]
where $\phi_{ij} : = \frac{\del^2 \phi}{\del x_i \del x_j}$.  In particular, the metric induced on $\mathbb{R}^n$ is the hessian metric of the convex function $\phi$.

To construct the mirror manifold, we use the moment map $\mu: X \rightarrow \Delta$,  where $\Delta$ is a convex polyhedron, which we view as a subset of $\mathbb{R}^{n}$.  The divisor $D$ is mapped to the boundary of $\Delta$, and the lower dimensional toric varieties are mapped to the faces of $\Delta$.  Restricting to the interior $\Delta^{o}$ the moment map gives a smooth Lagrangian torus fibration
\[
\mu: X\backslash D \rightarrow \Delta^{0}.
\]
It is a standard fact that $\nabla \phi : \mathbb{R}^{n} \rightarrow \Delta^{o}$ is a diffeomorphism.  Namely, we let
\[
y_i = \frac{\del \phi}{\del x_i}
\]
and these define coordinates on $\Delta$.  Using $(\nabla \phi)^{-1}$ we can view the moment map as inducing an $(S^{1})^n$-fibration over $\mathbb{R}^{n}$, which comes naturally equipped with a symplectic form, and Riemannian metric given by
\[
\tilde{g} =  \sum_{i,j}\phi_{ij} dx^i\otimes dx^j +  \phi^{ij} d\tilde{\theta}_i \otimes d\tilde{\theta}_j, \qquad \tilde{\omega} = \sum_{ij} \phi^{ij} dx_i\wedge d\tilde{\theta}_j
\]
where $\phi^{ij}$ is the inverse of the metric $\phi_{ij}$, and $\tilde{\theta}_i$ are coordinates in the $(S^{1})^n$ fibers of $\mu$.  This induces a complex structure $\tilde{J}$  by defining the coordinates
\[
\tilde{w}_i = y_i+\sqrt{-1}\tilde{\theta}_i
\]
to be holomorphic, and the resulting triple defines a K\"ahler structure which is precisely the underlying manifold of the mirror of $X$.  This can all be written succinctly on $\Delta^{o}$ as follows.  Let $u = \phi^{*}$ be the Legendre transform of $\phi$, and let $y_i$ be the coordinates on $\Delta^{o}$ induced by $\nabla \phi$.  Then $\tilde{w}_i = y_i+\sqrt{-1}\tilde{\theta}_i$ are holomorphic coordinates on $Y:= (S^1)^{n}\times \Delta^{o}$, which is equipped with the K\"ahler metric
\[
\tilde{g} = u_{ij}(dy^i\otimes dy^j + d\tilde{\theta}_i \otimes d\tilde{\theta}_j)
\]
where $u_{ij} = \frac{\del^2 u}{\del y_i \del y_j} = \phi^{ij}(x(y))$.  There is a holomorphic volume form on $Y$ given by $\check{\Omega} = d\tilde{w}_{1} \wedge \ldots \wedge d\tilde{w}_n$.  We view $Y$ as a bounded domain inside of $(\mathbb{C}^{*})^n$ by defining $\tilde{z}_i = e^{\tilde{w}_i}$, in which case the holomorphic volume form is
\[
\check{\Omega} = \frac{d\tilde{z}_{1}}{\tilde{z}_1} \wedge \ldots \wedge \frac{d\tilde{z}_{n}}{\tilde{z}_n}.
\]
The mirror to $X$ is then the Landau-Ginzburg model $(Y,W)$, where $W$ is the super-potential.  At least when $X$ is toric Fano, the superpotential $W$ can be computed directly from the polytope \cite{ChOh, KChan}.

In this setting, Leung-Yau-Zaslow use the real Fourier-Mukai transform to prove
\begin{thm}[Leung-Yau-Zaslow \cite{LYZ}]
Let $L \rightarrow (X,\omega)$ be a torus invariant line bundle, equipped with a torus invariant hermitian metric $h$.  The metric $h$ induces a function $f = \log(h): \Delta^{0}\rightarrow \mathbb{R}$ whose graph $y\mapsto(y,\tilde{\theta}(y))$ defined by
\[
\tilde{\theta}_i(y) = u^{ij}\frac{\del f}{\del y_j}
\]
is a Lagrangian submanifold $\tilde{L}$.  Furthermore, $\tilde{L}$ is special Lagrangian if and only if $h$ solves the deformed Hermitian-Yang-Mills equation~\eqref{eq: introDHYM}
\end{thm}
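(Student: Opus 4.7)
The plan is to verify both claims by direct computation on $Y$ in the real coordinates $(y,\tilde\theta)$, and the first step is to unwind the formula for $\tilde\theta_i(y)$ to something simpler. Using Legendre duality, $x_i(y) = \partial u/\partial y_i$ so that $\partial x_k/\partial y_j = u_{kj}$, and the chain rule gives $\partial f/\partial y_j = f_{x_k}\, u_{kj}$. Hence
\[
\tilde\theta_i = u^{ij} \frac{\partial f}{\partial y_j} = u^{ij} u_{kj} f_{x_k} = \delta^i_k f_{x_k} = f_{x_i}(x(y)).
\]
Thus $\tilde L$ is simply the graph $y \mapsto (y,\nabla_x f(x(y)))$, which is the image of $df$ under the standard identification $T^*\Delta^o/\Lambda \cong Y$. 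This presentation is the real Fourier--Mukai transform of the metric $h$.

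For the Lagrangian property, the K\"ahler form computed from $\tilde g$ and the complex structure $\tilde w_i = y_i + \sqrt{-1}\tilde\theta_i$ works out to $\tilde\omega = u_{ij}\, dy_i \wedge d\tilde\theta_j$. On $\tilde L$ we have $d\tilde\theta_j = f_{x_j x_k}\, dx_k = f_{x_j x_k}\, u_{k\ell}\, dy_\ell$, so
\[
\iota^*\tilde\omega = u_{ij}\, f_{x_j x_k}\, u_{k\ell}\, dy_i \wedge dy_\ell = (UFU)_{i\ell}\, dy_i \wedge dy_\ell,
\]
where $U=(u_{ij})$ and $F=(f_{x_ix_j})$ are both symmetric; the product $UFU$ is therefore symmetric, and the pullback vanishes. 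This shows $\tilde L$ is Lagrangian.

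For the special Lagrangian condition, pull back $\check\Omega = d\tilde w_1 \wedge \cdots \wedge d\tilde w_n$. From the same computation, $d\tilde w_i|_{\tilde L} = (\delta_{ik} + \sqrt{-1}(FU)_{ik})\, dy_k$, whence
\[
\check\Omega|_{\tilde L} = \det(I + \sqrt{-1}\, F\Phi^{-1})\, dy_1 \wedge \cdots \wedge dy_n,
\]
using $U = (u_{jk}) = \Phi^{-1}$ where $\Phi = (\phi_{jk})$ is the real Hessian. If $\lambda_1,\ldots,\lambda_n$ denote the eigenvalues of $F\Phi^{-1}$ (equivalently of $\Phi^{-1}F$), then $\check\Omega|_{\tilde L}/(dy_1\wedge\cdots\wedge dy_n) = \prod_i(1+\sqrt{-1}\lambda_i)$, so the special Lagrangian condition reads $\sum_i \arctan(\lambda_i) = $ constant.

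It remains to identify $\{\lambda_i\}$ with the dHYM eigenvalues on the $B$-side. A direct computation in the coordinates $w_i = x_i + \sqrt{-1}\theta_i$ (with $z_i = e^{w_i}$) shows $\omega$ and $F(h) = -\partial\bar\partial \log h$ take the real form $\omega \sim \phi_{ij}\, dx_i\wedge d\theta_j$ and $F(h) \sim -f_{x_ix_j}\, dx_i \wedge d\theta_j$, so the generalized eigenvalue problem $F(h)v = \mu\, \omega\, v$ reduces to the real problem $-F v = \mu\, \Phi v$. The eigenvalues $\mu_i$ of $\omega^{-1}F(h)$ are therefore the negatives of the $\lambda_i$, yielding
\[
\det(I+\sqrt{-1}\, F\Phi^{-1}) = \overline{\prod_i(1+\sqrt{-1}\mu_i)} = \overline{\left.\tfrac{(\omega+\sqrt{-1}F(h))^n}{\omega^n}\right.}.
\]
Consequently, ${\rm Im}(e^{-\sqrt{-1}\hat\theta}\check\Omega|_{\tilde L})=0$ for a constant $\hat\theta$ if and only if ${\rm Im}(e^{\sqrt{-1}\hat\theta}(\omega+\sqrt{-1}F(h))^n)=0$, which is the dHYM equation (with phase $-\hat\theta$ due to the conjugation coming from orientation). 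The only genuinely delicate step is the bookkeeping of sign conventions and factors of $2$ coming from the conversion $\partial\bar\partial \leftrightarrow d^2$ on the open orbit; these cancel against the corresponding normalizations built into the Fourier--Mukai transform so that the phase on the two sides matches exactly.
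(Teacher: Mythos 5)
The paper does not supply a proof of this theorem; it is stated as a citation to Leung--Yau--Zaslow \cite{LYZ} and used as a black box. Your argument is the standard semi-flat SYZ/real Fourier--Mukai computation that underlies \cite{LYZ}, and it is correct: the simplification $\tilde\theta_i = u^{ij}\partial_{y_j}f = f_{x_i}(x(y))$ exhibits $\tilde L$ as the graph of $d(\log h)$ inside $T^*\Delta^o/\Lambda$, the pullback $\iota^*\tilde\omega$ vanishes because $U\,F\,U$ is symmetric, and the pullback of $\check\Omega$ yields $\det(I + \sqrt{-1}F\Phi^{-1})$, whose argument is the Lagrangian phase of $\omega^{-1}F(h)$ up to the sign flip you note, giving the equivalence with dHYM.

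The only place you are deliberately loose is in the final sentence about sign conventions and factors of $1/2$. This is a genuine (if tedious) bookkeeping issue: the paper's conventions for $F(h)$ and for which of $L$ or $L^{-1}$ carries the dHYM structure (see the remark after the introduction of \eqref{eq: LagPhaseOp}) shift the phase by an overall sign, and the conversion between $\sqrt{-1}\partial\bar\partial$ and $d^2$ in the $(x,\theta)$ coordinates introduces a factor of $\tfrac12$ into both $\omega$ and $F(h)$ that cancels in the ratio $\omega^{-1}F(h)$. Both cancellations are correct, but in a self-contained write-up you should track them explicitly rather than asserting that they "cancel against the corresponding normalizations built into the Fourier--Mukai transform," since the reader cannot otherwise verify that the constant phase $\hat\theta$ matches the lifted angle appearing in the rest of the paper.
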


In the statement of this theorem we are abusively viewing $\tilde{\theta}_i$ as coordinates on the universal cover of $(S^1)^n$.  Under this correspondence, changes in the metric $h \mapsto he^{-f}$ correspond to Hamiltonian deformations of $\tilde{L}$ (coupled with a flat $U(1)$ bundle).

Our goal is to describe, in rather general terms, how our results combined with the above mirror construction, imply results about the existence of special Lagrangians.  Naturally, much of what follows is somewhat speculative, owing to the complicated nature of the derived Fukaya-Seidel category $D\mathcal{F}(Y,W)$ of the Landau-Ginzburg model.  The precise details of the correspondence will be addressed in future work.

The first ingredient of the correspondence is an equivalence between torus invariant line bundles $L\rightarrow X$ and admissible Lagrangians $\tilde{L}$ in $(Y,W)$, defining objects $[\tilde{L}]$ in  $D\mathcal{F}(Y,W)$; more precisely, a correspondence between smooth, torus invariant metrics $h$ on $L$ and Lagrangians in $(Y,W)$ representing $[\tilde{L}]$.  This correspondence is based on a notion of admissibility for Lagrangians in $(Y,W)$, about which much has been written; see for instance \cite{Abou, Abou1, Sei, Sei1}.  For our purposes the notion of monomial admissibility recently introduced by Hanlon \cite{Han} seems to capture the necessary structure.

Fix a torus invariant line bundle $L\rightarrow X$, and toric metrics $h_1, h_2$ on $L$ which define points in $\mathcal{H}$.  Suppose that $\hat{\theta}(L)= \hat{\theta} > (n-1)\frac{\pi}{2}$.  Then the mirror Lagrangians $L_1, L_2$ are almost calibrated in the sense of \cite{Th}, or positive in the sense of \cite{Sol}, with lifted angle $\tilde{\theta} = -\hat{\theta}$ (since our convention introduces an extra minus sign under the mirror map).  Furthermore, the geodesic connecting $h_1, h_2$ in $\mathcal{H}$ produced by Theorem~\ref{thm: geoExistence} corresponds exactly to a geodesic, in the sense of Solomon \cite{Sol}, connecting the Lagrangians $\tilde{L}_1, \tilde{L}_2$ in the class $[\tilde{L}]$.  Suppose that 
\begin{equation}\label{eq: nest}
\mathfrak{J}_{0} \subset \mathfrak{J}_{1} \cdots \subset \mathfrak{J}_{r} = \mathcal{O}_{X}
\end{equation}
are torus invariant ideal sheaves on $X$, and let $h_{\delta}(s)$ be the associated model curve (for $\delta>0$ sufficiently small) (see Section~\ref{sec: AlgObstr}).  The Fourier-Mukai transform associates to $h_{\delta}(s)$ a family of positive (or almost calibrated) Lagrangians $\tilde{L}_{\delta}(s)$.  Suppose that
\[
\Delta = \{ y \in \mathbb{R}^{n} :  \bigcap_{i=1}^{M} \ell_i(y) \geq 0 \}
\]
where $\ell_i, 1 \leq \ell \leq M$ are linear functions with $\ell_i=0$ defining the faces of $\Delta$.  Model curves correspond under the Fourier-Mukai transform to potentials of the form
\[
f_{\delta}(s) = -\frac{\delta}{2\pi}\log\left(\sum_{\ell=0}^{r} e^{-2s\ell}\sum_{k=0}^{r-1} P_{k}(\ell_1,\ldots, \ell_M) + e^{-2sr}\right)
\]
where $P_{k}$ are polynomials having $ P_{k}(\ell_1,\ldots, \ell_M)>0$ on $\Delta^{o}$, and satisfying certain nesting conditions for their zero sets, corresponding to~\eqref{eq: nest}.  The $f_{\delta}(s)$ give rise to the positive Lagrangians $\tilde{L}_{\delta}(s)$ by Hamiltonian deformation from some initial positive Lagrangian $\tilde{L}_0$.  As $s\rightarrow \infty$, the Lagrangians $\tilde{L}_{\delta}(s)$ degenerate to a limit
\[
\lim_{s\rightarrow \infty} L_{\delta}(s) = \tilde{L}_{\delta}(\infty).
\]
In general, $\tilde{L}_{\delta}(\infty)$ will not define an element of $D\mathcal{F}(Y,W)$.  However, for rational choices of $\delta$, we can scale the Lagrangian, and the symplectic form by a sufficiently large, and divisible integer  to get a well-defined element of $D\mathcal{F}(Y,W)$.  This is akin (and indeed mirror) to the fact that $\mathbb{Q}$-divisors define elements of $D^{b}Coh(X)$ only after clearing denominators.  For now let us ignore this technical issue, with the understanding that one needs to scale appropriately to make the following discussion meaningful.

Since $\tilde{L}_{\delta}(\infty)$ is obtained by a family of Hamiltonian deformations from $\tilde{L}_0$, it should come equipped with a map  $\tilde{L}_{\delta}(\infty)\rightarrow \tilde{L}_0$.  Let $C = {\rm Cone}(\tilde{L}_{\delta}(\infty)\rightarrow \tilde{L}_0)$ be the cone in $D\mathcal{F}(Y,W)$.  Geometrically, \cite{FOOO, Sei2} one can think of $C$ as the Lagrangian connect sum $\tilde{L}_0\#\tilde{L}_{\delta}(\infty)$, although this is not rigorous in general and it is unclear whether $C$ is a ``geometric" object of $D\mathcal{F}(Y,W)$.  At a purely formal level, it is natural to expect that the limit of the symplectic Kempf-Ness functional along the family $\tilde{L}_{\delta}(s)$ is
\begin{equation}\label{eq: sympKN}
\lim_{s\rightarrow \infty} - \int_{\tilde{L}_s} \frac{\del f_{\delta}}{\del s} {\rm Im}\left(e^{-\sqrt{-1}\tilde{\theta}}\Omega\right)= - \int_{\tilde{L}_{\delta}(\infty)}{\rm Im}\left(e^{-\sqrt{-1}\tilde{\theta}}\Omega\right).
\end{equation}
The limit on the left-hand side is of course calculated on the $B$-model $X$ by the results in Section~\ref{sec: AlgObstr}.  However, evaluating the limit in terms of data on the $A$-model $(Y,W)$ appears slightly complicated.  Note that, by direct computation, one can show that for all $x\in \Delta^o$ we have
\[
- \frac{\del f_{\delta}(s)}{\del s}(x) \rightarrow 0,
\]
 and so the integral in~\eqref{eq: sympKN} should localize to an integral near the boundary of $\Delta$, where $-\frac{\del f_{\delta}(s)}{\del s}(x)$ should converge to a piecewise constant function on each face of $\del \Delta$.  This observation already seems to suggest that~\eqref{eq: sympKN} is incorrect.  However, the cone $C$ can thought of loosely as representing certain torus fibers over the boundary (see Figure~\ref{fig: conePic}).  These boundary torus fibers are not objects in $Y$, since $Y$ does not include $\del \Delta$.  Nevertheless we have $[C] =[\tilde{L}_0]- [\tilde{L}_{\delta}(\infty)]$ in the Grothendieck group of $D\mathcal{F}(Y,W)$.  Since $[{\rm Im}(e^{-\sqrt{-1}\tilde{\theta}}\Omega)].[\tilde{L}_0]=0$ by definition of $\tilde{\theta}$, the boundary integral over the torus fibers formally represented by $C$ is equal to the integral on the right hand side of~\eqref{eq: sympKN}.
 
To illustrate this, let us consider the simple example of $(\mathbb{P}^1, \omega_{FS})$.  As usual let $[Z_1:Z_2]$ be homogeneous coordinates on $\mathbb{P}^{1}$ and let $\mathbb{C}^{*}$ act by $\lambda\cdot[Z_1:Z_2] =  [\lambda\cdot Z_1:Z_2]$.  We can identify $Z_1Z_2 \ne 0$ with $\mathbb{C}^{*}$ equipped with the coordinate $z= Z_1/Z_2$.  Write $z= e^{x+\sqrt{-1}\theta}$.  The potential of the Fubini-Study metric is
 \[
 \phi = \log(1+e^{2x}),
 \]
 which is a convex function on $\mathbb{R}$.   Consider the line bundle $\mathcal{O}_{\mathbb{P}^{1}}(k)$ equipped with the model curve of metrics
 \[
h_{k,\delta}(s):= \frac{h_{FS}^{\otimes k}}{\left(|Z_1^2Z_2|_{h_{FS}^{\otimes 3}}^{2} + e^{-2s}\right)^\delta}
 \]
 which corresponds to the toric flag ideal $\mathfrak{I} = (Z_1^2Z_2) + (t) \subset \mathcal{O}_{\mathbb{P}^1}\otimes \mathbb{C}[t]$.  Recall that, by our convention, these metrics induce Lagrangians mirror to $\mathcal{O}_{\mathbb{P}^1}(-k)$ (see the beginning of Section~\ref{sec: varFrame}).  Since $\nabla \phi : \mathbb{R} \rightarrow [0,2]$, the resulting section of $T^{*}(0,2)$ under the Fourier-Mukai transform is
 \[
 (0,2) \ni y \longmapsto -ky - \delta\frac{y^2(2-y)(4-3y)}{y^2(2-y)+8e^{-2s}}.
 \]
 The Lagrangian section of the mirror is obtained by passing to the quotient $T^{*}(0,2)/\Lambda$ for a lattice $\Lambda$.  For simplicity, we take this lattice to be $\mathbb{Z}$.  We plot the resulting Lagrangians for a few values of $s$ as $s\rightarrow \infty$ in Figure~\ref{fig: degLags}
 
 \begin{figure}
\begin{center}
\psset{unit=.007in}
\begin{pspicture}(-250,-260)(250,50)
	\psset{linecolor=black}
	\psline(-250,50)(250,50)
	\psline[linestyle=dashed](-250,-270)(-250,50)
	\psline[linestyle=dashed](250,-270)(250,50)
	\psline[linewidth=0.7pt](-250,50)(250,-250)
	\pscurve[linewidth=0.3pt](-250,50)(-248,49)(-230,5)(100,-160)(230,-222)(248,-249)(250,-250)
	\psline[linewidth=0.7pt](-250,-30)(250,-216)
	\psdot(100,-160)
	\uput[90](100,-160){$p$}
	\psline[linewidth=0.3pt]{|-|}(-270,50)(-270,-30)
	\uput[180](-270,10){4}
	\psline[linewidth=0.3pt]{|-|}(270,-216)(270,-250)
	\uput[0](270,-233){2}
	\uput[90](200,-180){$\tilde{L}_{\infty}$}
	\uput[-90](-180,50){$\tilde{L}_{0}$}
\end{pspicture}
\caption{The family of degenerating Lagrangians in $T^{*}(0,2)$ induced by the metrics $h_{k,1}^{-1}(s)$ on $\mathcal{O}_{\mathbb{P}^1}(-k)$ for $k\gg1$.  The marked point $p$ corresponds to an element of ${\rm Hom}(\tilde{L}_{\infty}, \tilde{L}_0)$ in $D{\mathcal{F}}(Y,W)$.} 
\label{fig: degLags}
\end{center}
\end{figure}
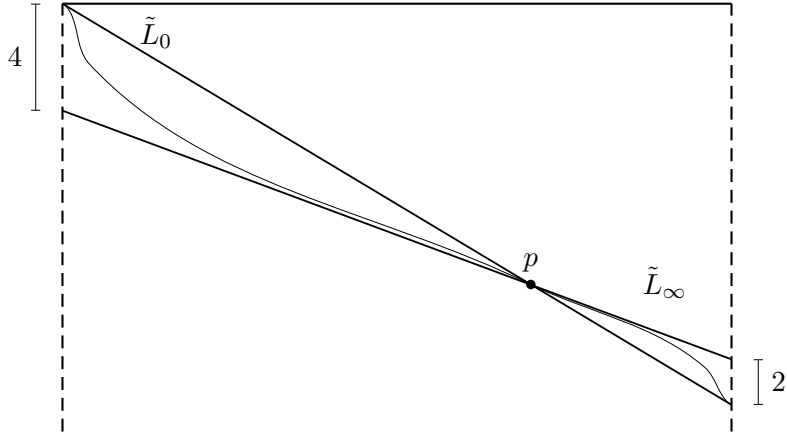
 
Recall that in this case we can think of the degeneration as corresponding to the exact sequence
\[
0 \rightarrow\mathcal{O}_{\mathbb{P}^{1}}(-3) \xrightarrow{\cdot Z_1^2Z_2} \mathcal{O}_{\mathbb{P}^{1}}\rightarrow \mathcal{O}_{\mathbb{P}^{1}}/(Z_1^2Z_2) \rightarrow 0
\]
Recalling our convention, we tensor with $\mathcal{O}_{\mathbb{P}^{1}}(-k)$ to get
\begin{equation}\label{eq: homInj}
0\rightarrow \mathcal{O}_{\mathbb{P}^{1}}(-(k+3)) \xrightarrow{\cdot Z_1^2Z_2} \mathcal{O}_{\mathbb{P}^{1}}(-k) \rightarrow \mathcal{O}_{\mathbb{P}^{1}}(-k)/(Z_1^2Z_2) \rightarrow 0.
\end{equation}
We think of $\tilde{L}_{\infty}\rightarrow \tilde{L}_{0}$ as mirror to the first map in~\eqref{eq: homInj}, and ${\rm Cone}(\tilde{L}_{\infty} \rightarrow \tilde{L}_0)$ as mirror to the cone over this map, which is quasi-isomorphic to $ \mathcal{O}_{\mathbb{P}^{1}}(-k)/(Z_1^2Z_2)$.  Geometrically, taking the Lagrangian connect sum can be visualized as depicted in Figure~\ref{fig: conePic}
 
 Of course the example of $\mathbb{P}^1$ is not particularly interesting geometrically, since, for example, all line bundles are stable.  Perhaps the first interesting case would be the case of ${\rm Bl}_{p}\mathbb{P}^2$, for which unstable bundles exist.  These unstable bundles are mirror to unstable classes in $D\mathcal{F}(Y,W)$.  Furthermore, in dimension $2$, the authors and Jacob showed that the conditions in Corollary~\ref{cor: subVarObstr} are both necessary and sufficient for the existence of solutions to dHYM \cite{CJY}.  This gives necessary and sufficient algebraic conditions for the existence of special Lagrangian sections in two dimensional Landau-Ginzburg models.  It would be enlightening to understand this correspondence in more detail.
 
 Let us briefly explain how this fits with the Thomas-Yau \cite{ThY} proposal.  Recall that in $D\mathcal{F}(Y,W)$ we have a distinguished triangle
\[
\tilde{L}_{\infty} \rightarrow \tilde{L}_0 \rightarrow C \rightarrow \tilde{L}_{\infty}[1].
\]

   \begin{figure}
\begin{center}
\psset{unit=.007in}
\begin{pspicture}(-250,-260)(250,50)
	\psset{linecolor=black} 
	\psline(-250,50)(250,50)
	\psline[linestyle=dashed](-250,-270)(-250,50)
	\psline[linestyle=dashed](250,-270)(250,50)
	\psline[linewidth=0.7pt](-250,50)(250,-250)
	\psline[linewidth=0.7pt](-250,-30)(250,-216)
	\psdot(100,-160)
	\psline[linewidth=0.3pt]{|-|}(-270,50)(-270,-30)
	\uput[180](-270,10){4}
	\psline[linewidth=0.3pt]{|-|}(270,-216)(270,-250)
	\uput[0](270,-233){2}
	\uput[90](200,-180){$\tilde{L}_{\infty}$}
	\uput[-90](-180,50){$\tilde{L}_{0}$}
	
	\psline[linewidth=0.5pt, linearc=2](-250,50)(50,-135)(-250,-30)
	\psline[linewidth=0.5pt, linearc=2](250,-250)(150,-185)(250,-216)
	\uput[-90](40,-180){${\rm Cone}(\tilde{L}_{\infty}\rightarrow \tilde{L}_{0})$}
	\psline[linewidth=0.3]{->}(60,-180)(40, -130)
	\psline[linewidth=0.3]{->}(60,-180)(150, -185)
\end{pspicture}
\caption{A geometric representation of ${\rm Cone}(\tilde{L}_{\infty}\rightarrow \tilde{L}_{0})$} 
\label{fig: conePic}
\end{center}
\end{figure}
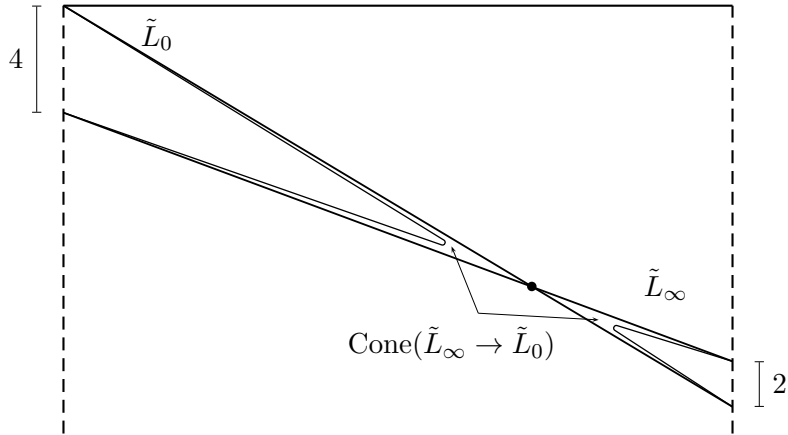

%Let us briefly explain how this fits with the Thomas-Yau \cite{ThY} proposal.  Recall that in $D\mathcal{F}(Y,W)$ we have a distinguished triangle
%\[
%\tilde{L}_{\infty} \rightarrow \tilde{L}_0 \rightarrow C \rightarrow \tilde{L}_{\infty}[1].
%\]
This gives rise to a distinguished triangle
\[
C[-1] \rightarrow \tilde{L}_{\infty} \rightarrow \tilde{L}_0\rightarrow C
\]
showing that $\tilde{L}_0 = {\rm Cone}(C[-1] \rightarrow \tilde{L}_{\infty})$.  In particular, viewing the mapping cone formally as Lagrangian connect sum leads to
\[
\tilde{L}_0 = \tilde{L}_{\infty} \# C[-1],
\]
and so $\tilde{L}_{\infty}$ can be viewed as a ``sub-object" of $L_0$ in the language of Thomas-Yau \cite{ThY}.  If the class of $\tilde{L}_0$ in $D\mathcal{F}(Y,W)$ contains a special Lagrangian, then, assuming the conjectural limit~\eqref{eq: sympKN}, using the existence of geodesics in the space of positive Lagrangians and arguing as in Section~\ref{sec: AlgObstr} we obtain
\[
{\rm Im}\left(\frac{\int_{\tilde{L}_{\infty}}\Omega}{\int_{\tilde{L}_0}\Omega}\right) <0.
\]
If we write
\[
\int_{\tilde{L}_{\infty}}\Omega \in \mathbb{R}_{>0}e^{\sqrt{-1}\phi(\tilde{L}_{\infty})}, \qquad \int_{\tilde{L}_{0}}\Omega \in \mathbb{R}_{>0}e^{\sqrt{-1}\phi(\tilde{L}_{0})}
\]
this is equivalent to $\sin(\phi(\tilde{L}_{\infty})-\phi(\tilde{L}_{0}))<0$, which, modulo lifting the phases to $\mathbb{R}$, is precisely the expected Bridgeland stability condition.

\end{document}